\numberwithin{equation}{section}
\newcommand{\point}{\par\noindent\vspace{1mm}$\bullet$ \ \ }
\newcommand{\dsize}{\textstyle}
\newcommand{\R}{{\mathbb R}}
\newcommand{\Z}{{\mathbb Z}}
\newcommand{\N}{{\mathbb N}}
\newcommand{\C}{{\mathbb C}}
\newcommand{\re}{{\rm Re}\,}
\newcommand{\im}{{\rm Im}\,}
\newcommand{\res}{{\rm res}\, }
\newcommand{\ctg}{{\mathrm cot}\,}
\newcommand{\tr}{{\rm tr}\,}
\theoremstyle{plain}
\newtheorem{theorem}{Theorem}[section]
\newtheorem{lemma}{Lemma}[section]
\newtheorem{Pro}{Proposition}[section]
\newtheorem{Cor}{Corollary}[section]
\newtheorem{hypothesis}{Hypothesis}
\theoremstyle{definition}
\newtheorem{Rem}{Remark}[section]
\title{A series of spectral gaps for the almost Mathieu operator 
with a small coupling constant}
\author{Alexander Fedotov}
\address[Alexander Fedotov]{St. Petersburg State University, 
7/9 Universitetskaya nab., St.Petersburg, 199034, Russia}
\email{a.fedotov@spbu.ru}
\thanks{The work was supported by the Russian
Science Foundation under the grant No. 17-11-01069}
\keywords{Almost Mathieu operator, small coupling, 
monodromy matrix, spectral gaps, asymptotics}
\subjclass{81Q10,47A35,47B39,39A45}
\begin{document}
\setcounter{section}{0}
\begin{abstract} 
For the almost Mathieu operator with a small coupling constant $\lambda$, 
for a series of spectral gaps, we describe the asymptotic locations of the 
gaps and get lower bounds for their lengths. The number of the gaps we 
consider can be of the order of $\ln 1/\lambda$, and  the 
length of the $k$-th gap is roughly of the order of $\lambda^k$.
\end{abstract}
\maketitle
%
%
\section{Introduction} 
We consider the almost Mathieu operator acting in $ l ^ 2 (\Z) $ 
by the formula
\begin {equation}
  \label{op:AM}
  (H_\theta f) _k = f_ {k+1} + f_ {k-1} +2 \lambda \cos (2 \pi (\theta + 
hk)) f_k, 
\quad k \in \Z,
\end {equation}
where $ \lambda> 0 $, \ $ 0 \le \theta <1 $, and $ 0 <h <1 $ are parameters.
The parameter $ \lambda $ is called  a {\it coupling constant}.
The operator~\eqref{op:AM} arises when studying  an electron in a crystal 
submitted to a constant magnetic field when the field is weak, 
when it is strong, in semiclassical regime etc, see, e.g.,~\cite{GHT:89} 
and references therein. This operator attracts attention of  mathematicians 
as well as  physicists thanks to its rich and unusual properties. One of 
the most difficult and interesting problems is the problem of  describing 
the geometry of the spectrum of $ H_ \theta $. During three decades, 
efforts of many mathematicians have been aimed at proving that for 
irrational $ h $ the spectrum is a Cantor set. Among them are A. Avila, J. 
Bellissard, B. Helffer, S.Zhitomirskaya, R. Krikorian, Y. Last, J. Puig, 
B. Simon, J. Sj\"ostrand and many others, see~\cite{A-J:09}, where the 
proof was completed. We mention also~\cite{K:17, LYZZ:17, LS:19} that are 
ones of the latest papers on the geometry of the spectrum.

Among the papers of physicists explaining the cantorian structure of the 
spectrum, we single out the paper~\cite{W:86} containing heuristic 
analysis clear for mathematicians. In the semiclassical approximation, the 
author has successively described sequences of shorter and shorter 
spectral gaps, i.e., obtained a constructive description of 
the spectrum as a Cantor set. According to~\cite{W:86}, the spectrum 
located on certain intervals of the real line, being ``put under a
microscope'', looks like the spectrum of the almost Mathieu 
operator with new parameters. That is why the approach described  by 
Wilkinson is called a renormalization method. Using methods of the 
pseudodifferential operator theory, B. Helffer and J. 
Sj\"ostrand have developed a rigorous asymptotic renormalization method, 
and  turned the heuristic  results into mathematical theorems. 
\\
Let us note that the asymptotic renormalization methods are used when 
the parameter $h$ can be represented as a continued fraction with 
sufficiently large elements.
\\
Later,  V. Buslaev and A. Fedotov suggested  the monodromization 
method, one more renormalization approach that arose when trying to use 
the Bloch-Floquet theory ideas to study the geometry of the spectrum of 
difference operators  in $L^2(\mathbb R)$. The method was further 
developed by A.Fedotov and F.Klopp when studying adiabatic quasiperiodic 
operators. More details can be found in review~\cite {F:13}. The 
monodromization method can be used for one-dimensional two-frequency 
difference and differential quasiperiodic operators independently  of any 
assumptions on  the continued fraction. If such an equation contains an 
asymptotic parameter, one can effectively describe the asymptotic geometry 
of the spectrum.
In~\cite {F:13} we described how to apply the monodromization method to
get a constructive asymptotic description of the spectrum  as a Cantor set
in the case studied by B. Helffer and J. Sj\"ostrand. The present paper is 
the first step to a similar constructive asymptotic description in the 
case of a small coupling constant. Here, we make only the first 
renormalization. This allows to describe asymptotically a series of the 
longest spectral gaps. We get asymptotic formulas for the gap centers and 
lower bounds for the gap lengths. Note that,  the number of the 
gaps we describe can be of the order of $\ln (1/\lambda)$, and the length 
of the $k$-th gap is roughly  of the order of $\lambda^k$. So, it can be
difficult to get such results by means of standard perturbation methods.
\\
The results we prove in this paper were partially announced in the short 
note~\cite{F:18}.

Below $ C $ denotes positive constants independent of any  parameters, 
variables and indices, $H$ denotes expressions of the form 
$Ce^{C/h}$. When writing $ a = O (b) $, we mean that 
$ | a | \le C | b | $, and when writing $a=O_H(b)$, we mean that $|a|\le 
e^{C/h} |b|$.
\\
Furthermore, for $z\in\mathbb C$, we often use the notations 
$x={\rm Re}\, z$ and $y={\rm Im}\, z$.
\\ 
For $v_1,\,v_2\in\mathbb C^2$, we denote by  $V=(v_1\;v_2)$  the 
matrix with the columns $v_{12}$ and $v_2$.
\section{Main results} 
It is well known, see, for example,~\cite {H-S:88},
that for the irrational $h$, as a set, the spectrum of the almost Mathieu 
operator  is independent of the parameter $ \theta $ and coincides with 
the spectrum of the Harper operator acting in $ L ^ 2 (\R) $ by the formula
$ H \psi(x) = \psi(x + h) + \psi(x-h) +2\lambda \cos(2 \pi x) \psi(x)$.
It is a difference Schr\"odinger operator  with a $1$-periodic potential. 
Below we discuss only it.
\\
As for the one-dimensional periodic differential operators, for the 
operator $H$ one can define a monodromy matrix. Here, we describe 
asymptotics of a monodromy matrix and spectral results obtained by means 
of these asymptotics.
\subsection{Monodromy matrix}
\subsubsection{}
Let us consider the Harper equation
\begin {equation}
\label{eq:harper}
  \psi (x + h) + \psi (x-h) +2 \lambda \cos (2 \pi x) \psi (x) = E \psi (x), \quad x \in \R,
\end {equation}
where $ E \in \C $ is a spectral parameter. Its solution space is 
invariant with respect to  translation by one. Let us fix 
a basis in the solution space. The corresponding monodromy matrix 
represents the restriction of the translation operator to the solution 
space. Equation~\eqref{eq:harper} is a second order difference equation 
on $\mathbb R$, and its solution space is a two-dimensional modul over 
the ring of $h$-periodic functions, see, e.g.~\cite{Bu-Fe:01}. Thus, the
monodromy matrix is a $2\times2$ matrix $h$-periodic in $x$. When 
defining a monodromy matrix, it is convenient to make  a linear change 
of variables, for it to be  $1$-periodic. 
The  formal definitions can be found in sections~\ref{def:MM:matrix-eq} 
and~\ref{sss:MM:diffSch}.
\\
The following two theorems describe the functional structure of one of the 
monodromy matrices. 
\begin {theorem}\label{th:1} 
In the solution space of~\eqref{eq:harper}, there exists a basis  such 
that the corresponding monodromy matrix  is of the form
\begin {equation}\label{firstM}
{\mathcal M} (x) =
\left (\begin {array} {cc}
a -2 \lambda_1 \cos (2 \pi x) & s + t \, e ^ {\textstyle-2 \pi ix} \\{} \\
-s-t \, e ^ {\textstyle 2 \pi ix} & st / \lambda_1
\end {array} \right), \ \
a = \lambda_1 \frac {1-s ^ 2-t ^ 2} {st}, \ \  
\lambda_1 = \lambda ^ {\frac1h},
\end {equation}
where the coefficients $ s $ and $ t $ are independent of $ x $
and meromorphic in $E$. 
\end {theorem}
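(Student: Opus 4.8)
The plan is to obtain the monodromy matrix from the standard monodromization construction (sections~\ref{def:MM:matrix-eq} and~\ref{sss:MM:diffSch}, cf.~\cite{Bu-Fe:01, F:13}) and then to exhaust the gauge freedom it carries so as to reach the shape~\eqref{firstM}. First I rewrite~\eqref{eq:harper} as the first order system $\Phi(x+h)=A(x)\Phi(x)$ with $\Phi(x)=(\psi(x),\psi(x-h))^{\top}$ and
\[
A(x)=\begin{pmatrix} E-2\lambda\cos(2\pi x) & -1\\ 1 & 0\end{pmatrix}.
\]
For any fundamental solution $\Phi$, the matrix $M(x)=\Phi(x)^{-1}\Phi(x+1)$ is $h$-periodic (because $A$ is $1$-periodic), its determinant equals $\det\Phi(x+1)/\det\Phi(x)$, and replacing $\Phi$ by $\Phi S$ with an invertible $h$-periodic $S$ turns $M(x)$ into $S(x)^{-1}M(x)S(x+1)$. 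After the linear change of variable announced before the theorem, $M$ becomes $1$-periodic in the new variable, which I again denote $x$; everything now reduces to choosing $\Phi$, i.e.\ the gauge and the normalization of $\det\Phi$, so that the resulting $\mathcal{M}$ has the form~\eqref{firstM}.

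The heart of the matter is a Fourier-degree statement: since $A$ depends on $x$ only through $e^{\pm2\pi ix}$, and only in its $(1,1)$ entry, one can pick the gauge so that every entry of $\mathcal{M}(x)$ is a Laurent polynomial in $e^{2\pi ix}$ of degree at most one. I would prove this by the usual monodromization bookkeeping: propagate $\Phi$ across one period as the ordered product governed by $A$, keep track of the Fourier support of the entries, and strip off the harmonics of order $\ge2$ one at a time by $h$-periodic gauge transformations; the very simple, essentially triangular, way in which $e^{\pm2\pi ix}$ enters $A$ is what makes this process terminate after finitely many steps. I expect this degree-reduction to be the main obstacle, the more so because the gauge transformations must stay meromorphic in $E$.

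Granting the degree-one structure, the precise shape is fixed next. At $\lambda=0$ the system has constant coefficients and is explicitly integrable, which gives $\mathcal{M}$ there directly; by the degree bound, turning on $\lambda$ can only act on the harmonics already present, and following this perturbation through one unit translation forces the $(1,1)$ entry to acquire the term $-2\lambda_1\cos(2\pi x)$ with $\lambda_1=\lambda^{1/h}$ — the exponent $1/h$, which has to be extracted exactly, reflecting that a unit translation comprises of the order of $1/h$ shifts of size $h$ — while the off-diagonal entries take the form $s+te^{-2\pi ix}$ and $-s-te^{2\pi ix}$ and the $(2,2)$ entry is the constant $st/\lambda_1$, with two scalar unknowns $s=s(E)$ and $t=t(E)$. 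These are meromorphic in $E$ because $A$ is entire in $E$, the fundamental solution is entire, and only finitely many normalizations and gauge transformations, each meromorphic in $E$, have been used.

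It remains to get $\det\mathcal{M}\equiv1$ and, with it, the formula for $a$. Since $\det A\equiv1$, the function $\det\Phi$ is $h$-periodic, so the gauge freedom lets me normalize $\det\Phi\equiv1$, whence $\det\mathcal{M}(x)=\det\Phi(x+1)/\det\Phi(x)\equiv1$. On the other hand, expanding the determinant of $\mathcal{M}(x)$ from~\eqref{firstM} the $\cos(2\pi x)$ terms cancel and one is left with $\det\mathcal{M}=a\,st/\lambda_1+s^2+t^2$; equating this to $1$ yields $a\,st/\lambda_1=1-s^2-t^2$, i.e.\ the relation $a=\lambda_1(1-s^2-t^2)/(st)$ stated in the theorem. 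This closes the argument modulo the Fourier-degree reduction flagged above.
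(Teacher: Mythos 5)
Your route is genuinely different from the paper's and, as written, has gaps that the paper's approach is precisely designed to circumvent. The paper obtains~\eqref{firstM} not by gauge-reducing an arbitrary monodromy matrix but by choosing a \emph{canonical} basis: the \emph{minimal entire solutions} $\psi_D,\psi_B$ of the Harper equation in the complex $z$-plane (see Section~\ref{s:mm:str} and the citation of Theorem~7.3 of~\cite{Bu-Fe:01}). These are entire functions of $z$ singled out, up to normalization, by having the slowest possible growth as $y\to\pm\infty$; equivalently, the $h$-periodic coefficients $A,B,C,D$ in the decompositions~\eqref{psi:up}--\eqref{psi:down} with respect to the Bloch solutions~\eqref{f-solutions} are required to be bounded in the relevant half-planes. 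The monodromy matrix for this basis has entries which are $1$-periodic entire functions of $z$ with two-sided growth control in $y$, and a Liouville-type argument forces them to be trigonometric polynomials of degree at most one. That is where the ``Fourier-degree $\le1$'' comes from in~\cite{Bu-Fe:01}: it is a property of one canonically chosen basis, not a reduction that one performs on a generic fundamental solution, and it lives in the complex-analytic picture, not at the level of products of transfer matrices on $\R$. The formulas~\eqref{first:ts} then give $s,t$ in terms of the asymptotic coefficients $A_D,B_D,D_D$, whose meromorphy in $E$ is known from the construction of $\psi_D$.

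Now the concrete gaps in your version. First, the degree-reduction step, which you flag as the ``main obstacle'', is the whole theorem; asserting that an $h$-periodic gauge transformation can strip the harmonics of order $\ge 2$ ``one at a time'' for a matrix whose entries a priori carry $O(1/h)$ Fourier modes needs an actual mechanism, and the Buslaev--Fedotov mechanism (entirety plus two-sided growth bounds on canonical solutions) is quite unlike an algebraic elimination scheme. Second, your perturbative argument is circular: at $\lambda=0$ the matrix $A(x)$ is $x$-independent, so ``the harmonics already present'' are none, and the claim that turning on $\lambda$ only acts on harmonics already present would then forbid any $x$-dependence at all, contradicting~\eqref{firstM}. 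The coefficient $\lambda_1=\lambda^{1/h}$ is in fact highly singular in the joint limit $(\lambda,h)\to(0,0)$ and cannot be read off from a naive perturbation in $\lambda$; in the paper it arises from the explicit asymptotics~\eqref{f-solutions} of the Bloch solutions $u_\pm$ and the normalization of the minimal solutions. Third, meromorphy of $s,t$ in $E$ is not a consequence of ``finitely many gauge transformations'': you would need to exhibit these gauges and prove their $E$-dependence is meromorphic, whereas in the paper it follows from the meromorphy in $E$ of $\psi_D$ and of its asymptotic coefficients, which is established as part of the minimal-solution construction. The unimodularity step at the end of your argument is fine and matches the paper, but it is the only part that survives scrutiny.

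In short: your plan replaces the analytic input (canonical entire solutions with prescribed growth at $y\to\pm\infty$) by an algebraic gauge-normal-form argument, and the part you defer is exactly the part that does not reduce to algebra. If you want an alternative to~\cite{Bu-Fe:01}, the thing to prove is that there exists an entire fundamental solution $\Psi$ whose columns have minimal growth as $y\to+\infty$ and as $y\to-\infty$ respectively, and that for such a $\Psi$ the $h$-periodic matrix $p(x)$ in $\Psi(x+1)=\Psi(x)p(x)$ has entries that are $1$-periodic trigonometric polynomials of degree $\le1$ of $x/h$ with the indicated coefficient pattern; this is where the work lies.
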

\noindent This theorem is a part of  Theorem 7.3 from ~\cite{Bu-Fe:01}. 
The basis solutions   are minimal entire solutions,
i.e., solutions that are entire functions of $z=x+iy$  growing the 
most slowly  as  $ |y| \to \infty $. These minimal solutions are 
meromorphic in $E$.
\begin{Rem} It follows from the proof of this theorem  that
if, for given $ h=h_0\in(0,1)$,  $ \lambda=\lambda_0 \in (0,\infty)$, and 
$E=E_0\in\mathbb C$, the coefficients $s$ and $t$ are finite, then 
they are continuous in $(h,\lambda, E)$ in a neighborhood of 
$(h_0,\lambda_0, E_0)$.
\end{Rem}
\noindent
In Section~\ref{s:mm:str}, we check
\begin {theorem} \label{th:2} If for an $E\in \R$ the coefficient  $t$ is 
finite, then, for this $E$,
  \begin{equation}
    \label{eq:s/a}
t \in i\R, \quad 
|s| = \lambda_1 \sqrt {\frac{1+ |t|^ 2}{\lambda_1^2 + |t|^2}}, 
  \end{equation}
and the zeroth Fourier 
coefficient of the trace of  the monodromy matrix equals
\begin {equation} \label{trace}
L = \frac{2i}{t} \sqrt {(1+ | t | ^ 2) ( \lambda_1^2 +| t | ^ 2)} \cos ({\rm arg} \, (is)).
\end {equation}
\end {theorem}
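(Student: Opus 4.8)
The plan is to convert the self-adjointness of $H$ at real $E$ into an algebraic symmetry of the monodromy matrix \eqref{firstM}, and then to extract \eqref{eq:s/a} and \eqref{trace} by direct computation. The starting point is the reality symmetry of~\eqref{eq:harper}: for $E\in\mathbb R$ all coefficients of \eqref{eq:harper} are real for real $x$, and $\overline{\cos(2\pi\bar x)}=\cos(2\pi x)$ for every $x$, so the antilinear map $\mathcal C\colon\psi(x)\mapsto\overline{\psi(\bar x)}$ carries solutions of \eqref{eq:harper} to solutions. Since $\mathcal C$ preserves the modulus of an entire function while reflecting $\im z\mapsto-\im z$, it preserves the (two-dimensional) space of minimal entire solutions, hence acts on the basis of Theorem~\ref{th:1} by a matrix $\Sigma=\Sigma(E)$; this $\Sigma$ is \emph{constant} (a genuinely $x$-dependent $h$-periodic coefficient would destroy either the growth or the analyticity that characterize the minimal solutions) and satisfies $\Sigma\,\overline{\Sigma}=I$ since $\mathcal C^2=\mathrm{id}$. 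As translation by one commutes with $\mathcal C$, applying $\mathcal C$ to the relation defining $\mathcal M$ (translation of the basis solutions by one) should give, in the $1$-periodic variable of Theorem~\ref{th:1},
\begin{equation}\label{eq:Csym}
  \mathcal M(x)=\Sigma\,\overline{\mathcal M(\bar x)}\,\Sigma^{-1}.
\end{equation}
Pinning down $\mathcal C$ on the minimal solutions of~\cite{Bu-Fe:01} precisely enough to obtain \eqref{eq:Csym} with a constant $\Sigma$ is the step I expect to be the main obstacle; everything after it is bookkeeping.

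Granting \eqref{eq:Csym}, write $\mathcal M(x)=\mathcal A_0+e^{2\pi ix}\mathcal A_++e^{-2\pi ix}\mathcal A_-$ from \eqref{firstM}; since $\overline{e^{\mp2\pi i\bar x}}=e^{\pm2\pi ix}$ one has $\overline{\mathcal M(\bar x)}=\overline{\mathcal A_0}+e^{-2\pi ix}\overline{\mathcal A_+}+e^{2\pi ix}\overline{\mathcal A_-}$. Matching the three Fourier harmonics in \eqref{eq:Csym}, feeding in the explicit entries of $\mathcal A_0,\mathcal A_\pm$ and the identity $a=\lambda_1(1-s^2-t^2)/(st)$ — equivalently $\det\mathcal M\equiv1$, a one-line check from \eqref{firstM} — I expect the following: the off-diagonal relations determine the columns of $\Sigma$ up to an overall scalar and force $t^2\in\mathbb R$; then $\Sigma\,\overline{\Sigma}=I$ forces $\lambda_1^2-t^2>0$; and the $e^0$-harmonic relation, simplified with the formula for $a$, collapses to $|s|^2(\lambda_1^2-t^2)=\lambda_1^2(1-t^2)$. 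Since $t^2$ is real, $\lambda_1^2-t^2>0$ and $|s|^2\ge0$, this last identity is possible only if $t^2\le0$, that is $t\in i\mathbb R$; then $t^2=-|t|^2$ and the identity becomes exactly $|s|=\lambda_1\sqrt{(1+|t|^2)/(\lambda_1^2+|t|^2)}$, which is~\eqref{eq:s/a}.

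It remains to compute $L$. The only $x$-dependence of $\tr\mathcal M(x)=(a-2\lambda_1\cos2\pi x)+st/\lambda_1$ is in the cosine, so the zeroth Fourier coefficient of $\tr\mathcal M$ is $L=a+st/\lambda_1=\bigl(\lambda_1^2(1-s^2-t^2)+s^2t^2\bigr)/(\lambda_1 st)$. Using $t^2=-|t|^2$ and \eqref{eq:s/a} in the form $\lambda_1^2(1-t^2)=|s|^2(\lambda_1^2-t^2)$, the numerator equals $|s|^2(\lambda_1^2-t^2)-s^2(\lambda_1^2-t^2)=s(\lambda_1^2-t^2)(\bar s-s)$, so $L=-2i(\lambda_1^2-t^2)\,\im s/(\lambda_1 t)$. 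On the other hand, again by~\eqref{eq:s/a}, $\sqrt{(1+|t|^2)(\lambda_1^2+|t|^2)}=\sqrt{(1-t^2)(\lambda_1^2-t^2)}=\lambda_1(1-t^2)/|s|$ (both factors under the root being positive), while $\cos(\arg(is))=\re(is)/|s|=-\im s/|s|$; substituting these into the right-hand side of~\eqref{trace} and using~\eqref{eq:s/a} once more turns it into $-2i(\lambda_1^2-t^2)\,\im s/(\lambda_1 t)$, that is, into $L$. This yields~\eqref{trace}.
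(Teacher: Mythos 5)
Your underlying idea — extract \eqref{eq:s/a}--\eqref{trace} from the antilinear symmetry $\psi\mapsto\overline{\psi(\bar\cdot)}$ of the Harper equation at real $E$ — is exactly the paper's idea (the operation $f^*(z,E)=\overline{f(\bar z,\bar E)}$ of Section~\ref{s:mm:str}). The gap you flag, the constancy of your $\Sigma$, is indeed the hard part and you do not close it; the paper establishes it by an explicit Wronskian analysis, not a growth/analyticity meta-argument. Concretely, it shows (Proposition before \eqref{ADandBD}) that the $h$-periodic entire conjugating matrix $S$ has $z$-independent entries by computing each Wronskian $\{\psi_D^*,\psi_B\}$, $\{\psi_D,\psi_D^*\}$, \dots\ explicitly via the representations of $\psi_D,\psi_B$ in the bases $u_\pm$, $d_\pm$ at $y\to\pm\infty$ (using Lemma~\ref{le:d-star-u}); matching the two limits of each $h$-periodic bounded Wronskian pins down $S_{11}=S_{22}=1/B_D$, $S_{12}=S_{21}=A_D/B_D$, and the relations $A_D^*=-A_D$ (note: \eqref{ADandBD} prints $A_D^*=A_D$, but the adjacent Wronskian limits $\lambda A_D=-\lambda A_D^*$ give the minus sign, and that sign is what makes $t=-\lambda_1A_D$ purely imaginary) and $B_DB_D^*-A_DA_D^*=1$. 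Your one-line claim that a nonconstant $h$-periodic factor "would destroy the growth or the analyticity" would need to be turned into precisely this sort of argument.

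There is also a second, unacknowledged gap in your finishing step. Substituting $\mathcal A_0\Sigma=\Sigma\overline{\mathcal A_0}$ into the $(1,2)$ entry with $a=\lambda_1(1-s^2-t^2)/(st)$ gives (for $t\ne0$)
\begin{equation*}
  \lambda_1^2\,\frac{\bar t}{t}\,(t^2-1)=|s|^2(\lambda_1^2-t^2),
\end{equation*}
not $\lambda_1^2(1-t^2)=|s|^2(\lambda_1^2-t^2)$. Since your off-diagonal matching only yields $t^2\in\mathbb R$ (so $\bar t/t=\pm1$), the sign of the left side flips between the two cases $t\in\mathbb R$ and $t\in i\mathbb R$: the identity you asserted is exactly the $t\in i\mathbb R$ case, so using it to rule out $t\in\mathbb R$ is circular. (In the real case one only gets $1\le t^2<\lambda_1^2$, which cannot be excluded without extra hypotheses such as $\lambda<1$, absent from Theorem~\ref{th:2}.) The paper does not run into this: with $\tilde S=\begin{pmatrix}1&A_D\\ A_D&1\end{pmatrix}$ known and $A_D^*=-A_D$ from the Wronskian computation, $t=-\lambda_1A_D$ gives $t^*=-t$ directly; alternatively, the extra symmetry $\psi_B(z)=\psi_D(1-z)$ forces $S_{12}=S_{21}$ (Lemma \eqref{eq:S-prop}), and imposing $\Sigma_{12}=\Sigma_{21}$ on your $\Sigma=\alpha\begin{pmatrix}1&-\bar t/\lambda_1\\ t/\lambda_1&-\bar t/t\end{pmatrix}$ would already give $\bar t=-t$ without any detour through the $e^0$ harmonic. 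Your final derivation of \eqref{trace} from \eqref{eq:s/a}, on the other hand, is a correct computation.
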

\noindent
Relations~ \eqref {eq:s/a} and~\eqref{trace}  reflect the self-adjointness 
of the Harper operator. 
\subsubsection{} Pick $a\in(0,\pi)$.
The asymptotics of the coefficients $ s $ and $ t $ as $ \lambda \to 0 $ are described in terms
of  a meromorphic function $ \sigma_a $ satisfying the  equation
\begin {equation}
  \label{eq:sigma}
  \sigma_a (z + a) = (1 + e ^ {- iz}) \sigma_a (z-a), \quad z \in \C.
\end {equation}
Let $ S = \{z \in \C \,:\, | {\rm Re} \, z | <\pi + a \} $.
The function $\sigma_a$  is uniquely characterized by the following properties. 
In the strip $S$, it  is analytic, does not vanish, tends to one as 
$y\to-\infty$ and has the minimal possible growth as $y\to+\infty$.
This function and functions related to it arose in different areas of  
mathematical physics, see, e.g.,~\cite{Bu-Fe:01, BLG, Bo-Fi:88, F-K-V:01, 
Fe:2016, Ma:58, R:00}. We discuss $\sigma_a$ in section~\ref{sec:sigma}. Let 
\begin{equation}\label{def:F0}
 F_0(p)=\sigma_{\pi h} (4\pi p - \pi +\pi h)\,
\overline{\sigma_{\pi h} (4\pi \overline{p} - \pi +\pi h)}.
\end{equation}
To describe the asymptotics of $t$ and $s$, we also use the parameter $p$ 
related to $E$ by the equation 
\begin{equation}\label{def:p}
 E = 2 \cos (2\pi p),
\end{equation}
and the notation $\xi =\frac1{2\pi}\ln\lambda$. 
\begin {theorem} \label{th:0:ne0} Fix $\beta\in(0,1/2) $. There exists 
$c>0$ such that if $\lambda< e^{-c/h}$, then, for $p$ satisfying the 
inequalities $ |\im p|\le h $ and $ h / 4 <{\rm Re} \, p <1/2-h / 4 $,
one has
\begin {equation}\label{as:st}
t = \frac {ie ^ {\frac {4\pi (1/2-p) \xi} {h}} F_0 (p)} {2 \sin (2\pi p)} 
\,
(1 + O_H(\lambda^\beta)),
 \quad
s = \frac {-2i \, e ^ {\frac {4\pi p \xi} { h} + \frac {2\pi ip} {h}} \, 
\sin (2\pi p)}
{F_0 (p)} \, (1+ O_H(\lambda^\beta)).
\end {equation}
\end {theorem}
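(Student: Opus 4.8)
The plan is to extract the asymptotics of $s$ and $t$ from an explicit representation of the monodromy matrix $\mathcal M$ in terms of the minimal entire solutions of \eqref{eq:harper}, and then to analyze these solutions as $\lambda\to 0$. First I would recall from \cite{Bu-Fe:01} (the same source as Theorem \ref{th:1}) the formulas expressing $s$ and $t$ as certain Wronskian-type combinations of the minimal solutions $\psi_\pm(z,E)$ evaluated on the lattices shifted by $h$; the coefficients $s,t$ are, up to normalization, the off-diagonal entries, so each is a ratio of two such Wronskians, and the $x$-independent part is isolated by taking the appropriate Fourier coefficient. The key point is that when $\lambda\to 0$ the Harper equation \eqref{eq:harper} degenerates: the potential term $2\lambda\cos(2\pi x)\psi$ becomes negligible away from the turning points, so \eqref{eq:harper} is asymptotically the constant-coefficient equation $\psi(x+h)+\psi(x-h)=E\psi(x)$, whose solutions are $e^{\pm 2\pi i p x/h}$ with $E=2\cos(2\pi p)$ as in \eqref{def:p}. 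The minimal entire solutions are then WKB-type corrections of these exponentials.

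Next I would carry out the asymptotic construction of $\psi_\pm$. The natural substitution is $\psi(x)=e^{\varphi(x)/h}$ (or a Fourier-side version), leading to a Riccati-type difference equation $e^{\varphi(x+h)-\varphi(x)}+e^{\varphi(x-h)-\varphi(x)}+2\lambda\cos(2\pi x)=E$; writing $\varphi'$ for the "momentum" one gets $2\cos\varphi' = E - 2\lambda\cos 2\pi x$ to leading order. Solving and integrating the complex momentum over a fundamental period produces the exponential factors $e^{4\pi p\xi/h}$ and $e^{4\pi(1/2-p)\xi/h}$ (here $\xi=\frac1{2\pi}\ln\lambda$, so $e^{\xi\cdot 4\pi p/h}=\lambda^{2p/h}$, which is exactly the expected order $\lambda^k$ scale), while the subleading transport/amplitude corrections, computed through the turning-point connection formulae, assemble into the function $F_0(p)$ of \eqref{def:F0}. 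The appearance of $\sigma_{\pi h}$ is the crucial structural ingredient: the local model near a turning point of $2\lambda\cos 2\pi x = E$ is governed by the functional equation \eqref{eq:sigma}, so the connection coefficient relating the recessive solutions on the two sides of the gap is built from $\sigma_{\pi h}$ evaluated at the argument $4\pi p-\pi+\pi h$ (and its conjugate, coming from the two turning points symmetric about a half-period), which is precisely $F_0(p)$. The factors $\sin(2\pi p)$ in the denominators/numerators are Wronskian normalizations of the pair $e^{\pm 2\pi ipx/h}$, and the extra $e^{2\pi ip/h}$ in $s$ is a one-period phase shift; the product $st/\lambda_1$ and the constraint $a=\lambda_1(1-s^2-t^2)/(st)$ from \eqref{firstM} provide consistency checks that the leading coefficients are correctly placed.

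Then I would control the error terms to obtain the $O_H(\lambda^\beta)$ bounds: the WKB expansion is asymptotic with each further term gaining a factor $O(\lambda/h^{C})$-type, but because the analysis is uniform only on the strips $|\im p|\le h$ and $h/4<\re p<1/2-h/4$ (which keep us away from the band edges $p\in\frac h4\mathbb Z$ where $\sin 2\pi p$ degenerates and turning points collide) one loses powers $e^{C/h}$, hence the notation $O_H$; choosing $\lambda<e^{-c/h}$ with $c$ large enough ensures $e^{C/h}\lambda^{1-\beta}\to 0$, so the remainder is genuinely $O_H(\lambda^\beta)$ after absorbing one power of $\lambda$ into the error and keeping $\lambda^{-\beta}$-worth of margin. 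I expect the main obstacle to be exactly this uniformity: making the turning-point connection analysis and the resulting identification with $\sigma_{\pi h}$ rigorous and uniform in $p$ across the whole strip, including tracking how the estimates degrade near the endpoints $\re p = h/4$ and $\re p=1/2-h/4$, and verifying that the two turning points per period can be treated by a single $\sigma$-factor and its conjugate rather than interacting. The self-adjointness relations \eqref{eq:s/a}–\eqref{trace} of Theorem \ref{th:2} should be used as a final sanity check: for real $E$ the leading terms in \eqref{as:st} must satisfy $t\in i\R$ and $|s|=\lambda_1\sqrt{(1+|t|^2)/(\lambda_1^2+|t|^2)}$ up to the stated errors, which pins down the phases (the factors of $i$) and confirms the normalization.
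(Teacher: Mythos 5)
Your proposal has a correct overall skeleton -- compute asymptotics of the minimal entire solutions and read off $s,t$ from Wronskian/asymptotic-coefficient formulas -- but the asymptotic mechanism you propose is the wrong one for the regime $\lambda\to 0$, and as a result the crucial step (the appearance of $\sigma_{\pi h}$ and of the exponential factors) is not actually derived.

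You propose a semiclassical/WKB substitution $\psi=e^{\varphi/h}$ and a turning-point connection analysis for $2\lambda\cos 2\pi x=E$. That is the $h\to 0$ regime; the paper explicitly contrasts it with the present regime $\lambda<e^{-c/h}$ (see the remark at the end of Section~2.1, citing~\cite{Bu-Fe:95}). Here $h$ is not small and the WKB expansion in $h$ has no small parameter. Moreover, for $E$ bounded away from $\pm 2$ the ``turning points'' of $2\lambda\cos 2\pi x=E$ are not on the real line at all but near $\im x\approx\pm\frac{1}{2\pi}\ln(1/\lambda)$, and the claim that the connection there is governed by the functional equation~\eqref{eq:sigma} is asserted rather than shown. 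In fact $\sigma_{\pi h}$ does \emph{not} enter as a turning-point connection coefficient. The paper's actual mechanism is: split $2\cos 2\pi z=e^{2\pi iz}+e^{-2\pi iz}$; in $\C_+$ treat $\lambda e^{2\pi iz}$ as the perturbation and solve exactly the model equation $\mu(z+h)+\mu(z-h)+e^{-2\pi iz}\mu(z)=E\mu(z)$ by a Laplace-type integral~\eqref{mu:form}, whose kernel $v(k)$ is built directly from $\sigma_{\pi h}$ because $v(k+h)=2(\cos 2\pi p-\cos 2\pi k)v(k)$ factors into $(1+e^{-2\pi i(k\mp p)})$ terms matching~\eqref{eq:sigma}; then show via an integral equation (Section~5) that the true solution in $\C_+$ is $\mu(z+i\xi)$ up to $O(\lambda)$ corrections; symmetrically in $\C_-$; and finally glue the two half-plane constructions into the minimal entire solution $\psi_D$ by solving a Riemann--Hilbert problem on $\{|\zeta|=1\}$ (Section~7). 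The function $F_0(p)$ comes out of the residue computation for $\mu$ in $\C_-$ (Proposition~\ref{pro:mu:down}, formula~\eqref{a(p,h)}) and the Wronskian arithmetic, not out of a turning-point formula, and the exponential factors $e^{4\pi p\xi/h}$, $e^{4\pi(1/2-p)\xi/h}$ arise from the $+i\xi$-shift of $\mu$ together with the asymptotic coefficient formulas~\eqref{1:RP:4}--\eqref{1:RP:6} and~\eqref{first:ts}, not from a ``complex momentum'' action integral. So the gap is: your proposal offers no derivation of the leading terms in~\eqref{as:st} that would survive scrutiny in the $\lambda\to 0$, $h$-fixed regime, and the one substantive claim it makes (WKB connection $\Rightarrow\sigma_{\pi h}$) is neither proved nor correct as a description of how $\sigma_{\pi h}$ actually enters.

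The one piece of your proposal that does align with the paper is the closing sanity check: using Theorem~\ref{th:2} to verify the phases and the relation $st/\lambda_1\approx e^{2\pi ip/h}$. That is a legitimate consistency test, but it cannot substitute for the missing construction.
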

\noindent
Note that, in the case of this theorem, one has
$st/\lambda_1=e^{\frac{2\pi ip}{h}} (1+ O_H(\lambda^\beta))$.
\\
The asymptotics of $s$ and $t$ near the points $p=0$ and $p=1/2$ are
more complicated. They are described by Theorem~\ref{th:0:0}.

To get the asymptotics of $s$ and $t$, we obtain asymptotics of 
minimal entire solutions to equation~\eqref{eq:harper} as $\lambda\to0$. 
For this, first, we construct entire  solutions to the 
auxiliary equations
$\psi (x + h) + \psi (x-h) + \lambda e ^ {\mp 2 \pi i x} \psi (x) = E 
\psi(x) $.
Next,  in the half-planes $ \C_ \pm $ respectively, for sufficiently small 
$\lambda$, we construct analytic solutions to the Harper equation 
that are close to the solutions to the auxiliary equations. Finally, with 
the help of a Riemann-Hilbert problem, we make of these analytic 
solutions 
the minimal entire  solutions.

Our asymptotic method works if $ \lambda < e ^ {- c / h} $. If  $ h $ is 
so small  that $ \lambda >> e ^ {- c / h} $, then the asymptotics of the 
solutions to the Harper equation can be obtained by semiclassical methods, 
see, e.g.,~\cite{Bu-Fe:95}. 
\subsection{Spectral gaps} 
The first renormalization of the 
monodromization method consists in replacing equation ~ \eqref {eq:harper}
with {\it the first monodromy equation}
\begin {equation} \label{eq:M1}
\Psi_1(x + h_1) = M_1(x) \Psi_1(x), \quad x \in \C, \qquad h_1 = \{1/h\},
\end {equation}
where $ M_1 $ is a monodromy matrix, and $ \{\cdot \} $ is the fractional 
part. 
Equations~\eqref {eq:harper} and~\eqref {eq:M1} 
simultaneously have pairs of linearly independent solutions such that one 
solution  of a pair decays exponentially as $ x\to+ \infty $, and the 
other decays as $x\to - \infty $, see Corollary~\ref{cor:exp-dec-sol}. 
This allows to find gaps in the spectrum of the Harper equation by 
studying solutions to the monodromy equation. In 
section~\ref{sec:proof:th:5} we prove
\begin {theorem} \label{th:5} Let $I\subset \R$ be an open interval. There 
exists such a constant $ C $ independent of $h$ and $E$ that if 
\begin {equation}\label{cond:gap}
  (L/2)^2 \ge (1 + C \lambda_1) (1+ | t | ^ 2), \quad
  \lambda_1\le | t | \le 1, \qquad \forall E \in I,
\end {equation}
then $ I $ is in a gap of the Harper operator.
\end {theorem}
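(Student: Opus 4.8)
The plan is to show that under the hypothesis \eqref{cond:gap}, equation \eqref{eq:harper} has no nonzero bounded solution for any $E\in I$, which by standard spectral theory for the Harper operator forces $I$ into a gap. Since the spectrum is characterized by the existence of (poly\-nomially) bounded generalized eigenfunctions, it suffices to produce, for each $E\in I$, a solution decaying exponentially as $x\to+\infty$ and an independent one decaying as $x\to-\infty$: if both exist and are the only decaying directions, no global bounded solution can exist, so $E$ lies in a gap. This is exactly the dichotomy recorded in Corollary~\ref{cor:exp-dec-sol}, which ties solutions of \eqref{eq:harper} to solutions of the first monodromy equation \eqref{eq:M1}. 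So the real work is to analyze the $2\times2$ transfer cocycle generated by $\mathcal M(x)$ from Theorem~\ref{th:1} and show it is uniformly hyperbolic on $I$.

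First I would reduce to the monodromy matrix $\mathcal M(x)$ of Theorem~\ref{th:1}, whose determinant is $\det\mathcal M(x) = (a - 2\lambda_1\cos 2\pi x)(st/\lambda_1) + |s + t e^{-2\pi i x}|^2$ on the real axis; using the explicit value of $a$ together with \eqref{eq:s/a} from Theorem~\ref{th:2}, this simplifies to a constant of modulus one (the monodromy matrix of a self-adjoint operator lies in $\mathrm{SL}_2$ up to a unimodular factor). Hyperbolicity of the cocycle is then governed by the trace: one wants $|\tfrac12\operatorname{tr}\mathcal M(x)| > 1$ uniformly. The trace is $\operatorname{tr}\mathcal M(x) = a - 2\lambda_1\cos 2\pi x + st/\lambda_1$, an $h$-periodic trigonometric polynomial in $x$ of degree one, so its sup over $x\in\R$ is controlled by its zeroth Fourier coefficient $L$ from \eqref{trace} plus a correction of size $O(\lambda_1)$ coming from the $\cos 2\pi x$ term (whose coefficient is $2\lambda_1$). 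Concretely $\min_x|\tfrac12\operatorname{tr}\mathcal M(x)| \ge |L|/2 - C\lambda_1$ for a suitable $C$. Then the condition $(L/2)^2 \ge (1 + C\lambda_1)(1+|t|^2)$ together with $|t|\le 1$ gives $(L/2)^2 \ge 1 + C\lambda_1 + (1+C\lambda_1)|t|^2 \ge (1 + C'\lambda_1)^2$ after adjusting constants and using $\lambda_1 \le |t|$, hence $\min_x|\tfrac12\operatorname{tr}\mathcal M(x)| > 1$, i.e. the cocycle is uniformly hyperbolic.

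The bound $\lambda_1 \le |t|$ is what lets the error terms (which are measured against $|t|$ via the $O_H$ conventions, and against $\lambda_1$ in the $\cos$-correction) be absorbed into the gap $(1+C\lambda_1)$; the upper bound $|t|\le 1$ is what makes $1+|t|^2 \le 2$ and keeps the product on the right of \eqref{cond:gap} from swallowing the margin. Having uniform hyperbolicity on $I$, I would invoke Corollary~\ref{cor:exp-dec-sol}: a uniformly hyperbolic transfer cocycle means \eqref{eq:harper} has, for every $E\in I$, exponentially decaying solutions as $x\to+\infty$ and as $x\to-\infty$ spanning complementary one-dimensional subspaces, so there is no global $\ell^\infty$ solution; since this holds for all $E$ in the open interval $I$, a connectedness/continuity argument (the hyperbolicity is uniform, so the stable/unstable directions vary continuously and never coincide across $I$) shows $I$ contains no spectrum, i.e. $I$ lies in a single gap.

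The main obstacle I anticipate is bookkeeping the passage from "the $2\times2$ transfer matrix over one period has $|\tfrac12\operatorname{tr}|>1$" to genuine exponential dichotomy for the difference equation on all of $\R$ — one must be careful that $\mathcal M(x)$ is $h$-periodic rather than $1$-periodic, so the relevant cocycle is the one obtained after the linear change of variables making it $1$-periodic, and that the factor relating $\det\mathcal M$ to $1$ does not destroy hyperbolicity (it does not, since a unimodular scalar multiple of a hyperbolic $\mathrm{SL}_2$ matrix is still hyperbolic with the same invariant directions). A secondary point is pinning down the constant $C$ in \eqref{cond:gap} so that it genuinely absorbs both the $O(\lambda_1)$ Fourier correction to the trace and the deviation of $|\det\mathcal M|$ from $1$ — this is where Theorem~\ref{th:2} is used, and the verification is routine but must be done uniformly in $h$ and $E\in I$.
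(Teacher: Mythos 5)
Your overall frame is right at the top and bottom: reduce to showing the Harper equation has no nonzero polynomially bounded solution, pass through the monodromy equation via Corollary~\ref{cor:exp-dec-sol}, and use a Wronskian argument. That matches the paper's route through Lemma~\ref{le:polynomially} and Proposition~\ref{pro:M1:gaps}. Your computation that $\det\mathcal M(x)\equiv 1$ (exactly, not merely up to a unimodular factor) is also correct.

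The gap is in the middle, and it is the whole substance of the theorem. You deduce from \eqref{cond:gap} the weaker bound $|L|/2 - C\lambda_1 > 1$, hence $\min_x|\tfrac12\tr\mathcal M(x)|>1$, and then assert that this means the cocycle is uniformly hyperbolic. That implication is false in general: a $1$-periodic $SL(2,\R)$-valued function with trace $>2$ at every point of the circle can generate a cocycle over an irrational rotation by $h_1$ that is \emph{not} uniformly hyperbolic, because the expanding/contracting eigendirections of $\mathcal M(x)$ may rotate with $x$ in a way that destroys any exponential dichotomy of the products $\mathcal M(x+(N-1)h_1)\cdots\mathcal M(x)$. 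Pointwise hyperbolicity is strictly weaker than uniform hyperbolicity for quasi-periodic cocycles, and nothing in your argument restores the missing cone condition or invariant splitting.

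What actually makes the proof work is exactly the information in \eqref{cond:gap} that you discard. Write $\mathcal M(x)=M_1^0+\tilde M(x)$ with $\tilde M(x)=O(\lambda_1|t|)$ and $M_1^0$ constant; note $\det M_1^0=1+|t|^2$, so $M_1^0$ is hyperbolic precisely when $(L/2)^2>1+|t|^2$. Your weaker bound $|L/2|>1+C'\lambda_1$ does \emph{not} ensure this when $|t|$ is of order $1$ ($M_1^0$ can be elliptic while every perturbed $\mathcal M(x)$ is hyperbolic, which is exactly the dangerous rotating-eigendirection regime). The hypothesis \eqref{cond:gap} is calibrated so that $Q:=\sqrt{(L/2)^2/(1+|t|^2)-1}$ satisfies $Q^2\ge C\lambda_1$: this $Q$ is (up to the factor $p=\sqrt{1+|t|^2}$) the eigenvalue separation of $M_1^0$, and the $x$-dependent perturbation, once conjugated by the eigenvector matrix $V$ of $M_1^0$, has size of order $\lambda_1/Q$. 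The constraint $\lambda_1\le|t|$ is what keeps $V$ uniformly well-conditioned so that this estimate holds; the constraint $|t|\le 1$ keeps $\det M_1^0$ bounded. One then needs a genuine perturbative dichotomy result for difference equations of the form $\phi(x+h_1)=p(D+\Delta(x))\phi(x)$ with diagonal $D=\mathrm{diag}(U^{-1},U)$ and $\|\Delta\|_\infty<|U-U^{-1}|/4$ (the paper's Proposition~\ref{le:resolvent-set:1}), which is where the exponential dichotomy is actually manufactured. That step is not a consequence of a trace inequality; it is a small-divisor-free fixed-point construction, and it is the piece your proposal is missing.
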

It is useful to compare this theorem with a well-known theorem from
the theory of the one-dimensional periodic differential  Schr\"odinger 
operators. The latter says  that the spectrum of a periodic operator is 
located  on the intervals where the absolute value 
of the trace of a monodromy matrix is less than or equal to two.

Using Theorem~\ref{th:5}, formula~\eqref {trace} and the asymptotics 
$ s $ and $ t $ described in Theorem~\ref {th:0:ne0},
one can describe a sequence of the longest gaps in the spectrum of the 
Harper operator. As its spectrum is symmetric with respect to zero, and 
as the spectra for the frequencies $h$ and $1-h$ coincide,
we consider only the spectrum located on  $\R_+$ in 
the case where $0<h<1/2$.
Let $ [\cdot] $ be the integer part. One has
\begin {theorem} \label{th:6} Let $h\in(0,1/2)$ and  $\beta\in (0,1/2)$. 
Let  $\lambda \le e ^ {-c / h} $ with a sufficiently large 
$c$. There exist points $E_k>0$, \ $1\le k\le K$, \  $K= 
[1/2h] $, such that 
\begin {equation*}
E_k = 2 \cos (\pi h k + O_H(\lambda^p)),
\end {equation*}
and, if $1\le k\le K-1$ or if $k=K$ and $[1/h]$ is odd, then the point 
$E_k$ is located inside a gap $g_k$. The length of $g_k$ satisfies the 
estimate
\begin {equation} \label{eq:gk}
| g_k | \ge 4\left(\frac{\lambda}4\right)^{k} \; \frac 
{(1+ O_H(\lambda^\beta))}
{\sin ^ 2 (\pi h) \sin ^ 2 (2 \pi h) \dots \sin ^ 2 (\pi h (k-1))}
\end {equation}
where for $ k = 1 $ the product of the sines 
has to be replaced with one.
\\
If $[1/h]$ be even, and  $\lambda^{h_1} \le e ^ {-c / h} $, then the 
point $E_K$ also is located in a gap $g_K$, and the length of $g_K$ 
satisfies \eqref{eq:gk} with  $\beta$ replaced with $p=\min\{h_1,\beta\}$. 

\end {theorem}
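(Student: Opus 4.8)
The plan is to combine the gap criterion of Theorem~\ref{th:5} with the asymptotics of $s$ and $t$ from Theorem~\ref{th:0:ne0} and the trace formula \eqref{trace} from Theorem~\ref{th:2}. First I would set $p=p(E)$ via \eqref{def:p}, so that the admissible range $h/4<\re p<1/2-h/4$, $|\im p|\le h$ corresponds to an $E$-neighbourhood of the interval $(2\cos(\pi/2-\pi h/4),\,2\cos(\pi h/4))$ on $\R_+$. On this range the asymptotics \eqref{as:st} give $|t|=\dfrac{e^{4\pi(1/2-\re p)\xi/h}\,|F_0(p)|}{2|\sin(2\pi p)|}(1+O_H(\lambda^\beta))$, and since $\xi=\frac1{2\pi}\ln\lambda<0$, $|t|$ is exponentially small when $\re p$ is bounded away from $1/2$ and grows through $1$ as $\re p$ approaches $1/2$. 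Tracking when the dimensionless quantity $e^{4\pi(1/2-\re p)\xi/h}$ equals $(\lambda/4)^{\,\text{integer}}$, I would show that the equation $|t(E)|=1$ (equivalently $4\pi(1/2-\re p)\xi/h=0$ up to corrections) is solved, for each integer $k$ with $1\le k\le K=[1/2h]$, near $\re p=hk/2$... more precisely I claim $|t|$ crosses the value relevant to the $k$-th gap at $E=2\cos(\pi h k)+\text{(small)}$; a continuity/monotonicity argument in $E$ (using the Remark after Theorem~\ref{th:1} for continuity of $s,t$) together with the explicit exponential dependence pins down $E_k=2\cos(\pi h k+O_H(\lambda^p))$, where the error absorbs the $O_H(\lambda^\beta)$ factors in \eqref{as:st} and the deviation of $F_0$ and $\sin(2\pi p)$ from their values at $p=hk/2$... wait, I should be careful: the natural crossing scale puts the $k$-th resonance at $\re p$ near $hk/2$ only if the $k$-dependence enters through powers of $\lambda^{1/h}$ versus $\lambda$; rereading \eqref{eq:gk}, the length is $\sim(\lambda/4)^k$, which is $\lambda^k$, not $\lambda^{k/h}$, so the relevant small parameter at the $k$-th gap is $e^{2\pi k\xi}=\lambda^k$ and the crossing occurs where $st/\lambda_1=e^{2\pi i p/h}(1+\dots)$ has modulus comparable to $\lambda_1=\lambda^{1/h}$; matching $|t|\sim\lambda_1$ with \eqref{as:st} gives $e^{4\pi(1/2-\re p)\xi/h}\sim\lambda^{1/h}$, i.e. $4(1/2-\re p)=1$ at leading order, but that forces $\re p\approx1/4$ independently of $k$, which is wrong, so the $k$-labelling must instead come from the zeros of $\sin(2\pi p)$ and the multivaluedness hidden in $F_0$ and in the minimal-solution construction — concretely, $F_0(p)$ and the prefactors have zeros/poles at $p=hj/2$ accumulating the factors $\sin^2(\pi hj)$ in the denominator of \eqref{eq:gk}. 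So the correct bookkeeping is: expand $L/2$ from \eqref{trace} using $|s|$ from \eqref{eq:s/a} and $t\in i\R$, getting $|L/2|\approx\sqrt{(1+|t|^2)(\lambda_1^2+|t|^2)}/|t|\cdot|\cos(\arg(is))|$, and the gap condition \eqref{cond:gap} becomes, after dividing by $1+|t|^2$, the requirement $(\lambda_1^2+|t|^2)/|t|^2\,\cos^2(\arg(is))\ge 1+C\lambda_1$ together with $\lambda_1\le|t|\le1$.

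Second, I would extract the gap length. Inside the interval $I=I_k$ around $E_k$ where \eqref{cond:gap} holds, the length of the gap is at least $|I_k|$; but to get the sharp constant $4(\lambda/4)^k/\prod\sin^2$ I would instead locate the two endpoints of the maximal interval on which $(L/2)^2\ge(1+C\lambda_1)(1+|t|^2)$ fails to be violated, i.e. solve $(L(E)/2)^2=(1+|t(E)|^2)$ approximately. Near $E_k$, $t$ is of size $\lambda_1$ times a bounded factor while $L/2$ is dominated by the $\lambda_1/|t|$-type term; writing $E=E_k+\delta$ and linearising, $t(E)$ varies to leading order through the exponential $e^{4\pi(1/2-p)\xi/h}$ whose derivative in $E$ brings a factor $\xi/h=\ln\lambda/(2\pi h)$, so the width in $E$ over which the defect $(L/2)^2-(1+|t|^2)$ stays positive is governed by how fast $\cos^2(\arg(is))$ and the ratio $|t|/\lambda_1$ move away from their resonant values. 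I expect that, after substituting the asymptotics, $(L/2)^2-(1+|t|^2)\ge$ (a positive quantity) $\times\bigl(1-(\text{quadratic in }\delta)/(\text{length scale})^2\bigr)$, and the length scale is exactly $(\lambda/4)^k(1+O_H(\lambda^\beta))/\prod_{j=1}^{k-1}\sin^2(\pi hj)$ — the product of sines entering because each factor $\sin(2\pi p)$, $\sin(2\pi p/\dots)$ in the successive denominators of \eqref{as:st} (through $F_0$ and $s$) is evaluated near a zero $p=hj/2$ and contributes $\sin^2(\pi hj)$ to the denominator. The empty-product convention for $k=1$ is automatic since then there is no such zero to the left.

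Third, the endpoint cases $k=K$: when $[1/h]$ is odd, $K=[1/2h]$ and $\re p=hK/2<1/2-h/4$ stays in the range of validity of Theorem~\ref{th:0:ne0}, so the argument above applies verbatim. When $[1/h]$ is even, $\re p=hK/2$ lands too close to $1/2$ (or $p$ near $0$ after the symmetry $E\mapsto-E$), outside the range $h/4<\re p<1/2-h/4$; there one must use the more delicate asymptotics at $p=1/2$ announced as Theorem~\ref{th:0:0}, which carry an extra small parameter $\lambda^{h_1}$ rather than $\lambda^{1/h}$, explaining the hypothesis $\lambda^{h_1}\le e^{-c/h}$ and the replacement of $\beta$ by $p=\min\{h_1,\beta\}$ in the error term. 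I would handle this case by the same scheme — gap criterion plus asymptotics plus continuity — but reading off the endpoint-region asymptotics instead of \eqref{as:st}.

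The main obstacle I anticipate is the second step: converting the qualitative inclusion "$I_k$ lies in a gap" into the quantitative lower bound \eqref{eq:gk} with the correct constant $4$ and the correct product of sines. This requires a uniform (in $k$ and in $h$) control of the error terms $O_H(\lambda^\beta)$ in \eqref{as:st} near the zeros of the various sine factors — precisely where those asymptotics are least stable — and a careful check that the constant $C$ in \eqref{cond:gap} does not eat into the main term; one must verify that $C\lambda_1\ll 1$ relative to the gap-opening quantity $\lambda_1^2/|t|^2\sim(\lambda_1/|t|)^2$, i.e. that $|t|$ stays comfortably below $1$ throughout $I_k$, which is why the hypothesis $\lambda\le e^{-c/h}$ with $c$ large is needed. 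Getting all these estimates to line up so that the stated clean bound emerges, with the sine product appearing with the right powers, is the technical heart of the proof.
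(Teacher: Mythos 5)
Your overall strategy -- Theorem~\ref{th:5} plus the asymptotics of $s,t$ plus the trace formula -- is the same as the paper's, but the proposal contains several concrete errors in the steps that actually carry the proof.

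First, the definition of the gap centers. The paper defines $E_k=2\cos(2\pi p_k)$ by the condition $\alpha(p_k)=\pi k$ where $\alpha=\arg(is)$, and uses the asymptotics $\alpha=2\pi p/h+O_H(\lambda^\beta)$ (Lemma~\ref{le:as:alpha}) to conclude $p_k=hk/2+O_H(\lambda^\beta)$. You never pin this down: you notice that the gap condition $(1+X^2)\cos^2\alpha\ge1+C\lambda_1$ (your rewriting of~\eqref{cond:gap}, correct) must involve $\cos^2\alpha$ being near $1$, but you do not identify the resonance points $\alpha\in\pi\Z$ and hence do not get an argument that produces exactly $K=[1/2h]$ gaps at $p\approx hk/2$. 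Your first attempt (looking at where $|t|$ crosses $1$) is rightly abandoned, but the replacement you sketch (``zeros of $\sin(2\pi p)$ and multivaluedness hidden in $F_0$'') is not correct either, since $\sin(2\pi p)$ has no zeros in $I_p=[h/4,1/4]$ and $F_0$ is nonvanishing there.

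Second, the origin of the product of sines. You claim each factor $\sin^2(\pi hj)$ comes from a sine factor ``evaluated near a zero $p=hj/2$''; but in the formula for $X_0(p)=e^{2p\ln\lambda/h}\cdot2\sin(2\pi p)/F_0(p)$ the sine sits in the numerator and is not near zero at $p_k=hk/2$. In the paper the product of sines comes from evaluating $F_0(hk/2)$ explicitly by iterating the functional equation $\sigma_a(z+a)=(1+e^{-iz})\sigma_a(z-a)$, giving $F_0(hk/2)=4^k\,h\prod_{\kappa=1}^{k}\sin^2(\pi h\kappa)$; this $4^k$ also supplies the $(\lambda/4)^k$ factor. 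Without this telescoped evaluation your ``length scale'' is asserted, not derived, and is unlikely to come out with the correct constant~$4$.

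Third, the case $k=K$, $[1/h]$ even. You attribute the extra hypothesis $\lambda^{h_1}\le e^{-c/h}$ to $\re p_K$ leaving the strip $h/4<\re p<1/2-h/4$ and requiring the $p\approx0$ asymptotics (Theorem~\ref{th:0:0}). That is not what happens: $p_K<1/4$ always lies in the admissible strip, and Theorem~\ref{th:0:ne0} applies. The genuine issue is in the gap criterion itself: the bound $\lambda_1/X^2(p_K)\le e^{C/h}\lambda^{h_1}$ (the paper's~\eqref{ineq:lambdaX2}) requires $\lambda^{h_1}$ small to justify the transformation~\eqref{cond:gap:1}$\to$~\eqref{cond:gap:2}. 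Theorem~\ref{th:0:0} is not used in the proof of Theorem~\ref{th:6} at all. Finally, the parenthetical claim that ``$t$ is of size $\lambda_1$'' near $E_k$ is off: one has $X=\lambda_1/|t|\sim\lambda^k e^{O(1/h)}$ there, so $|t|\sim\lambda^{1/h-k}e^{O(1/h)}$, typically much larger than $\lambda_1$.
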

\noindent
In the next paper, we will prove that the expression in the right 
hand side of~\eqref {eq:gk} is the leading term of the asymptotics of the 
length of the $k$th gap.
\\
Theorem~\ref{th:6} agree well with the results of computer 
calculations described in~\cite{GHT:89}.
\\
The $K$th gap in the case where where $[1/h]$ is even is the most 
difficult to describe. For small $h h_1$, it is located near zero that is 
a very special point. For $h\not\in\mathbb Q$, the complexity of the 
spectrum near zero  is well-known. One can find a series of new 
results  in~\cite{K:17}.

The right hand side in~\eqref{eq:gk} equals $\lambda^ke^{O(1/h)}$,  
see Corollary~\ref{cor:gk:roughly}. Therefore, if $\lambda$ 
is small, and $h$ is of the order of $\frac1{|\ln\lambda|}$, then  
$|g_K|$, the length of the gap closest to zero,  is of the order of 
$\lambda^{|\ln\lambda| +O(1)}$. So, it can be rather difficult to 
compute $|g_K|$ using standard perturbation methods. 

According to~\cite{LYZZ:17}, for the almost Mathieu 
equation with $\lambda<1$ and Diophantine frequencies,  as the gap number, 
say, $k$ tends to infinity, the gap lengths are bounded from above and 
below by expressions of the form $C(\lambda,h,\epsilon) \lambda^{(1\pm 
\epsilon)k}$, where $\epsilon$ is a fixed positive number.\footnote{I am 
grateful to Qi Zhou for attracting my attention to paper~\cite{LYZZ:17} 
and discussing its results.} And as we mentioned, in our case, one has  
$|g_k|\ge\lambda^ke^{O(1/h)}$.

It is interesting  to compare our results with the results obtained in the 
case of small $ h $  and $ \lambda = 1 $, see~\cite {F:13}. In this case, 
there is a statement similar to Theorem~\ref {th:5}. However, the 
asymptotics 
of the coefficients $ t $ and $ s $ turn out to be quite different, and, on
the most of the  interval $[-4,4]$ containing the spectrum,  $L(E)$  
oscillates with an amplitude that is exponentially large with respect to 
$h$ , whereas in our case, for most $ E \in (0,2) $, one has $L\approx 2 
\cos (p / h) $. Thus, in the case of  small $ h $,  the spectrum is 
located on a series of exponentially small intervals, and in the case of 
small $ \lambda $, we observe small gaps in the spectrum. 
\subsection{Other gaps} Since the matrix $ M_1 $ is $ 1 $-periodic,
for equation~\eqref {eq:M1}, we can also define a monodromy matrix $M_2$ 
and consider the second monodromy equation that can be obtained from 
the first one by replacing $M_1$ with $M_2$ and $h_1$ with $h_2=\{1/h_1\}$.
Continuing, we can construct an infinite sequence  of difference equations.
In the next paper, studying consequently the equations of this sequence, 
we will  describe consequently  series of shorter and shorter gaps. 
We  also obtain upper bounds for the gaps lengths. 

Note that, to prove Theorem~\ref{th:6}, we use only the asymptotics of 
the coefficients $s$ and $t$ from Theorem~\ref{th:0:ne0}, i.e., their 
asymptotics for $E$ being bounded away from $2$. 
However,  the asymptotics of $s$ and $t$ for $E$ close to $2$ are 
crucial to study the geometry of the spectrum near its edge. All we need 
to get these asymptotics is prepared when proving Theorem~\ref{th:0:ne0}, 
and they are obtained almost like the ones described in this theorem. So, 
omitting elementary details, we get them 
in section~\ref{ss:s-and-t-for-p-close-to-0}.
\subsection{The plan of the paper}
In section~\ref{sec:MM}, we give the definition of a monodromy matrix, and 
prove Theorems~\ref{th:5} and~\ref{th:6}. For this we use 
Theorems~\ref{th:1}--\ref{th:0:ne0}. The most of the remaining part of 
the paper is devoted to the proof of Theorem~\ref{th:0:ne0}.
In section~\ref{model:eq}, we construct and analyze analytic solutions to 
the model equation~\eqref{mu:eq}.  
In section~\ref{sec:int-eq}, we show that, in the upper half-plane, there 
are analytic solutions to the Harper equation that are close 
to the solutions to the model equation.
Recall that  the monodromy matrix described in Theorem~\ref{th:1} 
corresponds to a  basis of two minimal entire solutions to the Harper 
equation. In section~\ref{s:mm:str}, we recall the definition of minimal 
entire solutions and prove Theorem~\ref{th:2}.
In section~\ref{as:mm}, for sufficiently small $\lambda$, using the 
analytic solutions to the Harper equation constructed in  
section~\ref{sec:int-eq}, we construct and study  the minimal entire 
solutions, and  prove Theorem~\ref{th:0:ne0}. 
Section~\ref{ss:s-and-t-for-p-close-to-0} is devoted to the asymptotics 
of $s$ and $t$ for $p$ close to zero (i.e., for $E$ close to 2).
In Section~\ref{sec:sigma}, we describe the properties of 
$\sigma_a$ that are used in this paper.
 \section{Monodromy matrices, monodromy
   equation and spectral results}\label{sec:MM}
 Here we remind the definition of a monodromy matrix, describe relations
 between solutions to a difference equation with periodic coefficients and
 solutions to a corresponding monodromy equation, and  prove
 Theorems~\ref{th:5} and~\ref{th:6}.
 \subsection{Monodromy matrices and monodromy equation}
 \subsubsection{Definition and elementary properties of a monodromy 
matrix}\label{def:MM:matrix-eq}
 Here, following~\cite{F:13} we discuss the difference equations of the 
form
 \begin{equation}\label{eq:matrix}
  \Psi(x + h) = M (x) \Psi (x),
 \end{equation}
where $x$ is a real variable, $M:\R\to SL(2,\C)$ is a given 1-periodic 
function,
and $h \in (0,1)$ is a fixed number.
\\
Obviously, for any solution $\Psi$ to~\eqref{eq:matrix},
we have  $\det \Psi(x + h) = \det \Psi(x)$,\ $x\in\R$.
\\
We call $SL(2,\mathbb C)$-valued solutions to~\eqref{eq:matrix} 
fundamental matrix solutions.
\\
Note that, to construct a fundamental solution, it suffices to define
it arbitrarily on  the interval $0< x < h$, and then, to define its
values outside of this interval directly with the help of 
equation~\eqref{eq:matrix}.
\\
It can be shown that $\tilde \Psi \mathbb R\mapsto M_2(\C)$
is a matrix-valued solution to~\eqref{eq:matrix} if and only if it can be 
represented in the form
\begin{equation*}
  \tilde \Psi(x)= \Psi (x) \;p (x), \quad x \in\R,
\end{equation*}
where $p:\R\mapsto M_2(\C)$ is an $h$-periodic function, and $\Psi$ is a
fundamental solution.
\\
Note that this representation implies that the space of 
matrix-valued solutions to~\eqref{eq:matrix} is a module over the ring of 
$h$-periodic functions.
\\
Let $\psi_1,\ \psi_2 : \R\to \C^2$ be two vector-valued solutions 
to~\eqref{eq:matrix}. We say that they are linearly independent if 
$\det(\psi_1,\; \psi_2)$ does not vanish.
In this case,  a function $\psi: \R\to \C^2$ is a vector-valued solution 
to~\eqref{eq:matrix} if and only
if it is a linear combination of $\psi_1$ and $\psi_2 $ with $h$-periodic 
coefficients.
\\
Let $\Psi$  be a fundamental solution. As $M$ is $1$-periodic, the function
$x\mapsto \Psi(x + 1)$ is also a solution to~\eqref{eq:matrix}, and we can 
write
\begin{equation*}
\Psi(x + 1) = \Psi (x)\;p (x),\quad p(x + h) = p(x),\qquad x \in\R.
\end{equation*}
The matrix $M_1 (x) = p^t(hx)$, where ${}^t$ denotes transposition,
is called the monodromy matrix corresponding to the fundamental solution 
$\Psi$.
\\
Note that, by construction, a monodromy matrix is 1-periodic and
unimodular.
\\
In early papers, the $h$-periodic matrix $p$ was called the monodromy
matrix. It is more convenient to consider $1$-periodic monodromy matrices.
\subsubsection{Monodromy equation}
Let $M_1$ be the monodromy matrix corresponding to a fundamental solution
$\Psi$ to~\eqref{eq:matrix}. Let us consider  {\it the first monodromy 
equation}~\eqref{eq:M1}.
It appears that the behavior of solutions
to~\eqref{eq:matrix} at infinity ``copies'' the behavior of solutions 
to~\eqref{eq:M1}.
Let us formulate the precise statement.
\\
Let $M$ be a $SL(2,\C)$-valued function of real variable, and $h>0$.
Let  $k\in \Z$ and $x\in \R$. We put
\begin{equation*}
P_k(M,x,h)=
\begin{cases}
 M(x+h(k-1))\dots M(x+h)M(x),& k\ge 0,\\
 M^{-1}(x+ h k)\dots  M^{-1}(x-2h)M^{-1}(x-h),& k<0.
\end{cases}
\end{equation*}
Clearly, if $\psi\,:\,\R\to \C^2$ satisfies~\eqref{eq:matrix}, then
\begin{equation*}
  \psi(x+hk)=P_k(M,x,h)\psi(x)
\end{equation*}
One has
\begin{theorem} \cite{F-S:15}
\label{th_renormalization_formula}
Let $\Psi$ be a fundamental  solution to~\eqref{eq:matrix}, 
and let $M_1$ be the corresponding monodromy matrix.
Then, for all $N\in \Z$,
\begin{gather}
\label{eq_main_renormalization_formulae}
P_N(M,h,x)=\Psi(\{x+Nh\})\sigma_2
P_{N_1}(M_1,h_1,x_1)\sigma_2\Psi^{-1}(x),\\
\nonumber
N_1=-[\theta+N h],\qquad h_1=\left\{1/h\right\},\qquad 
x_1=\left\{x/h\right\},
\end{gather}
where  $\sigma_2$ is the Pauli matrix $\begin{pmatrix} 0&-i\\ i& 
0\end{pmatrix}$.
\end{theorem}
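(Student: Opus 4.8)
The plan is to reduce the claimed matrix identity to a term‑by‑term comparison of two products of the $h$‑periodic matrix $p$ that defines $M_1$, and then to verify that comparison by elementary arithmetic with the fractional parts $\{x/h\}$ and $\{1/h\}$.

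First I would eliminate the boundary factors. Since $\Psi$ is a fundamental, hence $SL(2,\mathbb C)$‑valued and pointwise invertible, matrix solution of~\eqref{eq:matrix}, iterating the equation columnwise gives $\Psi(x+Nh)=P_N(M,h,x)\,\Psi(x)$ for every $N\in\mathbb Z$, so $P_N(M,h,x)=\Psi(x+Nh)\,\Psi^{-1}(x)$. Thus~\eqref{eq_main_renormalization_formulae} is equivalent to
\[
\Psi(x+Nh)=\Psi(\{x+Nh\})\,\sigma_2\,P_{N_1}(M_1,h_1,x_1)\,\sigma_2 .
\]
Now I would use the two facts that produce the conjugating $\sigma_2$'s: for $A\in SL(2,\mathbb C)$ one has $\sigma_2 A\sigma_2=(A^{-1})^{t}=(A^{t})^{-1}$, and by definition $M_1(y)=p^{t}(hy)$; hence $\sigma_2 M_1^{\pm1}(y)\sigma_2=p^{\mp1}(hy)$. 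Inserting $\sigma_2^{2}=I$ between consecutive factors, $\sigma_2 P_{N_1}(M_1,h_1,x_1)\sigma_2$ becomes an ordered product of $|N_1|$ factors $p^{\mp1}\!\bigl(h(x_1+h_1 j)\bigr)$, with $j$ running over a block of consecutive integers.

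On the other side I would expand $\Psi(x+Nh)$ using the defining relation of the monodromy matrix, $\Psi(y+1)=\Psi(y)\,p(y)$: with $m=[x+Nh]$, peeling off $|m|$ unit translations writes $\Psi(x+Nh)=\Psi(\{x+Nh\})\cdot\prod_k p^{\pm1}(\{x+Nh\}+k)$, again an ordered product of $|m|$ factors. So the identity will follow once I check that $|m|=|N_1|$ and that the $k$‑th factors of the two products coincide. Because $p$ is $h$‑periodic, this is exactly the assertion that the two $k$‑th arguments differ by an element of $h\mathbb Z$; using $h\{x/h\}\equiv x$ and $h\{1/h\}\equiv 1\pmod{h\mathbb Z}$, the difference collapses to $h\cdot(\text{integer})$, and for irrational $h$ this same computation pins down $N_1=-[x+Nh]$ (so one should read the base point in the cited formula, written there as $\theta$, as $x$). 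This proves the identity for $N\ge0$; the case $N<0$ is the same with every $p$ replaced by $p^{-1}$, or one may deduce it from $P_{-k}(M,h,x)=P_k(M,h,x-kh)^{-1}$.

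Equivalently — perhaps more transparently — one can induct on $|N|$: both sides equal $I$ at $N=0$ (nontrivially so only when $x\notin[0,1)$, again by the arithmetic above), and both sides $X(N)$ satisfy $X(N+1)=M(x+Nh)X(N)$. For the right‑hand side this divides into the case where $x+Nh$ does not cross an integer — then $\{x+Nh\}$ advances by $h$, $N_1$ is unchanged, and the new factor is $\Psi(\{x+Nh\}+h)\Psi^{-1}(\{x+Nh\})=M(\{x+Nh\})=M(x+Nh)$ by $1$‑periodicity of $M$ — and the case where it does cross, in which $N_1$ drops by one, an extra factor $\sigma_2 M_1^{-1}(\cdot)\sigma_2=p(\cdot)$ appears, and $h$‑periodicity of $p$ together with the same arithmetic again collapses the product to $M(x+Nh)$. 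The one genuine obstacle is precisely this bookkeeping: one transports $x$ to $x+Nh$ in two incompatible step sizes — unit steps, for which $\Psi$ carries the simple multiplier $p$, versus the $h_1=\{1/h\}$ steps of the monodromy chain — and must see that they match up; this is the arithmetic core of monodromization (the continued‑fraction combinatorics of $1/h$), while the remaining pieces (the $\sigma_2$‑conjugation formula, $P_N=\Psi(x+Nh)\Psi^{-1}(x)$, and the periodicities of $p$ and $M$) are routine.
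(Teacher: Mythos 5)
The paper does not prove Theorem~\ref{th_renormalization_formula}: it is cited from~\cite{F-S:15} and used as a black box (only Corollary~\ref{cor:exp-dec-sol} is proved in the text). So there is no in-paper proof to compare against, and I can only assess your argument on its own terms.

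Your argument is correct and is the natural one for this identity. The three ingredients are right and fit together: the reduction $P_N(M,h,x)=\Psi(x+Nh)\Psi^{-1}(x)$, the $SL(2,\C)$ identity $\sigma_2 A\sigma_2=(A^{-1})^t$ which, combined with $M_1(y)=p^t(hy)$, yields $\sigma_2 M_1^{\pm1}(y)\sigma_2=p^{\mp1}(hy)$, and the modular bookkeeping $h\{x/h\}\equiv x$, $h\{1/h\}\equiv 1\pmod{h\mathbb Z}$ which makes the two chains of $p$-factors coincide term by term because $p$ is $h$-periodic. I also verified your reading $\theta\mapsto x$, and indeed the proof of Corollary~\ref{cor:exp-dec-sol} in the paper silently makes the same substitution ($N_1=-[x+Nh]$), so that is a typo in the quotation from~\cite{F-S:15}, not an extra parameter. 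Two small points worth tightening in a full write-up: (i) the relevant case split is on the sign of $m=[x+Nh]$ rather than on the sign of $N$, since $N_1=-m$ and the product $P_{N_1}(M_1,\cdot,\cdot)$ has $|m|$ factors of $M_1$ or $M_1^{-1}$ accordingly; (ii) in the alternative inductive argument, at an integer crossing the factor count $|N_1|$ can either grow or shrink depending on the sign of $N_1$ before the crossing, so "an extra factor appears'' should be stated as "one factor is added or cancelled,'' with the same arithmetic showing it collapses to $M(x+Nh)$ in either case. Neither affects the correctness of the argument.
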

\noindent In this paper we use 
\begin{Cor}\label{cor:exp-dec-sol} 
  In the case of  Theorem~\ref{th_renormalization_formula}, we assume that
  $\Psi\in L_{loc}^\infty(\R,SL(2,\C))$, and that $\psi_\pm^{(1)}$ are two 
  vector-valued solutions to the monodromy equation such that
  \begin{equation}\label{est:psi1}
   \|\psi_\pm^{(1)}(\pm x)\|_{\C^2}\le C_0 e^{\mp \varkappa\,x},\quad x\ge 
0,
 \end{equation}
  with some positive constants $C_0$ and $\varkappa$.
  Then there are two vector-valued solutions $\psi_\pm^{(0)}$  to 
equation~\eqref{eq:matrix}
  such that
  \begin{equation}\label{det:mon}
    \det \left(\psi_+^{(0)}(x), \psi_-^{(0)}(x)\right)=
    \det \left(\psi_+^{(1)}(\{x/h\}), \psi_-^{(1)}(\{x/h\})\right),\quad 
\forall x\in\R, 
  \end{equation}
  \begin{equation}\label{est:psi0}
    \|\psi_\pm^{(0)}(\pm x)\|_{\C^2}\le C_1 e^{\mp \varkappa\,h_1 x},\quad 
\forall x\ge 0,
  \end{equation}
  with a positive constant  $C_1$.
\end{Cor}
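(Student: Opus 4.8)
The plan is to deduce Corollary~\ref{cor:exp-dec-sol} from the renormalization formula~\eqref{eq_main_renormalization_formulae} of Theorem~\ref{th_renormalization_formula} by a direct construction. First I would fix the solutions $\psi_\pm^{(1)}$ of the monodromy equation~\eqref{eq:M1} and use them to build the sought solutions of~\eqref{eq:matrix} via the transfer-matrix identity $\psi(x+hk)=P_k(M,x,h)\psi(x)$, reading the formula on the set $x\in[0,h)$, where $\{x/h\}=x/h$ and $x_1$ in the theorem is simply $x/h$. Concretely, for $x\in[0,h)$ I would set
\begin{equation*}
\psi_\pm^{(0)}(x)=\Psi(x)\,\sigma_2\,\psi_\pm^{(1)}(x/h),
\end{equation*}
and then extend $\psi_\pm^{(0)}$ to all of $\R$ by the recursion~\eqref{eq:matrix}; by construction these are genuine solutions of~\eqref{eq:matrix}. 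The Wronskian identity~\eqref{det:mon} is then immediate on $[0,h)$ because $\det\Psi=1$ (since $\Psi\in SL(2,\C)$) and $\det\sigma_2=1$, so the two determinant factors cancel, and it propagates to all $x\in\R$ because $\det P_k(M,x,h)=1$, so $\det(\psi_+^{(0)},\psi_-^{(0)})$ is $h$-periodic and thus equals $\det(\psi_+^{(1)}(\{x/h\}),\psi_-^{(1)}(\{x/h\}))$ everywhere.

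The substantive point is the decay estimate~\eqref{est:psi0}. Here I would take the renormalization formula~\eqref{eq_main_renormalization_formulae} with $\theta=0$, write an arbitrary $x\ge0$ as $x=hk+x_0$ with $k=[x/h]\in\N$ and $x_0=\{x\}h\in[0,h)$, err, more carefully $x_0\in[0,h)$ with $x_0/h=\{x/h\}$, and apply the identity $\psi_+^{(0)}(x)=P_k(M,x_0,h)\psi_+^{(0)}(x_0)$. By the theorem, $P_k(M,x_0,h)=\Psi(\{x_0+kh\})\sigma_2 P_{k_1}(M_1,h_1,x_1)\sigma_2\Psi^{-1}(x_0)$ with $k_1=-[kh]$ and $x_1=\{x_0/h\}=x_0/h$. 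Since $P_{k_1}(M_1,h_1,\cdot)$ is exactly the transfer matrix that advances $\psi_+^{(1)}$ from argument $x_1$ to $x_1+k_1 h_1$, and $k_1=-[kh]$ is a negative integer of size roughly $kh$ (more precisely $|k_1|=[kh]$, so $|k_1|\ge kh-1$), the hypothesis~\eqref{est:psi1} gives $\|\psi_+^{(1)}(x_1+k_1h_1)\|\le C_0 e^{-\varkappa h_1|k_1|}$, and tracking $\psi_+^{(0)}(x)=\Psi(\{x\})\sigma_2\psi_+^{(1)}(x_1+k_1h_1)$ — which follows from the same formula applied to the vector solution — yields $\|\psi_+^{(0)}(x)\|\le C\,e^{-\varkappa h_1(kh-1)}\le C_1 e^{-\varkappa h_1 x}$, using $kh\ge x-h$ and absorbing the bounded factors $\|\Psi\|_{L^\infty}$ and $e^{\varkappa h_1 h}$ into $C_1$. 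The estimate for $\psi_-^{(0)}$ for $x\le0$ is symmetric: one uses negative $k$, for which $P_k$ involves inverse transfer matrices, $k_1=-[kh]$ becomes a positive integer of size $\approx|k|h$, and~\eqref{est:psi1} for the $\psi_-^{(1)}(-\,\cdot)$ branch again delivers exponential decay at rate $\varkappa h_1$.

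The main obstacle — really the only place care is needed — is the bookkeeping of the indices in Theorem~\ref{th_renormalization_formula}: matching $\{x/h\}$, $[x/h]$, the floor $[\theta+Nh]$ with $\theta=0$, and the direction of the transfer matrix $P_{N_1}(M_1,h_1,x_1)$, and checking that as $x$ ranges over a full period $[0,h)$ the argument $x_1=\{x/h\}$ ranges over $[0,1)$ so that~\eqref{est:psi1} is used on the whole line for the monodromy solutions. One must also confirm that the $L^\infty_{loc}$ hypothesis on $\Psi$ together with $1$-periodicity of $M$ gives a uniform bound $\|\Psi\|_{L^\infty(\R)}<\infty$ and likewise for $\Psi^{-1}$ (which equals $\sigma_2\Psi^t\sigma_2^{-1}$ up to sign since $\det\Psi=1$), so that the passage from $x_0\in[0,h)$ to general $x$ costs only a constant. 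Once these are pinned down, the exponent $\varkappa$ is replaced by $\varkappa h_1$ precisely because one step of the original equation at scale $h$ corresponds, after monodromization, to roughly $h$ steps of the monodromy equation at scale $h_1$, i.e. to a displacement $\approx h\cdot h_1$ per unit — no, more simply, $|k_1|\approx kh$ so the decay $e^{-\varkappa h_1|k_1|}\approx e^{-\varkappa h_1 h k}=e^{-\varkappa h_1 \cdot (h k)}\approx e^{-\varkappa h_1 x}$, which is exactly~\eqref{est:psi0}.
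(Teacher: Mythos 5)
Your construction follows the paper's route --- define the sought solutions on a fundamental interval via $\Psi\sigma_2$ and a monodromy solution, extend by the equation, and read the decay off the renormalization formula~\eqref{eq_main_renormalization_formulae} --- but there is a sign error in the pairing of indices that breaks the decay estimate. You set
$\psi_\pm^{(0)}(x)=\Psi(x)\,\sigma_2\,\psi_\pm^{(1)}(x/h)$ on $[0,h)$, whereas the paper defines
$\psi_\pm^{(0)}(x)=\Psi(x)\,\sigma_2\,\psi_\mp^{(1)}(\{x/h\})$, with the indices \emph{swapped}. This swap is not cosmetic. Following your own computation, for $x=x_0+kh$ with $x_0\in[0,h)$ and $k\to+\infty$ one arrives at
$\psi_+^{(0)}(x)=\Psi(\{x\})\,\sigma_2\,\psi_+^{(1)}(x_1+k_1 h_1)$ with $k_1=-[\,x_0+kh\,]\to-\infty$ and $x_1\in[0,1)$, so the argument $x_1+k_1 h_1$ tends to $-\infty$. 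But the hypothesis~\eqref{est:psi1} controls $\psi_+^{(1)}$ only on $\R_+$: it says nothing about $\psi_+^{(1)}$ at $-\infty$, and, if anything, one expects a generic solution of the monodromy equation to \emph{grow} there. Your line ``the hypothesis~\eqref{est:psi1} gives $\|\psi_+^{(1)}(x_1+k_1h_1)\|\le C_0 e^{-\varkappa h_1|k_1|}$'' silently applies the bound at negative arguments, where it is unavailable; if one does plug $x_1+k_1h_1<0$ formally into $e^{-\varkappa(\cdot)}$ the exponent even has the wrong sign and produces growth, not decay.

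The fix is exactly the paper's index flip: to make $\psi_+^{(0)}$ decay at $+\infty$, the renormalization sends the argument to $-\infty$ in the monodromy variable, so you must feed in the monodromy solution that decays at $-\infty$, namely $\psi_-^{(1)}$. Concretely, set $\psi_+^{(0)}(x)=\Psi(x)\sigma_2\psi_-^{(1)}(\{x/h\})$ and $\psi_-^{(0)}(x)=\Psi(x)\sigma_2\psi_+^{(1)}(\{x/h\})$; then $\psi_+^{(0)}(x)=\Psi(\{x\})\sigma_2\psi_-^{(1)}(x_1-[x]h_1)$, and since $x_1-[x]h_1\le 0$ for $x\ge 0$ the bound $\|\psi_-^{(1)}(-s)\|\le C_0 e^{-\varkappa s}$, $s\ge 0$, applies and gives~\eqref{est:psi0}. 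The Wronskian identity, the uniform $L^\infty$ bound on $\Psi$ and $\Psi^{-1}$ from $1$-periodicity of $M$ and $\det\Psi\equiv1$, and the absorption of the $O(1)$ discrepancy between $[x]h_1$ and $h_1 x$ into $C_1$ all go through as you describe once the indices are corrected.
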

\begin{proof} As $P_N(M, h, x) \Psi(x)=\Psi(x+Nh)$,
   formula~\eqref{eq_main_renormalization_formulae} implies that
  \begin{equation}\label{eq:cor:renorm}
\Psi(x+Nh)\sigma_2=\Psi(\{x+Nh\})\sigma_2
P_{N_1}(M_1,h_1,x_1).
  \end{equation}
  Let us define the  solutions $\psi_\pm^{(0)}\,:\R\to\C^2$ 
to~\eqref{eq:matrix}
  by the formulas
  \begin{equation}\label{def:psi0}
    \psi_\pm^{(0)}(x)=\Psi(x)\sigma_2\psi_\mp^{(1)}\left(\{x/h\}\right).
  \end{equation}
  As $\det\Psi\equiv 1$, relation~\eqref{det:mon} is obvious.
  Furthermore, as $x_1=\{x/h\}$, and $N_1=-[x+Nh]$,
  formulas~\eqref{def:psi0} and~\eqref{eq:cor:renorm} lead to the relation
  \begin{equation*}    
\psi_\pm^{(0)}(x+Nh)=\Psi(\{x+Nh\})\sigma_2\psi_\mp^{(1)}(x_1-[x+Nh]h_1),
    \quad N\in \Z, 
  \end{equation*}
  that can be rewritten in the form
  \begin{equation*}
    \psi_\pm^{(0)}(x)=\Psi(\{x\})\sigma_2\psi_\mp^{(1)}(x_1-[x]h_1), \quad
    x\in \R. 
  \end{equation*}
  This formula and estimates~\eqref{est:psi1} imply~\eqref{est:psi0}.
  The proof is complete.
 \end{proof} 
 \subsubsection{Monodromy matrices for difference Schr\"odinger
   equations}\label{sss:MM:diffSch}
Let $h>0$ and $v:\R\to \C$. The difference Schr\"odinger equation
 \begin{equation}\label{diff-Sch}
   \psi(x+h)+\psi(x-h)+v(x)\psi(x)=E\psi(x),\quad x\in \R,
 \end{equation}
 is equivalent to~\eqref{eq:matrix} with
\begin{equation}\label{eq:M:Shcr}
 M(z)=\begin{pmatrix} E-v(x) & -1 \\ 1 & 0\end{pmatrix}.
\end{equation}
More precisely, a function $\Psi:\R\mapsto C^2$
satisfies~\eqref{eq:matrix} with this matrix if and only if
$\Psi(x)=\begin{pmatrix}\psi(x) \\ \psi(x-h)\end{pmatrix}$, and
$\psi$ is a solution to~\eqref{diff-Sch}.
This allows to turn the observations made for~\eqref{eq:matrix}
into observations for~\eqref{diff-Sch} . 
\\
Let $\psi_1$ and $\psi_2$ be two solutions to~\eqref{eq:matrix}.
The expression
\begin{equation*}
  \{\psi_1(x),\psi_2(x)\}=\psi_1(x+h)\psi_2(x)-\psi_1(x)\psi_2(x+h),
\end{equation*}
their Wronskian,  is $h$-periodic in $x$.
\\
Assume that  the Wronskian  is constant and nonzero. Then $\psi_{1,2}$ 
form 
a basis in the 
space of solutions, and a function $\psi$ satisfies~\eqref{diff-Sch}
if and only if 
\begin{equation}\label{eq:three-solutions}
  \psi(x)=a(x)\psi_1(x)+b(x)\psi_2(x),
\end{equation}
where $a$ and $b$ are $h$-periodic coefficients.
One easily proves that
\begin{equation}
  \label{eq:periodic-coef}
  a(x)=\frac{\{\psi(x),\,\psi_2(x)\}}{\{\psi_1(x),\,\psi_2(x)\}},\quad 
  b(x)=\frac{\{\psi_1(x),\,\psi\,(x)\}}{\{\psi_1(x),\,\psi_2(x)\}}.
\end{equation} 
If $v$ is $1$-periodic,  the functions $x\to \psi_1(x+1)$ and $x\to 
\psi_2(x+1)$
are also solutions to~\eqref{diff-Sch}, and one can write
\begin {equation}
  \label{eq:MM-def} \Psi (x + 1) = M_1 (x / h) \Psi (x), \quad \Psi (x) =
  \begin {pmatrix} \psi_1 (x) \\ \psi_2 (x) \end {pmatrix}, \quad x \in \R,
\end {equation}
where $ M_1 $ is a $ 1 $-periodic $2\times2$ matrix. It is  the matrix
monodromy corresponding to $\psi_1$ and $\psi_2$.
It  coincides with a monodromy matrix for~\eqref{eq:matrix} with the
matrix~\eqref{eq:M:Shcr}. 
\subsection{Gaps in the spectrum of the Harper equation: proof 
of Theorem~\ref{th:5}}\label{sec:proof:th:5}
Let us consider equation~\eqref{eq:M1} with the matrix $M_1$ 
described in Theorem~\ref{th:1}.
Theorem~\ref{th:5}, a sufficient condition for $E$ to 
be in gap, follows from 
\begin{Pro}\label{pro:M1:gaps} In the case of Theorem~\ref{th:5},  for all 
$E\in I$,  there
  exist two vector solutions $\psi_\pm^{(1)}$ to~\eqref{eq:M1} such that
  $\det(\psi_+^{(1)}(x),\psi_-^{(1)}(x))$ is a nonzero constant,  and, for 
$x\ge 0$, \   
   $ \|\psi_\pm^{(1)}(x)\|_{\C^2}\le Ce^{\mp x\,\ln\frac{|L|}2}$.
\end{Pro}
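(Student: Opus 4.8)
The plan is to treat the monodromy equation~\eqref{eq:M1} as an $SL(2,\C)$ difference equation with step $h_1$, to single out the solutions decaying at $+\infty$ and at $-\infty$ as the ones lying in the stable and unstable subspaces of the cocycle generated by $M_1$, and to read the exponential rate off the Fourier--zero coefficient $L$ of $\tr M_1$: exactly as in the periodic case, $\ln(|L|/2)$ will come from the (effective) trace of the transfer matrix.

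First I would draw out the content of~\eqref{cond:gap}. By Theorem~\ref{th:1}, $\det M_1\equiv1$ and $\tr M_1(x)=L-2\lambda_1\cos(2\pi x)$, while~\eqref{trace} gives $|L|\le\frac2{|t|}\sqrt{(1+|t|^2)(\lambda_1^2+|t|^2)}$. Confronting this with $(|L|/2)^2\ge(1+C\lambda_1)(1+|t|^2)$ forces $|t|^2\le\lambda_1/C$, so $|t|=O(\sqrt{\lambda_1})$ is small, while $|L|$ stays in a fixed compact subinterval of $(2,\infty)$ with $|L|-2\gtrsim C\lambda_1$; in particular $|\tr M_1(x)|\ge|L|-2\lambda_1>2$ uniformly in $x\in\R$ and $E\in I$ once $C$ is large. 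Writing $M_1(x)=M_1^{(0)}+R(x)$ with $M_1^{(0)}$ the $x$--independent part, one has $\|R(x)\|=O(\sqrt{\lambda_1})$, whereas $M_1^{(0)}$ is hyperbolic with its eigenvalues separated by at least $2\sqrt{C\lambda_1(1+|t|^2)}$, which dominates $\|R(x)\|$ for $C$ large; hence the cocycle of the matrices $M_1(x)$ is uniformly hyperbolic.

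Next I would build $\psi_\pm^{(1)}$. Fix $x_1\in[0,h_1)$ and set $P_n=M_1(x_1+(n-1)h_1)\cdots M_1(x_1)$. By uniform hyperbolicity the most contracted direction of $P_n$ converges, as $n\to\infty$, to a line $\ell_+(x_1)$, which is $O(1/C)$--close to the contracting eigendirection of $M_1^{(0)}$ and continuous in $(x_1,E)$; similarly $\ell_-(x_1)$ is obtained from $P_n^{-1}$. Let $\psi_+^{(1)}$ (resp.\ $\psi_-^{(1)}$) be the solution of~\eqref{eq:M1} whose value at $x_1$ lies in $\ell_+(x_1)$ (resp.\ $\ell_-(x_1)$), with the remaining scalar normalisation chosen on $[0,h_1)$ so that $\|\psi_\pm^{(1)}\|\le1$ there. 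Since $\det M_1\equiv1$, $\det(\psi_+^{(1)}(x),\psi_-^{(1)}(x))$ is $h_1$--periodic in $x$; it is nonzero because $\ell_+\ne\ell_-$, and by a suitable choice of normalisations on $[0,h_1)$ it can be made a nonzero constant.

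Finally, the exponential estimates. For $x\ge0$ write $x=x_1+nh_1$ with $x_1\in[0,h_1)$ and $n=[x/h_1]$; then $\psi_+^{(1)}(x)=P_n\psi_+^{(1)}(x_1)$, and since $\psi_+^{(1)}(x_1)$ lies in the limit of the most contracted directions of $P_n$, $\|\psi_+^{(1)}(x)\|$ is governed by the smaller eigenvalue of $P_n$, that is, by $2/|\tr P_n|$ up to an error exponentially small in $n$. The crux is then the lower bound $|\tr P_n|\ge c\,(|L|/2)^{nh_1}$: to leading order $\tr P_n$ equals $2T_n(L/2)$, with $T_n$ the $n$--th Chebyshev polynomial, and this grows like $(|L|/2)^n\gg(|L|/2)^{nh_1}$, while the contributions of $R$ are of lower order because the phases $e^{\pm2\pi i(x_1+jh_1)}$ sum over $j$ to a bounded amount. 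With $nh_1\le x$ this gives $\|\psi_+^{(1)}(x)\|\le Ce^{-x\ln(|L|/2)}$, and the bound for $\psi_-^{(1)}$ is obtained in the same way from $P_n^{-1}$. I expect the real difficulty to lie exactly in this last estimate, and in making it uniform in $h$: the error in $\tr P_n$ produced by $R$ involves $\|R\|=O(\sqrt{\lambda_1})$ together with a geometric sum of oscillating phases that deteriorates when $h_1$ is close to $0$ or $1$, and it must still be outweighed by the margin $|L|-2\gtrsim C\lambda_1$ --- this is where $C$ in~\eqref{cond:gap} has to be taken large and where the interplay of $|t|^2\le\lambda_1/C$ with $|L|-2\gtrsim C\lambda_1$ enters. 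The construction of $\ell_\pm$ and the Wronskian bookkeeping are routine by comparison.
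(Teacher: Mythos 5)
Your overall plan — read hyperbolicity off the constant part $M_1^0$ of the monodromy matrix, split into stable and unstable directions, and extract the decay rate from $L$ — is the same strategy the paper uses, and your preliminary deductions are correct: combining~\eqref{trace} with~\eqref{cond:gap} does force $|t|^2\le\lambda_1/C$, and the trace $\tr M_1(x)=L-2\lambda_1\cos(2\pi x)$ stays strictly larger than $2$ in modulus. However, the decisive quantitative step is not actually carried out, and the sketch you give of it would not work as stated. The identity $\tr P_n=2T_n(L/2)$ holds only for powers of a single $SL(2,\C)$ matrix; $P_n$ is a product of \emph{varying} matrices $M_1^0+R(x_1+jh_1)$, and the perturbation $R$ enters multiplicatively, not additively. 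If you expand the product, the first-order terms are $\sum_j(M_1^0)^{n-1-j}R(x_1+jh_1)(M_1^0)^j$, whose contribution to the trace is of order $n\,\|R\|\,|\nu_+|^{n-1}$ and hence of relative size $n\|R\|/|\nu_+|$, which is unbounded in $n$: the ``oscillating phases sum to a bounded amount'' argument does not touch the growing prefactor $(M_1^0)^{n-1-j}$ that sandwiches $R$, so naive perturbation of the trace fails. The situation is genuinely delicate because $\|R\|=O(\sqrt{\lambda_1})$ is \emph{comparable} to the hyperbolicity margin $|\nu_+/\nu_-|-1=O(\sqrt{\lambda_1})$; one must exploit the exact algebraic size of the conjugated perturbation, not just an order-of-magnitude count.

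The paper does exactly the bookkeeping your sketch defers. It computes the eigenvalues $\nu_\pm=p(q\cos\alpha\pm Q)$ and eigenvectors $v_\pm$ of $M_1^0$, conjugates the monodromy equation to
$\phi(x+h_1)=p(D+\Delta(x))\phi(x)$ with $D=\operatorname{diag}(U^{-1},U)$, proves that the second components of $v_\pm$ are uniformly bounded so that $\max_{i,j}|\Delta_{ij}|\le C\lambda_1/Q$, and then invokes the fixed-point result Proposition~\ref{le:resolvent-set:1} (borrowed from~\cite{F-K:05b}), whose hypotheses $|U+U^{-1}|>2$ and $|U-U^{-1}|>4\max|\Delta_{ij}|$ reduce exactly to~\eqref{cond:gap} (this is the computation leading to~\eqref{eq:phi:1}); finally it pulls back via $\psi^{(1)}_\pm=V\phi_\pm$ and uses $p(U+U^{-1})/2=L/2$. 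Your proposal identifies all the right ingredients but, at the place you yourself flag as ``the real difficulty,'' it substitutes a Chebyshev heuristic that is only valid for a constant cocycle; without an argument of the contraction/fixed-point type (or an explicit estimate of $\Delta$ after diagonalisation), the claimed bound $\|\psi^{(1)}_+(x)\|\le Ce^{-x\ln(|L|/2)}$ with a uniform constant is not established.
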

\noindent This proposition implies 
\begin{lemma}\label{le:polynomially}
In the case of Theorem~\ref{th:5}, for any  $E\in I$, the only 
polynomially bounded solution to the 
Harper equation~\eqref{eq:harper} is zero.
\end{lemma}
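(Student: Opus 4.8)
\textbf{Proof plan for Lemma~\ref{le:polynomially}.}
The plan is to transfer information from the monodromy equation~\eqref{eq:M1} back to the Harper equation~\eqref{eq:harper} via Corollary~\ref{cor:exp-dec-sol}, and then to argue that a polynomially bounded solution of the Harper equation would have to lie in the span of two solutions that decay exponentially in \emph{opposite} directions, forcing it to be zero. First I would invoke Proposition~\ref{pro:M1:gaps} to obtain the two solutions $\psi_\pm^{(1)}$ to the monodromy equation~\eqref{eq:M1}, with $\det(\psi_+^{(1)},\psi_-^{(1)})$ a nonzero constant and $\|\psi_\pm^{(1)}(x)\|\le Ce^{\mp x\ln(|L|/2)}$ for $x\ge 0$; note that the gap condition~\eqref{cond:gap} guarantees $\ln(|L|/2)>0$, so this is genuine exponential decay. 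Since the matrix $M_1$ of Theorem~\ref{th:1} is $1$-periodic and, away from the poles of $s,t$, locally bounded, the underlying fundamental solution $\Psi$ of the Harper equation (in its matrix form~\eqref{eq:matrix} with~\eqref{eq:M:Shcr}) is in $L^\infty_{loc}(\R,SL(2,\C))$, so Corollary~\ref{cor:exp-dec-sol} applies with $\varkappa=\ln(|L|/2)$ and $h_1=\{1/h\}$. It yields two vector solutions $\psi_\pm^{(0)}$ of the Harper equation (in matrix form) with $\det(\psi_+^{(0)}(x),\psi_-^{(0)}(x))$ a nonzero constant (by~\eqref{det:mon} and the fact that the Wronskian of $\psi_\pm^{(1)}$ is constant and nonzero) and with $\|\psi_\pm^{(0)}(\pm x)\|\le C_1 e^{\mp\varkappa h_1 x}$ for $x\ge 0$.

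Next I would translate this into the scalar picture. Via the correspondence $\Psi(x)=\binom{\psi(x)}{\psi(x-h)}$ of section~\ref{sss:MM:diffSch}, the vector solutions $\psi_\pm^{(0)}$ correspond to scalar solutions $\phi_\pm$ of the Harper equation~\eqref{eq:harper}, and the componentwise bounds give $|\phi_+(x)|\le C e^{-\varkappa h_1 x}$ as $x\to+\infty$ and $|\phi_-(x)|\le C e^{\varkappa h_1 x}$ as $x\to-\infty$ (on the lattices $x_0+h\Z$; since the estimates in Corollary~\ref{cor:exp-dec-sol} hold for all real $x$, one gets decay along every such lattice). Because $\det(\psi_+^{(0)},\psi_-^{(0)})$ is a nonzero constant, $\phi_+$ and $\phi_-$ are linearly independent over the $h$-periodic functions, hence form a basis of the solution space of~\eqref{eq:harper}. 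So any solution $\psi$ can be written, using~\eqref{eq:three-solutions}--\eqref{eq:periodic-coef}, as $\psi(x)=a(x)\phi_+(x)+b(x)\phi_-(x)$ with $a,b$ $h$-periodic; the formulas~\eqref{eq:periodic-coef} express $a,b$ as ratios of Wronskians $\{\psi,\phi_-\}/\{\phi_+,\phi_-\}$ and $\{\phi_+,\psi\}/\{\phi_+,\phi_-\}$.

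Finally, suppose $\psi$ is polynomially bounded on $\R$. Then along any lattice $x_0+h\Z$, as $x\to+\infty$ the Wronskian $\{\psi(x),\phi_-(x)\}=\psi(x+h)\phi_-(x)-\psi(x)\phi_-(x+h)$ is bounded by (polynomial)$\times e^{-\varkappa h_1 x}\to 0$, while the denominator $\{\phi_+,\phi_-\}$ is a nonzero constant; since $a$ is $h$-periodic and equals this ratio, $a\equiv 0$. Symmetrically, letting $x\to-\infty$ and using the decay of $\phi_+$ there shows $b\equiv 0$. Hence $\psi\equiv 0$. The main obstacle, or rather the point requiring care, is the bookkeeping between the four objects — scalar vs.\ vector solutions, and decay stated ``for $x\ge 0$'' on one lattice vs.\ decay needed along all lattices $x_0+h\Z$ — together with checking that $\Psi\in L^\infty_{loc}$ so that Corollary~\ref{cor:exp-dec-sol} is legitimately applicable; once those are in place the vanishing of the periodic coefficients $a$ and $b$ is immediate from the competition between polynomial growth and exponential decay.
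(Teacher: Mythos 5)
Your proposal follows essentially the same route as the paper: build $\psi_\pm^{(1)}$ via Proposition~\ref{pro:M1:gaps}, pull them back to solutions $\psi_\pm^{(0)}$ of the Harper equation via Corollary~\ref{cor:exp-dec-sol} (checking $\Psi\in L^\infty_{\rm loc}$ as the paper notes parenthetically), take first components to get scalar solutions with constant nonzero Wronskian, and then kill the $h$-periodic coefficients $a,b$ by showing the corresponding Wronskians vanish. The only slip is in the last paragraph: you have the directions of decay reversed. With the conventions of~\eqref{est:psi0}, $\phi_+$ (coming from $\psi_+^{(0)}$) decays as $x\to+\infty$, while $\phi_-$ decays as $x\to-\infty$; so to conclude $a\equiv 0$ one should let $x\to-\infty$ in $\{\psi(x),\phi_-(x)\}$ (where $\phi_-$ decays and $\psi$ grows at most polynomially), and to conclude $b\equiv 0$ one lets $x\to+\infty$ in $\{\phi_+(x),\psi(x)\}$. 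This is exactly how the paper phrases it ($\{\psi(x),\psi_\pm(x)\}\to 0$ as $x\to\pm\infty$); with that sign swap corrected your argument matches the paper's.
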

\noindent
First, using these proposition and lemma, we prove Theorem~\ref{th:5}.
\\
Assume that the Harper operator has some spectrum on $I$. As it is a 
direct integral of the almost Mathieu operators with respect to $\theta$, 
then,  for some $\theta$,  the almost Mathieu operator, has a nontrivial 
spectrum on $I$, and on $I$ almost everywhere  with respect to the 
spectral 
measure,
the almost Mathieu equation
\begin {equation*}
   f_ {k+1} + f_ {k-1} +2 \lambda \cos (2 \pi (\theta + hk))f_k=Ef_k, 
\quad k \in \Z.
\end {equation*}
has a polynomially bounded solution $f$  (see section 2.4 in~\cite{CFKS}).
One defines a solution $\psi$ to the Harper equation so that 
$\psi(x)=f_k(\theta)$ if $x=\theta+kh$ with $k\in\mathbb Z$,  and 
$\psi(x)=0$ otherwise. The $\psi$ is a non-trivial polynomially 
bounded solution to the Harper equation. This contradicts   
Lemma~\ref{le:polynomially}. 
\\
Now, let us prove
Lemma~\ref{le:polynomially} and Proposition~\ref{pro:M1:gaps}.
\\
{\it Proof of Lemma~\ref{le:polynomially}}. \  Let us assume that, for an 
$E\in I$, there is a nontrivial polynomially bounded solution $\psi$ 
to~\eqref{eq:harper}.
For this $E$ we construct the solutions  to the monodromy equation 
described in 
Proposition~\ref{pro:M1:gaps}. Then, in terms of these solutions, we 
construct 
the solutions $\psi_\pm^{(0)}$ to equation~\eqref{eq:matrix} with 
matrix~\eqref{eq:M:Shcr} as described in Corollary~\ref{cor:exp-dec-sol}
(this is possible as the fundamental solution used to define the monodromy 
matrix from Theorem~\ref{th:1} is entire in $x$).
\\
Let $\psi_1$ be the first entry of $\psi_+^{(0)}$, and $\psi_2$ be the one 
of 
$\psi_-^{(0)}$.
One has 
$$\{\psi_1(x),\,\psi_2(x)\}=\det(\psi_+^{(0)}(x),\,\psi_-^{(0)}(x))=\det 
(\psi_+^{(1)}(x),\,\psi_-^{(1)}(x)),$$ 
where we used Corollary~\ref{cor:exp-dec-sol}. Thus, by 
Proposition~\ref{pro:M1:gaps}, $\{\psi_1(x),\,\psi_2(x)\}$ is a nonzero 
constant. Therefore, one has
\eqref{eq:three-solutions}--\eqref{eq:periodic-coef}. Now, it suffices to 
show that the
Wronskians  $\{\psi(x),\,\psi_{j}(x)\}$, \ $j=1,2$,  equal zero.
But this is obvious, as these Wronskians are periodic and, on the other 
hand,
$\{\psi(x),\,\psi_\pm(x)\}\to 0$ as $x\to\pm \infty$ since $\psi$ is 
polynomially bounded, and
$\psi_\pm$ are exponentially decreasing as $x\to\pm \infty$. The proof of 
Lemma~\ref{le:polynomially}  is complete. \qed

\medskip

\noindent Now, let us prove Proposition~\ref{pro:M1:gaps}.
\begin{proof} Below we assume that $E\in I$, and that
conditions~\eqref{cond:gap} are satisfied. 
\\
In view of Theorem~\ref{th:1}, we can represent  the monodromy matrix in 
the form
\begin{equation}
  \label{eq:MMM}
  M_1(x)=M_1^0+\tilde M(x), \quad \tilde M(x)=O(t), \qquad
  M_1^0=\begin{pmatrix}
    \frac{\lambda_1}{st}(1-s^2-t^2) & s \\ -s & \frac{st}{\lambda_1}
    \end{pmatrix}.
\end{equation}
The plan of the proof is the following.
First, we transform the monodromy equation with a matrix $M_1$ of 
the form~\eqref{eq:MMM} to the equation
\begin{equation}
  \label{eq:phi}
  \phi(x+h)=p \left(D+\Delta(x)\right)\,\phi(x), \quad
  D=\begin{pmatrix} 1/U & 0 \\ 0 &
    U\end{pmatrix},\qquad x\in\R,
  \end{equation}
where $p$ and $U$ are parameters, and $\Delta$ 
is a ``sufficiently   small'' matrix.  Then, we construct two   solutions 
to~\eqref{eq:phi}   by means of 
\begin{Pro}
  \label{le:resolvent-set:1} Let us consider equation~\eqref{eq:phi}
  with parameters  $h>0$, $p\ge 1$ and $U\in \mathbb R$, and a function
  $p\left(D+\Delta\right) \in L^\infty(\R,SL(2,\C))$. 
  Let 
  \begin{equation}
    \label{f-d-e-on-R1}
   |U+U^{-1}|> 2, \quad\text{and}\quad |U-U^{-1}|> 4m, \qquad 
   m=\max_{1\leq i,j\leq 2}\sup_{x\in\R}|\Delta_{ij}(x)|.
  \end{equation}
  There exist $\phi_\pm \in L^{\infty}_{\rm loc} (\R,\C^2)$,  
vector-valued solutions to~\eqref{eq:phi}, such that
  \begin{equation}
   \label{est:phi}
    \|\phi_\pm(\pm x)\|_{\C^2}\le Ce^{ \mp \frac{x}h\ln\frac{p|U+U^{-1}|}2}
    \ \  \forall x\ge 0,\qquad \inf_{x\in 
\R}|\det(\phi_+(x),\phi_-(x))|>0.
  \end{equation}
\end{Pro}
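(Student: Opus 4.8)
The plan is to show that the hypotheses make equation~\eqref{eq:phi} hyperbolic — i.e.\ that it admits an exponential dichotomy — and to take for $\phi_\pm$ the solutions spanning the two dichotomy subspaces. I would write $A(x)=p\bigl(D+\Delta(x)\bigr)$, so that $A\in L^\infty(\R,SL(2,\C))$, $\det A(x)\equiv 1$, and $A^{-1}(x)=\left(\begin{smallmatrix}A_{22}(x)&-A_{12}(x)\\-A_{21}(x)&A_{11}(x)\end{smallmatrix}\right)$ is again bounded, with the same bound $m$ on its off-diagonal entries and with diagonal entries $pU+p\Delta_{22}$ and $p/U+p\Delta_{11}$. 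Since $U\in\R$ and $|U+U^{-1}|>2$, one has $|U|\ne1$; conjugating~\eqref{eq:phi} by the permutation $\left(\begin{smallmatrix}0&1\\1&0\end{smallmatrix}\right)$ if necessary — which preserves all hypotheses and only changes the sign of the determinant of a pair of solutions — I may assume $|U|>1$, so that $|U\pm U^{-1}|=|U|\pm|U|^{-1}$. Put $r:=\tfrac12 p|U+U^{-1}|>1$.

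First I would establish cone estimates. Let $\mathcal C_u=\{v\in\C^2:|v_1|\le|v_2|\}$ and $\mathcal C_s=\{v\in\C^2:|v_2|\le|v_1|\}$, and for $\delta\in(0,1)$ let $\mathcal C_u^\delta=\{|v_1|\le(1-\delta)|v_2|\}$, $\mathcal C_s^\delta=\{|v_2|\le(1-\delta)|v_1|\}$. Estimating separately the numerator and the denominator of the ratio $(A(x)v)_1/(A(x)v)_2$ for $v\in\mathcal C_u$, using $|\Delta_{ij}|\le m$ and $|U|-|U|^{-1}=|U-U^{-1}|>4m$, one checks that for some $\delta>0$ depending only on $U$ and $m$, and for every $x$,
\begin{equation*}
A(x)\bigl(\mathcal C_u\setminus\{0\}\bigr)\subseteq\mathcal C_u^\delta,\qquad \|A(x)v\|_\infty\ge r\|v\|_\infty\quad(v\in\mathcal C_u),
\end{equation*}
and, by the same computation applied to $A^{-1}(x)$ (whose diagonal structure is that of $A(x)$ with the two diagonal positions interchanged),
\begin{equation*}
A^{-1}(x)\bigl(\mathcal C_s\setminus\{0\}\bigr)\subseteq\mathcal C_s^\delta,\qquad \|A^{-1}(x)v\|_\infty\ge r\|v\|_\infty\quad(v\in\mathcal C_s),
\end{equation*}
$\|\cdot\|_\infty$ being the maximum norm. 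It is precisely the constant $4$ in the hypothesis $|U-U^{-1}|>4m$ that lets these hold simultaneously with the largest cones $\mathcal C_u,\mathcal C_s$ and with the sharp expansion rate $r$ that appears in~\eqref{est:phi}.

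Then I would construct the two solutions. Fix $x_0\in[0,h)$ and look at the recursion $\psi_{n+1}=A(x_0+nh)\psi_n$ on $\Z$. In the chart $w=v_1/v_2$ the map $v\mapsto A(x)v$ acts on directions by a Möbius transformation carrying $\{|w|\le1\}$ into $\{|w|\le1-\delta\}$, so the nested compact sets of directions $K_N:=\bigl(A(x_0-h)\cdots A(x_0-Nh)\bigr)\,\mathbb P(\mathcal C_u)$ decrease to a nonempty limit — a single point, by Schwarz--Pick, depending measurably on $x_0$. Choosing in it a representative $\phi_-(x_0)$ and propagating by the recursion, I get a solution with $\phi_-(x_0+nh)\in\mathcal C_u^\delta$ for all $n\in\Z$; the expansion estimate then gives $\|\phi_-(x_0-kh)\|_\infty\le r^{-k}\|\phi_-(x_0)\|_\infty$ for $k\ge0$. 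Carrying this out for every $x_0\in[0,h)$ and fixing the (free) values of $\phi_-$ on $[0,h)$ to be bounded yields $\phi_-\in L^\infty_{\mathrm{loc}}(\R,\C^2)$ with $\|\phi_-(-x)\|_{\C^2}\le C e^{-\frac xh\ln r}$ for $x\ge0$. The solution $\phi_+$ is produced identically from $A^{-1}$ and the cones $\mathcal C_s$: it satisfies $\phi_+(x)\in\mathcal C_s^\delta$ for all $x$ and $\|\phi_+(x)\|_{\C^2}\le Ce^{-\frac xh\ln r}$ for $x\ge0$ (these decay bounds are stronger than~\eqref{est:phi}, since $r>1$). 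Local boundedness and measurability of $\phi_\pm$ are clear because on each $[nh,(n+1)h)$ the solution is an $|n|$-fold product of matrices bounded in $L^\infty$ applied to the data on $[0,h)$.

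Finally, the determinant bound follows because $\det A(x)\equiv1$ makes $x\mapsto\det(\phi_+(x),\phi_-(x))$ $h$-periodic, so it suffices to bound it below on $[0,h)$; there $\phi_+\in\mathcal C_s^\delta$, $\phi_-\in\mathcal C_u^\delta$, and since a vector lying in both $\mathcal C_s^\delta$ and $\mathcal C_u^\delta$ would satisfy $|v_1|\le(1-\delta)^2|v_1|$ and hence vanish, one gets $|\det(\phi_+(x),\phi_-(x))|\ge\bigl(1-(1-\delta)^2\bigr)\|\phi_+(x)\|_\infty\|\phi_-(x)\|_\infty$, bounded away from $0$ once the normalizations on $[0,h)$ are fixed. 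The hard part will be the cone estimates of the first step: the "diagonal" multipliers $p/U,pU$ of $D$ are \emph{not} the ones governing the dichotomy — the constraint $p(D+\Delta)\in SL(2,\C)$ forces $\det(D+\Delta)=p^{-2}\ne1$ — and one must show that $\mathcal C_u$ is genuinely expanded at the trace-dictated rate $\tfrac12 p|U+U^{-1}|$, which is exactly where the constant $4$ is consumed.
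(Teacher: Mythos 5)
Your argument is correct, and in fact proves a bit more than~\eqref{est:phi} asserts (it gives two-sided exponential \emph{decay} for both $\phi_\pm$, not just a one-sided growth bound for $\phi_-$, which is what is actually used downstream in Lemma~\ref{le:polynomially}). The cone inequalities hold exactly as you say: after the $\sigma_1$-conjugation that lets you assume $|U|>1$ (so that $|U\pm U^{-1}|=|U|\pm|U|^{-1}$ for real $U$), for $v\in\mathcal C_u$ (hence $\|v\|_\infty=|v_2|$) one gets $|(Av)_2|\ge p(|U|-2m)|v_2|$ and $|(Av)_1|\le p(1/|U|+2m)|v_2|$; the hypothesis $|U-U^{-1}|>4m$ supplies both the strict cone invariance $1/|U|+2m<|U|-2m$ and the expansion rate $p(|U|-2m)>\tfrac12 p(|U|+1/|U|)=r$, and, since $\det A\equiv1$ forces $(A^{-1})_{11}=A_{22}=p(U+\Delta_{22})$, $(A^{-1})_{22}=A_{11}=p/U+p\Delta_{11}$, the identical computation yields invariance and $r$-expansion of $\mathcal C_s$ under $A^{-1}$. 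The Schwarz--Pick shrinkage of the nested compacts, the $h$-periodicity of $\det(\phi_+,\phi_-)$, the lower bound $1-(1-\delta)^2$ coming from transversality of the two closed $\delta$-cones, and $L^\infty_{\mathrm{loc}}$ via boundedness of the finite matrix products all go through. As for comparison: the present paper gives no proof of this proposition at all, saying only that it ``repeats, mutatis mutandis, the proof of Proposition 4.1 from~\cite{F-K:05b}'' where $p=1$. The argument there is of Perron type: one peels off the diagonal part of the transfer matrix and solves a contraction for the subordinate component in a Banach space of bounded two-sided sequences. Your cone/graph-transform (Hadamard) construction is therefore a genuinely different route to the same dichotomy. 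What it buys is transparency: the role of the constant $4$ in~\eqref{f-d-e-on-R1}, the emergence of the trace quantity $\tfrac12p|U+U^{-1}|$ as the decay rate, and the fact that the extra factor $p\ge1$ is harmless are all visible directly from the cone estimates, which is precisely the content of the ``mutatis mutandis'' the paper leaves to the reader.
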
 
\noindent
Mutatis mutandis, the proof of Proposition~\ref{le:resolvent-set:1} 
repeats the proof  of Proposition 4.1 from~\cite{F-K:05b} where we have  
considered the case of $p=1$. 
\\ 
The $\det(\phi_+,\phi_-)$ being $h$-periodic, the function
$x\mapsto \phi_+(x)/\det(\phi_+(x),\phi_-(x))$ satisfies~\eqref{eq:phi}.
We keep for this new function the old notation $\phi_+$. It
belongs to $L^{\infty}_{\rm loc} (\R,\C^2)$, satisfies 
estimate~\eqref{est:phi}, and we have  $\det(\phi_+(x),\phi_-(x))=1$. 
\\
To complete the proof of Proposition~\ref{pro:M1:gaps}, we return 
from~\eqref{eq:phi} to the monodromy equation constructing 
$\psi_\pm^{(1)}$ in terms of $\phi_\pm$.

Let us transform the monodromy equation to the form~\eqref{eq:phi}.
Therefore, we compute the eigenvalues and eigenvectors of $M_0$.
In view of Theorem~\ref{th:2}, one has 
\begin{equation*}
  t=i\tau,\quad \tau\in\R,\qquad
  s=-i\lambda_1\sqrt{\frac{1+\tau^2}{\tau^2+\lambda_1^2} }\;e^{i\alpha},
  \quad \alpha\in \R,
\end{equation*}
  Let
  \begin{equation*}
    p=\sqrt{1+\tau^2},  \quad q=\frac1\tau\sqrt{\tau^2+\lambda_1^2},\quad
    Q=\sqrt{q^2\cos^2\alpha-1}.
  \end{equation*}
Then
  \begin{equation*}
    \tr M_1^0=L=2pq  \cos\alpha,\qquad \det M_1^0=p^2.
  \end{equation*}
  The eigenvalues $\nu_\pm$ and the corresponding
  eigenvectors  $v_\pm$ are given by the formulae
  \begin{gather*}
      \nu_\pm= p\left(q\cos\alpha \pm Q\right),\quad 
      v_\pm=\begin{pmatrix}
        1 \\ -\frac{p}s \left( q\cos\alpha-\frac1qe^{i\alpha}\mp Q\right)
    \end{pmatrix}.
  \end{gather*}
  Let $V=(v_+\;v_-)$. We represent
  a vector-valued solution to~\eqref{eq:M1} in the form
  \begin{equation}\label{eq:psi-phi}
    \psi(x)= V\phi(x).
  \end{equation}
  Then $\phi$ satisfies equation~\eqref{eq:phi}  with
  \begin{equation}\label{eq:theta-Delta}
    U=q\cos\alpha +Q \quad \text{and}
    \quad \Delta(x)=\frac1p\,V^{-1} \tilde M(x)V.
  \end{equation}
  Let us determine the conditions under which 
  $U$ and $\Delta$ from \eqref{eq:theta-Delta} satisfy the assumptions of
  Proposition~\ref{le:resolvent-set:1}.
  Let
  \begin{equation}
    \label{eq:cond1}
    q^2\cos^2\alpha>1.
  \end{equation}
  We can and do assume that  $Q>0$ and consider the case where 
  $q\cos\alpha>1$. The complementary case is analyzed similarly.
  \\
  We have $U>1$, and the first condition in~\eqref{f-d-e-on-R1} is 
  satisfied. 
  \\
  Now, let us estimate the entries of $\Delta$. One has 
  \begin{equation*}
      |\sin\alpha|=\sqrt{1-\cos^2\alpha}\le \sqrt{1-\frac1{q^2}}\le
  \left|\frac{\lambda_1}\tau\right|,\quad
    Q=\sqrt{\left(1+\frac{\lambda_1^2}{\tau^2}\right)\cos^2\alpha-1}
    \le\left|\frac{\lambda_1}\tau\right|.
\end{equation*}
Therefore and as $\lambda_1\le |\tau|$,
the second entries of $v_\pm$ are uniformly bounded :
\begin{equation*}
 \left|\frac{p}s \left( q\cos\alpha-\frac1qe^{i\alpha}\mp Q\right)\right|
 =\left|\frac{p}{qs}\right|\left|\frac{\lambda_1^2}{\tau^2}\cos\alpha-
   i\sin\alpha\mp qQ\right|\le C\left|\frac{p\lambda_1}{qs\tau}\right|=C.
\end{equation*}
As $\det V=-2pQ/s$, this estimate implies that, for all $i,j\in\{1,2\}$,
\begin{equation*}
  \max_{x\in\R} |\Delta_{ij}(x)|\le C\left|\frac{s\tau}{p^2Q}\right|=
  C\left|\frac{\lambda_1}{pqQ}\right|< C\frac{\lambda_1}{Q}.
\end{equation*}
Therefore, the second condition from~\eqref{f-d-e-on-R1} is satisfied if
\begin{equation}\label{eq:phi:1}
  \frac{U-U^{-1}}2=Q\ge C\frac{\lambda_1}{Q}
  \quad \Longleftrightarrow\quad
  q^2\cos^2\alpha\ge 1+C\lambda_1.
\end{equation}
Clearly, this condition implies~\eqref{eq:cond1}
and, as $L=2pq\cos\alpha$, it  is equivalent to~\eqref{cond:gap}.
\\
Let assume that~\eqref{eq:phi:1} is satisfied.
Then, using Proposition~\ref{le:resolvent-set:1} and
formula~\eqref{eq:psi-phi}, we construct two vector-valued solutions
$\psi_\pm=V\phi_\pm$   to the monodromy equation.
As $p(U^{-1}+U)/2=pq\cos\alpha=L/2$, estimates~\eqref{est:phi} 
imply  the estimates for $\psi_\pm$ from Proposition~\ref{pro:M1:gaps}.
Furthermore, one has
$$\det(\psi_+(x),\psi_-(x))=
\det V\,\det(\phi_+(x),\phi_-(x))=\det V\ne 0.$$
The proof is complete.
\end{proof}
\subsection{Gaps in the spectrum of the Harper equation: proof 
of Theorem~\ref{th:6}}
Theorem~\ref{th:6} describing a sequence of gaps in the spectrum of the 
Harper operator follows from Theorem~\ref{th:5}, a sufficient condition 
for 
$E$ to be in a gap written in terms of the coefficients $s$ and $t$ of a 
monodromy 
matrix,  
and Theorem~\ref{th:0:ne0} describing the asymptotics of the monodromy 
matrix 
as $\lambda\to0$.
\\
Below we assume that 
\begin{equation*}
 p\in I_p=[h/4,\; 1/4].
\end{equation*}
where  $p$ is the parameter  related to $E$ by~\eqref{def:p}. 
In view of~\eqref{trace}, 
condition~\eqref{cond:gap} can be written in the form
\begin{equation}\label{cond:gap:1}
 (1+X^2)\cos^2\alpha\ge 1+C_0\lambda_1, \quad  
X=\lambda_1/\tau=i\lambda_1/t,
\end{equation}
where  $\alpha=\arg{is}$, \ $s$ and $t$ being the coefficients from 
Theorem~\ref{th:1},  $\lambda_1=\lambda^{1/h}$, and $C_0$ is a certain 
positive constant. 
\\
Below, when analyzing~\eqref{cond:gap:1}, we assume that the following 
hypothesis is true.
\begin{hypothesis}\label{H}
One has $\lambda\le e^{-c_0/h}$, where $c_0>0$ is 
a sufficiently large  constant. 
\end{hypothesis} 
\noindent
This hypothesis is needed in particular, to use Theorem~\ref{th:0:ne0} on 
the asymptotics of the monodromy matrix coefficients $s$ and $t$.
\\
Also, we fix $\beta\in (0,1/2)$.
\subsubsection{Locations of gaps}
Inequality~\eqref{cond:gap:1} can be rewritten as
$\frac{X^2-C\lambda_1}{1+X^2}\ge \sin^2\alpha$, and assuming that 
$\lambda_1/X^2$ and $X^2$ are sufficiently small, we transform it to the 
form
\begin{equation} \label{cond:gap:2}
 |X|\,(1+O(\lambda_1/X^2)+O(X^2))\ge |\sin \alpha|,
\end{equation} 
and see that there are gaps containing the points $E_k$ defined by 
the relations
\begin{equation}\label{def:Ek} 
 E_k=2\cos(2\pi p_k),\quad \alpha(p_k)= \pi k,\quad k\in\mathbb Z
\end{equation} 
To continue, we need the following two lemmas:
\begin{lemma}\label{le:as:alpha} For  $p\in I_p$, one has
\begin{equation}\label{as:alpha}
\alpha=\frac{2\pi p}h+ O_H\big(\lambda^\beta \big).
\end{equation}
If $p>Ch>h/4$,  the error term is analytic in $p$ and satisfies the 
estimate
\begin{equation}\label{est:error:alpha}
 \Big(O_H\big(\lambda^\beta\big)\Big)'_p=O_H\big(\lambda^\beta\big).
\end{equation}
\end{lemma}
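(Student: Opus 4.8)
The plan is to read the asymptotics of $\alpha=\arg(is)$ directly off the asymptotics of $s$ supplied by Theorem~\ref{th:0:ne0}. Substituting the second formula in~\eqref{as:st} and multiplying by $i$, I would write
\[
is = A(p)\, e^{\frac{2\pi i p}{h}}\,\bigl(1+g(p)\bigr),\qquad
A(p)=\frac{2\,e^{\frac{4\pi p\xi}{h}}\,\sin(2\pi p)}{F_0(p)},\qquad
g(p)=O_H(\lambda^\beta).
\]
For real $p\in I_p$ every factor of $A(p)$ is positive: $2>0$; \ $e^{\frac{4\pi p\xi}{h}}>0$ since $\xi=\frac1{2\pi}\ln\lambda<0$ and $p,h>0$; \ $\sin(2\pi p)>0$ since $2\pi p\in(0,\pi)$; and $F_0(p)=|\sigma_{\pi h}(4\pi p-\pi+\pi h)|^2>0$, because for $p\in I_p$ the real number $4\pi p-\pi+\pi h$ satisfies $|4\pi p-\pi+\pi h|<\pi+\pi h$ and hence lies in the strip $S$, where $\sigma_{\pi h}$ is analytic and does not vanish. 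So $\arg A(p)=0$ and $\alpha=\frac{2\pi p}{h}+\arg\bigl(1+g(p)\bigr)$, where the continuous branch of $\alpha$ is fixed by taking $\arg(1+g)$ near $0$ (this is compatible with the use of $\alpha$ in~\eqref{cond:gap:1}, which only involves $\cos\alpha$).

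To obtain~\eqref{as:alpha} it remains to turn $\arg(1+g(p))$ into $O_H(\lambda^\beta)$. Here Hypothesis~\ref{H} enters: if $c_0$ is large compared with $\beta$ and with the constant hidden in the symbol $O_H$ in~\eqref{as:st}, then $\lambda^\beta e^{C/h}=o(1)$, so $|g(p)|<1/2$ and $\arg(1+g(p))=O(|g(p)|)=O_H(\lambda^\beta)$. At the left endpoint $p=h/4$, which lies on the boundary of the domain of Theorem~\ref{th:0:ne0}, one passes to the limit by continuity (or one simply works on the interior of $I_p$).

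For the refinement with $p>Ch$: since $s$ is meromorphic in $E$, hence in $p$, and by~\eqref{as:st} has no poles and does not vanish on a complex neighbourhood of $\{p\in I_p:\ p>Ch\}$, the function $g$ is holomorphic there. The bound $|g|\le e^{C/h}\lambda^\beta$ of Theorem~\ref{th:0:ne0} holds uniformly on $\{\,|\im p|\le h,\ h/4<\re p<1/2-h/4\,\}$, a set which, if $C$ is taken large enough, contains the disk of radius $h$ about every real $p$ with $Ch<p\le 1/4$. A Cauchy estimate on this disk gives $|g'(p)|\le e^{C/h}\lambda^\beta/h=O_H(\lambda^\beta)$, the polynomial factor $1/h$ being absorbed into $e^{O(1/h)}$. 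Since the error term in~\eqref{as:alpha} equals $\arg(1+g(p))$, it extends to a holomorphic function of $p$ (for instance as $\frac1{2i}\log\frac{1+g(p)}{1+\overline{g(\overline p)}}$), and its derivative is $O\bigl(|g'(p)|/(1-|g(p)|)\bigr)=O_H(\lambda^\beta)$, which is~\eqref{est:error:alpha}.

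The difficulty here is organizational rather than conceptual. One has to verify the positivity of $\sin(2\pi p)$ and of $F_0(p)$ and the inclusion $4\pi p-\pi+\pi h\in S$ uniformly for $p\in I_p$, and likewise on the complex disk used in the Cauchy estimate, on which $g$ must remain holomorphic and small; one has to track the branch of $\arg$ that makes $\alpha$ the continuous function appearing in~\eqref{cond:gap:1}; and one has to know that the constant implicit in the error term of Theorem~\ref{th:0:ne0} is uniform in $p$ on the closed interval. The requirement that a complex disk of radius comparable to $h$ about $p$ fit inside the domain of validity of Theorem~\ref{th:0:ne0} is exactly what forces the restriction $p>Ch$ with $C$ large in the second part of the lemma.
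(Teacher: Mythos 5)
Your proposal is correct and follows essentially the same route as the paper: read $\alpha=\arg(is)$ off the second formula in~\eqref{as:st}, observe that the prefactor $2e^{4\pi p\xi/h}\sin(2\pi p)/F_0(p)$ is positive on $I_p$ (because $\sigma_{\pi h}$ has neither zeros nor poles at $4\pi p-\pi+\pi h$), and get the derivative bound from analyticity of the error and a Cauchy estimate on a disk of radius of order $h$. The paper is terser and works on a $(ch)$-neighborhood $V_{ch}$ of all of $I_p$, but the ingredients and their order are the same as yours.
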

\begin{proof} Pick $c$ sufficiently small.  Let $V_{ch}$ be the 
$(ch)$-neighborhood of $I_p$. 
\\
Recall that $F_0$ is given by~\eqref{def:F0}. The description of the zeros 
and poles of the function $\sigma_{a}$ from 
section~\ref{sigma:poles,zeros} 
implies that in $V_{ch}$ the function $F_0$ is real analytic and does not 
vanish.
\\
Formula~\eqref{as:alpha} follows from the second formula in~\eqref{as:st}.
\\
As the function $p\mapsto\sin(2\pi p) / F_0 (p) $  
from~\eqref{as:st} is analytic and does not vanish 
in $V_{ch}$, and as $s$ is meromorphic in $E$, 
the second formula in~\eqref{as:st} implies   that
$s$ is bounded and thus analytic in $p\in V_{ch}$. And the analyticity 
of $s$ and the same formula imply that the error term in~\eqref{as:st} is 
also analytic in $V_{ch}$.
\\
The estimate~\eqref{est:error:alpha} follows from the analyticity of the 
error term and the Cauchy representation for the derivatives of analytic 
functions.
\end{proof}
\begin{lemma}\label{le:as:x} For $p\in I_p$, one has
\begin{gather}\label{as:x}
 X=X_0(p)\left(1+O_H\left(\lambda^\beta\right)\right),\qquad  
X_0(p)=e^{2p\ln\lambda /h} \frac{2\sin(2\pi 
p)}{F_0(p)},
\\
\label{est:lnX0}
\ln X_0(p)=\frac{2p\ln\lambda}h +O\Big(\frac1h\Big),\qquad 
\left(O\Big(\frac1h\Big)\right)'_p=O\Big(\frac1h\Big).
\end{gather}
\end{lemma}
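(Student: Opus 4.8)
Lemma~\ref{le:as:x} is essentially a corollary of Theorem~\ref{th:0:ne0} together with Lemma~\ref{le:as:alpha}, so the proof should be short. Recall that $X=\lambda_1/\tau=i\lambda_1/t$ with $\lambda_1=\lambda^{1/h}$ and $t$ the coefficient from Theorem~\ref{th:1}. First I would substitute the asymptotic formula for $t$ from~\eqref{as:st},
\[
 t=\frac{i\,e^{4\pi(1/2-p)\xi/h}\,F_0(p)}{2\sin(2\pi p)}\,\bigl(1+O_H(\lambda^\beta)\bigr),
\qquad \xi=\tfrac1{2\pi}\ln\lambda,
\]
into $X=i\lambda_1/t$. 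Writing $\lambda_1=\lambda^{1/h}=e^{(1/h)\ln\lambda}=e^{2\pi\xi/h}$ and noting that the exponent $2\pi\xi/h-4\pi(1/2-p)\xi/h=4\pi p\xi/h=2p\ln\lambda/h$, the $i$'s cancel and the prefactors combine to give exactly $X_0(p)=e^{2p\ln\lambda/h}\,\dfrac{2\sin(2\pi p)}{F_0(p)}$; the error factor $(1+O_H(\lambda^\beta))^{-1}=1+O_H(\lambda^\beta)$ gives~\eqref{as:x}. One has to check that Theorem~\ref{th:0:ne0} applies on $I_p=[h/4,1/4]$: its hypotheses are $|\im p|\le h$ and $h/4<\re p<1/2-h/4$, which hold on (a neighborhood of) $I_p$ since $1/4<1/2-h/4$ for $h<1$, and Hypothesis~\ref{H} (with $c_0$ large) supplies $\lambda<e^{-c/h}$.

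For~\eqref{est:lnX0} I would take logarithms in the formula for $X_0$:
\[
 \ln X_0(p)=\frac{2p\ln\lambda}{h}+\ln\frac{2\sin(2\pi p)}{F_0(p)}.
\]
It then remains to show that $\ln\bigl(2\sin(2\pi p)/F_0(p)\bigr)=O(1/h)$ with $p$-derivative also $O(1/h)$ on $I_p$. As established in the proof of Lemma~\ref{le:as:alpha}, the description of zeros and poles of $\sigma_a$ in section~\ref{sigma:poles,zeros} shows that $F_0$, given by~\eqref{def:F0}, is real analytic and non-vanishing on a $(ch)$-neighborhood $V_{ch}$ of $I_p$; likewise $\sin(2\pi p)$ is analytic and non-vanishing there (on $I_p$, $2\pi p\in[\pi h/2,\pi/2]$). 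Hence $g(p):=\ln\bigl(2\sin(2\pi p)/F_0(p)\bigr)$ is analytic on $V_{ch}$. The bound $g(p)=O(1/h)$ follows from the definition of $F_0$ as a product of two values of $\sigma_{\pi h}$ at arguments with real part $4\pi p-\pi+\pi h$, combined with the $h$-uniform bounds on $\log\sigma_a$ collected in section~\ref{sec:sigma} (these give $\log\sigma_{\pi h}=O(1/h)$ on the relevant region); the derivative estimate $g'=O(1/h)$ then follows from the Cauchy integral formula for $g'$ over a circle of radius $ch$ inside $V_{ch}$, exactly as in the proof of~\eqref{est:error:alpha}.

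\textbf{Main obstacle.} The genuinely non-formulaic point is the uniform-in-$h$ control $\log\bigl(1/F_0\bigr)=O(1/h)$ and its derivative on $V_{ch}$: one must ensure that as $h\to0$ the zeros and poles of $\sigma_{\pi h}$ (whose spacing is governed by $a=\pi h$, hence shrinks) stay bounded away from the curve $\{4\pi p-\pi+\pi h:\ p\in V_{ch}\}$, so that $F_0$ does not come close to $0$ or $\infty$, and that the logarithmic growth of $\sigma_{\pi h}$ in its analyticity strip $S=\{|\re z|<\pi+\pi h\}$ is $O(1/h)$. Both facts are consequences of the properties of $\sigma_a$ to be proved in section~\ref{sec:sigma} (in particular section~\ref{sigma:poles,zeros} and the growth estimates), so at this stage of the paper they may simply be invoked; everything else in the lemma is algebra with the asymptotics~\eqref{as:st}.
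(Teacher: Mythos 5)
Your derivation of~\eqref{as:x} matches the paper's proof (which simply says ``formulas~\eqref{as:x} follow from Theorem~\ref{th:0:ne0}''), and your algebra with the exponents is correct.

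For~\eqref{est:lnX0}, however, there is a genuine gap at the step you yourself flag as the ``main obstacle.'' You claim that $\log\sigma_{\pi h}=O(1/h)$ on the whole relevant region can simply be invoked from section~\ref{sec:sigma}. But the only estimate of that type available there, Corollary~\ref{cor:sigma:2}, is valid \emph{only outside a fixed $\delta$-neighborhood of the ray $z<-\pi$}. The argument $4\pi p-\pi+\pi h$ appearing in $F_0(p)$ equals $-\pi+2\pi h$ at the left endpoint $p=h/4$ of $I_p$, and thus for small $h$ it lies inside every fixed $\delta$-neighborhood of $-\pi$. So Corollary~\ref{cor:sigma:2} does not cover all of $I_p$, and it is precisely near $p=h/4$ that both $\ln F_0(p)$ and $\ln\sin(2\pi p)$ become unbounded (of order $\ln h$) and their sizes have to be tracked carefully. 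The paper's proof therefore splits $I_p$ into two pieces: for $p\ge \delta/4\pi-h/4$ it uses Corollary~\ref{cor:sigma:2} plus Cauchy estimates; for $h/4\le p\le \delta/4\pi-h/4$ it switches to the $\Gamma$-function representation of $\sigma_a$ near $-\pi$, Theorem~\ref{theta:uni-rep:2}, writing $\ln F=\frac{4p\ln h}{h}+2\ln\Gamma(2p/h+1)+O(1/h)$ (with the $O(1/h)$ analytic) and then invokes Stirling's formula $\ln\Gamma(x+1)=x(\ln x-1)+O(\ln x)$, together with the observation that $2p/h>1/2$ on $I_p$, to get $\ln F(p)=\frac{4p\ln p}{h}+O(1/h)=O(1/h)$ with a controlled $p$-derivative. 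None of this appears in your outline; the $\Gamma$/Stirling asymptotics is a genuinely new ingredient that a bare appeal to ``the growth estimates of section~\ref{sec:sigma}'' does not supply.
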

\begin{proof} 
Formulas~\eqref{as:x} follow from Theorem~\ref{th:0:ne0}.
\\
For $p\in I_p$, we get
\begin{equation}\label{eq:lnX0}
\ln X_0=\frac{2p\ln\lambda}h-\ln F_0(p) +\ln p+O(1),\quad \frac{d 
O(1)}{dp}=O(1),
\end{equation}
where $\ln F_0(p)$ and $\ln p$ are real.  It suffices to show that
\begin{equation*}
 \ln F_0(p)=O(1/h),\quad  (\ln F_0)(p)=O(1/h),\quad p\in I_p.
\end{equation*} 
Let us fix $\delta\in (0,\pi)$.
By Corollary~\ref{cor:sigma:2}
we get
\begin{equation}\label{est:f}
\ln F(p)=O\left(1/h\right)\quad 
 \text{if}\quad  |4\pi p+\pi h|\ge \delta, \ \ -h/4\le \re p \le 1 \text{ 
\ 
and 
\ }|\im p|\le 1.
\end{equation}
This estimate and the Cauchy representations for the derivatives 
of analytic functions imply that, in any fixed subinterval of the interval 
$[\delta/4\pi-h/4,  1]$, one has $(\ln F)'(p)=O(1/h)$.
However, as  $\delta$ is chosen quite arbitrarily, we can write
\begin{equation}\label{est:df}
 (\ln F)'(p)=O(1/h), \quad \delta/4\pi-h/4\le p\le 1/4.
\end{equation} 
If $\max \{\delta/4\pi-h/4,\;h/4\}\le p\le 1/4$, 
estimates~\eqref{est:f},~\eqref{est:df}, \eqref{eq:lnX0} 
imply~\eqref{est:lnX0}.
\\
Let us assume that $|4\pi p +\pi h|\le\delta$ and $p\in I_p$. 
For $p\in I_p$, one has $2p/h>1/2$.
\\
Theorem~\ref{theta:uni-rep:2} implies that
\begin{equation*}
 \ln F(p)=\frac{4p\ln h}{h}+2\ln\Gamma\,\Big(\frac{2p}h+1\Big)+
O\Big(\frac1{h}\Big),\quad  
\left(O\Big(\frac1h\Big)\right)'_p=O\Big(\frac1h\Big),\quad p\in I_p,
\end{equation*}
where the values of the logarithms are real. 
When deriving this formula, we have estimated the derivative of the error 
term from~\eqref{theta:delta} arguing as when proving~\eqref{est:df}.
\\
As $2p/h>1/2$, we estimate $\ln\Gamma(2p/h+1)$ using the Stirling formula 
$(\ln\Gamma)(x+1)=x(\ln x-1)+O(\ln  x)$, where the error term satisfies 
the estimate $(O(\ln  x))_x'=O(1/x)$. We get 
\begin{equation*}
 \ln F(p)=\frac{4 p\ln 
p}{h}+O\Big(\frac1{h}\Big)=O\Big(\frac1{h}\Big),\quad  
\left(O\Big(\frac1h\Big)\right)'_p=O\Big(\frac1h\Big),\qquad 
\frac{h}4\le p\le \frac{\delta}{4\pi} -\frac{h}4,
\end{equation*}
and thus, in view of~\eqref{eq:lnX0}, estimate \eqref{est:lnX0} . The 
proof 
is complete.
\end{proof}
Lemma~\ref{le:as:alpha}  immediately implies 
\begin{Cor} For  $p_k\in I_p$, one has 
\begin{equation}\label{as:pk}
 p_k=\frac{hk}2+O_H\big(\lambda^\beta \big)
\end{equation} 
where  the error is uniform uniform in $k$
\end{Cor}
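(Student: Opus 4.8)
The plan is to extract the claimed corollary directly from the asymptotic formula for $\alpha$ provided by Lemma~\ref{le:as:alpha}. Recall that the points $p_k\in I_p$ are characterized by the equation $\alpha(p_k)=\pi k$ from~\eqref{def:Ek}. Substituting this into~\eqref{as:alpha}, I would write
\[
\pi k=\alpha(p_k)=\frac{2\pi p_k}{h}+O_H\big(\lambda^\beta\big),
\]
and then simply solve for $p_k$, obtaining $p_k=\frac{hk}{2}+\frac{h}{2\pi}\,O_H\big(\lambda^\beta\big)$. Since $0<h<1/2$, the factor $h/2\pi$ is bounded, so the error term remains of the form $O_H\big(\lambda^\beta\big)$, which is precisely~\eqref{as:pk}.

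The only point requiring a word of care is the \emph{uniformity in $k$} asserted in the corollary. This follows because the error term in~\eqref{as:alpha} is $O_H\big(\lambda^\beta\big)$ \emph{uniformly for $p\in I_p$} — that is, the constant $C$ in the bound $|O_H(\lambda^\beta)|\le e^{C/h}\lambda^\beta$ does not depend on which point $p\in I_p$ we evaluate at. Consequently, when we evaluate at the various points $p_k$, the same uniform bound applies regardless of $k$, and the estimate $p_k=\frac{hk}{2}+O_H\big(\lambda^\beta\big)$ holds with a $k$-independent implied constant. I would also note, for completeness, that the points $p_k$ in question are exactly those indices $k$ for which the solution of $\alpha(p_k)=\pi k$ happens to lie in $I_p$; the corollary is a statement precisely about those $p_k$, so no existence or counting argument is needed here — that bookkeeping belongs to the subsequent analysis of gap locations.

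There is essentially no obstacle in this proof: it is a one-line substitution plus a remark on uniformity. The phrasing ``Lemma~\ref{le:as:alpha} immediately implies'' in the paper signals exactly this, and I would keep the argument to two or three sentences in the final text. If anything deserves emphasis, it is that the derivative estimate~\eqref{est:error:alpha} is \emph{not} needed for this corollary — only the leading asymptotics~\eqref{as:alpha} — whereas~\eqref{est:error:alpha} will be used later when one needs to control how $\alpha$, and hence the gap condition, varies as $p$ moves across an interval around $p_k$.
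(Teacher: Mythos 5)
Your proof is correct and is precisely the argument the paper intends when it writes that ``Lemma~\ref{le:as:alpha} immediately implies'' the corollary: substitute $\alpha(p_k)=\pi k$ into~\eqref{as:alpha} and solve for $p_k$, with the $k$-uniformity inherited from the $p$-uniformity of the error term. Your closing remark that~\eqref{est:error:alpha} is not used here (only later, for controlling variation of $\alpha$ near $p_k$) is also accurate.
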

\noindent One can easily see that, for sufficiently small $\lambda$  
$hk/2\in I_p$ if and only if 
\begin{equation}
 \label{eq:k:pk}
 1\le k \le K,\quad K=\left[\frac1{2h}\right],
\end{equation} 
where $[x]$ denotes the integer part of $x\in\mathbb R$.
We note also that
 \begin{equation*}
  \textstyle\frac14-\frac{hK}2=\frac{h}4\cdot
  \begin{cases}
   h_1& \text{\rm \ if \ } [1/h] {\rm \ is \ even},\\ 
   1+h_1 & \text{\rm \ if \ } [1/h] {\rm \ is \ odd}.\\ 
  \end{cases}
 \end{equation*}
For the points $p_k$ one has
\begin{Cor} Let us fix a positive constant $C_1$ so that the error terms 
in~\eqref{as:alpha} and~\eqref{est:error:alpha} be bounded by 
$\delta_0=\lambda^\beta e^{C_1/h}$. 
If   $\delta_0/\pi \le\min\{1/3,\,h_1/2\}$, and $k$ 
satisfies~\eqref{eq:k:pk}, then there is  $p_k\in I_p$.
\end{Cor}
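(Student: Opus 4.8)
The plan is to produce the point $p_k$ by solving $\alpha(p_k)=\pi k$ on the \emph{closed} interval $I_p=[h/4,\,1/4]$ via the intermediate value theorem, using only the asymptotics~\eqref{as:alpha} of Lemma~\ref{le:as:alpha} evaluated at the two endpoints of $I_p$, together with the elementary identity for $\tfrac14-\tfrac{hK}{2}$ displayed just above this corollary.

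First I would fix the branch of $\alpha=\arg(is)$ to be used: by the proof of Lemma~\ref{le:as:alpha} the coefficient $s$ is analytic and non-vanishing on $V_{ch}\supset I_p$, so on $I_p$ the function $\alpha$ has a continuous (in fact real-analytic) branch, namely the one for which~\eqref{as:alpha} holds; by the choice of $C_1$ this gives $|\alpha(p)-2\pi p/h|\le\delta_0$ for all $p\in I_p$. Evaluating at $p=h/4$ gives $\alpha(h/4)\le \pi/2+\delta_0$, which is $<\pi\le\pi k$ for every $k\ge 1$ since $\delta_0/\pi\le 1/3$. Evaluating at $p=1/4$ gives $\alpha(1/4)\ge \pi/(2h)-\delta_0$; writing $1/(2h)=K+\{1/(2h)\}$ and observing that the displayed identity $\tfrac14-\tfrac{hK}{2}=\tfrac h4\,(h_1\text{ or }1+h_1)$ is equivalent to $\{1/(2h)\}=h_1/2$ or $(1+h_1)/2$, in either case $\{1/(2h)\}\ge h_1/2$, so that $\alpha(1/4)\ge \pi K+\pi h_1/2-\delta_0\ge \pi k+(\pi h_1/2-\delta_0)\ge \pi k$, using $k\le K$ and $\delta_0/\pi\le h_1/2$. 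Hence $\alpha(h/4)<\pi k\le\alpha(1/4)$, and the intermediate value theorem applied to the continuous map $p\mapsto\alpha(p)$ on $[h/4,\,1/4]$ yields a point $p_k\in I_p$ with $\alpha(p_k)=\pi k$, which is the $p_k$ of~\eqref{def:Ek}.

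I do not expect a genuine obstacle here: once the asymptotics~\eqref{as:alpha} are available, the argument is a one-line application of the intermediate value theorem, and the two parts of the hypothesis $\delta_0/\pi\le\min\{1/3,\,h_1/2\}$ are precisely what make the left and right endpoint inequalities hold. The only points needing a little care are (i) making sense of $\alpha$ as a pointwise-defined continuous function on $I_p$, which is why I first invoke the analyticity and non-vanishing of $s$ from Lemma~\ref{le:as:alpha}, and (ii) the borderline case $k=K$ with $[1/h]$ even, where the right-endpoint inequality is only non-strict; this is harmless since $I_p$ is closed, the value $p_k=1/4$ being then an admissible solution. If uniqueness of $p_k$ were also wanted one could add that, for $p$ bounded away from $h/4$, estimate~\eqref{est:error:alpha} makes $\alpha'=2\pi/h+O_H(\lambda^\beta)$ strictly positive so $\alpha$ is increasing; but the statement as given only asserts existence.
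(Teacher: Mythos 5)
Your proof is correct and follows essentially the same approach as the paper's: both apply the intermediate value theorem to $p\mapsto\alpha(p)$ using the endpoint bounds supplied by~\eqref{as:alpha}, with the two halves of the hypothesis $\delta_0/\pi\le\min\{1/3,\,h_1/2\}$ deployed exactly where you use them (left endpoint for $k=1$, right endpoint via the displayed identity for $\tfrac14-\tfrac{hK}{2}$ for $k=K$). The only cosmetic difference is that the paper restricts to $[h/3,1/4]\subset I_p$ so that~\eqref{est:error:alpha} yields monotonicity (hence uniqueness) of $p_k$, whereas you apply IVT on all of $I_p$ and obtain existence only, which is all the statement asserts; you correctly note that uniqueness would follow from the same derivative bound.
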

\begin{proof}
By Lemma~\ref{le:as:alpha}, the function  $\alpha$ is monotonous on the 
interval $[h/3,\,1/4]$, and  $[2\pi/3+\delta_0,\,\pi/(2h)-\delta_0]\subset 
\alpha\big([h/3,\,1/4]\big)$. Thus, for any $k$ satisfying 
$$\frac23+\frac{\delta_0}\pi\le k\le \frac1{2h}-\frac{\delta_0}\pi,$$
the equation 
$\alpha(p_k)=\pi k$ has a unique solution in $[h/3,\,1/4]$.
\\
As $\delta_0/\pi\le 1/3$, the the minimal possible value of $k$ equals 1. 
Recall that $1/h=[1/h]+h_1$, \ $h_1\in(0,1)$. Thus, 
one has 
\begin{equation*}
\frac1{2h}=
\begin{cases} 
\left[\frac1{2h}\right]+\frac{h_1}2, & \text{\rm \ if \ } [1/h] {\rm \ is 
\ 
even},\\ 
\left[\frac1{2h}\right]+\frac{1+h_1}2, & \text{\rm \ if \ } [1/h] {\rm \ 
is 
\ odd}, \end{cases}
\end{equation*}
and as $\delta_0/\pi\le h_1/2$, the maximal value of $k$ equals 
$\left[\frac1{2h}\right]$.
The proof is complete.
\end{proof}
\begin{Rem}
As seen from the proof, either if $\left[\frac1h\right]$ is odd, or if $k< 
\left[\frac1{2h}\right]$, then the condition on $\delta_0$ can be weakened 
and 
replaced with $|\delta_0/\pi|\le 1/2$.
\end{Rem}
\noindent One has
\begin{lemma}\label{le:PkInGap}
For any $k=1,2,\dots K-1$, the point  $E_k=2\cos(2\pi p_k)$ is located 
in a gap. The point  $E_K$ is in a gap if either
$\left[\frac1{h}\right]$ is odd, or  $\left[\frac1{h}\right]$ is even, 
and $\lambda^{h_1}<e^{-\frac{c_1}h}$ where $c_1>0$ is a certain constant.
\end{lemma}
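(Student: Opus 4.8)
The plan is to deduce the lemma from Theorem~\ref{th:5}, used in the form of inequality~\eqref{cond:gap:2}, by checking the required conditions on a small interval around each admissible $p_k$. \emph{First}, it suffices, for each such $k$, to produce an open interval $J_k\subset I_p$ with $p_k\in J_k$ on which $\lambda_1\le|X|\le1$ and~\eqref{cond:gap:2} holds: since $p_k\in(0,1/4)$, the map $p\mapsto 2\cos(2\pi p)$ is then a diffeomorphism of $J_k$ onto an open interval $I_k\ni E_k$, and by Theorem~\ref{th:5} together with the computation leading to~\eqref{cond:gap:1}--\eqref{cond:gap:2} this $I_k$ lies in a gap. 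Now $X$ and $\alpha$ are real-analytic near $p_k$ (Lemmas~\ref{le:as:alpha} and~\ref{le:as:x}; recall from the proof of the preceding Corollary that $p_k\in[h/3,1/4]$), and $\sin\alpha(p_k)=\sin(\pi k)=0$ while $|X(p_k)|>0$; hence, provided that at the single point $p=p_k$ one has in addition $\lambda_1\le|X(p_k)|\le1$ and that $X(p_k)^2$ and $\lambda_1/X(p_k)^2$ are small (so that passing from~\eqref{cond:gap:1} to~\eqref{cond:gap:2} is legitimate there), the desired $J_k$ exists by continuity.

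\emph{Next}, I would estimate $|X(p_k)|$. Combining $\ln X_0(p)=\frac{2p\ln\lambda}{h}+O(1/h)$ from~\eqref{est:lnX0}, the asymptotics $X=X_0(p)\,(1+O_H(\lambda^\beta))$ from~\eqref{as:x}, and $p_k=\frac{hk}2+O_H(\lambda^\beta)$ from~\eqref{as:pk}, and using Hypothesis~\ref{H} with $c_0$ large to absorb the terms $\frac{2\ln\lambda}{h}O_H(\lambda^\beta)$ and $\ln(1+O_H(\lambda^\beta))$ (both exponentially small in $1/h$, using $|\ln\lambda|\lambda^\beta\le C\lambda^{\beta/2}$), one obtains
\begin{equation*}
\ln|X(p_k)|=k\ln\lambda+O(1/h),\qquad\text{hence}\qquad |X(p_k)|=\lambda^k e^{O(1/h)},\quad X(p_k)^2=\lambda^{2k}e^{O(1/h)}.
\end{equation*}
Thus the key inequality $X(p_k)^2\ge C_0\lambda_1=C_0\lambda^{1/h}$ reduces to $\lambda^{\,2k-1/h}e^{O(1/h)}\ge C_0$, and the smallness of $X(p_k)^2$ and of $\lambda_1/X(p_k)^2=\lambda^{\,1/h-2k}e^{O(1/h)}$ will follow as soon as $2k-1/h<0$ with enough margin.

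\emph{The decisive step} is therefore the sign of the exponent $2k-1/h$. From the identity displayed just after~\eqref{eq:k:pk} one reads off $2K-1/h=-h_1$ when $[1/h]$ is even and $2K-1/h=-1-h_1$ when $[1/h]$ is odd. Hence: for $1\le k\le K-1$ one has $2k-1/h\le(2K-1/h)-2\le-h_1-2<-2$; for $k=K$ with $[1/h]$ odd, $2k-1/h=-1-h_1\le-1$; and for $k=K$ with $[1/h]$ even, $2k-1/h=-h_1$. In the first two cases $\lambda^{\,2k-1/h}\ge e^{2c_0/h}$, resp. $e^{c_0/h}$, by Hypothesis~\ref{H}, so $\lambda^{\,2k-1/h}e^{O(1/h)}\ge e^{(2c_0-C)/h}$, resp. $e^{(c_0-C)/h}$, which exceeds $C_0$ once $c_0$ is large; in the third case $\lambda^{\,2k-1/h}=\lambda^{-h_1}>e^{c_1/h}$ \emph{precisely} under the extra hypothesis $\lambda^{h_1}<e^{-c_1/h}$, whence $\lambda^{\,2k-1/h}e^{O(1/h)}\ge e^{(c_1-C)/h}>C_0$ for $c_1$ large. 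In each case the margin is exponentially large in $1/h$, which at the same time forces $X(p_k)^2$ and $\lambda_1/X(p_k)^2$ to be small. Finally, the two-sided bound at $p_k$ follows from $|X(p_k)|=\lambda^k e^{O(1/h)}$: since $k\ge1$, $|X(p_k)|\le\lambda e^{C/h}=e^{(C-c_0)/h}<1$; and since $k\le K\le\frac1{2h}$, $|X(p_k)|\ge\lambda^{1/(2h)}e^{-C/h}=\lambda_1^{1/2}e^{-C/h}\ge\lambda_1$ once $\lambda_1^{-1/2}\ge e^{C/h}$, which holds under Hypothesis~\ref{H} (and, in the even $k=K$ case, under $\lambda^{h_1}<e^{-c_1/h}$). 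Assembling these facts at $p=p_k$ and applying the continuity argument of the first paragraph completes the proof. \emph{The only genuine obstacle} is the exponent bookkeeping of this last step: one must establish that $2k-1/h$ is negative with a margin that beats the $e^{O(1/h)}$ factors in every case, and it is exactly the borderline value $2K-1/h=-h_1$ occurring for $k=K$ when $[1/h]$ is even that makes the additional hypothesis $\lambda^{h_1}<e^{-c_1/h}$ unavoidable.
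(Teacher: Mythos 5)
Your proposal is correct and follows essentially the same route as the paper: reduce to the condition $X^{2}(p_{k})\ge C_{0}\lambda_{1}$ (plus the side conditions $\lambda_{1}\le|X|\le1$), use $X(p_{k})=\lambda^{k}e^{O(1/h)}$ from Lemma~\ref{le:as:x} and~\eqref{as:pk}, and then reduce everything to the sign of $2k-1/h$, whose three values $\le -2$, $-1-h_1$, and $-h_1$ correspond exactly to the paper's three cases in~\eqref{ineq:lambdaX2} and explain why the extra hypothesis $\lambda^{h_1}<e^{-c_1/h}$ is needed precisely when $k=K$ and $[1/h]$ is even. The only difference is that you spell out the continuity step and the verification of $\lambda_1\le|X(p_k)|\le1$ more explicitly, which the paper leaves implicit.
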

\begin{proof} 
In view of~\eqref{cond:gap:1} and the definition 
of $p_k$, see~\eqref{def:Ek}, it suffices to prove that
$X(p_k)\ge C_0 \lambda_1$, where $\lambda_1=\lambda^{1/h}$, and $C_0>0$ is 
certain constant.
If the constant  $c_0$ in Hypothesis~\ref{H} is sufficiently large, then 
using Lemma~\ref{le:as:x} and~\eqref{as:pk}, we get 
\begin{equation}
\label{eq:Xpk}
X(p_k)=e^{\frac{2p_k\ln\lambda}h+O\left(\frac1h\right)}=
e^{k\ln\lambda+O_H(k\lambda^\beta\ln\lambda)+  
O\left(\frac1h\right)}=\lambda^k e^{O\left(\frac1h\right)}.\\
\end{equation}
So, $\frac{\lambda_1}{X^2(p_k)}\le\lambda^{\frac1h-2k+\frac Ch}$, 
and we get 
\begin{equation}\label{ineq:lambdaX2}
\frac{\lambda_1}{X^2(p_k)}\le e^{\frac Ch}\cdot
 \begin{cases} 
 \lambda^2 & \text{\rm \ if \ }  1\le k\le K-1,
 \\
  \lambda^{1+h_1} & \text{\rm \  if \ } k=K, \text{\rm \ and \ }
  \left[\frac1{h}\right] {\rm  \ is \ odd},
\\
\lambda^{h_1} & \text{\rm \  if \ } k=K, \text{\rm \ and \ }
 \left[\frac1{h}\right] {\rm  \ is \ even}.
 \end{cases}
\end{equation} 
This implies the needed.
\end{proof}
\noindent Below, we denote by $g_k$ the gap containing $E_k$.
We have proved the statement of Theorem~\ref{th:6} on the location 
of $g_k$,  \ $k=1,2,\dots K$.
\subsubsection{The lengths of gaps}
Here we prove~\eqref{eq:gk}.
Let us fix $1\le k\le K$. To get a lower bound for  $|g_k|$, the 
length of $g_k$, we first assume that
\begin{equation*}
 |p-p_k|\le ChX(p_k)
\end{equation*} 
and prove
\begin{lemma}\label{as:AlphaXpk} 
Under hypothesis~\ref{H}, one has
\begin{equation}\label{eq:AlphaXnearPk}
X(p)=X_0(p_k) (1+O_H(\lambda^\beta)),\quad 
\sin \alpha(p)=\frac{2\pi(p-p_k)}h\;(1+O_H(\lambda^\beta )).
\end{equation} 
\end{lemma}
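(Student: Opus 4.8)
The plan is to obtain both asymptotic relations in~\eqref{eq:AlphaXnearPk} by combining the asymptotics of $X$ and $\alpha$ on the whole interval $I_p$ (Lemmas~\ref{le:as:alpha} and~\ref{le:as:x}) with the smallness of the interval $|p-p_k|\le ChX(p_k)$, together with the fact, recorded in~\eqref{eq:Xpk}, that $X(p_k)=\lambda^k e^{O(1/h)}$ is exponentially small. First I would use~\eqref{eq:Xpk} to see that $ChX(p_k)\le h\lambda^k e^{C/h}$, which under Hypothesis~\ref{H} (with $c_0$ large compared to $C$) is far smaller than $h$; hence the whole interval $[p_k-ChX(p_k),\,p_k+ChX(p_k)]$ lies inside $I_p$ (and, for $k\ge 2$, well away from the endpoint $h/4$, while for $k=1$ one checks $p_1=h/2+O_H(\lambda^\beta)$ keeps us inside), so Lemmas~\ref{le:as:alpha} and~\ref{le:as:x} are available on this whole interval, including the derivative estimates~\eqref{est:error:alpha} and~\eqref{est:lnX0}.

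For the statement on $X$, I would write, for $p$ in the given interval,
\[
\ln X(p)-\ln X_0(p_k)=\bigl(\ln X_0(p)-\ln X_0(p_k)\bigr)+\bigl(\ln X(p)-\ln X_0(p)\bigr).
\]
The second bracket is $O_H(\lambda^\beta)$ directly from~\eqref{as:x}. For the first bracket, the mean value theorem and the derivative bound $(\ln X_0)'_p=\tfrac{2\ln\lambda}{h}+O(1/h)=O(\ln\lambda/h)$ from~\eqref{est:lnX0} give
\[
|\ln X_0(p)-\ln X_0(p_k)|\le |p-p_k|\cdot O\!\Big(\frac{|\ln\lambda|}{h}\Big)\le ChX(p_k)\cdot O\!\Big(\frac{|\ln\lambda|}{h}\Big)=O\bigl(X(p_k)|\ln\lambda|\bigr),
\]
and since $X(p_k)=\lambda^k e^{O(1/h)}$ this is certainly $O_H(\lambda^\beta)$ (indeed much smaller). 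Exponentiating, $X(p)=X_0(p_k)(1+O_H(\lambda^\beta))$, which is the first relation.

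For the relation on $\sin\alpha$, I would expand $\alpha$ around $p_k$ using~\eqref{as:alpha} and the definition $\alpha(p_k)=\pi k$ from~\eqref{def:Ek}. Writing $\alpha(p)=\tfrac{2\pi p}{h}+r(p)$ with $r(p)=O_H(\lambda^\beta)$ analytic and $r'(p)=O_H(\lambda^\beta)$ on the interval, and using $\tfrac{2\pi p_k}{h}+r(p_k)=\pi k$, we get
\[
\alpha(p)-\pi k=\frac{2\pi(p-p_k)}{h}+\bigl(r(p)-r(p_k)\bigr),\qquad |r(p)-r(p_k)|\le |p-p_k|\cdot O_H(\lambda^\beta)\le ChX(p_k)\,O_H(\lambda^\beta).
\]
Relative to the main term $\tfrac{2\pi(p-p_k)}{h}$, which has size $O(X(p_k))$ on this interval, the error $r(p)-r(p_k)$ is smaller by a factor $O_H(\lambda^\beta)$; hence $\alpha(p)-\pi k=\tfrac{2\pi(p-p_k)}{h}(1+O_H(\lambda^\beta))$. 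Since $\alpha(p)-\pi k$ is small (again because $|p-p_k|/h=O(X(p_k))$ is exponentially small), $\sin\alpha(p)=\sin(\alpha(p)-\pi k)\cdot(\pm1)=(\alpha(p)-\pi k)(1+O(|\alpha(p)-\pi k|^2))\cdot(\pm 1)$; absorbing the sign (the sign $(-1)^k$ is harmless since it drops out of $|\sin\alpha|$ in the subsequent use, or one tracks it and it matches the monotonicity of $\alpha$) and the $O(\cdot^2)$ correction into the error gives the second relation in~\eqref{eq:AlphaXnearPk}.

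The main obstacle I anticipate is purely bookkeeping: making sure the interval $|p-p_k|\le ChX(p_k)$ genuinely stays inside $I_p$ (hence inside the domain $V_{ch}$ where the analyticity and derivative estimates of Lemmas~\ref{le:as:alpha}, \ref{le:as:x} hold), which requires invoking Hypothesis~\ref{H} with a sufficiently large $c_0$ so that $ChX(p_k)=h\lambda^k e^{O(1/h)}\ll h$, and separately handling $k=1$ where $p_1$ is only $O_H(\lambda^\beta)$ away from the left endpoint $h/4$; there one uses instead that $p_1=h/2+O_H(\lambda^\beta)$ is at distance $\asymp h$ from $h/4$. Everything else is Taylor expansion controlled by the derivative bounds already established.
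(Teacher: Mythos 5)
Your argument is correct and matches the paper's proof in all essentials: you use $X(p_k)=\lambda^k e^{O(1/h)}$ to bound $|p-p_k|$, then propagate the derivative estimates of Lemmas~\ref{le:as:alpha} and~\ref{le:as:x} via a Taylor/mean-value argument, exactly as the paper does (your explicit mention of the $(-1)^k$ sign and of why the interval stays in $I_p$ just makes implicit steps visible). No genuine differences in route.
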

\begin{proof} 
As when proving Lemma~\ref{le:PkInGap}, we see that
$X(p_k)=\lambda^ke^{O(1/h)}$. Therefore, 
\begin{equation}\label{eq:ppk:1}
 |p-p_k|\le h\lambda^k e^{C/h}.
\end{equation} 
Lemma~\ref{le:as:x} and~\eqref{eq:ppk:1} imply that one has
\begin{equation*}
 X(p)=X_0(p)(1+O_H(\lambda^\beta ))=
 X_0(p_k) e^{\ln \lambda\; \lambda^k e^{C/h}}(1+O_H(\lambda^\beta )).
\end{equation*}
This implies the first formula in~\eqref{eq:AlphaXnearPk}. 
Lemma~\ref{le:as:alpha}  implies that
$\alpha(p)=\frac{2\pi (p-p_k)}h\;(1+O_H(\lambda^\beta))$.
Thus, in view of~\eqref{eq:ppk:1}, we get
\begin{equation*}
 \sin\alpha=\frac{2\pi (p-p_k)}h
 \textstyle{\left(1+O\left( 
\frac{(p-p_k)^2}{h^2}\right)\right)}(1+O_H(\lambda^\beta))=
\frac{2\pi (p-p_k)}h(1+O_H(\lambda^\beta)).
\end{equation*} 
We proved the second formula in~\eqref{eq:AlphaXnearPk}.
\end{proof}
Now, we can  readily  prove
\begin{Pro} 
Let  $1\le k\le 
K$, let  $\lambda^{h_1}e^{c_0/h}$ be sufficiently small if 
$k=K$ and  $\left[1/h\right]$ is even. Then \eqref{cond:gap:2} can be 
transformed to the form
\begin{equation}\label{cond:gap:3}
 |p-p_k|\le \frac{h}{2\pi}X_0(p_k)(1+O_H(\lambda^p))
\end{equation} 
where $p=\beta$ if  $1\le k\le K-1$ or  $\left[1/h\right]$ is odd, 
and $p=\min\{h_1,\beta\}$ if $k=K$ and  $\left[1/h\right]$ is even.
\end{Pro}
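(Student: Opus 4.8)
The plan is to feed into the gap inequality \eqref{cond:gap:2} the local asymptotics of $X(p)$ and $\sin\alpha(p)$ near $p_k$ provided by Lemma~\ref{as:AlphaXpk}, and then to solve the resulting inequality for $|p-p_k|$. First I would record that, by \eqref{eq:Xpk}, $X(p_k)=\lambda^k e^{O(1/h)}$, so the half-length $\frac h{2\pi}X_0(p_k)$ occurring in \eqref{cond:gap:3} is at most $ChX(p_k)$; hence every $p$ satisfying \eqref{cond:gap:3} lies in the interval $|p-p_k|\le ChX(p_k)$ on which Lemma~\ref{as:AlphaXpk} is valid, and it stays inside the $(ch)$-neighbourhood of $I_p$ where $X$ and $\alpha$ are analytic. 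So it suffices to analyse \eqref{cond:gap:2} on that interval: outside it the connected component of $\{$\eqref{cond:gap:2}$\}$ containing $p_k$ cannot extend, since just beyond it $|\sin\alpha|$ already exceeds $|X|$.

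On this interval Lemma~\ref{as:AlphaXpk} gives $X(p)=X_0(p_k)(1+O_H(\lambda^\beta))$ and $\sin\alpha(p)=\frac{2\pi(p-p_k)}h(1+O_H(\lambda^\beta))$. Substituting these into \eqref{cond:gap:2}, rewriting $\lambda_1/X^2(p)$ as $(\lambda_1/X_0^2(p_k))(1+O_H(\lambda^\beta))$ and $X^2(p)$ as $X_0^2(p_k)(1+O_H(\lambda^\beta))$, and dividing by the nonvanishing factor $\frac{2\pi}h(1+O_H(\lambda^\beta))$, I would obtain that, in this range, \eqref{cond:gap:2} is equivalent to $|p-p_k|\le\frac h{2\pi}X_0(p_k)\big(1+O_H(\lambda^\beta)+O(\lambda_1/X_0^2(p_k))+O(X_0^2(p_k))\big)$.

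It then remains to merge the two remaining errors into a single $O_H$-term. Since $X_0^2(p_k)=\lambda^{2k}e^{O(1/h)}$ with $k\ge1$, the term $O(X_0^2(p_k))$ is $O_H(\lambda^2)=O_H(\lambda^\beta)$. For $O(\lambda_1/X_0^2(p_k))$ I would quote the case analysis \eqref{ineq:lambdaX2}: if $1\le k\le K-1$, or if $k=K$ and $[1/h]$ is odd, then $\lambda_1/X_0^2(p_k)\le e^{C/h}\lambda^{\gamma}$ for some $\gamma\ge1>\beta$, hence $O_H(\lambda^\beta)$, so all the errors collapse and \eqref{cond:gap:3} holds with exponent $\beta$; if $k=K$ and $[1/h]$ is even, then $\lambda_1/X_0^2(p_k)\le e^{C/h}\lambda^{h_1}$, which is $O_H(\lambda^{\min\{h_1,\beta\}})$ and, by the assumed smallness of $\lambda^{h_1}e^{c_0/h}$, less than $1$; that same smallness is precisely what legitimises passing from \eqref{cond:gap:1} to \eqref{cond:gap:2} at this endpoint, where $\lambda_1/X^2$ is no longer automatically small. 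This gives \eqref{cond:gap:3} with exponent $\min\{h_1,\beta\}$, as claimed.

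The hard part will be the single case $k=K$ with $[1/h]$ even: there $p_k$ sits at the right edge $\frac14-\frac{hh_1}4$ of $I_p$, the ratio $\lambda_1/X^2$ deteriorates to order $\lambda^{h_1}e^{O(1/h)}$, and one must simultaneously invoke the extra hypothesis and keep track of the drop of the limiting error exponent to $\min\{h_1,\beta\}$. A minor accompanying point is the self-consistency noted in the first paragraph — that the interval cut out by \eqref{cond:gap:3} genuinely lies inside the region where Lemma~\ref{as:AlphaXpk} was used — which is immediate from $X_0(p_k)\asymp X(p_k)=\lambda^k e^{O(1/h)}$.
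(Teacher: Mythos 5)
Your proposal is correct and uses exactly the three ingredients the paper cites in its one-line proof (Lemma~\ref{as:AlphaXpk}, equation \eqref{eq:Xpk}, and the case analysis \eqref{ineq:lambdaX2}); you have simply written out in full the substitution of the local asymptotics into \eqref{cond:gap:2} and the merging of the three error terms that the paper leaves implicit. No change of route, just the same argument made explicit.
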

\begin{proof} The statement follows from Lemma~\ref{as:AlphaXpk} and 
formulas~\eqref{eq:Xpk} and~\eqref{ineq:lambdaX2}.
\end{proof}
Now, to complete the proof of~\eqref{eq:gk}, we need to compute 
$X_0(p_k)$ and to write~\eqref{cond:gap:3} in terms of $E$. We begin with
\begin{lemma}
For $1\le k\le K$, one has
\begin{equation}\label{formula:X0pk}
 X_0(p_k)=\left(\frac{\lambda}4\right)^k
 \frac{\sin(\pi h k)}{h\,\sin^2(\pi h k)\dots \sin^2(\pi h)} 
(1+O_H(\lambda^\beta)).
\end{equation}
\end{lemma}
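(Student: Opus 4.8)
The plan is to read off $X_0(p_k)$ from its explicit form~\eqref{as:x}, reducing everything to a single evaluation of $F_0$ at the anchor point $p=0$ by means of an \emph{exact} telescoping identity for $F_0$ that comes from the functional equation~\eqref{eq:sigma}.

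First, I would replace $p_k$ by $hk/2$. By~\eqref{as:pk} one has $p_k=hk/2+O_H(\lambda^\beta)$, and, arguing as in the derivation of~\eqref{eq:Xpk} with the help of~\eqref{est:lnX0}, this replacement changes $X_0$ only by a factor $1+O_H(\lambda^\beta)$; in particular the exponential factor in~\eqref{as:x} becomes $e^{2(hk/2)\ln\lambda/h}=\lambda^{k}$. Moreover $0<\pi hk\le\pi/2$ for $1\le k\le K$, so $\sin(\pi hk)\ge\sin(\pi h)\ge ch$, and the error made inside the sine is again $1+O_H(\lambda^\beta)$. Hence, by~\eqref{as:x},
\begin{equation*}
 X_0(p_k)=\lambda^{k}\,\frac{2\sin(\pi hk)}{F_0(hk/2)}\,\bigl(1+O_H(\lambda^\beta)\bigr).
\end{equation*}

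The main step is the exact identity
\begin{equation*}
 F_0(hk/2)=4^{k}\Bigl(\prod_{j=1}^{k}\sin^{2}(\pi hj)\Bigr)F_0(0).
\end{equation*}
By~\eqref{def:F0}, for real $p$ we have $F_0(p)=|\sigma_{\pi h}(4\pi p-\pi+\pi h)|^{2}$. Applying~\eqref{eq:sigma} with $a=\pi h$ and $z=4\pi p-\pi$ gives $\sigma_{\pi h}(4\pi p-\pi+\pi h)=(1-e^{-4\pi ip})\,\sigma_{\pi h}\bigl(4\pi(p-h/2)-\pi+\pi h\bigr)$, hence $F_0(p)=4\sin^{2}(2\pi p)\,F_0(p-h/2)$. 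Iterating this $k$ times from $p=hk/2$ down to $p=0$ peels off exactly the factors $4\sin^{2}(\pi hj)$ for $j=k,k-1,\dots,1$ and stops at $F_0(0)$: one cannot continue, because $p=-h/2$ corresponds to the point $-\pi-\pi h$, which is a pole of $\sigma_{\pi h}$, whereas $F_0(0)$ is finite and nonzero.

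It then remains to evaluate the anchor value $F_0(0)=|\sigma_{\pi h}(\pi h-\pi)|^{2}$ and to show $F_0(0)=2h\,(1+O_H(\lambda^\beta))$. This I would deduce from the properties of $\sigma_a$ collected in Section~\ref{sec:sigma} — in particular from the uniform representation of Theorem~\ref{theta:uni-rep:2} and Corollary~\ref{cor:sigma:2} — evaluated near the point $z=-\pi$, which the pole of $\sigma_{\pi h}$ at $-\pi-\pi h$ approaches as $h\to0$. Substituting this and the telescoping identity into the first display yields
\begin{equation*}
 X_0(p_k)=\lambda^{k}\,\frac{2\sin(\pi hk)}{4^{k}\prod_{j=1}^{k}\sin^{2}(\pi hj)\cdot 2h}\,\bigl(1+O_H(\lambda^\beta)\bigr)
 =\Bigl(\frac{\lambda}{4}\Bigr)^{k}\frac{\sin(\pi hk)}{h\,\sin^{2}(\pi hk)\cdots\sin^{2}(\pi h)}\,\bigl(1+O_H(\lambda^\beta)\bigr),
\end{equation*}
which is~\eqref{formula:X0pk}. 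The main obstacle is this last step: obtaining the constant in $F_0(0)=2h(1+\dots)$ to the stated precision is a genuinely transcendental matter about $\sigma_a$ in the regime $a=\pi h\to0$, and the crude bound $\ln F_0(p)=O(1/h)$ used in the proof of Lemma~\ref{le:as:x} does not suffice; by contrast, the telescoping and the reduction to $p=hk/2$ are elementary.
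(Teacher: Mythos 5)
Your reduction from $p_k$ to $hk/2$ and the telescoping recursion $F_0(p)=4\sin^2(2\pi p)\,F_0(p-h/2)$ coming from~\eqref{eq:sigma} are exactly what the paper does, and both steps are sound. The gap is the step you yourself flag as ``the main obstacle'': you treat the anchor value $F_0(0)=|\sigma_{\pi h}(\pi h-\pi)|^2$ as something that must be extracted asymptotically in the regime $\pi h\to0$, and you conclude that the crude bound $\ln F_0=O(1/h)$ from Lemma~\ref{le:as:x} is inadequate. But no asymptotics are needed here at all: formula~\eqref{sigma:res} is an \emph{exact} algebraic identity, and the paper invokes it precisely for this purpose. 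It gives $-i\sigma_a(-\pi+a)=\sqrt{a/\pi}\,e^{-i\pi^2/12a-i\pi/4-ia/12}$, so $|\sigma_{\pi h}(\pi h-\pi)|^2=a/\pi=h$ with no error term whatsoever. The evaluation is a one-line plug-in, not a ``genuinely transcendental matter,'' and your worry that the $h\to0$ behaviour of $\sigma_a$ near $-\pi$ needs to be controlled to the precision $1+O_H(\lambda^\beta)$ is based on not having noticed $\eqref{sigma:res}$.

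Two smaller remarks. First, your ansatz $F_0(0)=2h$ disagrees with the value $h$ that $\eqref{sigma:res}$ actually yields; reverse-engineering the constant from the target formula is exactly what one must not do, since it leaves a factor-of-2 ambiguity unresolved rather than resolved (if there is a discrepancy, it should be traced, not assumed away). Second, the paper phrases the telescoping directly as a finite product $F_0(hk/2)=\prod_{\kappa=1}^{k}|1+e^{-i(2\pi h\kappa-\pi)}|^2\,|\sigma_{\pi h}(\pi h-\pi)|^2$ rather than as a one-step recursion iterated $k$ times, but that is a cosmetic difference; the substance is the same.
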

\begin{proof}
By the second estimate in~\eqref{est:lnX0},
\begin{equation*}
 \textstyle X_0(p_k)=X_0\left(\frac{hk}2\right)\,(1+O_H(\lambda^\beta))
\end{equation*} 
as $\beta$ in Lemma~\ref{le:as:x} can be chosen greater 
then in this lemma.
Let us recall that $X_0$ is defined in~\eqref{as:x} with $F_0$ given 
by~\eqref{def:F0}. Using~\eqref{eq:sigma} and~\eqref{sigma:res}, 
we get
\begin{equation*}\textstyle
F_0\left(\frac{hk}2\right)=
\prod_{\kappa=1}^{k} |1+e^{-i(2\pi h\kappa-\pi)}|^2
|\sigma_{\pi h}(\pi h-\pi)|^2=
4^k\,h\,\prod_{\kappa=1}^{k} |\sin(\pi h\kappa)|^2.
\end{equation*} 
This formula and formulas~\eqref{as:x} and~\eqref{def:F0}
imply~\eqref{formula:X0pk}.
\end{proof}
\noindent Finally, we rewrite~\eqref{cond:gap:3} in terms of $E$.
We denote the right hand side in~\eqref{cond:gap:3} by $\Delta_k$.
As $E=2\cos (2\pi p)$, we get
\begin{equation*}
|g_k|\ge  
2(\cos\big(2\pi(p_k-\Delta_k)\big)-\cos\big(2\pi(p_k+\Delta_k)\big)=
          4\sin(2\pi p_k)\sin (2\pi \Delta_k).
\end{equation*}
As $X_0(p_k)=O_H(\lambda^k )$,  we get $|\Delta_k|=O_H(\lambda^k),$ 
and, by means of~\eqref{as:pk}, we obtain
$$|g_k|\ge 8\pi\Delta_k \,\sin(2\pi p_k)(1+O_H(\lambda^{2k}))=8\pi\Delta_k 
\,\sin(\pi hk)(1+O_H(\lambda^{\beta})).$$
Substituting into this estimate instead of $\Delta_k$ the right hand side 
from~\eqref{cond:gap:3} and using formula~\eqref{formula:X0pk}, we come 
to~\eqref{eq:gk}.
This completes the proof Theorem~\ref{th:6}.\qed

\smallskip
We will use the following immediate corollary from the above 
proof:
\begin{Cor}\label{cor:gk:roughly}
 The right hand side in~\eqref{eq:gk} satisfies equals to 
$\lambda^ke^{O(1/h)}$.
\end{Cor}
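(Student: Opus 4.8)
The goal is to show that the right-hand side of~\eqref{eq:gk}, namely
\begin{equation*}
 R_k=4\left(\frac{\lambda}4\right)^{k}\;\frac{1+O_H(\lambda^\beta)}{\sin^2(\pi h)\sin^2(2\pi h)\dots\sin^2(\pi h(k-1))},
\end{equation*}
equals $\lambda^k e^{O(1/h)}$ for every $k$ with $1\le k\le K$. The plan is simply to trace through the computations already performed in the proof of Theorem~\ref{th:6} and bookkeep the $h$-dependence. The cleanest route is to observe that $R_k$ was obtained precisely as $8\pi\,\Delta_k\sin(\pi hk)$ with $\Delta_k=\frac{h}{2\pi}X_0(p_k)(1+O_H(\lambda^p))$; hence $R_k=4h\,X_0(p_k)\sin(\pi hk)(1+O_H(\lambda^\beta))$, so it suffices to control $X_0(p_k)$ and the two elementary factors.

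First I would invoke~\eqref{eq:Xpk}, which was established inside the proof of Lemma~\ref{le:PkInGap}: there it is shown directly that $X(p_k)=\lambda^k e^{O(1/h)}$, and since $X(p_k)=X_0(p_k)(1+O_H(\lambda^\beta))$ by~\eqref{as:x}, we also get $X_0(p_k)=\lambda^k e^{O(1/h)}$. Next, the factor $\sin(\pi hk)$ is bounded above by $1$ and, because $hk/2\in I_p=[h/4,1/4]$ forces $\pi hk\in[\pi h/2,\pi/2]$, it is bounded below by $\sin(\pi h/2)\ge C h$; thus $\sin(\pi hk)=e^{O(1/h)}$ (indeed $e^{O(\ln(1/h))}$, which is absorbed). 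The prefactor $4h$ is likewise $e^{O(1/h)}$, and the error factor $1+O_H(\lambda^\beta)=e^{O(1/h)}$ since $\lambda\le e^{-c_0/h}$ makes $\lambda^\beta e^{C/h}$ bounded. Multiplying, $R_k=\lambda^k e^{O(1/h)}$, as claimed.

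Alternatively, and perhaps more transparently, one can argue from the explicit product formula~\eqref{formula:X0pk}: it expresses $X_0(p_k)$ as $(\lambda/4)^k$ times $\sin(\pi hk)/\big(h\sin^2(\pi hk)\cdots\sin^2(\pi h)\big)$ times $(1+O_H(\lambda^\beta))$, so $R_k=4h\,X_0(p_k)\sin(\pi hk)(1+O_H(\lambda^\beta))$ collapses to $\lambda^k$ times a ratio of products of sines of the form $\sin(\pi h\kappa)$ with $1\le\kappa\le k\le K$. Every such sine lies in $[\sin(\pi h/2),1]$, hence equals $e^{O(\ln(1/h))}=e^{O(1/h)}$, and there are at most $2K=O(1/h)$ of them, so the whole ratio is $e^{O(1/h)}$; combined with the $(1+O_H(\lambda^\beta))=e^{O(1/h)}$ factor this gives $R_k=\lambda^k e^{O(1/h)}$.

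The only point requiring a word of care — and the mild obstacle — is the uniformity in $k$: the bound must hold with a single $O(1/h)$ valid for all $1\le k\le K$, and $K$ itself grows like $1/(2h)$. This is handled by noting that the number of sine factors is $\le 2K\le 1/h$ and each contributes a bounded multiple of $\ln(1/h)\le C/h$ to the logarithm, while the exponent $k\ln\lambda$ is kept exactly as $\lambda^k$ and not estimated; the error terms $O_H(\lambda^\beta)$, $O_H(k\lambda^\beta\ln\lambda)$ appearing in~\eqref{eq:Xpk} are all $o(1)$ uniformly in $k$ under Hypothesis~\ref{H} (with $c_0$ large), so they only multiply the result by $1+o(1)=e^{O(1/h)}$. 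No new estimate is needed; this is a bookkeeping corollary of the proof already given.
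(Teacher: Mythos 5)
Your proposal is correct and follows essentially the same route as the paper: the paper's one-line argument is to compare~\eqref{eq:gk} with~\eqref{formula:X0pk} using $X_0(p_k)=\lambda^k e^{O(1/h)}$ (from~\eqref{eq:Xpk} and~\eqref{as:x}), and you unpack exactly this, keeping careful track of the elementary factors $4h$, $\sin(\pi h k)$, and $1+O_H(\lambda^\beta)$, each of which is $e^{O(1/h)}$.
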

\begin{proof} As $X_0(p_k)=O_H(\lambda^k )$, it suffices to 
compare~\eqref{eq:gk} with~\eqref{formula:X0pk}.\end{proof}
\section{Model equation}\label{model:eq}
Entire solutions to the equation 
\begin{equation} \label{mu:eq}
\mu\,(z+h)+\mu\,(z-h)+e^{-2\pi i z}\mu(z)=2\cos(2\pi p)\,\mu(z),\quad
z\in \C.
\end{equation}
were constructed in~\cite{F:18a}.
Here, we briefly recall  construction of these solutions, and get  
for them  estimates uniform in $h$. 
\subsection{Construction of solutions}
\subsubsection{Integral representation}
We construct solutions   in the form:
\begin{equation}\label{mu:form}
\mu\,(z)=\frac1{\sqrt{h}}\int_{\gamma}e^{\frac{2\pi izk}h}\, v\,(k)\,dk,
\end{equation}
where $\gamma$ is a curve in the complex plane that we
describe later, and $v$ is a function analytic in a sufficiently 
large neighborhood of $\gamma$. The function  $\mu$ 
satisfies~\eqref{mu:eq} if
\begin{eqnarray*}
v(k+h)=2(\cos (2\pi p)-\cos (2\pi k))\,v(k).
\end{eqnarray*}
We note that
\begin{equation*}
2(\cos(2\pi p)-\cos (2\pi k))=-e^{2\pi ik}\,(1-e^{-2\pi 
i(k-p)})\,(1-e^{-2\pi i(k+p)}).
\end{equation*}
Therefore, one can choose
\begin{equation}\label{v:form}
v(k)=e^{\frac{i\pi k^2}{h}-\frac{\pi i k}{h}-\pi ik}\, 
\sigma_{\pi h}(2\pi(k-p-{\textstyle \frac h2-\frac12}))\,
\sigma_{\pi h}(2\pi(k+p-{\textstyle \frac h2 -\frac12})),
\end{equation}
where $\sigma_a$ is a solution to~\eqref{eq:sigma}. 
In this paper, we use the meromorphic solution described in 
Section~\ref{sec:sigma}.  
\subsubsection{Properties of the function $v$}
As $\sigma_a$ the function $v$ is meromorphic.
In Section~\ref{sigma:poles,zeros} we list all the poles of $\sigma_a$.
This description implies  that the poles of $v$  are located at the points
\begin{equation}\label{v:poles}
k=\pm p-lh-m,\quad l,m=0,1,2,3\dots.
\end{equation}
Let $l_\pm=\pm p-(-\infty,0]$. The rays $l_\pm $ contain 
all the poles of $v$.
\\
Below, for  $k\in\C$, we write $r=\re k$ and $q=\im k$.
\\
Let $|q|> C\,|r|$. Corollary~\ref{cor:sigma:2} implies the uniform
asymptotic representations
\begin{gather}
\label{v:down}
v(k)= v_-\,e^{\frac{i\pi\left(k-{\textstyle \frac12-\frac h2} 
\right)^2}{h}+o(1)},\quad
v_-=-ie^{-\frac{i\pi}{4 h}-\frac{\pi ih}4},\qquad q\to-\infty,
\\
\label{v:up}
v(k)=v_+\,e^{-\frac{i\pi\left(k-{\textstyle \frac 12-\frac h2} 
\right)^2}{h}+o(1)}, \quad 
v_+=-ie^{-\frac{2\pi ip^2}{h}-\frac{i\pi}{12h}-\frac{\pi ih}{12}},\qquad
q\to+\infty.
\end{gather}  
\subsubsection{Integration path}
We  choose the integration path  in~\eqref{mu:form}
so that it does not intersect $ l _\pm $, comes from infinity
from bottom to top along the line $ e ^ {i \pi / 4} \, \R $ and 
goes to infinity upward along the line $ e ^ {3i \pi / 4} \, \R $.
This completes the construction of $\mu$.
\subsubsection{Notations}
Fix $X>0$. We study   $\mu$ in the strip
$|x|\le X$ assuming that $p\in P$,
\begin{equation*} 
P=\{p\in\C\,:\,0\le \re p\le 1/2, \,|\im p|\le h\}.
\end{equation*}
\subsection{Estimates in the upper half-plane}
Let
\begin{equation*}
\xi_\pm(z)=e^{\pm\frac{\pi iz^2}{h}+ \frac{i\pi z}h+\pi iz}.
\end{equation*}
One has
\begin{Pro}\label{pro:mu:up} Let us pick $X, Y>0$. Assume that $p\in P$.
One has
\begin{equation}\label{mu:up}
\mu=\nu_+\,\xi_++\nu_-\,\xi_-,\qquad
\nu_\pm=\mp e^{\mp\frac{i\pi}4}\,v_\pm\,F_\pm;
\end{equation}
and  for $|x|\le X$ the functions $F_\pm$ satisfy the uniform in $x$ 
estimates:
\begin{equation}\label{mu:up:Delta}
F_\pm=O(H)\ \text{ if } \ y\ge -Y, \qquad \text{and}\qquad 
F_\pm=1+o(1) \ \text{ as } \  y\to+\infty.
\end{equation}
\end{Pro}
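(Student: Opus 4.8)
The plan is to apply the method of steepest descent to the integral representation~\eqref{mu:form}, $\mu(z)=\frac1{\sqrt h}\int_\gamma e^{2\pi izk/h}v(k)\,dk$, treating $1/h$ only as a bookkeeping device (all error terms are to be $O(1)$-uniform after the $\frac1{\sqrt h}$ prefactor is accounted for, which is why the statement carries the $H=Ce^{C/h}$ factor and the $o(1)$ as $y\to+\infty$). The integrand is $\exp\bigl(\frac{2\pi izk}h\bigr)v(k)$; using the asymptotic formulas~\eqref{v:down} and~\eqref{v:up} for $v$ in the two sectors $\im k\to\mp\infty$, the exponent along the descending part of $\gamma$ behaves like $\frac{2\pi izk}h+\frac{i\pi}h(k-\frac12-\frac h2)^2$ and along the ascending part like $\frac{2\pi izk}h-\frac{i\pi}h(k-\frac12-\frac h2)^2$. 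Each is a quadratic in $k$, so each half of the contour contributes, via a Gaussian (stationary-phase/saddle) integral, one of the two exponentials $\xi_\pm(z)=e^{\pm\pi iz^2/h+i\pi z/h+\pi iz}$, and the two saddle points are located at $k=\mp z-\tfrac12-\tfrac h2$ (to leading order) respectively. Completing the square in each Gaussian produces exactly the prefactors $\mp e^{\mp i\pi/4}v_\pm$ in $\nu_\pm$, the factor $e^{\mp i\pi/4}$ being the standard phase from the Gaussian integral $\int e^{\pm i\pi w^2/h}\,dw$ together with the $\frac1{\sqrt h}$ normalization.

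The steps, in order, are as follows. First I would deform $\gamma$ into the union of two steepest-descent rays through the two saddles while checking, using the pole description~\eqref{v:poles} and the hypothesis $p\in P$ together with $|x|\le X$, that no poles on $l_\pm$ are crossed during the deformation (this is where the restriction $p\in P$ and the bound on $x$ are genuinely used, and it must be done uniformly for $y\ge-Y$). Second, I would split $\mu=\mu_++\mu_-$ according to the two halves of the contour and, on each half, substitute the uniform asymptotics~\eqref{v:up}/\eqref{v:down} for $v$, with the $o(1)$ errors there; this reduces each $\mu_\pm$ to a Gaussian integral times $v_\pm$ times the $o(1)$-perturbed integrand. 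Third, I would evaluate the Gaussian integral by shifting to the saddle and using $\frac1{\sqrt h}\int_{e^{i\pi/4}\R}e^{i\pi w^2/h}\,dw=e^{i\pi/4}$ (and the conjugate relation on the other ray), which yields $\mu_\pm=\nu_\pm\xi_\pm(z)F_\pm(z)$ with $\nu_\pm$ as stated and $F_\pm$ collecting all the correction factors. Fourth, I would prove the two estimates in~\eqref{mu:up:Delta}: the bound $F_\pm=O(H)$ for $y\ge-Y$ comes from crude majorization of the shifted integral (the exponential weight $e^{-\pi\,\mathrm{(something)}/h}$ along the steepest-descent ray is bounded, and the worst constants are absorbed into $H=Ce^{C/h}$), while $F_\pm=1+o(1)$ as $y\to+\infty$ comes from the fact that, as $y\to+\infty$, the relevant saddle moves deep into the sector where the $o(1)$ in~\eqref{v:up}/\eqref{v:down} is genuinely small and the Gaussian concentrates, so the correction integral tends to its leading value $1$.

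The main obstacle I expect is the uniform contour analysis in step one combined with the bookkeeping of the $1/h$-dependence. One must verify that the steepest-descent contour through each saddle can be chosen to avoid the pole rays $l_\pm=\pm p-(-\infty,0]$ for every admissible $z$ (with $|x|\le X$, $y\ge-Y$) and every $p\in P$, and that the tails of the contour still go to infinity along $e^{i\pi/4}\R$ and $e^{3i\pi/4}\R$ as required in the construction of $\mu$; near $y=-Y$ or near the boundary values of $p$ the saddle may come close to a pole, and one needs the freedom in choosing $X$, $Y$ (and, implicitly, smallness of the distance being controlled only up to $H$-type constants) to push through. The second delicate point is making sure the $o(1)$ in~\eqref{v:up}/\eqref{v:down} — which is uniform in $k$ in a sector $|\im k|>C|\re k|$ — actually applies all along the chosen contour, i.e.\ that the contour stays inside that sector; away from the saddle this is automatic, but near the saddle for moderate $y$ one instead uses the $O(H)$ bound rather than the sharp asymptotics, which is exactly why~\eqref{mu:up:Delta} is split into a rough global bound and a sharp bound only in the limit $y\to+\infty$.
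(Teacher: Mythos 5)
Your plan follows the paper's proof closely in its essential structure: starting from the integral representation~\eqref{mu:form}, identifying the controlling exponentials via~\eqref{exps} with saddle points $k_\pm(z)=\pm z+\tfrac12+\tfrac h2$ and steepest-descent lines $L_\pm(z)$, splitting the contour at their intersection $k_*=y+ix+\tfrac h2+\tfrac12$, and extracting $\mu_\pm=\mp e^{\mp i\pi/4}v_\pm\xi_\pm F_\pm$ from each half, with the crude $O(H)$ bound from Gaussian majorization and $F_\pm\to1$ from concentration of the Gaussian as $y\to+\infty$. (A small slip: the saddles are at $\pm z+\tfrac12+\tfrac h2$, not $\mp z-\tfrac12-\tfrac h2$.)

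The step you leave unresolved is the regime of moderate $y$. For $y$ below some threshold $Y$ (the paper picks such a $Y$ inside the proof, distinct from the $Y$ in the statement), the two steepest-descent rays through $k_*$ genuinely do come too close to, or cross, the pole rays $l_\pm$, and since $X$ and $Y$ in the statement are fixed in advance you cannot evade this by ``freedom in choosing $X,Y$'' --- a contour through $k_*$ is simply the wrong contour there. The paper's fix is a detour: for $|y|\le Y$, $\gamma$ follows $L_-(z)$ from $-\infty$ up to a circle $c_r$ of sufficiently large radius $r$ centered at $k_*$, traverses $c_r$ anticlockwise to $L_+(z)$, and continues along $L_+(z)$ to $+\infty$; with $r$ large enough so that $\gamma$ stays at distance $\ge\delta$ from $l_\pm$, the integrand on $c_r$ is bounded by $H$ via Corollary~\ref{cor:sigma:2}, and the tails are majorized as before. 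A secondary point, related: the pointwise control of $v$ along the contour should come from Corollary~\ref{cor:sigma:2} in the form $F(k,p,h)=e^{O(h^{-1}e^{-2\pi\kappa|q|}(1+|r|))}$, which holds throughout a neighborhood of the contour and not only in a sector $|\im k|>C|\re k|$; near $k_*$ the steepest-descent rays leave any such sector, so the cone asymptotics~\eqref{v:down}--\eqref{v:up} alone would not give you the uniform estimate you need for~\eqref{mu:up:Delta}.
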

\begin{proof}
In view of~\eqref{v:down}--~\eqref{v:up},  as $q\to\pm \infty$ 
the behavior of the integrand in~\eqref{mu:form} is described by the 
exponentials
\begin{equation}\label{exps}
e^{\frac{2\pi ik z}h}\,e^{\mp\frac{i\pi (k-{\textstyle 
\frac12}-{\textstyle 
\frac h2} )^2}{h}}=
\xi_\pm(z)\,e^{\mp\frac{\pi i(k-k_\pm(z))^2}{h}},
\quad\text{where}\quad
k_\pm(z)=\pm z+{\textstyle \frac h2+\frac12}.
\end{equation}
Let us consider the straight lines
\begin{equation*}
L_\pm(z)=k_\pm(z)+e^{\mp\pi i/4}\R.
\end{equation*}
The lines $L_\pm(z)$ are lines of steepest descent for the functions  
$k\mapsto|e^{\mp\frac{\pi i(k-k_\pm(z))^2}{h}}|$.
They intersect one another at $k_*=y+ix+h/2+1/2$. 

Let us pick $d_0>0$. There is an $Y>0$ such that if $y>Y$, then for all 
$(h,p,x)\in (0,1)\times P\times [-X,X]$, the distance from $L_\pm$ to  
$l_\pm$  is greater than $d_0$. 

First, we assume that $y>Y$. In this case, we choose the integration path 
$\gamma$ in~\eqref{mu:form} so that it go upwards first   along $L_-(z)$ 
from infinity to $k_*$ and then along $L_+(z)$ from $k_*$ to infinity.  
We denote by $\gamma_-$ ($\gamma_+$), the part of $\gamma$ below (resp., 
above) 
$k_*(z)$. We define two functions $\mu_\pm$  by same the formula as $\mu$,
i.e. by~\eqref{mu:form}, but with the integration path $\gamma$ replaced 
with
$\gamma_\pm$.  It suffices to show that
\begin{equation}\label{mu:pm}
\mu_\pm=\mp e^{\mp \frac{i\pi}4}\,v_\pm\,\xi_\pm\,F_\pm,
\end{equation}
with $F_\pm$ satisfying the estimates~\eqref{mu:up:Delta}. We
prove~\eqref{mu:pm} only for $\mu_-$\;; \ $\mu_+$ is estimated similarly.
Substituting~\eqref{v:form} into~\eqref{mu:form}
 and using~\eqref{exps}, we get
\begin{equation}\label{mu:up:-:1}
\mu_-=\xi_-(z)\,v_-\,\nu,\quad\quad \nu=\frac1{\sqrt{h}}
\int_{\gamma_-}\,e^{\frac{\pi i(k-k_-(z))^2}{h}}\,F(k,p,h)\,dk,
\end{equation}
where $F(k,p,h)= \sigma_{\pi h}(2\pi(k-p- {\textstyle 
\frac{h}2-\frac12}))\, 
\sigma_{\pi h}(2\pi(k+p-{\textstyle\frac{h}2-\frac12}))$. 
\\
We remind that $k=r+iq$, \ $r,q\in\R$. Let $0<\kappa<1$. By 
Corollary~\ref{cor:sigma:2}, for $k\in\gamma_-$,
\begin{equation}\label{F:1}
  F(k,p,h)=e^{O\left(h^{-1}e^{-2\pi \kappa\,|q|}\,(1+|r|)\right)}.
\end{equation}
As along $\gamma_-$ one has 
\begin{equation*}
k-k_-(z)=\sqrt{2}e^{i\pi/4}\im (k-k_-(z)),\quad \im (k-k_-(z))=q+y,\quad 
|r|\le C+|q+y|,
\end{equation*}
formulas~\eqref{mu:up:-:1} and~\eqref{F:1} imply that
\begin{equation}\label{eq:nu:up:1}
\begin{split}
\nu=e^{\frac{i\pi}4}\sqrt{\frac{2}{h}}
\int_{-\infty}^{x}&
e^{-\frac{2\pi(q+y)^2}{h}+O\left(\frac{e^{-2\pi 
\kappa|q|}\,(1+|q+y|)}h\right)}\,dq\\ &=
e^{\frac{i\pi}4}\sqrt{\frac{2}{h}}
\int_{-\infty}^{x+y}
e^{-\frac {2\pi 
t^2}{h}+O\left(\frac{e^{-2\pi\kappa|t-y|}\,(1+|t|)}h\right)}\,dt.
\end{split}
\end{equation}
Therefore,
\begin{equation*}
  |\nu|\le \frac{C}{\sqrt{h}}\int_{-\infty}^{\infty}
e^{-\frac {2\pi t^2}{h}+\frac{C(1+|t|)}h}\,dt\le H.
\end{equation*}
This proves the first estimate in~\eqref{mu:up:Delta} for $y>Y$. 
To prove the second one, we note that, for sufficiently large $y$, one has
\begin{equation*} 
e^{-2\pi \kappa|t-y|}\,(1+|t|)\le Cy 
\begin{cases}
  e^{-\pi \kappa y} &
\text{if \ }  t\le \frac{y}2,\\
1 & \text{if \ }  \frac{y}2\le t\le y+X.
\end{cases}
\end{equation*}
So,  representation~\eqref{eq:nu:up:1} implies that as $y\to\infty$
\begin{equation*}
\nu=e^{\frac{i\pi}4}\sqrt{\frac{2}{h}}\left(
\int_{-\infty}^{\frac{y}2}
e^{-\frac {2\pi t^2}{h}+O\left(e^{-\pi \kappa y}y/h\right)}\,dt+
\int_{\frac{y}2}^{x+y}
e^{-\frac {2\pi t^2}{h}+O(y/h)}\,dt\right)=e^{i\pi/4}+o(1).
\end{equation*}
This proves the second estimate in~\eqref{mu:up:Delta}.

To complete the proof, it suffices to check that
if $ |x|\le X$ and $|y|\le Y$, then $\mu\le H$. 
In this case, we pick  $\delta,r>0$ and choose the integration 
path $\gamma$  in~\eqref{mu:form} that goes along $L_-(z)$ upwards   
from infinity to the circle $c_r$ with radius $r$ and center at $k_*$,
then along $c_r$ in the anticlockwise direction to the upper point of 
intersection
of   $L_+(z)$ and $c_r$, and finally along $L_+(z)$ upwards from this 
point 
to
infinity. We assume that $r$ is sufficiently large so that the distance 
between
$\gamma$ and the rays $l_\pm$ be greater than $\delta$.
In view of Corollary~\ref{cor:sigma:2}, on $\gamma\cap c_r$ the integrand 
in~\eqref{mu:form} is bounded by $H$. On $\gamma\setminus c_r$ it is 
estimated 
 as when proving the first estimate in~\eqref{mu:up:Delta}.
\end{proof}
\subsection{Estimates in the lower half-plane}
Set
\begin{equation}\label{a(p,h)}
a(p)=e^{-\frac{i\pi}{12h}+\frac{i\pi}4-\frac{\pi ih}{12}} \ 
e^{\frac{\pi ip^2}{h}-\frac{ip\pi}h-\pi ip}\,
\sigma_{\pi h}(4\pi p-\pi h-\pi)).
\end{equation}
We note that $a$ is meromorphic in $p$, and its poles in $(-\infty,0]$ .
One has
\begin{Pro}\label{pro:mu:down} Pick positive $X$. 
Let $p\in P$. 
There is an $Y>0$ independent of $p$ and $h$ and such that, for $y\le -Y$ 
and $|x|\le X$, one has the following results
\\ 
$\bullet$ \ Fix $\alpha$, $\beta$  so that $0<\beta<\alpha<1$. Then,
\begin{equation}\label{mu:down:1}
\mu(z)= e^{\frac{2\pi ip z}h}\left( a(p)+
O_H\big(\,e^{-2\pi \beta |y|}\,\big)\right),\quad
{\rm if}\quad \re p\ge \alpha h/2,\quad
\end{equation}
\\
$\bullet$ \ Fix $\alpha$ and $\beta'$ so that $0<\alpha<1$ and 
$0<\beta'<1$. Then
\begin{equation}\label{mu:down:2}
\begin{split}
\mu(z)= a(p)\,e^{\frac{2\pi ip z}h}+a(-p)\,e^{-\frac{2\pi ip z}h}&+
O_H\left(\,e^{\frac{2\pi ip z}h-2\pi \beta'|y|}\,\right),\\
&{\rm if}\quad 0\le \re p\le \alpha h/2.
\end{split}
\end{equation}
\end{Pro}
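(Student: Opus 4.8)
The plan is to prove Proposition~\ref{pro:mu:down} by evaluating the integral~\eqref{mu:form} in the lower half-plane via residues, exactly as the representation was set up. For $y<0$ the exponential $e^{2\pi izk/h}$ decays as $\im k\to+\infty$ along the steepest descent lines, so one can push the contour $\gamma$ upward and to the right; in doing so one crosses the poles of $v$ lying on the rays $l_\pm=\pm p-(-\infty,0]$. The leading contributions come from the two poles closest to the origin, namely $k=p$ (on $l_+$) and $k=-p$ (on $l_-$). Collecting the residue at $k=p$ produces a term proportional to $e^{2\pi ipz/h}$, and its coefficient is precisely $a(p)$ as defined in~\eqref{a(p,h)} — this is just $\frac{1}{\sqrt h}\,2\pi i/h$ times the residue of $v$ at $k=p$, and the formula for that residue follows from the residue formula for $\sigma_{\pi h}$ recalled in Section~\ref{sec:sigma} (the \eqref{sigma:res} relation) together with~\eqref{v:form}. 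Similarly the residue at $k=-p$ gives the $a(-p)\,e^{-2\pi ipz/h}$ term. The remaining contour integral, now running along displaced steepest-descent lines that stay a fixed distance $\ge d_0$ from $l_\pm$, is the error term.

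The key steps, in order, are: (1) choose $Y>0$ and a new contour $\gamma'$ (the analogue of the $L_\pm(z)$ construction in the proof of Proposition~\ref{pro:mu:up}, but shifted so that it passes above $k=\pm p$ when $y\le -Y$) so that $\gamma'$ separates the two leading poles from the rest of $l_\pm$ and keeps distance $\ge d_0$ from the rays; (2) apply the residue theorem to write $\mu=\text{(residue terms)}+\mu_{\rm rem}$, where $\mu_{\rm rem}$ is the integral over $\gamma'$; (3) compute the two residues using~\eqref{v:form}, \eqref{sigma:res} and~\eqref{eq:sigma}, matching the output to $a(\pm p)\,e^{\pm 2\pi ipz/h}$; (4) estimate $\mu_{\rm rem}$ using the uniform bound~\eqref{F:1} for $F(k,p,h)$ (equivalently Corollary~\ref{cor:sigma:2}) on the displaced contour, Laplace/steepest-descent estimation of $\int e^{\pi i(k-k_-(z))^2/h}\dots\,dk$ as in~\eqref{eq:nu:up:1}, and extraction of the decay factor $e^{-2\pi\beta|y|}$ from the distance between $\gamma'$ and the pole $k=p$; (5) treat the two cases: when $\re p\ge\alpha h/2$ the pole at $k=-p$ is far to the left and its residue is absorbed into the error (since $a(-p)e^{-2\pi ipz/h}$ is then exponentially smaller than the error in~\eqref{mu:down:1} — here one uses $\re p\ge\alpha h/2$ to make $e^{2\pi\re p\,|y|/h}$ dominate), giving~\eqref{mu:down:1}; when $0\le\re p\le\alpha h/2$ both poles are genuinely close to each other and to the origin, so both residues must be kept, giving~\eqref{mu:down:2}, and the error is measured relative to $e^{2\pi ipz/h}$.

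The main obstacle I expect is the bookkeeping in Step (4)–(5): one must track the exact exponential weight picked up from the displaced contour and verify that it really is $e^{-2\pi\beta|y|}$ relative to the leading term, uniformly in $(h,p)\in(0,1)\times P$, and in particular that the error in~\eqref{mu:down:2} carries the prefactor $e^{2\pi ipz/h}$ (not a symmetric combination) — this forces a careful asymmetric choice of contour. A secondary subtlety is that $a(p)$ has poles in $p$ on $(-\infty,0]$ coming from $\sigma_{\pi h}(4\pi p-\pi h-\pi)$; one has to make sure the contour deformation and the residue extraction are valid for $p\in P$ away from these poles, and that when $\re p$ is comparable to $h$ (the borderline between the two cases) the two descriptions~\eqref{mu:down:1} and~\eqref{mu:down:2} are consistent. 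The value $a(p)$ as written already encodes the Gaussian prefactors $e^{\mp i\pi/(12h)}$ etc. coming from the asymptotics $v_\pm$ in~\eqref{v:down}--\eqref{v:up}, so a useful internal consistency check is that the residue computation reproduces exactly those constants; getting every sign and phase right there is where most of the routine-but-delicate work lies.
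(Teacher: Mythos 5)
Your overall strategy — push the contour in~\eqref{mu:form} past poles, extract residues to obtain the leading terms $a(\pm p)\,e^{\pm 2\pi i p z/h}$, and bound the shifted integral for the error — is the one the paper uses, and your identification of~\eqref{v:form} and~\eqref{sigma:res} as the ingredients of the residue computation is correct.

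There is, however, a real gap in your treatment of the case $\re p\ge\alpha h/2$. You propose (step~1) a contour $\gamma'$ that ``separates the two leading poles $k=\pm p$ from the rest of $l_\pm$'', extract both residues (step~5), and then claim $a(-p)\,e^{-2\pi i p z/h}$ is absorbed into the error. This is impossible whenever $\re p>h/2$: the list~\eqref{v:poles} contains the poles $k=p-h,\,p-2h,\dots$ on $l_+$, and every $p-lh$ with $0<l<2\re p/h$ has real part strictly between $-\re p$ and $\re p$, hence strictly between $k=-p$ and $k=p$. No contour of the shape you describe can leave $\{p,-p\}$ on its right and all the other poles of $v$ on its left; if you push past $k=-p$ anyway you pick up the residues at $p-h,\dots$ — up to $O(1/h)$ of them — which must be computed and summed, and which you never address. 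The paper sidesteps this entirely: in the case $\re p\ge\alpha h/2$ it deforms the contour past the \emph{single} pole $k=p$ only (staying to the right of $\re k=\re p-\alpha h$, where $v$ has no other pole), so the term $a(-p)\,e^{-2\pi i p z/h}$ never appears and does not need to be argued away; only in the second case $0\le\re p\le\alpha h/2$ — where $p-h$ and $-p-h$ both lie strictly to the left of $-p$ — does it cross both $\pm p$. You should restructure your case~(1) to cross $k=p$ alone. (A minor slip: the residue contributes $\frac{2\pi i}{\sqrt h}\,\res_{k=p}v(k)\,e^{2\pi i p z/h}$; your extra $1/h$ in ``$\frac1{\sqrt h}\cdot 2\pi i/h$'' is spurious.)
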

\begin{proof}  Let us begin with justifying~\eqref{mu:down:1}.
Remind that $\mu$ has the integral representation~\eqref{mu:form}. For
$y<0$, the behavior of $\mu$ appears to be  determined by the rightmost 
poles of $v$. 

The poles of $v$ are at the points listed in~\eqref{v:poles}.
As $p\in P$, they are inside the strip $|\im k|\le h$. As $\re p\ge
\alpha h/2$, \ $0<\alpha<1$, we see that, to the right of the line $\re 
k=\re
p-\alpha h$, the function $v$ has only one simple pole; it is situated at
$k=p$.

We deform $\gamma$, the integration contour in~\eqref{mu:form},
to a curve  that    has the same asymptotes as $\gamma$, but, instead of 
staying to the right of all the poles of $v$, it goes
between the pole at $k=p$ and the other ones (they stay to the left of this
curve). We keep for the new integration curve the old notation $\gamma$. 
One has
\begin{equation}\label{mu:res:1}
\begin{split}
&\mu= A+B, \\
A= \frac{2\pi i}{\sqrt{ h}}\, \res_{k=p}\,I(k),\quad 
&B= \frac1{\sqrt{h}}\,\int_{\gamma}I(k)\,dk,\qquad 
I=e^{\frac{2\pi ikz}h}\, v\,(k).
\end{split}
\end{equation}
Using the representation~\eqref{v:form}, the information on the poles of 
the
function $\sigma_{\pi h}(z)$ from section~\ref{sigma:poles,zeros},
and formula~\eqref{sigma:res}, we get
\begin{equation*}
A=a(p)\,e^{\frac{2\pi ip z}h}
\end{equation*}
with $a(p)$ given by~\eqref{a(p,h)}.

Now, to complete the proof of the proposition, we need  only to
estimate the term $B$  in~\eqref{mu:res:1}.
Let  $\gamma_+$ ($\gamma_-$) be the part of $ \gamma$ located above 
(resp., 
below) the line $\im k=\im p$. First, we  choose $\gamma_\pm$, and then, 
we  prove that
\begin{equation}\label{gamma1gamma2}
\left|\int_{\gamma_\pm}I(k)\,dk\right|\le 
H\,\left|e^{\frac{2\pi ip z}h}\right|\, e^{-2\pi \beta  |y|}.
\end{equation}

We begin with estimating the integral along $\gamma_-$.
We remind that the  exponential $e^{\frac{\pi i(k-k_-(z))^2}{h}}$
governs the behavior of $I(k)$, the integrand in~\eqref{mu:form}, 
as  $\im k\to-\infty$, see the beginning of the proof of 
Proposition~\ref{pro:mu:up}. We assume that  $y<-Y$ with 
a positive $Y$. Therefore, $\im k_-(z)>Y$.

For $\xi\in\C$, we denote by  $H_i(\xi)$ the smooth curve
described by an  equation of the form $\im (k-k_-(z))^2=c$, \ $c\in\R$,  
and containing $\xi$.
If $c=0$ this curve is one of the straight lines $k_-(z)+\R$ and 
$k_-(z)+i\R$,
otherwise it is a hyperbola located in one of the sectors bounded by these
lines. Its asymptotes are two half lines of these straight lines.
Let $H_r(\xi)$ be the smooth curve described  by an  equation of the form
$\re (k-k_-(z))^2=c$, \ $c\in\R$, and containing  $\xi$. If $c=0$ this 
curve is
one of the straight lines $k_-(z)+e^{\pm \frac{i\pi}4}\R$, 
otherwise it is a hyperbola located in one of the sectors bounded by 
these lines, and they are its 
asymptotes.

Set $k_1=p-\beta h$. As $0<\beta<\alpha<1$, the point $k_1$
is to the right of the line $\re k=p-\alpha h$ and to the left of $p$. As
$p\in P$, one has $|\im k_1|\le h<1$.

Let $Y$ be sufficiently large, and $y<-Y$. Then  the hyperbola $H_i(k_1)$
stays in the half plane $\im k< \im k_-(z)$ and intersects the line $\im 
k=-2$
at a point $k_2$.We denote by $\gamma_1$ its segment of $H_i(k_1)$ between
$k_1$ and $k_2$.
\\
Furthermore, if $Y$ is sufficiently large, $H_r(k_2)$ is a hyperbola
located below $k_-(z)$. We denote by  $\gamma_2$   its segment between
$k_2$ and $\infty$ along which $\re k\to -\infty$ as $k\to\infty$.
The curve $\gamma_-$ is the union of  $\gamma_1$  and $\gamma_2$,
see Fig.~\ref{fig:1}.

\begin{figure}[t!]\centering
  \includegraphics[width=0.5\textwidth]{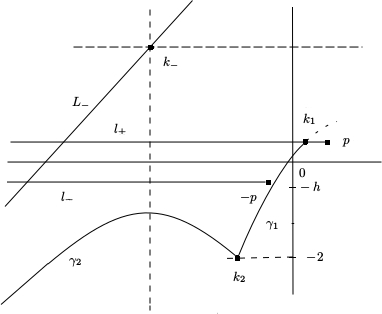}
  \caption{Curves $\gamma_1$ and $\gamma_2$}
\label{fig:1}
\end{figure} 

If $Y$ is sufficiently big, then (1) the curve
$\gamma_-$ does stays between $p$ and all the other poles of $v$; (2) its
segment $\gamma_2$   is located below the poles of the integrand at a 
distance
greater than $1$.
\\
Let us estimate $\int_{\gamma_1}I(p)\,dp$.
We note that, by the definition of $H_i$ and by~\eqref{exps},
the expression $\im\left(2k z+k^2-k-kh\right)$ is constant on
$H_i(k_1)$. By~\eqref{v:form} we get
%
\begin{multline}\label{int-term:1}
\left|\frac1{\sqrt{h}}
\int_{\gamma_1}e^{\frac{2\pi izk}h}v(k)dk\right|
\le 
\frac{C\left|e^{-\frac{C(H_i)}{h}}\right|}{\sqrt{h}}\times\\
 \times\int_{\gamma_1} |\sigma_{\pi h}(2\pi(k+p-{\textstyle \frac 
h2-\frac12})) 
\sigma_{\pi h}(2\pi(k-p-{\textstyle \frac h2-\frac12}))dk|
\end{multline}
%
where
\begin{equation*}
C(H_i)=\pi \left. \im \left(2kz+k^2-k-kh\right)
\right|_{H_i(k_1)},\quad
\end{equation*}
Computing $C(H_i)$ at the point $k_1$, we get
\begin{equation}\label{C(H)}
\left|e^{-\frac{C(H_i)}{h}}\right|\le C\,\left|e^{\frac{2\pi ip 
z}h}\right|\,
e^{-2\pi \beta  |y|}.
\end{equation}
Let us estimate the integrand in the right hand side
of~\eqref{int-term:1}.  Using~\eqref{eq:sigma}, we get
\begin{equation*} 
\sigma_{\pi h}(2\pi(k-p-{\textstyle\frac h2-\frac12}))=
\frac{\sigma_{\pi h}(2\pi(k-p+{\textstyle \frac h2-\frac12}))}{1-e^{-2\pi 
i(k-p)}}. 
\end{equation*}
For $k\in\gamma_1$, one has
\begin{equation*}
\begin{split}
\re  (k+p-h/2 -1/2)&\ge -1/2-h/2+2\re p  - \beta h - C(Y) |\im (k-p)|\\
&\ge -1/2-h/2+(\alpha - \beta) h - C(Y) |\im (k-p)|,
\\
\re (k-p+h/2 -1/2)&\ge  -1/2-h/2+(1-\beta) h-C(Y) |\im (k-p)|.
\end{split}
\end{equation*}
where $C(Y)>0$ tends to zero as $Y\to\infty$.
These observations and Corollaries~\ref{cor:sigma:2}--~\ref{cor:sigma:3}
imply that, for sufficiently large $Y$ and $k\in\gamma_1$, 
\begin{equation*}
|\sigma_{\pi h}(2\pi(k+p-{\textstyle \frac h2-\frac12}))\, 
\sigma_{\pi h}(2\pi(k-p-{\textstyle\frac h2-\frac12}))|\le H.
\end{equation*}
This estimate and~\eqref{C(H)} imply estimate~\eqref{gamma1gamma2}
with $\gamma_1$ instead of $\gamma_\pm$.

Consider the integral $\int_{\gamma_2}I(k)\,dk$.
As $\gamma_2$ stays below the poles of $I$, at a distance greater than 
$1$, 
by means of Corollary~\ref{cor:sigma:2}, one immediately obtains
\begin{equation*}
|\sigma_{\pi h}(2\pi(k+p-{\textstyle \frac h2-\frac12} ))\, 
\sigma_{\pi h}(2\pi (k-p-{\textstyle\frac h2-\frac12} ))|\le  H,\quad 
k\in\gamma_2,
\end{equation*}
and
\begin{equation*}
\left|\int_{\gamma_2} I\,dk\right|\le H\,
\left|e^{\frac{\pi i}h\,\left.(2kz+k^2- k-kh)\right|_{k=k_2}}
\right|\,\int_{\gamma_2}\left|e^{\frac{\pi i}h 
((k-k_-(z))^2-(k_2-k_-(z))^2)} \,dk \right|.
\end{equation*}
Clearly,
\begin{equation*}
\left|e^{\frac{\pi i}h 
\left.(2kz+k^2-k-kh)\right|_{k=k_2}}\right|=e^{-\frac{C(H_i)}h}\le
C\,\left|e^{\frac{2\pi ip z}h}\right| e^{-2\pi\beta  |y|}.
\end{equation*}
We remind that curve $\gamma_2\subset H_r(k_2)$ goes to infinity 
approaching the 
asymptote $e^{i\pi/4}(-\infty,0]$. Integrating by parts, we get
\begin{equation*}
\int_{\gamma_2} \left|e^{\frac{\pi i}h ((k-k_-(z))^2-(k_2-k_-(z))^2)} 
\,dk\right|\le 
Ch/|k_2-k_-|.
\end{equation*}
These estimates imply that $\int_{\gamma_2} I\,dk$ satisfies an estimate 
of 
the 
form~\eqref{gamma1gamma2}. This implies~\eqref{gamma1gamma2} with
$\gamma_-$.

The estimates of the integral along $\gamma_+$, the part of
$\gamma$  above the line $\im k=\im p$ are similar. We omit the
details and mention only that now the role of  $e^{i\frac{(k-k_-(z))^2}h}$
is played by the exponential $e^{-i\frac{(k-k_+(z))^2}h}$.
The obtained estimates and the formula for $A$ imply~\eqref{mu:down:1}.  
This completes the proof of~\eqref{mu:down:1}.
Representation~\eqref{mu:down:2} is obtained similarly.
\end{proof}
%
\subsection{Rough estimates}
%
We shall need
\begin{lemma}\label{mu:estimates} Pick $X>0$. Let $p\in P$
and $|x|\le X$. One has
\begin{equation}\label{mu-est}
|\mu(z)|,\; |\mu'(z)|\le H w(x,y),\quad
w(x,y)=(1+|y|)\begin{cases} e^{\frac{2\pi(|x|-1/2)y}h-\pi y}, & y\ge
0,\\ e^{\frac{2\pi \re p |y|}h}, &  y\le 0.
\end{cases}
\end{equation}
\end{lemma}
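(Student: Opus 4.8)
The plan is to derive the rough estimate \eqref{mu-est} directly from the more precise asymptotic results already at hand, namely Propositions~\ref{pro:mu:up} and~\ref{pro:mu:down}, together with the behavior of $\mu$ on a bounded horizontal strip. First I would split the strip $|x|\le X$ into three horizontal regions: the upper region $y\ge Y$, the lower region $y\le -Y$, and the bounded rectangle $|y|\le Y$, where $Y$ is the (finite, $h$-independent) constant furnished simultaneously by Propositions~\ref{pro:mu:up} and~\ref{pro:mu:down}. In the upper region, Proposition~\ref{pro:mu:up} gives $\mu=\nu_+\xi_++\nu_-\xi_-$ with $\nu_\pm=\mp e^{\mp i\pi/4}v_\pm F_\pm$ and $F_\pm=O(H)$ for $y\ge -Y$; since $|v_\pm|=e^{O(1/h)}=O(H)$ by their explicit formulas in~\eqref{v:down}--\eqref{v:up}, and $|\xi_\pm(z)|=e^{\mp\pi y^2/h\cdot(\ldots)}$—more precisely $|\xi_\pm(z)|=e^{\mp 2\pi xy/h\mp\pi x^2/h\pm\pi y^2/h-\pi y}$, wait, I should just compute $|\xi_\pm(z)|$ carefully from $\xi_\pm(z)=e^{\pm\pi iz^2/h+i\pi z/h+\pi iz}$—one finds $|\xi_\pm(z)|=e^{\mp 2\pi xy/h-\pi y/h-\pi y}$, so both are bounded by $e^{2\pi X|y|/h-\pi y/h-\pi y}\le H e^{(2\pi(|x|-1/2)y)/h-\pi y}$ on $|x|\le X$, $y\ge 0$, after absorbing the constant shift into $H$. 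Hence $|\mu(z)|\le Hw(x,y)$ in the upper region.

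In the lower region $y\le -Y$, Proposition~\ref{pro:mu:down} gives, whichever of \eqref{mu:down:1} or \eqref{mu:down:2} applies, that $\mu(z)=O_H(1)\cdot e^{2\pi ipz/h}+O_H(1)\cdot e^{-2\pi ipz/h}$, where for the case $\re p\le\alpha h/2$ the second term is present with coefficient $a(-p)$. Now $|e^{\pm 2\pi ipz/h}|=e^{\mp 2\pi(\re p\cdot(-y)+\im p\cdot x)/h}\le e^{2\pi\re p|y|/h}\cdot e^{2\pi h X/h}=e^{2\pi\re p|y|/h}\cdot e^{2\pi X}$ since $|\im p|\le h$ and $|x|\le X$; the constant $e^{2\pi X}$ is harmless. (In the regime $\re p\le\alpha h/2$ one has $\re p|y|\le$ both exponents up to $O(1)$, so the bound $e^{2\pi\re p|y|/h}$ dominates both terms after enlarging the constant.) Thus $|\mu(z)|\le H e^{2\pi\re p|y|/h}=Hw(x,y)$ in the lower region, absorbing the polynomial factor $(1+|y|)$ trivially. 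Finally, in the bounded rectangle $|y|\le Y$, $|x|\le X$, the estimate $|\mu(z)|\le H$ was established at the end of the proof of Proposition~\ref{pro:mu:up} (the contour through the circle $c_r$), and on this compact set $w(x,y)\ge c>0$ uniformly, so $|\mu(z)|\le H\le H w(x,y)$ after adjusting constants.

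For the derivative $\mu'$, I would either differentiate the integral representation~\eqref{mu:form} under the integral sign—which only inserts a factor $2\pi ik/h$ into the integrand, an extra $O(H(1+|y|))$ on each of the contours used above, producing the extra $(1+|y|)$ in $w$—or, more cheaply, apply a Cauchy estimate on a disk of radius, say, $h/2$ (or any fixed small radius) around $z$, using the bound on $|\mu|$ just proved on the slightly enlarged strip: $|\mu'(z)|\le (2/h)\max_{|w-z|=h/2}|\mu(w)|$, and since $w(x,y)$ changes only by a factor $e^{O(1)}$ over such a disk (its logarithmic derivative in $y$ is $O(1/h)$, so over a disk of radius $h/2$ the ratio is $e^{O(1)}$), this gives $|\mu'(z)|\le H w(x,y)$ as well. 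I expect the only mildly delicate point to be bookkeeping the constant shifts in the exponents—matching $e^{2\pi X|y|/h}$ type errors against the precise exponent $2\pi(|x|-1/2)y/h-\pi y$ in $w$—and checking that in the transitional case $0\le\re p\le\alpha h/2$ of \eqref{mu:down:2} the sum of the two exponentials is still dominated by $e^{2\pi\re p|y|/h}$; both are routine, since all discrepancies are $e^{O(1/h)}=O(H)$ and get absorbed into the symbol $H$.
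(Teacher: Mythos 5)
Your overall plan — split into $y\ge Y$, $|y|\le Y$, $y\le -Y$; use Propositions~\ref{pro:mu:up} and~\ref{pro:mu:down} for the outer regions; feed $|\mu|$ into a Cauchy estimate to get $|\mu'|$ — is the same as the paper's. But two of the steps you declare "routine" are exactly where the argument needs care, and as written they do not go through.

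\textbf{Lower region near $p=0$.} In the regime $0\le\re p\le\alpha h/2$ you write $\mu=O_H(1)e^{2\pi ipz/h}+O_H(1)e^{-2\pi ipz/h}$, treating the coefficients $a(\pm p)$ as $O_H(1)$. This is false: estimate~\eqref{est:a_0} gives only $|pa(p)|\le H$ and $|a(p)+a(-p)|\le H$, so $a(\pm p)\sim H/|p|$ individually and blow up as $p\to 0$. To get the bound $He^{2\pi\re p|y|/h}$ you have to pair the two exponentials into $\cosh$ and $\sinh$,
\begin{equation*}
a(p)e^{2\pi ipz/h}+a(-p)e^{-2\pi ipz/h}
=(a(p)+a(-p))\cosh\!\tfrac{2\pi ipz}{h}+(a(p)-a(-p))\sinh\!\tfrac{2\pi ipz}{h},
\end{equation*}
bound the $\cosh$-term by $|a(p)+a(-p)|\le H$, and in the $\sinh$-term extract a factor of $p$ from $\sinh(2\pi ipz/h)$ to cancel the $1/p$ singularity via $|pa(\pm p)|\le H$; the residual $|z|/h$ is absorbed into $H(1+|y|)$. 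This cancellation is the whole point of the hypothesis~\eqref{est:a_0} and cannot be waved away as a bookkeeping issue.

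\textbf{Cauchy disk of radius $h/2$ in the upper region.} You argue that $w$ varies by a factor $e^{O(1)}$ over a disk of radius $h/2$ because $\partial_y\ln w=O(1/h)$, but you neglect $\partial_x\ln w$. For $y\ge 0$, $w(x,y)=(1+y)e^{2\pi(|x|-1/2)y/h-\pi y}$, so $\partial_x\ln w=\pm 2\pi y/h$; over a displacement of size $h/2$ in $x$ this changes $\ln w$ by $\approx\pi y$, an uncontrolled factor $e^{\pi y}$. The fix — and what the paper does — is to shrink the disk to radius $h/y$ for $y\ge Y$: then the $x$-variation of $\ln w$ is $O(1)$ over the disk, and the price of the smaller radius is a factor $y/h$ from the Cauchy formula, which is what the prefactor $(1+|y|)$ in $w$ (together with $1/h\le H$) is there to absorb. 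So your ``any fixed small radius'' comment is wrong for $y$ large; the radius must shrink like $1/y$. (In the lower half-plane $w$ is $x$-independent, so there a radius $h/2$ disk is fine.)

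One further slip worth flagging: in the upper region you go through $e^{2\pi X|y|/h-\pi y/h-\pi y}\le He^{2\pi(|x|-1/2)y/h-\pi y}$, which is false when $|x|<X$ and $y$ is large. You do not need this detour at all: your own computation $|\xi_\pm(z)|=e^{\mp 2\pi xy/h-\pi y/h-\pi y}$ already yields $|\xi_\pm(z)|\le e^{2\pi(|x|-1/2)y/h-\pi y}$ directly, since $\mp xy\le|x|y$ for $y\ge 0$.
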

When proving this lemma we use
\begin{lemma}
Pick $\alpha\in (0,1)$. For  $ p\in P$ one has 
\begin{equation}
  \label{est:a0}
  |a(p)|\le H\quad\text{if}\quad |p|\ge \alpha h/2,
\end{equation}
\begin{equation}\label{est:a_0}
|a(p)+a(-p)|\le H,\quad   |p a(p)|\le H\quad\text{if}\quad
|p|\le \alpha h/2.
\end{equation}
\end{lemma}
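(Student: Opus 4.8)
The plan is to isolate the whole difficulty in the single function $\sigma_{\pi h}$ near its rightmost pole. Write
$$
a(p)=c_0\,e^{g(p)}\,\sigma_{\pi h}(z_0+4\pi p),\qquad
c_0=e^{-\frac{i\pi}{12h}+\frac{i\pi}4-\frac{\pi ih}{12}},\quad
g(p)=\frac{\pi ip^2}{h}-\frac{ip\pi}h-\pi ip,
$$
where $z_0=-\pi-\pi h$, so that the argument of $\sigma_{\pi h}$ in~\eqref{a(p,h)} is $4\pi p-\pi h-\pi=z_0+4\pi p$. The constant $c_0$ is unimodular, and for $p=x+iy\in P$ one computes $\re g(p)=\frac{\pi y}h\,(1-2x+h)$; since $0\le x\le 1/2$ and $|y|\le h<1$, this is bounded in modulus by $2\pi$, so $|e^{g(p)}|\le e^{2\pi}$ on $P$. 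As $p$ ranges over $P$, the argument $z_0+4\pi p$ ranges over the compact rectangle $Q=\{z:\re z\in[-\pi-\pi h,\ \pi-\pi h],\ \im z\in[-4\pi h,\ 4\pi h]\}$. By the description of the poles of $\sigma_{\pi h}$ in section~\ref{sigma:poles,zeros}, the only pole of $\sigma_{\pi h}$ in $\overline Q$ is the simple pole $z_0$, sitting on the left edge on the real axis; every other pole has real part $\le-\pi-3\pi h$, hence lies at distance $\ge 2\pi h$ from $\overline Q$. Consequently $\tau(z):=(z-z_0)\,\sigma_{\pi h}(z)$ is analytic in a neighbourhood of $\overline Q$, with $\tau(z_0)$ equal to the residue of $\sigma_{\pi h}$ at $z_0$.

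The sole analytic input needed is the uniform bound $|\tau(z)|\le H$ on $\overline Q$, together with $|\tau'(z)|\le H$ on $\{|z-z_0|\le 2\pi\alpha h\}$. I would get these by combining Corollary~\ref{cor:sigma:2} (which bounds $\sigma_{\pi h}$ by $H$ on bounded sets staying away from its poles) with the residue formula~\eqref{sigma:res} (which controls $\sigma_{\pi h}$ near $z_0$ and in particular gives $|\tau(z_0)|\le H$, indeed of order $\sqrt h$): this yields $|\tau|\le H$ on $\partial Q$, and the maximum principle propagates it to $\overline Q$; for the derivative one applies Cauchy's inequality on a circle centred at $z_0$ of radius strictly between $2\pi\alpha h$ and $2\pi h$, which meets no pole of $\sigma_{\pi h}$. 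Granting this, the estimate~\eqref{est:a0} and the second bound in~\eqref{est:a_0} are immediate: since $(z_0+4\pi p)-z_0=4\pi p$, one has $4\pi p\,a(p)=c_0\,e^{g(p)}\,\tau(z_0+4\pi p)$, hence $|p\,a(p)|\le\frac1{4\pi}e^{2\pi}H\le H$ for every $p\in P$, which is the second bound in~\eqref{est:a_0}; and for $|p|\ge\alpha h/2$ it gives $|a(p)|\le\frac2{\alpha h}H\le H$, which is~\eqref{est:a0}. (As usual, factors $1/h$ are absorbed into the exponential in $H$, whose constant may depend on the fixed $\alpha$.)

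It remains to prove $|a(p)+a(-p)|\le H$ for $|p|\le\alpha h/2$, and here the mechanism is the cancellation of the simple pole of $a$ at $p=0$: the residues of $p\mapsto a(p)$ and of $p\mapsto a(-p)$ at $p=0$ are $\frac{c_0}{4\pi}\tau(z_0)$ and $-\frac{c_0}{4\pi}\tau(z_0)$, so $a(p)+a(-p)$ is regular there. Quantitatively, with $\Phi(p)=e^{g(p)}\tau(z_0+4\pi p)-e^{g(-p)}\tau(z_0-4\pi p)$ one has $4\pi p\,(a(p)+a(-p))=c_0\,\Phi(p)$ and $\Phi(0)=0$ (because $g(0)=0$), so the mean value inequality gives $|\Phi(p)|\le|p|\,\sup_{|p'|\le|p|}|\Phi'(p')|$. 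Differentiating $\Phi$ and using $|g'(p')|\le C/h$ for $|p'|\le\alpha h/2$, $|e^{\pm g(p')}|\le e^{2\pi}$, and $|\tau|,|\tau'|\le H$ on $\{|z-z_0|\le 2\pi\alpha h\}$, one obtains $|\Phi'|\le CH/h$, whence $|a(p)+a(-p)|=\frac1{4\pi|p|}|\Phi(p)|\le\frac{CH}{4\pi h}\le H$. I expect the real obstacle to be not this bookkeeping but the uniform-in-$h$ control of $\sigma_{\pi h}$ near $z_0$ — $z_0$ and the next pole are only $2\pi h$ apart — which is precisely what section~\ref{sigma:poles,zeros}, formula~\eqref{sigma:res} and Corollary~\ref{cor:sigma:2} are built to provide; the one structurally new observation in the argument is the pole cancellation in $a(p)+a(-p)$.
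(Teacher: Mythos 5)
Your overall strategy coincides with the paper's: pull off the unimodular factor and the bounded exponential $e^{g(p)}$, locate the single relevant simple pole of $\sigma_{\pi h}$ at $z_0=-\pi-\pi h$, observe that $p\,a(p)$ is analytic across $p=0$ and that the residues of $a(p)$ and $a(-p)$ cancel, and then propagate a boundary estimate through the $O(h)$-disk via the maximum principle / Cauchy inequalities. The reformulation in terms of $\tau(z)=(z-z_0)\sigma_{\pi h}(z)$ and the function $\Phi$ with $\Phi(0)=0$ is just a repackaging of the paper's ``the two combinations are analytic near $p=0$''; it is correct.

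There is, however, a genuine gap in the one step you yourself flag as the real content, namely the uniform bound $|\tau|\le H$ on $\partial Q$. You propose to get it from Corollary~\ref{cor:sigma:2} together with the residue formula~\eqref{sigma:res} and the maximum principle. But Corollary~\ref{cor:sigma:2} only controls $\sigma_{\pi h}$ outside a \emph{fixed} $\delta$-neighborhood of the ray $\{z<-\pi\}$, with $\delta$ independent of $h$; since $z_0=-\pi-\pi h$ lies on that ray and $\partial Q$ passes through $z_0$, a whole $O(\delta)$-portion of $\partial Q$ is not covered. The residue formula~\eqref{sigma:res} only gives the single value $\tau(z_0)$, not a quantitative estimate at nearby boundary points, so it cannot fill that gap. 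The maximum principle can only transport a bound you already have on the whole boundary. To obtain $|\tau|\le H$ near $z_0$ one needs either Corollary~\ref{cor:sigma:3} (which handles $z$ within $O(h)$ to the right of $z_0$) or, more directly, Theorem~\ref{theta:uni-rep:2}, which gives the $\Gamma$-function representation of $\sigma_{\pi h}$ in a fixed neighborhood of $-\pi$ and in particular yields $|\sigma_{\pi h}| \le C/h$ on any circle $|z-z_0| = c\,h$ with $0<c<2\pi$. That is exactly what the paper uses: for $|p|\ge\alpha h/2$ it invokes both Corollaries~\ref{cor:sigma:2} and~\ref{cor:sigma:3}, and for $|p|\le\alpha h/2$ it gets the bound on the circle $|p|=\alpha h/2$ from Theorem~\ref{theta:uni-rep:2} and then applies the maximum principle to the analytic combinations. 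With one of these tools added, your argument closes; without it, the claim ``this yields $|\tau|\le H$ on $\partial Q$'' is unsupported.

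A minor further remark: even the pair Corollary~\ref{cor:sigma:2}/Corollary~\ref{cor:sigma:3} does not literally cover the sliver of $\partial Q$ on the line $\re z=-\pi-\pi h$ with $|\im z|\lesssim h$, so in that regime it is cleaner to work with the disk $|z-z_0|\le 2\pi\alpha h$ whose boundary circle stays at uniform distance from $z_0$ (as the paper does), rather than with the rectangle $Q$ whose boundary contains $z_0$.
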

\begin{proof} Let $|p|\ge \alpha h/2$ and $p\in P$. 
Corollaries~\ref{cor:sigma:2} and~\ref{cor:sigma:3} imply that 
$|\sigma_{\pi h}(4\pi p-\pi h-\pi))|\le H$.
This and~\eqref{a(p,h)} lead to~\eqref{est:a0}.

Assume that $|p|\le\alpha h/2$ ($p$ is not necessarily in $P$).
In view of~\eqref{a(p,h)}, it suffices to check that
\begin{equation*}
|\sigma_{\pi h}(4\pi p-\pi h-\pi)+
\sigma_{\pi h}(-4\pi p-\pi h-\pi)|\le \frac{C}h,\quad 
|p \sigma_{\pi h}(4\pi p-\pi h-\pi)|\le C.
\end{equation*}
Both the functions 
$p\mapsto \sigma_{\pi h}(4\pi p-\pi h-\pi)+
\sigma_{\pi h}(-4\pi p-\pi h-\pi)$ 
and $p\mapsto p\sigma_{\pi h}(4\pi p-\pi h-\pi)$
are analytic in $p$ in the $\frac{\alpha h}2$-neighborhood of zero (see 
section~\ref{sigma:poles,zeros}). By Theorem~\ref{theta:uni-rep:2}
$\sigma_{\pi h}(2\pi(2p-{\textstyle\frac{h}2-\frac12}))$ is bounded by 
$C/h$ 
at the boundary of this neighborhood. This and the maximum principle 
imply the needed estimates.
\end{proof}
Now we can check Lemma~\ref{mu:estimates}.
\begin{proof}  Let $p\in P$. Pick $Y$ sufficiently large.
For $y\ge - Y$,  the estimate for $\mu$ follows directly from
Proposition~\ref{pro:mu:up}.
For $|y|\le Y$, the estimate for $\mu'$ follows from the estimate for $\mu$
and the Cauchy estimates for the derivatives of analytic functions
(as $X$ and $Y$ were chosen rather arbitrarily).
Let $y\ge Y$ and $|x|\le X$. 
The first estimate  in~\eqref{mu-est} implies that, in the 
$(h/y)$-neighborhood 
of $z$, $\mu$ is bounded by $H w(x,y)$ (we again use the fact
that $X$ and $Y$ were chosen rather arbitrarily).  
This and  the Cauchy estimates for the derivatives of analytic functions 
lead then
to the estimate $|\mu'(z)|\le yH w(x,y)$.  This completes the proof 
of~\eqref{mu-est}
for $y\ge -Y$.

Let us prove~\eqref{mu-est} for $y\le -Y$. Pick $0<\alpha<1$. Let
$|x|\le X$,  \  $y\le -Y$ and
$\re p\ge \alpha h/2$. Estimate~\eqref{est:a0} and 
Proposition~\ref{pro:mu:down} imply that  
\begin{equation}\label{est:mu:rough:1}
|\mu(z) e^{-\frac{2\pi ipz}h}|\le H.
\end{equation}
By means of the Cauchy estimates for the derivatives of analytic functions 
we get the estimate
\begin{equation}\label{est:mu:rough:2}
\left|\frac{d}{dz}(\mu(z) e^{-\frac{2\pi ipz}h})\right|\le H.
\end{equation}
Estimates \eqref{est:mu:rough:1}--\eqref{est:mu:rough:2} lead 
to~\eqref{mu-est}
for $|x|\le X$, $y\le -Y$ and $\re p\ge \alpha h/2$.
If $0\le \re p\le \alpha h/2$, the estimates for $\mu$ and its derivative 
are 
deduced from~\eqref{mu:down:2} and~\eqref{est:a_0} similarly. 
We omit further details.
\end{proof}
%
\subsection{One more solution to the model equation}
%
Let
\begin{equation}\label{tildeMu}
\tilde\mu\,(z,p)=e^{-i\pi z/h}\,\mu\,(z+1/2,1/2-p),
\end{equation}
were we indicate the dependence of $\mu$ on $p$ explicitly.
Together with $\mu$, \ $\tilde \mu$ is  a solution to \eqref{mu:eq}.
It is entire in $z$ and $p$. We use it to construct entire solutions to 
the 
Harper equation.
Here, we compute the Wronskian $\{\mu,\,\tilde \mu\}=
\mu(z+h) \tilde \mu(z)-\mu(z)\tilde \mu(z+h)$. 
\begin{lemma}
For all $p\in\C$, 
\begin{equation}\label{wronskian:formula}
\{\mu,\,\tilde \mu\}=
ie^{-\frac{2\pi i p^2}h-\frac{i\pi}{12h}-\frac{\pi ih}3}.
\end{equation}
\end{lemma}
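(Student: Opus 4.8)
The plan is to exploit the fact that the Wronskian $\{\mu,\tilde\mu\}$ is $h$-periodic in $z$ (as a Wronskian of two solutions to the same second-order difference equation \eqref{mu:eq}), hence a bounded entire function of $z$ provided we can control its growth, and therefore a constant; after that one only needs to evaluate it along a convenient path, namely $y=\im z\to-\infty$, where Proposition~\ref{pro:mu:down} gives explicit asymptotics of $\mu$. First I would record that $\{\mu,\tilde\mu\}$ is entire in $z$ (since $\mu$ and $\tilde\mu$ are entire) and $h$-periodic; then I would use the rough bounds of Lemma~\ref{mu:estimates} together with the analogous bounds for $\tilde\mu$ (which follow from \eqref{tildeMu} and Lemma~\ref{mu:estimates} applied with $p$ replaced by $1/2-p$, after absorbing the factor $e^{-i\pi z/h}$) to see that $\{\mu,\tilde\mu\}$ grows at most like $e^{o(z^2/h)}$ in every horizontal strip, and in fact is bounded on the line $y\to-\infty$. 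Periodicity in $z$ plus boundedness on one horizontal line (and the standard Phragmén--Lindelöf argument in the half-plane, using that the growth of each factor is at most Gaussian) forces $\{\mu,\tilde\mu\}$ to be independent of $z$.

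Next I would compute the constant by taking $y\to-\infty$ with $\re p$ bounded away from $0$, i.e.\ in the regime of \eqref{mu:down:1}. There $\mu(z)=e^{\frac{2\pi ipz}{h}}(a(p)+O_H(e^{-2\pi\beta|y|}))$, so $\mu(z+h)=e^{\frac{2\pi ip(z+h)}{h}}(a(p)+o(1))=e^{2\pi ip}e^{\frac{2\pi ipz}{h}}(a(p)+o(1))$. For $\tilde\mu$, formula \eqref{tildeMu} gives $\tilde\mu(z,p)=e^{-i\pi z/h}\mu(z+1/2,1/2-p)$, and applying \eqref{mu:down:1} with $p$ replaced by $1/2-p$ (valid when $\re(1/2-p)$ is bounded away from $0$ as well, which holds for $p$ in a suitable subinterval) yields $\tilde\mu(z,p)=e^{-i\pi z/h}e^{\frac{2\pi i(1/2-p)(z+1/2)}{h}}(a(1/2-p)+o(1))$. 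Multiplying out, the leading exponentials in $\mu(z+h)\tilde\mu(z)$ and in $\mu(z)\tilde\mu(z+h)$ coincide (they must, since the Wronskian is constant), and the $z$-dependent Gaussian/linear factors cancel; what remains is an explicit combination of $a(p)$, $a(1/2-p)$, and the elementary phases $e^{2\pi ip}$, $e^{\pm i\pi h/h}=e^{\pm i\pi}$, etc. Substituting the explicit formula \eqref{a(p,h)} for $a$ and using the functional equation \eqref{eq:sigma} for $\sigma_{\pi h}$ to relate $\sigma_{\pi h}(4\pi p-\pi h-\pi)$ and $\sigma_{\pi h}(4\pi(1/2-p)-\pi h-\pi)=\sigma_{\pi h}(\pi-4\pi p-\pi h)$, the $\sigma$-factors combine into an elementary expression, and a direct bookkeeping of the exponents $e^{\pm i\pi p^2/h}$, $e^{\pm i\pi/(12h)}$, $e^{\pm i\pi h/12}$ produces the claimed value $ie^{-\frac{2\pi ip^2}{h}-\frac{i\pi}{12h}-\frac{\pi ih}{3}}$.

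The main obstacle is the bookkeeping in the two steps above: first, making the Phragmén--Lindelöf / Liouville argument fully rigorous requires knowing that the product of the Gaussian growth factors of $\mu(z+h)\tilde\mu(z)$ and of $\mu(z)\tilde\mu(z+h)$ genuinely cancels in the \emph{difference}, not merely that each term is $e^{O(z^2/h)}$ — i.e.\ one should first verify algebraically that the two $z$-quadratic exponentials are identical (from $\xi_\pm$ in Proposition~\ref{pro:mu:up} and the shift structure) so that $\{\mu,\tilde\mu\}$ is actually subexponential, and only then invoke periodicity. Second, the evaluation of the constant is a careful but routine computation with the functional equation for $\sigma_{\pi h}$ and the explicit phase factors; the only subtlety is that \eqref{mu:down:1} requires $\re p$ bounded below, so the identity is established for $p$ in a subinterval of $(0,1/2)$ and then extended to all $p\in\C$ by analyticity, since both sides of \eqref{wronskian:formula} are entire in $p$ (the left side because $\mu,\tilde\mu$ are entire in $p$, the right side manifestly).
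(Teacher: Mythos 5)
Your approach is essentially the paper's: evaluate $\{\mu,\tilde\mu\}$ as $y\to-\infty$ using Proposition~\ref{pro:mu:down} and the defining relation~\eqref{tildeMu}, argue that constancy follows from $h$-periodicity plus control of the Wronskian as $y\to+\infty$, and then extend from a subinterval of $p$ to all of $\C$ by analyticity in $p$. Two small remarks on the details. For the constancy step the paper does not invoke Phragm\'en--Lindel\"of: it restricts $x$ to $[-h,0]$, where the Gaussian factors from Lemma~\ref{mu:estimates} already cancel, and reads off a polynomial bound $|\{\mu,\tilde\mu\}|\le H|y|$ as $y\to+\infty$; combined with the finite limit as $y\to-\infty$ and periodicity, Liouville's theorem for the Laurent expansion in $\zeta=e^{-2\pi iz/h}$ gives constancy directly -- so your observation that one must first verify the algebraic cancellation of the quadratic exponentials is exactly the mechanism, but the paper packages it as the choice of strip rather than as a Phragm\'en--Lindel\"of argument. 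Second, in the final bookkeeping step the functional equation~\eqref{eq:sigma} alone does not relate $\sigma_{\pi h}(4\pi p-\pi-\pi h)$ to $\sigma_{\pi h}(\pi-4\pi p-\pi h)$, since their difference is not a multiple of $2\pi h$; you also need the reflection identity~\eqref{sigma:sym}, which is what the paper uses in deriving~\eqref{wr-relation}. Neither of these affects the correctness of your plan, only the bookkeeping you would have to fill in.
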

\begin{proof} Pick $X>0$ and $0<\alpha<\frac12$. 
Assume that $\alpha h/2\le p\le1/2-\alpha h/2$. 
Then, by means of~\eqref{tildeMu},~\eqref{mu:down:1} and~\eqref{est:a0}, 
we check that, uniformly in $x\in [-X,X]$, as $y\to-\infty$ the Wronskian
$\{\mu,\,\tilde \mu\}$ tends to
\begin{equation}\label{eq:2:w}
2i e^{\frac{i\pi}{2h}-\frac{i\pi p}h}\,a(1/2-p)a(p)\sin(2\pi p) .
\end{equation}
By means of~\eqref{mu-est}, 
we check that, as $y\to+\infty$, uniformly in $x\in[-h,0]$, 
\begin{equation*}
|\{\mu(z),\,\tilde \mu(z)\}|\le H |y|
\left( e^{\frac{2\pi y}h (x-\frac12+|x+1/2|)}+1\right)\le H |y|.
\end{equation*}
The Wronskian  being entire (as  $\mu$ and $\tilde \mu$) and  
$h$-periodic in $z$ (as the Wronskian of any two solutions to a 
one-dimensional difference Schr\"odinger equation, see 
section~\ref{sss:MM:diffSch}),
we conclude that it is independent of  $z$ and  equals  the expression 
in~\eqref{eq:2:w}. As the Wronskian is entire in $p$, this 
statement is valid for all $p$. 
Finally, using the definition of $a$, equation~\eqref{eq:sigma} and 
formula~\eqref{sigma:sym}, we check that
\begin{equation}\label{wr-relation}
2i \,a(1/2-p)a(p)\sin(2\pi p)=
ie^{-\frac{2\pi i p^2}h+\frac{i\pi p}h-\frac{7i\pi}{12h}-\frac{\pi ih}3}.
\end{equation}
This leads to the statement of  the lemma.
\end{proof}
%
%
\section{Analytic solution to Harper equation}
\label{sec:int-eq}
%
%
\subsection{Preliminaries}
%
%
For $Y\in\R$, we  set $\C_+(Y)=\{y> Y\}$. 
Here, we pick $Y>0$ and for sufficiently small $\lambda>0$ 
construct a solution to~\eqref{eq:harper} analytic in $\C_+(-Y)$. 

Below, we represent the spectral parameter $E$ in the form $E=2\cos (2\pi 
p)$
and consider solutions to~\eqref{eq:harper} as functions of the parameter 
$p$.

As $\lambda$ is small, then, when constructing solutions
to~\eqref{eq:harper} in $\C_+(-Y)$, it is natural to rewrite 
this equation in the form
\begin{equation} \label{pert-model}
\psi\,(z+h)+\psi\,(z-h)+\lambda\,e^{-2\pi iz}\psi\,(z)-2\cos(2\pi 
p)\,\psi(z)=
-\lambda\,e^{2\pi iz}\,\psi(z),
\end{equation}
so that the term in the right hand side could be considered as a 
perturbation.
Let $\xi=\frac1{2\pi}\ln\lambda$. Then $\mu(z+i\xi)$
is a solution to the unperturbed equation
\begin{equation}\label{un-pert-eq}
\psi\,(z+h)+\psi\,(z-h)+\lambda\,e^{-2\pi iz}\psi\,(z)-2\cos(2\pi 
p)\,\psi(z)=0.
\end{equation}
We construct $\psi$, an analytic solution
to equation~\eqref{pert-model}  close to
$\mu(z+i\xi)$. We prove
\begin{theorem}\label{psi0} Pick positive $Y$. 
There exists $C$ such that if \ 
$\lambda\le e^{-\frac Ch}$, \ then:
\point There is $\psi_0$, a solution to~\eqref{eq:harper}  analytic
in $(z,p)\in \C_+(-Y)\times P$;
\point Pick positive $X$. As $y\to+\infty$, uniformly in $x\in[-X,X]$
\begin{equation}\label{psi-0:up}
\psi_0(z)=e^{\frac{i\pi}4}\,\xi_-(z+i\xi)\,v_-\,(1+\varkappa_0 +o(1))
-e^{-\frac{i\pi}4}\,\xi_+(z+i\xi)\,v_+\,(1+o(1)),
\end{equation}
where $\varkappa_0$ is a constant satisfying the estimate
\begin{equation*}\dsize
|\varkappa_0|\le H\lambda^{1+\nu(p)}(1+|\xi|)^3,\quad
\nu(p)=\min\left\{1,\frac{1-2\re p}h\right\}.
\end{equation*}
\point Fix $X>0$. For $|x|\le X$ and $y\le Y/2$, one has
\begin{equation}\label{psi-0:down}
|\psi_0(z)-\mu(z+i\xi)|\le H\lambda (1+|\xi|)^3\,e^{2\pi \re p|\xi|/h}.
\end{equation}
\end{theorem}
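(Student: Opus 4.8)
The plan is to construct $\psi_0$ as a fixed point of a contraction, treating the right hand side $-\lambda e^{2\pi iz}\psi(z)$ of \eqref{pert-model} as a small perturbation of the model equation \eqref{un-pert-eq}, whose solution $\mu(z+i\xi)$ is already understood via Propositions~\ref{pro:mu:up} and~\ref{pro:mu:down}. First I would fix a second, linearly independent solution of the unperturbed equation (for instance $\tilde\mu(z+i\xi)$, whose Wronskian with $\mu(z+i\xi)$ is the nonzero constant from \eqref{wronskian:formula} up to the shift) and write down the variation-of-constants formula: any solution of \eqref{pert-model} that behaves like $\mu(z+i\xi)$ near $+i\infty$ must satisfy an integral (in fact, a summation) equation of the form
\begin{equation*}
\psi(z)=\mu(z+i\xi)+(\mathcal K\psi)(z),\qquad
(\mathcal K\psi)(z)=-\lambda\sum_{n\ge 1}G(z,z+nh)\,e^{2\pi i(z+(n-1)h)}\,\psi(z+(n-1)h),
\end{equation*}
where $G$ is the Green's kernel built from the two model solutions and their constant Wronskian, and the sum runs ``towards $+i\infty$'' so that it converges given the decay of one model solution there. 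The key point of the construction is to choose the right weighted sup-norm on $\C_+(-Y)$: using the rough bounds \eqref{mu-est} of Lemma~\ref{mu:estimates} for $\mu$ (and hence for $\tilde\mu$), one sees that the natural weight is $w(x,y)$ multiplied by the exponential $|e^{2\pi ip(z+i\xi)/h}|$ or its $\xi_\pm$-counterparts, and in that norm $\mathcal K$ gains a factor $\lambda$ times a power of $(1+|\xi|)$ coming from summing the geometric-type series against the $e^{2\pi iz}$ factor.

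Second, I would verify that $\mathcal K$ maps this weighted space into itself with norm $O_H(\lambda(1+|\xi|)^3)$: each application of $G$ against $e^{2\pi iz}$ contributes, after using \eqref{mu-est} and the explicit exponential rates in $w$, a convergent sum whose total size is controlled by $\lambda$ times the stated power of $|\xi|$ and a constant of the form $Ce^{C/h}$ — this is exactly where the hypothesis $\lambda\le e^{-C/h}$ enters to make the operator a genuine contraction. The factor $e^{2\pi\re p|\xi|/h}$ in \eqref{psi-0:down} is precisely the weight in which $\mu(z+i\xi)$ itself lives in the lower part of the strip (by \eqref{mu:down:1}–\eqref{mu:down:2} and \eqref{est:a0}–\eqref{est:a_0}), so estimating $\psi_0-\mu(z+i\xi)=\mathcal K\psi_0$ in that weighted norm gives \eqref{psi-0:down} directly. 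Analyticity of the fixed point in $(z,p)\in\C_+(-Y)\times P$ follows because each term in the series is analytic (the model solutions $\mu,\tilde\mu$ are analytic in $p$, the Wronskian is a nonvanishing entire function of $p$, and $G$ therefore depends analytically on $p$), and the series converges uniformly on compact subsets; $\psi_0$ is a solution of \eqref{pert-model}, hence of \eqref{eq:harper}, by construction.

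Third, for the asymptotics \eqref{psi-0:up} as $y\to+\infty$, I would feed the decomposition $\mu=\nu_+\xi_++\nu_-\xi_-$ from Proposition~\ref{pro:mu:up} into the fixed-point formula: the leading behavior of $\psi_0$ is that of $\mu(z+i\xi)$, namely $-e^{-i\pi/4}v_+\xi_+(z+i\xi)(1+o(1))+e^{i\pi/4}v_-\xi_-(z+i\xi)(1+o(1))$ (using $F_\pm=1+o(1)$ and the relation $\nu_\pm=\mp e^{\mp i\pi/4}v_\pm F_\pm$), and the correction $\mathcal K\psi_0$ must be shown to modify only the coefficient of the subdominant exponential $\xi_-$, producing the constant $\varkappa_0$; here one has to be careful which of $\xi_+,\xi_-$ is dominant for large $y$ (the term $e^{+\pi i z^2/h}$ decays, the other grows, or vice versa depending on sign conventions) and to extract from the summation the genuinely constant limit $\varkappa_0$ plus an $o(1)$ remainder, with $|\varkappa_0|\le H\lambda^{1+\nu(p)}(1+|\xi|)^3$ obtained by the same weighted estimates, the sharper exponent $1+\nu(p)$ coming from the extra decay that the Green's kernel picks up between the $\xi_+$ and $\xi_-$ sectors when $\re p$ is bounded away from $1/2$. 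The main obstacle I anticipate is bookkeeping in this last step: controlling the summation defining $\mathcal K$ so finely that the correction to the \emph{leading} exponential vanishes while the correction to the subdominant one converges to an honest constant with the claimed $\lambda^{1+\nu(p)}$ bound — uniformly in $p\in P$ including the delicate regime $\re p$ near $1/2$ where $\nu(p)$ degenerates — whereas setting up the contraction and proving analyticity should be routine once the weighted norm is chosen correctly.
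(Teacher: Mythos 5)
Your overall plan — set up a perturbative fixed-point equation around $\mu(z+i\xi)$ using a Green's kernel built from $\mu$ and $\tilde\mu$, choose a weight matching the growth of $\mu$, show the integral operator is a contraction of size $O_H(\lambda(1+|\xi|)^3)$, and then read off the boundary asymptotics — is the right skeleton, and it is indeed the route the paper takes. But the concrete equation you write down does not work: you propose a \emph{discrete} variation-of-constants sum $\sum_{n\ge 1}G(z,z+nh)e^{2\pi i(z+(n-1)h)}\psi(z+(n-1)h)$, which runs horizontally ($z\mapsto z+nh$), while simultaneously asserting it ``runs towards $+i\infty$.'' These are not the same direction, and in the horizontal direction the sum fails: by Lemma~\ref{mu:estimates} the weight $w(x,y)$ and hence $\mu,\tilde\mu$ have no decay at all as $\re z$ grows (for fixed $y\le 0$ the bound is independent of $x$, and for $y>0$ it even grows), and $|e^{2\pi i(z+nh)}|=e^{-2\pi y}$ is also independent of $n$. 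So the series is not absolutely convergent, and no choice of weighted norm rescues it.

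The missing device — which is precisely what makes the paper's construction go through, and is the key trick of Buslaev--Fedotov's method — is the kernel factor $\Theta(z,z')=\cot\frac{\pi(z'-z)}{h}-i$ used to convert the formal sum over shifts into an \emph{integral along a vertical contour} $\gamma\subset i\R$: $K\psi(z)=\int_\gamma\kappa(z,z')\psi(z')\,dz'$ with $\kappa$ as in~\eqref{int-eq}. Because $\Theta$ has simple poles exactly at $z'=z+kh$, a residue computation shows that $K\psi$ solves the inhomogeneous difference equation, while the contour is vertical where $\mu,\tilde\mu$ \emph{do} decay and the kernel estimate of Lemma~\ref{le:kernel-estimate} can be proved. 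Two further steps you treat as automatic also need separate arguments in this framework: (i) the fixed point is initially defined only on $\gamma$, and analyticity in $z\in\C_+(-Y)$ must be obtained by deforming $\gamma$ strip-by-strip in increments of width $h$; and (ii) the estimate~\eqref{psi-0:down} only follows directly from the weighted fixed-point norm for $|x|$ of order $h$, and is extended to $|x|\le X$ by a discrete Gr\"onwall argument with $N\sim X/h$ steps, producing the $e^{C/h}$ loss absorbed in $H$. Without the vertical-contour kernel the rest of your outline cannot even begin, so I would call this a genuine gap rather than a difference of route.
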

The rest of the section is devoted to the proof of Theorem~\ref{psi0}.

Let us explain the idea of the proof of Theorem~\ref{psi0}.
Let $\gamma=\{z\in i\R\,:\, y\ge -Y\}$.
We study  the integral equation 
\begin{equation}\label{int-eq}
\psi\,(z)=\mu\,(z+i\xi)-(K\psi)(z),\quad K\psi(z)=\int_{\gamma}
\kappa\,(z,\,z')\psi\,(z')\,dz',\quad z\in \gamma.
\end{equation}
The kernel $\kappa$  is constructed in terms of  $\mu(\cdot+i\xi)$ and 
$\tilde \mu(\cdot +i\xi)$, two
linearly independent solutions to  the unperturbed 
equation~\eqref{un-pert-eq},
\begin{gather*}
\kappa\,(z,\,z')= \frac {\lambda}{2ih}\,\Theta\,(z,\,z')\, \frac{
        [\,\mu(z+i\xi)\,\tilde \mu\,(z'+i\xi)-
         \mu\,(z'+i\xi)\,\tilde \mu\,(z+i\xi)\,]}
        {\{\mu,\,\tilde \mu\}}\,e^{2\pi iz'},
\\
\Theta\,(z,\,z')=\ctg\frac{\pi(z'-z)}h\,-i.
\end{gather*}
Similar integral operators have appeared in~\cite{Bu-Fe:96b}.
The kernel $\kappa$ can be considered as a difference analog 
of the resolvent kernel  arising in the theory of differential equations.
First, we construct a solution to the integral equation \eqref{int-eq},
and then, we check that it is analytic in $z$ and satisfies the difference
equation~\eqref{eq:harper}. Finally, we obtain the asymptotics of this
solution for $y\to+\infty$ and for $y\sim 0$.
%
\subsection{Integral equation} 
%
Here, we  prove the existence of a solution (continuous 
in $z$ and analytic in $p$) to the integral equation.
Below,
\begin{equation*}
q(y)=(1+|y|)\begin{cases}
e^{-\frac{\pi y}h-\pi y}, &y\ge 0,\\
e^{\frac{2\pi \re p |y|}h},&y\le 0,
\end{cases}
\quad
\tilde q(y)=(1+|y|)\begin{cases}
e^{+\frac{\pi y}h-\pi y}, & y\ge 0,\\
e^{-\frac{2\pi \re p |y|}h},& y\le 0.
\end{cases}
\end{equation*}
\subsubsection{Estimates of $\mu$ and $\tilde \mu$}
To estimate the norm of the integral operator, we use
\begin{Cor}\label{Mu,tildeMu:estimates}
On the curve $\gamma$, the functions $\mu$ and $\tilde \mu$ satisfy the 
estimates
\begin{equation}\label{mu-est:rough}
|\mu(z)|,\,|\mu'(z)| \le H q(y),\qquad 
|\tilde \mu(z)|,\,|\tilde \mu'(z)|\le H \tilde q(y),
\end{equation}
\end{Cor}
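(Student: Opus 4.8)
The statement to prove is Corollary~\ref{Mu,tildeMu:estimates}, giving the bounds $|\mu(z)|,|\mu'(z)|\le Hq(y)$ and $|\tilde\mu(z)|,|\tilde\mu'(z)|\le H\tilde q(y)$ on the contour $\gamma=\{z\in i\R:y\ge -Y\}$.

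The plan is to derive both estimates directly from Lemma~\ref{mu:estimates}, which already gives $|\mu(z)|,|\mu'(z)|\le Hw(x,y)$ with the weight $w(x,y)=(1+|y|)e^{2\pi(|x|-1/2)y/h-\pi y}$ for $y\ge 0$ and $(1+|y|)e^{2\pi\re p|y|/h}$ for $y\le 0$. First I would observe that on $\gamma$ one has $x=\re z=0$, so $|x|=0$; substituting into $w$ gives exactly $w(0,y)=(1+|y|)e^{-\pi y/h-\pi y}$ for $y\ge 0$ and $(1+|y|)e^{2\pi\re p|y|/h}$ for $y\le 0$, which is precisely $q(y)$. This immediately yields $|\mu(z)|,|\mu'(z)|\le Hq(y)$ on $\gamma$. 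One technical point: Lemma~\ref{mu:estimates} is stated for $|x|\le X$ with $X>0$ fixed; since $x=0$ on $\gamma$ lies inside any such strip, we may simply take, say, $X=1$.

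For $\tilde\mu$, I would use the definition $\tilde\mu(z,p)=e^{-i\pi z/h}\mu(z+1/2,1/2-p)$ from~\eqref{tildeMu}. On $\gamma$ we have $z=iy$, so $|e^{-i\pi z/h}|=|e^{\pi y/h}|=e^{\pi y/h}$, and the shifted argument $z+1/2$ has real part $1/2$ and imaginary part $y$. Applying Lemma~\ref{mu:estimates} to $\mu(\cdot,1/2-p)$ at the point $z+1/2$ (whose imaginary part is still $y$, and whose real part $1/2$ again lies in the relevant strip, and noting $1/2-p\in P$ when $p\in P$ since $\re(1/2-p)=1/2-\re p\in[0,1/2]$ and $|\im(1/2-p)|=|\im p|\le h$) gives $|\mu(z+1/2,1/2-p)|,|\mu'(z+1/2,1/2-p)|\le Hw(1/2,y)$ with $\re p$ replaced by $\re(1/2-p)=1/2-\re p$. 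For $y\ge 0$, $w(1/2,y)=(1+|y|)e^{2\pi(1/2-1/2)y/h-\pi y}=(1+|y|)e^{-\pi y}$; multiplying by the prefactor $e^{\pi y/h}$ gives $(1+|y|)e^{\pi y/h-\pi y}$, which is $\tilde q(y)$. For $y\le 0$, $w(1/2,y)=(1+|y|)e^{2\pi(1/2-\re p)|y|/h}$; multiplying by $|e^{\pi y/h}|=e^{-\pi|y|/h}$ gives $(1+|y|)e^{2\pi(1/2-\re p)|y|/h-\pi|y|/h}=(1+|y|)e^{-2\pi\re p|y|/h}$, again exactly $\tilde q(y)$. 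For the derivative $\tilde\mu'$, I would use $\tilde\mu'(z)=-\tfrac{i\pi}{h}e^{-i\pi z/h}\mu(z+1/2,1/2-p)+e^{-i\pi z/h}\mu'(z+1/2,1/2-p)$; the factor $\pi/h$ is absorbed into the constant $H$ (recall $H$ denotes quantities of the form $Ce^{C/h}$), and both terms are bounded by $H\tilde q(y)$ by the estimates just established.

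There is no substantial obstacle here; the only thing requiring a moment of care is bookkeeping the $1/2$-shift and the parameter change $p\mapsto 1/2-p$, and checking that $1/2-p$ remains in the admissible set $P$, together with absorbing the $1/h$-type factor from differentiating $e^{-i\pi z/h}$ into the symbol $H$. Since everything reduces to evaluating the weight $w$ of Lemma~\ref{mu:estimates} at the two real parts $x=0$ and $x=1/2$, the proof is short; I would simply write it out as the two displayed substitutions above.
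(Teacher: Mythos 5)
Your proposal is correct and follows exactly the route the paper intends: the paper's proof is the single line ``This Corollary follows directly from Lemma~\ref{mu:estimates}'', and your careful substitutions $x=0$ for $\mu$, and $x=1/2$ together with the prefactor $|e^{-i\pi z/h}|=e^{\pi y/h}$ and the parameter change $p\mapsto 1/2-p$ for $\tilde\mu$, are precisely what that ``directly'' compresses. The only detail you add explicitly that the paper leaves tacit is that $1/2-p\in P$ when $p\in P$ and that the extra $1/h$ from differentiating $e^{-i\pi z/h}$ is harmlessly absorbed into $H$; both observations are correct.
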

This Corollary follows directly from Lemma~\ref{mu:estimates}.
\subsubsection{A solution to the integral equation}
One has
\begin{Pro}\label{pro:psi-a:1}
Fix positive $\alpha<1$.
There is a positive constant $C$
such that if
$\lambda\le e^{-\frac{C}h}$, then
the integral equation~\eqref{int-eq} has a solution
$\psi_0$ continuous in $z\in\gamma$, analytic in $p\in P$
and satisfying the estimate
\begin{equation}\label{est:psi-a:1}
\left|\psi_0(z)-\mu(z+i\xi)\right|\le
 \lambda^\alpha H q(y+\xi),\quad z\in\gamma.
\end{equation}
\end{Pro}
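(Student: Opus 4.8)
The plan is to solve the integral equation~\eqref{int-eq} by a Neumann series in a suitable weighted space of continuous functions on $\gamma$, and then to read off the continuity in $z$, the analyticity in $p$, and the estimate~\eqref{est:psi-a:1} from the geometric convergence of that series. Let $\mathcal B$ be the Banach space of continuous functions $f\colon\gamma\to\C$ with finite norm $\|f\|=\sup_{z\in\gamma}|f(z)|/q(y+\xi)$, where $z=iy$. Since $\re(z+i\xi)=0$ and $p\in P$, Lemma~\ref{mu:estimates} with $x=0$ gives $|\mu(z+i\xi)|\le H\,q(y+\xi)$ on $\gamma$, so $\mu(\cdot+i\xi)\in\mathcal B$ with $\|\mu(\cdot+i\xi)\|\le H$. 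Equation~\eqref{int-eq} reads $(I+K)\psi=\mu(\cdot+i\xi)$, and $K$ maps $\mathcal B$ to $\mathcal B$ because on $\gamma$ the only possible singularity of $\kappa$ is on the diagonal $z'=z$, where the pole of $\Theta$ is cancelled by the vanishing of the bracket; so it suffices to invert $I+K$ on $\mathcal B$.

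The heart of the matter is the operator estimate: I claim that, uniformly in $z\in\gamma$ and $p\in P$,
\begin{equation*}
\int_\gamma|\kappa(z,z')|\,q(y'+\xi)\,|dz'|\le\lambda\,H\,(1+|\xi|)^{C}\,q(y+\xi),\qquad z'=iy',
\end{equation*}
i.e. $\|K\|\le\lambda H(1+|\xi|)^C$. Three ingredients enter. First, by~\eqref{wronskian:formula} the Wronskian $\{\mu,\tilde\mu\}=ie^{-2\pi ip^2/h-i\pi/(12h)-\pi ih/3}$ in the denominator of $\kappa$ is bounded below by a positive constant for $p\in P$. Second, Corollary~\ref{Mu,tildeMu:estimates} (more precisely Lemma~\ref{mu:estimates} in a neighbourhood of $\gamma$) gives $|\mu(z+i\xi)|,|\mu'(z+i\xi)|\le Hq(y+\xi)$ and $|\tilde\mu(z+i\xi)|,|\tilde\mu'(z+i\xi)|\le H\tilde q(y+\xi)$, together with the elementary bound $q(s)\tilde q(s)\le(1+|s|)^2$. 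Third, on $\gamma$ the factor $\Theta(z,z')=\ctg\frac{\pi(z'-z)}h-i$ has at most a simple pole at $z'=z$, where the bracket $\mu(z+i\xi)\tilde\mu(z'+i\xi)-\mu(z'+i\xi)\tilde\mu(z+i\xi)=\mu(z+i\xi)[\tilde\mu(z'+i\xi)-\tilde\mu(z+i\xi)]-\tilde\mu(z+i\xi)[\mu(z'+i\xi)-\mu(z+i\xi)]$ vanishes to first order, so the product stays regular and controlled by the above derivative bounds; moreover $|\Theta(z,z')|\le C$ when $z'$ lies above $z$ and $|\Theta(z,z')|\le Ce^{-2\pi(y-y')/h}$ when $z'$ lies below $z$. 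Splitting $\gamma$ into $\{|z'-z|\le h\}$, $\{y'>y,\ |z'-z|>h\}$, $\{y'<y,\ |z'-z|>h\}$ and bounding the bracket by $H|z'-z|(q(y+\xi)\tilde q(y'+\xi)+\tilde q(y+\xi)q(y'+\xi))$ near the diagonal and by $H(q(y+\xi)\tilde q(y'+\xi)+\tilde q(y+\xi)q(y'+\xi))$ away from it, one checks that in each part the integrand is at most $\lambda Hq(y+\xi)$ times an integrable function of $y'$ whose integral is $O((1+|\xi|)^C)$ uniformly in $y\ge-Y$: the decays $|e^{2\pi iz'}|=e^{-2\pi y'}$ as $y'\to+\infty$ and $|\Theta|\le Ce^{-2\pi(y-y')/h}$ as $y'\to-\infty$ (here $1/h>1$ is used) absorb the polynomial weights $(1+|y'+\xi|)^2$ coming from $q\tilde q$ into bounded-in-$y$ factors of size $(1+|\xi|)^C$, and the factor $\tilde q(y+\xi)$ produced by the second bracket term is compensated by the extra exponential decay gained from $\Theta$ and $e^{2\pi iz'}$ in the relevant range of $y'$. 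This proves the claimed bound on $\|K\|$.

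Since $1+|\xi|=1+\tfrac1{2\pi}|\ln\lambda|$, the factors $H=Ce^{C/h}$ and $(1+|\xi|)^C$ are beaten by a small power of $\lambda$, so for $\lambda\le e^{-C'/h}$ with $C'$ large enough one has $\|K\|\le\lambda H(1+|\xi|)^C\le\lambda^\alpha H<1$. Hence $I+K$ is invertible on $\mathcal B$ and $\psi_0:=(I+K)^{-1}\mu(\cdot+i\xi)=\sum_{n\ge0}(-K)^n\mu(\cdot+i\xi)$, the series converging geometrically in $\mathcal B$; in particular $\psi_0$ is continuous in $z\in\gamma$, and
\begin{equation*}
\|\psi_0-\mu(\cdot+i\xi)\|\le\frac{\|K\|}{1-\|K\|}\,\|\mu(\cdot+i\xi)\|\le\lambda^\alpha H,
\end{equation*}
which is exactly~\eqref{est:psi-a:1}. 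For the analyticity in $p$: the kernel $\kappa(z,z';p)$ is analytic in $p\in P$, since it is built from $\mu$ and $\tilde\mu$, which are analytic in $p$, and from the nowhere-vanishing Wronskian~\eqref{wronskian:formula}; the operator bound above is uniform in $p\in P$, so each iterate $(-K)^n\mu(\cdot+i\xi)$ is analytic in $p$ (Morera together with Fubini), and the series converges locally uniformly in $p$, whence $\psi_0$ is analytic in $p\in P$ by Weierstrass' theorem.

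The main obstacle is the operator estimate of the second paragraph: it requires careful bookkeeping of the competing exponential factors — the Gaussian-type growth of $\mu$ and $\tilde\mu$ in the imaginary direction, encoded in $q$ and $\tilde q$, against the $\ctg$-decay of $\Theta$ and the decay of $e^{2\pi iz'}$ — the removal of the diagonal singularity of $\Theta$ through the first-order vanishing of the bracket there, and the verification that all bounds are uniform in $z\in\gamma$, i.e. that no factor growing with $y=\im z$ survives.
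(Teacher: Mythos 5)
Your argument is essentially the paper's own proof, restated: the same weighted space $C(\gamma,q)$, the same contraction/Neumann-series inversion of $I+K$, the same three ingredients for the kernel bound (boundedness of the Wronskian, the $q,\tilde q$ estimates for $\mu,\tilde\mu$, the cancellation of the $\Theta$-pole by the first-order vanishing of the bracket at $z'=z$), and the same splitting into near- and off-diagonal parts together with the exponential-versus-polynomial bookkeeping. The only cosmetic difference is that the paper establishes the sharper pointwise kernel estimate $\lambda H e^{-2\pi y'}(1+\xi^2)$ (Lemma~\ref{le:kernel-estimate}) before integrating, whereas you estimate the integral directly with a coarser $(1+|\xi|)^C$ power; both yield $\|K\|\le\lambda^\alpha H$ for $\lambda\le e^{-C/h}$.
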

\begin{proof}
Let $C(\gamma,q)$ be the space of functions defined and continuous
on $\gamma$  and having the finite norm
$\|f\|=\sup_{z\in \gamma} |q^{-1}(y+\xi)\,f(z)|$.
The proof  is based on 
\begin{lemma}\label{le:kernel-estimate}
For $z,\,z'\in \gamma$, one has
  \begin{equation}\label{kernel-estimate}
|q^{-1}(y+\xi)\,\kappa\,(z,\,z')\,q(y'+\xi)|\le \lambda\,H\,
e^{-2\pi y'}(1+\xi^2).
\end{equation}
\end{lemma}
First, we prove the proposition, and then, we check
estimate~\eqref{kernel-estimate}. This estimate implies that the norm of 
$K$ as an operator acting in $C(\gamma, q)$ is bounded by
$\lambda\,H\,(1+\xi^{2})$. By Corollary~\ref{Mu,tildeMu:estimates}, 
$\mu(.+\xi)\in C(\gamma,q)$. So, there is a positive constant 
$C$ such that, if $\lambda(1+\xi^{2})\, <e^{-C/h}$, then there is 
$\psi_0$, 
a solution to~\eqref{int-eq} from $C(\gamma,q)$. The estimate of 
the norm of the integral operator  implies that
$$
\left|\psi_0(z)-\mu(z)\right|\le
\lambda(1+\xi^{2})H q(y+\xi),\quad z\in\gamma.
$$
This  implies~\eqref{est:psi-a:1} for any fixed positive $\alpha<1$.
The analyticity of $\psi_0$ in $p$ follows from one of $\mu$
and the uniformity of the estimates. The proof is  complete.\qed

Let us prove Lemma~\ref{le:kernel-estimate}. 
Below, $z,z'\in \gamma$. We note that by~\eqref{wronskian:formula}, for
$p\in P$ one has $C^{-1}\le \{\mu(z),\tilde\mu(z)\}\le C$. 

First, we consider 
the case where $|y-y'|\ge h$. In view of 
Corollary~\ref{Mu,tildeMu:estimates}, we get 
\begin{gather*}
|q^{-1}(y+\xi)\,\kappa\,(z,\,z')\,q(y'+\xi)|\le \lambda H e^{-2\pi 
y'}(E_1+E_2),\\
E_1=|\Theta(z,z')|\,\tilde q(y'+\xi)\,q(y'+\xi),\qquad
E_2=|\Theta(z,z')|\,\frac{\tilde q(y+\xi)}{q(y+\xi)}\,q^2(y'+\xi).
\end{gather*}
To justify~\eqref{kernel-estimate}, it suffices to check that 
$E_{1,2}\le C\,(1+|\xi|)^2$. Note that
\begin{equation}\label{est:theta}
  |\Theta(z,z')|\le C\,\begin{cases} e^{-\frac{2\pi(y-y')}h}&\text{ if 
}y-y'\ge 
h,\\
 1 & \text{ if } y'-y\ge h.\end{cases}
\end{equation}
Clearly, $E_1\le C\,\tilde q(y'+\xi) q(y'+\xi)$. For $y'\ge -Y$, we have
\begin{equation}\label{kernel-estimate:2a}
\begin{split}
\tilde q(y'+\xi) q(y'+\xi)&\le (1+|y'+\xi|)^2e^{-2\pi(y'+\xi)}\le 
C,\quad \text{if}\quad y'+\xi\ge0,\\
\tilde q(y'+\xi) q(y'+\xi)&\le (1+|y'+\xi|)^2\le 
C\,(1+|\xi|)^2 \quad \text{otherwise}.
\end{split}
\end{equation}
This implies that  $E_{1}\le C\,(1+|\xi|)^2$.
To estimate $E_2$, we have to consider four cases. If $y+\xi,\,y'+\xi\ge 
0$, 
we have
\begin{equation*}
  E_2\le |\Theta(z,z')|\,(1+|y'+\xi|)^2\,e^{\frac{2\pi(y-y')}h-2\pi 
(y'+\xi)}
\le C\, |\Theta(z,z')|\,e^{\frac{2\pi(y-y')}h}\le C.
\end{equation*}
If $y+\xi\ge  0\ge y'+\xi$, then
\begin{equation*}
\begin{split}
  E_2\le(1+|y'+\xi|)^2\, &|\Theta(z,z')|\,
e^{\frac{2\pi(y+\xi)}h-\frac{4\pi\re p(y'+\xi)}h}\\
&\le C\,(1+\xi^2)\,e^{\frac{2\pi (1-2\re p)(y'+\xi)}h}\le C\,(1+\xi^2).
\end{split}
\end{equation*}
If $y'+\xi\ge  0\ge y+\xi$, then
\begin{equation*}
\begin{split}
  E_2\le (1+|&y'+\xi|)^2\,|\Theta(z,z')|\,
e^{\frac{4\pi\re p(y+\xi)}h-\frac{2\pi(y'+\xi)}h-2\pi (y'+\xi)}\\
&\le C\,|\Theta(z,z')|\,e^{\frac{4\pi\re p(y+\xi)}h-\frac{2\pi(y'+\xi)}h}
\le C\,|\Theta(z,z')|\le C.
\end{split}
\end{equation*}
Finally, if $y+\xi,\,y'+\xi\le 0$, we get
\begin{equation*}
  E_2\le (1+|y'+\xi|)^2\,|\Theta(z,z')|\,e^{\frac{4\pi \re p(y-y')}h}\le 
C\,(1+|\xi|)^2.
\end{equation*}
These estimates imply that  $E_{2}\le C\,(1+|\xi|)^2$. This completes 
the proof in the case where $|y-y'|\ge h$.
\\
Let us consider the case where $|y-y'|\le h$. Let $\eta=\im \zeta$.
Using~\eqref{mu-est} we get  
\begin{equation*}
\begin{split}
 |\Theta(z,z') \,(\mu(z+i&\xi)\,\tilde \mu\,(z'+i\xi)-
         \mu\,(z'+i\xi)\,\tilde \mu\,(z+i\xi)\,)|\\
&\le Ch\,\max_{\zeta\in\gamma,\, |y-\eta|\le h}\left|
\mu(z+i\xi)\,\tilde \mu'(\zeta+i\xi)-
         \mu'(\zeta+i\xi)\,\tilde \mu\,(z+i\xi)\right|\\
&\le H \left(q(y+\xi)\max_{|y-\eta|\le h}\tilde q(\eta+\xi)+
\tilde q(y+\xi)\max_{|y-\eta|\le h}q(\eta+\xi)\right)\\
&\le Hq(y+\xi)\tilde q(y+\xi),
\end{split}
\end{equation*}
and, using~\eqref{kernel-estimate:2a}, we again come 
to~\eqref{kernel-estimate}. This completes the proof. 
\end{proof}
Note that Corollary~\ref{Mu,tildeMu:estimates}
and Proposition~\ref{pro:psi-a:1} imply
\begin{Cor}
In the case of Proposition~\ref{pro:psi-a:1}, there is  $C$
such that, for $\lambda\le e^{-\frac{C}h}$,
\begin{equation}\label{est:psi-a:2}
|\psi_0(z)|\le H q(y+\xi),\quad z\in\gamma.
\end{equation}
\end{Cor}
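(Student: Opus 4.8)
The plan is to derive the bound directly from the triangle inequality, using the two statements that immediately precede it as a black box. Write
$$\psi_0(z)=\mu(z+i\xi)+\bigl(\psi_0(z)-\mu(z+i\xi)\bigr),\qquad z\in\gamma,$$
and estimate the two summands separately.

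First I would control $\mu(z+i\xi)$. Corollary~\ref{Mu,tildeMu:estimates} asserts that on $\gamma$ one has $|\mu(w)|\le H\,q(\im w)$; applying this with $w=z+i\xi$, whose imaginary part is $y+\xi$, gives $|\mu(z+i\xi)|\le H\,q(y+\xi)$. Here one only has to note that $\xi=\frac1{2\pi}\ln\lambda<0$ for $\lambda<1$ is harmless, since $q$ is defined for every real value of its argument and, with $Y$ fixed as in Proposition~\ref{pro:psi-a:1}, the point $z+i\xi$ still lies in the region where the rough estimate of Lemma~\ref{mu:estimates} (hence Corollary~\ref{Mu,tildeMu:estimates}) was proved. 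Second, Proposition~\ref{pro:psi-a:1} provides, for a fixed $\alpha\in(0,1)$ and $\lambda\le e^{-C/h}$, the bound $|\psi_0(z)-\mu(z+i\xi)|\le\lambda^\alpha H\,q(y+\xi)$ on $\gamma$.

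Adding the two inequalities yields $|\psi_0(z)|\le(1+\lambda^\alpha)\,H\,q(y+\xi)$; since $\lambda^\alpha\le1$ and a sum of two quantities of the form $Ce^{C/h}$ is again of that form, the right-hand side is $\le H\,q(y+\xi)$, which is precisely~\eqref{est:psi-a:2}. There is essentially no obstacle: the statement is a one-line consequence of the triangle inequality and the bookkeeping convention for $H$, the only point deserving a remark being the matching of the argument shift by $i\xi$ between Corollary~\ref{Mu,tildeMu:estimates} and Proposition~\ref{pro:psi-a:1}.
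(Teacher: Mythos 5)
Your proof is correct and is essentially the paper's argument: the paper simply states that the corollary follows from Corollary~\ref{Mu,tildeMu:estimates} and Proposition~\ref{pro:psi-a:1}, i.e., from exactly the triangle-inequality decomposition you spell out. Your remark that the bound on $\mu(z+i\xi)$ really rests on Lemma~\ref{mu:estimates} (since $z+i\xi$ need not lie on $\gamma$) is a fair and correct clarification.
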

\subsection{ Analytic continuation of the solution to the
integral equation}
Here, we prove the first point of Theorem~\ref{psi0}. One has
\begin{lemma}
The solution $\psi_0$ can be analytically
continued in $\C_+(-Y)$.
\end{lemma}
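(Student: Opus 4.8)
The plan is to continue $\psi_0$ off the integration contour $\gamma$ by means of the integral equation~\eqref{int-eq} itself. For $z\notin\gamma$ I put
$$
\psi_0(z)=\mu(z+i\xi)-\int_{\gamma}\kappa(z,z')\,\psi_0(z')\,dz',
$$
where on the right $\psi_0$ restricted to $\gamma$ is the function produced by Proposition~\ref{pro:psi-a:1}, and I study the analytic dependence of the right hand side on $z$. As $\mu$ is entire, the term $\mu(\cdot+i\xi)$ causes no trouble. Writing $\kappa(z,z')=\frac{\lambda}{2ih}\bigl(\ctg\tfrac{\pi(z'-z)}h-i\bigr)\dfrac{W(z,z')}{\{\mu,\tilde\mu\}}\,e^{2\pi iz'}$ with $W(z,z')=\mu(z+i\xi)\tilde\mu(z'+i\xi)-\mu(z'+i\xi)\tilde\mu(z+i\xi)$, one uses that $\{\mu,\tilde\mu\}$ is a nonzero constant by~\eqref{wronskian:formula}, that $W$ is entire in $(z,z')$, and that for fixed $z'\in\gamma$ the map $z\mapsto\kappa(z,z')$ has at most simple poles, situated at $z=z'+hn$, $n\in\Z$, hence on the lines $\re z=hn$. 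The pole with $n=0$ lies on $\gamma$, but it is removable, being cancelled by the simple zero of $W(z,z')$ at $z'=z$. Consequently the integrand is analytic in $z$ throughout the strip $\Pi=\{\,|\re z|<h\,\}\cap\C_+(-Y)$, and, by Corollary~\ref{Mu,tildeMu:estimates}, the bound $|\psi_0(z')|\le Hq(y'+\xi)$ on $\gamma$ and the decay~\eqref{est:theta} of $\Theta$, the integral converges absolutely and locally uniformly on $\Pi$. Differentiating under the integral sign gives an analytic function on $\Pi$, and on $\gamma\subset\Pi$ the formula reduces to~\eqref{int-eq}; so $\psi_0$ extends analytically to $\Pi$.

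To reach the whole half plane $\C_+(-Y)$ I would continue across the lines $\re z\in h\Z\setminus\{0\}$, one vertical strip $S_n=\{\,nh<\re z<(n+1)h\,\}$ at a time. When $z$ moves from the region already treated into $S_n$ (say $n\ge1$), the poles of $z'\mapsto\ctg\tfrac{\pi(z'-z)}h$ at $z'=z-h,\dots,z-nh$ cross $\gamma$, from its left to its right; keeping $\gamma$ fixed and adding the corresponding residue terms --- each of the form $\pm\,c\,W(z,z-kh)\,e^{2\pi i(z-kh)}\,\psi_0(z-kh)$ with a $z$-independent constant $c$, since $\res_{z'=z-kh}\ctg\tfrac{\pi(z'-z)}h=h/\pi$ --- produces the analytic continuation to $S_n$. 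Each residue term is analytic in $z$, since $W$ and the exponential are entire and the argument $z-kh$ lies in a strip already treated; the contour keeps the vertical asymptotic directions of $\gamma$, along which the integrand decays exactly as above, so convergence is preserved, and, if one prefers to deform $\gamma$ rather than collect residues, the deformation can be chosen within the region where $\psi_0$ is already analytic, away from its (known) singularities. Running this induction over $n\ge1$ and its mirror over $n\le-1$ covers all of $\C_+(-Y)$. Joint analyticity of the continuation in $(z,p)\in\C_+(-Y)\times P$ then follows from the analyticity of $\mu$ and $\tilde\mu$ in $p$ and the uniformity in $p\in P$ of all the estimates used.

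The main obstacle is precisely this last step. The bare integral formula is analytic only in the strip $|\re z|<h$, and the continuation across the lines $\re z\in h\Z$ must be organised carefully: one has to check that each crossing of $\re z=nh$ contributes exactly one simple-pole residue, that the resulting residue terms are again analytic because they are built only from already-continued values of $\psi_0$ (which is why the induction must advance by one strip of width $h$ at a time), that the successive contour deformations neither meet the poles of $\psi_0$ nor destroy the convergence of the integral, and that the pieces match across the lines $\re z=nh$. The remaining work is routine bookkeeping with the kernel estimate~\eqref{kernel-estimate} and the growth bounds~\eqref{mu-est}. (Alternatively, once $\psi_0$ is known to satisfy the Harper equation on a strip of width exceeding $2h$, the difference equation~\eqref{eq:harper} itself propagates analyticity to all of $\C_+(-Y)$; here, however, the continuation is obtained before the difference equation is verified.)
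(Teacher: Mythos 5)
Your proof is correct and takes essentially the same route as the paper: both begin by extending $\psi_0$ through the integral formula to the strip $S_h=\{|\re z|<h,\,y>-Y\}$ (using that the $z$-pole of the kernel at $z'=z$ is removable while the other poles of $z\mapsto\kappa(z,z')$ for $z'\in\gamma$ lie on the lines $\re z\in h\Z\setminus\{0\}$), and then iterate strip by strip of width $h$ by sliding the contour $\gamma$ through the region where $\psi_0$ has already been continued. The paper phrases the inductive step directly as a contour deformation inside $S_h$; your residue-collection formulation is an equivalent bookkeeping of the same deformation.
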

\noindent Similar statements were checked in~\cite{Bu-Fe:96b}
and~\cite{Bu-Fe:01},  we outline the proof only for the convenience
of the reader.
\begin{proof} For $z'\in \gamma$, the kernel
$\kappa\,(z,\,z')$ is analytic in $z\in S_h=\{|\re z|<h,\,y>-Y\}$, 
and the function $K\psi_0$ can be analytically
continued in $S_h$; \  $\psi_0$, being a solution to
\eqref{int-eq}, can be also analytically continued in $S_h$.

Having proved that $\psi_0$ is analytic in $S_h$,
one can deform the integration contour in the formula
for $K\psi_0$ inside $S_h$, and check
that, in fact, $\psi_0$ can be analytically continued in
$S_{2h}=\{|\re z|<2h,y>-Y\}$. Continuing in this way,
one comes to the statement of the Lemma.
\end{proof}

Below, we denote by $\psi_0$ also the analytic continuation 
of the old $\psi_0$. 
\subsubsection{Function $\psi_0$ and the difference equation}
%
To check that $\psi_0$ satisfies~\eqref{pert-model}, we again borrow
an argument from~\cite{Bu-Fe:96b} and~\cite{Bu-Fe:01}. 

We call a curve in $\C$ vertical if along it $x$ is a smooth function of 
$y$.
For $z\in \C_+(-Y)$ we denote by $\gamma(z)$ a vertical curve that begins 
at $-iY$,
goes upward to $z$, then comes back to the imaginary axis and goes 
along it to $+i\infty$. One can compute $K\psi_0(z)$ by the formula 
in~\eqref{int-eq} with the integration path $\gamma$ replaced with 
$\gamma(z)$.   

Set $({\mathcal H}_0\,f)\,(z)=f\,(z+h)+f\,(z-h)+\lambda\,e^{-2\pi i z}f(z)-
2\cos(2\pi p)\,f(z)$. Then, ${\mathcal H}_0\,\psi_0=-{\mathcal 
H}_0\,K\psi_0$.
Using~\eqref{int-eq} with $\gamma$ replaced with $\gamma(z)$, \ 
$z\in\C_+(-Y)$,
we get
$$({\mathcal H}_0\,K\, \psi_0)\,(z)= 2\pi
i\;\res_{z'=z}\,\kappa\,(z+h,\,z')\,\psi_0(z')=
\lambda\,e^{2\pi iz}\,\psi_0(z).$$
Thus, we come to
\begin{lemma} 
The solution $\psi_0$
satisfies equation \eqref{eq:harper} in $\C_+(-Y)$.
\end{lemma}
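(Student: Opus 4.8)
The plan is to apply the difference operator
$(\mathcal{H}_0 f)(z)=f(z+h)+f(z-h)+\lambda e^{-2\pi iz}f(z)-2\cos(2\pi p)f(z)$
to both sides of the integral equation~\eqref{int-eq}. Since $\mu(\cdot+i\xi)$ solves the unperturbed equation~\eqref{un-pert-eq}, one has $\mathcal{H}_0\big(\mu(\cdot+i\xi)\big)=0$, so it is enough to compute $\mathcal{H}_0 K\psi_0$ and to check that $\mathcal{H}_0 K\psi_0=\lambda e^{2\pi iz}\psi_0$; then $\mathcal{H}_0\psi_0=-\lambda e^{2\pi iz}\psi_0$, which is equation~\eqref{pert-model} and hence~\eqref{eq:harper}.

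First I would record the key algebraic property of the kernel: for each fixed $z'$, the function $z\mapsto\kappa(z,z')$ satisfies $\mathcal{H}_0\kappa(\cdot,z')=0$ wherever it is regular in $z$. Indeed, the factor $\Theta(z,z')=\ctg\frac{\pi(z'-z)}h-i$ is $h$-periodic in $z$, so it comes out of $\mathcal{H}_0$ as a common factor, while the bracket $\mu(z+i\xi)\tilde\mu(z'+i\xi)-\mu(z'+i\xi)\tilde\mu(z+i\xi)$ is, in the variable $z$, a linear combination of the solutions $\mu(\cdot+i\xi)$ and $\tilde\mu(\cdot+i\xi)$ of~\eqref{un-pert-eq}; hence $\mathcal{H}_0\kappa(\cdot,z')$ equals $\Theta(z,z')$ times $\mathcal{H}_0$ applied to that combination, which vanishes. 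I would also note that $\kappa(z,z')$ is regular at $z'=z$ (the simple zero of the bracket cancels the simple pole of $\Theta$), whereas $z'\mapsto\kappa(z+h,z')$ has a genuine simple pole at $z'=z$, coming from $\Theta(z+h,z')$.

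Next, as in~\cite{Bu-Fe:96b} and~\cite{Bu-Fe:01}, I would compute $K\psi_0$ and its translates $(K\psi_0)(z\pm h)$ using for each of them the deformed vertical contour passing through the corresponding point ($z$, $z+h$, $z-h$), which keeps all integrands regular. If all three integrals could be taken over one and the same contour, the previous step would give $\mathcal{H}_0 K\psi_0(z)=0$. The only obstruction is that the contour for $(K\psi_0)(z+h)$ must be pushed across the pole of $z'\mapsto\kappa(z+h,z')$ at $z'=z$; a careful analysis of the contour deformations shows that the net contribution of all such mismatches equals $2\pi i\,\res_{z'=z}\kappa(z+h,z')\,\psi_0(z')$. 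I expect this bookkeeping — keeping track of which poles are crossed and with which orientation — to be the only delicate point, and it is handled essentially as in the cited papers.

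Finally I would evaluate the residue. Since $\res_{z'=z}\Theta(z+h,z')=h/\pi$, and since, by~\eqref{wronskian:formula}, the expression $\mu(z+h+i\xi)\tilde\mu(z+i\xi)-\mu(z+i\xi)\tilde\mu(z+h+i\xi)$ is the constant Wronskian $\{\mu,\tilde\mu\}$, the Wronskians cancel and one gets
\begin{equation*}
(\mathcal{H}_0 K\psi_0)(z)=2\pi i\,\res_{z'=z}\kappa(z+h,z')\,\psi_0(z')=\lambda e^{2\pi iz}\psi_0(z).
\end{equation*}
Combined with $\mathcal{H}_0\psi_0=-\mathcal{H}_0 K\psi_0$, this gives $\mathcal{H}_0\psi_0=-\lambda e^{2\pi iz}\psi_0$ throughout $\C_+(-Y)$, i.e.\ $\psi_0$ satisfies~\eqref{eq:harper} there.
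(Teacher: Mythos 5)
Your proposal is correct and follows essentially the same route as the paper: apply $\mathcal{H}_0$ to the integral equation, use that $\kappa(\cdot,z')$ is annihilated by $\mathcal{H}_0$ away from its pole (because $\Theta$ is $h$-periodic in $z$ and the bracket is a linear combination of solutions to~\eqref{un-pert-eq}), and identify $\mathcal{H}_0 K\psi_0$ with the residue at $z'=z$ picked up when deforming contours. You actually spell out the residue computation and the regularity/pole structure of $\kappa$ more explicitly than the paper does, but the argument is the same.
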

The last two lemmas imply the first point of the
Theorem~\ref{psi0}.
\subsection{Asymptotics in the upper half-plane}\label{sss:asymp:up}
We get asymptotics~\eqref{psi-0:up}  using the integral equation for 
$\psi_0$.
First, we pick $\delta\in(0,1)$ and sufficiently large $Y>0$,    assume
that $|x|\le \delta h$ and  $y>Y$, 
and represent $(K\psi_0)(z)$  in the form
\begin{equation}\label{psi-0:up:Kpsi}
(K\psi_0)(z)=\dsize \frac {\lambda}{2ih \{\mu,\,\tilde \mu\}}\,
\left(\mu\,(z+i\xi) (I(\tilde\mu)+J(\tilde\mu))-\tilde 
\mu\,(z+i\xi)(I(\mu)+J(\mu))\right),
\end{equation}
where 
\begin{equation}\label{eq:I}
I(f)=\int_{\gamma,\,\,|y-y'|\ge h} \Theta\,(z,\,z')\,f\,( 
z'+i\xi)\,e^{2\pi 
iz'}\,
         \psi_0(z') dz',
\end{equation}
\begin{equation}
\label{eq:J}
J(f)=\int_{\gamma,\,\,|y-y'|\le h}
         \Theta\,(z,\,z')\,(f\,( z'+i\xi)-f(z+i\xi))\,
         e^{2\pi iz'}\,\psi_0(z') dz'.
\end{equation}
Let us estimate $I(\mu)$, $I(\tilde\mu)$, $J(\mu)$ and $J(\tilde\mu)$.
The first two are defined $\forall x$. One has
\begin{lemma} Let $p\in P$. We pick $X>0$. As $y\to+\infty$, uniformly in 
$x\in[-X,X]$ 
  \begin{equation}\label{psi-0:up:I}
I(\tilde \mu)(z)=o(1),\qquad
I(\mu)(z)=e^{\frac{2\pi i (z+i\xi)}h} (a+o(1)),
  \end{equation}
where
\begin{equation}\label{eq:a}
a=2i\int_{\gamma} e^{-\frac{2\pi i(z'+i\xi)}h} \mu\,( z'+i\xi)\,e^{2\pi 
iz'}\, \psi_0(z') dz'.
\end{equation}
One has 
\begin{equation}\label{est:a}
  a = O\left((1+|\xi|)^3\lambda^{\nu(p)}H\right), \quad 
\dsize\nu(p)=\min\left\{1,\frac{1-2\re p}h\right\}.
\end{equation}
\end{lemma}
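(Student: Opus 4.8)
The plan is to split each integral $I(f)$, $f\in\{\mu,\tilde\mu\}$, along $\gamma$ into the part over $\gamma_{\mathrm{ab}}=\{z'\in\gamma:\im z'\ge y+h\}$ (far above $z=x+iy$) and the part over $\gamma_{\mathrm{bl}}=\{z'\in\gamma:\im z'\le y-h\}$ (far below it), and to estimate each piece with the a priori bounds already at hand. On $\gamma$ one has $z'=iy'$, so $|e^{2\pi iz'}|=e^{-2\pi y'}$, while $|\mu(z'+i\xi)|\le Hq(y'+\xi)$, $|\tilde\mu(z'+i\xi)|\le H\tilde q(y'+\xi)$ by Corollary~\ref{Mu,tildeMu:estimates} and $|\psi_0(z')|\le Hq(y'+\xi)$ by~\eqref{est:psi-a:2}. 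For $\Theta$ we use, besides~\eqref{est:theta}, the identity $\Theta(z,z')=\frac{2ie^{-i\zeta}}{e^{i\zeta}-e^{-i\zeta}}=2ie^{2\pi i(z-z')/h}+R(z,z')$, where $\zeta=\frac{\pi(z'-z)}h$ and $R(z,z')=\frac{2ie^{-3i\zeta}}{e^{i\zeta}-e^{-i\zeta}}$; since $|e^{i\zeta}-e^{-i\zeta}|\ge\tfrac12e^{|\im\zeta|}$ whenever $|\im\zeta|\ge\pi$, we get $|R(z,z')|\le Ce^{-4\pi(y-y')/h}$ on $\gamma_{\mathrm{bl}}$, and all the bounds for $\Theta$ and $R$ are uniform in $x\in[-X,X]$ because only $\im\zeta=\pi(y'-y)/h$ enters them.

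On $\gamma_{\mathrm{ab}}$, where $|\Theta|\le C$, the integrand of $I(\tilde\mu)$ is $O\big(He^{-2\pi y'}\tilde q(y'+\xi)q(y'+\xi)\big)$, and since $\tilde q(\eta)q(\eta)=(1+\eta)^2e^{-2\pi\eta}$ for $\eta\ge0$ the integral over $y'\ge y+h$ is $o(1)$; the same computation shows the $\gamma_{\mathrm{ab}}$-part of $I(\mu)$ is $e^{2\pi i(z+i\xi)/h}o(1)$. On $\gamma_{\mathrm{bl}}$, using $|\Theta|\le Ce^{-2\pi(y-y')/h}$ for $I(\tilde\mu)$ and splitting $\int_{-Y}^{y-h}$ at $y'=|\xi|$, on $[-Y,|\xi|]$ the quantity $e^{-2\pi y'}\tilde q(y'+\xi)q(y'+\xi)$ is $O((1+|\xi|)^2)$ while $e^{-2\pi(y-y')/h}\le e^{-2\pi(y-|\xi|)/h}\to0$, and on $[|\xi|,y-h]$ the exponential decay of $q\tilde q$ for positive argument makes the integral $o(1)$, so $I(\tilde\mu)=o(1)$. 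For the $\gamma_{\mathrm{bl}}$-part of $I(\mu)$ we insert $\Theta=2ie^{2\pi i(z-z')/h}+R$: the $R$-term is handled as above but with the faster decay $e^{-4\pi(y-y')/h}$, which dominates the growth of $q(y'+\xi)^2$ (equal to $(1+|y'+\xi|)^2e^{4\pi\re p\,|y'+\xi|/h}$ when $y'+\xi\le0$) as $y'+\xi\to-\infty$, precisely because $\re p<1$, and its contribution is $e^{2\pi i(z+i\xi)/h}o(1)$. The remaining main term equals $2ie^{2\pi iz/h}\int_{\gamma_{\mathrm{bl}}}e^{-2\pi iz'/h}\mu(z'+i\xi)e^{2\pi iz'}\psi_0(z')\,dz'$; using $e^{-2\pi i(z'+i\xi)/h}=e^{2\pi\xi/h}e^{-2\pi iz'/h}$ and $e^{2\pi i(z+i\xi)/h}=e^{-2\pi\xi/h}e^{2\pi iz/h}$ one sees $e^{2\pi i(z+i\xi)/h}a=2ie^{2\pi iz/h}\int_{\gamma}e^{-2\pi iz'/h}\mu(z'+i\xi)e^{2\pi iz'}\psi_0(z')\,dz'$, so the main term minus $e^{2\pi i(z+i\xi)/h}a$ is $-2ie^{2\pi iz/h}\int_{\{\im z'>y-h\}}(\cdots)\,dz'$, whose modulus, using $|e^{2\pi iz/h}e^{-2\pi iz'/h}|=e^{-2\pi(y-y')/h}$ and the decay of $q^2$, is again $e^{2\pi i(z+i\xi)/h}o(1)$. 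Adding the three pieces yields $I(\mu)(z)=e^{2\pi i(z+i\xi)/h}(a+o(1))$.

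For the bound on $a$, the same a priori estimates give, after the substitution $u=y'+\xi$ and using $e^{2\pi\xi}=\lambda$,
\begin{equation*}
|a|\le H\int_{-Y}^{\infty}e^{2\pi(y'+\xi)/h}e^{-2\pi y'}q(y'+\xi)^2\,dy'=H\lambda\int_{-Y+\xi}^{\infty}e^{2\pi u(1/h-1)}q(u)^2\,du.
\end{equation*}
The part $u\ge0$ contributes $O(1)$ (the integrand there is $(1+u)^2e^{-4\pi u}$). For $u\in[-Y+\xi,0]$, writing $v=-u\in[0,Y+|\xi|]$, one has $e^{2\pi u(1/h-1)}q(u)^2=(1+v)^2e^{2\pi v(1-(1-2\re p)/h)}$: if $(1-2\re p)/h\ge1$ the exponent is nonpositive and this is $O((1+|\xi|)^3)$, while if $(1-2\re p)/h<1$ it is $O\big((1+|\xi|)^2e^{2\pi(Y+|\xi|)(1-(1-2\re p)/h)}\big)$, and $2\pi|\xi|=\ln(1/\lambda)$ turns $e^{2\pi|\xi|(1-(1-2\re p)/h)}$ into $\lambda^{(1-2\re p)/h-1}$. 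Multiplying by the prefactor $\lambda$ produces $\lambda^{\nu(p)}$ in both cases, whence $a=O\big((1+|\xi|)^3\lambda^{\nu(p)}H\big)$.

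The main difficulty is purely the bookkeeping of the competing exponential factors $\lambda$, $e^{\pm2\pi y'/h}$, $e^{-2\pi y'}$, $q(y'+\xi)$ and $\tilde q(y'+\xi)$. The delicate point is that the factor $e^{2\pi(y'+\xi)/h}$ produced by $e^{-2\pi i(z'+i\xi)/h}$ in the definition of $a$ must not overcome the growth $e^{4\pi\re p\,|y'+\xi|/h}$ of $q(y'+\xi)^2$ as $y'+\xi\to-\infty$ — it does not, exactly because $\re p\le\tfrac12$, the borderline being $1-2\re p=h$ — and, to obtain the sharp power $\lambda^{\nu(p)}$ rather than something weaker, one must keep track of the sign of $1-2\re p-h$ instead of discarding lower-order terms.
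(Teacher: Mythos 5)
Your proof follows the paper's argument step for step: the split of $\gamma$ above and below $z$, the decomposition $\Theta=2ie^{2\pi i(z-z')/h}+R$ with $|R|\le Ce^{-4\pi(y-y')/h}$, the a priori bounds from Corollary~\ref{Mu,tildeMu:estimates} and~\eqref{est:psi-a:2}, and (after the change of variables $u=y'+\xi$) the same split at $y'=-\xi$ when estimating $a$ are all identical in substance to the paper's. The one aside that is off the mark --- invoking $y'+\xi\to-\infty$ and ``$\re p<1$'' when handling the $R$-term --- is harmless but irrelevant: on $\gamma$ one has $y'\ge-Y$, and the $R$-contribution is $o(e^{-2\pi(y+\xi)/h})$ simply because the extra factor $e^{-2\pi(y-y')/h}$ in $|R|$ (relative to the main term) decays as $y\to+\infty$ uniformly on the fixed range $y'\in[-Y,y-h]$.
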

\noindent Note that the integral in~\eqref{eq:a} converges in view
of~\eqref{mu-est:rough} and~\eqref{est:psi-a:2}.
\begin{proof} Let us estimate $I(\tilde\mu)$. We assume that $y+\xi>0$.
Using~\eqref{est:theta}, ~\eqref{mu-est:rough} 
and~\eqref{est:psi-a:2}, we get 
\begin{multline*}
|I(\tilde \mu)(z)|\le H\left(
\int_{y}^\infty (1+|y'+\xi|)^2e^{-2\pi (y'+\xi)}e^{-2\pi y'}dy'\right.\\
\hspace{3cm}+\int_{-\xi}^y e^{-\frac{2\pi(y-y')}h}(1+|y'+\xi|)^2e^{-2\pi 
(y'+\xi)}e^{-2\pi y'}dy'\\
\left.+\int_{-Y}^{-\xi} e^{-\frac{2\pi(y-y')}h}(1+|y'+\xi|)^2e^{-2\pi 
y'}dy'\right).
\end{multline*}
The expression in the brackets is bounded by
\begin{equation*}
\int_{y}^\infty e^{-2\pi y'}dy'+
\int_{-\xi}^y e^{-\frac{2\pi(y-y')}h}e^{-2\pi y'}dy'+
(1+|Y-\xi|^2)\int_{-Y}^{-\xi} e^{-\frac{2\pi(y-y')}h}e^{-2\pi y'}dy'.
\end{equation*}
As $h<1$, this implies the first estimate from~\eqref{psi-0:up:I}.
\\
Let us turn to $I(\mu)$. 
Arguing as when estimating $I(\tilde \mu)$, we check that 
\begin{multline}\label{I(tilde-mu):up}
\left|\int_{\gamma,\,\,y'>y+h}
     \Theta\,(z,\,z')\,\mu\,( z'+i\xi)\,e^{2\pi iz'}\,\psi_0(z') 
dz'\right|\le\\
\le H\int_{y+h}^\infty
     (1+|y'+\xi|)^2e^{-\frac{2\pi (y'+\xi)}h-2\pi (y'+\xi)}e^{-2\pi 
y'}dy'=o(e^{-\frac{2\pi y}h})
\end{multline}
uniformly in $x$ as $y\to+\infty$. Then, using the estimate
\begin{equation*}
\left|\Theta(z,z')-2ie^{-\frac{2\pi i(z'-z)}h}\right|\le
C e^{-\frac{4\pi (y-y')}h},\quad y-y'\ge h,
\end{equation*}
and again arguing  as before, we prove that
as $y\to\infty$, uniformly in $x$
\begin{equation}\label{I(tilde-mu):down}
         \operatornamewithlimits\int_{\gamma,\,\,y>y'+h}
         \Theta\,(z,\,z')\,\mu\,( z'+i\xi)\,e^{2\pi iz'}\,
         \psi_0(z') dz'=e^{\frac{2\pi i (z+i\xi)}h} a+
         o\left(e^{-\frac{2\pi y}h}\right)
\end{equation}
with $a$ from~\eqref{eq:a}.
Estimates~\eqref{I(tilde-mu):up}--\eqref{I(tilde-mu):down} imply
the second estimate in~\eqref{psi-0:up:I}.

Let us prove~\eqref{est:a}.  Using~\eqref{eq:a}
and estimates~\eqref{mu-est:rough} and~\eqref{est:psi-a:2}, 
we get
\begin{equation*}
|a|\le H(1+|\xi|)^2
\operatornamewithlimits\int_{-Y}^{-\xi} e^{\frac{2\pi(1-2\re p) 
(y'+\xi)}h-2\pi y'} dy' +
H\operatornamewithlimits\int_{-\xi}^{+\infty}  e^{-2\pi 
(2y'+\xi)}(1+|y'+\xi|)^2 dy'.
\end{equation*}
One has $ \max_{0\le y'\le-\xi} e^{\frac{2\pi(1-2\re p) (y'+\xi)}h- 2\pi 
y'}=
e^{2\pi \nu(p)\xi}=\lambda^{\nu(p)}$ with  $\nu(p)$ defined by the 
second formula in~\eqref{est:a}. Furthermore, the second integral in the 
last 
estimate for $a$  equals  $\lambda\int_{0}^{+\infty}  e^{-4\pi t}(1+t)^2 
dt$.
These observations lead to \eqref{est:a}. This completes the proof of the 
lemma.
\end{proof}
Let us turn to the terms $J(\mu)$ and $J(\tilde\mu)$. One has
\begin{lemma}  Fix $\delta\in(0,1)$. Let $p\in P$. As $y\to+\infty$, one 
has
  \begin{equation}\label{psi-0:up:J}
 J(\tilde\mu)(z)=o(1),\quad J(\mu)(z)=o(e^{-\frac{2\pi y}h}).
  \end{equation}
These estimates are uniform in $x\in[-\delta h,\delta h]$. 
\end{lemma}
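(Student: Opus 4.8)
The design of~\eqref{eq:J} already dictates the proof: the subtraction $f(z'+i\xi)-f(z+i\xi)$ is there precisely to cancel the pole of $\Theta(z,z')=\ctg\tfrac{\pi(z'-z)}h-i$ at $z'=z$. So the plan is to (i) turn this cancellation into a pointwise, singularity-free majorant for the integrand of $J(f)$; (ii) insert the rough estimates of $\mu$ and $\tilde\mu$ from Lemma~\ref{mu:estimates} and of $\psi_0$ from~\eqref{est:psi-a:2}; and (iii) integrate over the short window $|y-y'|\le h$ and read off the exponent. The computation runs parallel to the case $|y-y'|\le h$ in the proof of Lemma~\ref{le:kernel-estimate}.

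\emph{Taming $\Theta$.} For $z'\in\gamma$ (so $z'=iy'$), with $|y-y'|\le h$ and $|x|\le\delta h$, $\delta<1$, the number $w:=\tfrac{\pi(z'-z)}h$ satisfies $|w|\le\pi\sqrt{1+\delta^2}<2\pi$ and $|\re w|\le\pi\delta<\pi$; hence $w$ avoids every pole of $\ctg$ except $w=0$, near which $\ctg w=w^{-1}+O(w)$. Therefore $|\Theta(z,z')|\le Ch/|z'-z|$ on the whole integration contour. This is the one place where the hypothesis $\delta<1$ (rather than just ``$x$ bounded'') is genuinely used, both here and in the exponent count below.

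\emph{Killing the singularity and estimating.} Since $\mu$ and $\tilde\mu$ are entire in $z$, one has $f(z'+i\xi)-f(z+i\xi)=\int_{[z,z']}f'(\zeta+i\xi)\,d\zeta$ along the straight segment, so $|f(z'+i\xi)-f(z+i\xi)|\le|z'-z|\max_{\zeta\in[z,z']}|f'(\zeta+i\xi)|$; multiplying by the previous bound, the factor $|z'-z|$ cancels and the integrand of $J(f)$ is majorized pointwise by $Ch\,\max_\zeta|f'(\zeta+i\xi)|\,e^{-2\pi y'}\,|\psi_0(iy')|$. On $[z,z']$ one has $|\re\zeta|\le|x|\le\delta h$ and $|\im\zeta-y|\le h$, so Lemma~\ref{mu:estimates} --- applied to $\mu$ directly and to $\tilde\mu$ through~\eqref{tildeMu} with $X$ taken large enough to absorb the shift by $1/2$ --- gives $|\mu'(\zeta+i\xi)|\le Hq(\im\zeta+\xi)\,e^{2\pi\delta(\im\zeta+\xi)}$ and $|\tilde\mu'(\zeta+i\xi)|\le H\tilde q(\im\zeta+\xi)\,e^{2\pi\delta(\im\zeta+\xi)}$, while~\eqref{est:psi-a:2} gives $|\psi_0(iy')|\le Hq(y'+\xi)$. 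Integrating over $y'\in[y-h,y+h]$ costs only a factor $O(h)$ and, as $y\to+\infty$, allows replacing $y'$ and $\im\zeta$ by $y$ up to $O(h)$; collecting weights one obtains $|J(\tilde\mu)|\le H^2h^2(1+|y+\xi|)^2e^{2\pi(\delta-1)(y+\xi)-2\pi y}$ and $|J(\mu)|\le H^2h^2(1+|y+\xi|)^2e^{-2\pi(y+\xi)/h+2\pi(\delta-1)(y+\xi)-2\pi y}$. Because $\delta<1$ the decisive exponent $2\pi(\delta-1)(y+\xi)-2\pi y\to-\infty$, which defeats the polynomial prefactor and --- for $J(\mu)$, relative to the target $e^{-2\pi y/h}$ --- also the extra $e^{-2\pi(y+\xi)/h}$. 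This yields~\eqref{psi-0:up:J}, uniformly in $x\in[-\delta h,\delta h]$.

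\emph{Main obstacle.} It is not conceptual: the built-in subtraction disposes of $\Theta$ cleanly. The real care is the exponent bookkeeping --- tracking the competing $e^{\pm\pi(y+\xi)/h}$ growth of $\mu$ versus $\tilde\mu$, the $e^{2\pi\delta(y+\xi)}$ correction produced by $|x|\le\delta h$, the $e^{-2\pi y'}$ from $e^{2\pi iz'}$, and the weight $q(y'+\xi)$ carried by $\psi_0$ --- and checking that with $\delta<1$ the surplus is strictly negative. There is also the minor administrative point of invoking Lemma~\ref{mu:estimates} with $X$ large enough to cover the shifted argument of $\tilde\mu'$ coming from~\eqref{tildeMu}.
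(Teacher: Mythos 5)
Your proof is correct and follows essentially the same approach as the paper's: cancel the singularity of $\Theta$ against the vanishing of $f(z'+i\xi)-f(z+i\xi)$ via a Lipschitz bound, feed in the derivative estimates from Lemma~\ref{mu:estimates} and the $\psi_0$ estimate~\eqref{est:psi-a:2}, integrate over the window $|y-y'|\le h$, and check that $\delta<1$ makes the dominant exponent negative. This is precisely what the paper means by ``using estimates~\eqref{mu-est} for $d\mu/dz$ and estimate~\eqref{est:psi-a:2}'', the same technique already displayed in the $|y-y'|\le h$ branch of the proof of Lemma~\ref{le:kernel-estimate}, as you yourself observe.
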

\begin{proof}
Let $y+\xi\ge h$. 
Using estimates~\eqref{mu-est} for $d\mu/dz$ and 
estimate~\eqref{est:psi-a:2}, 
we get
\begin{equation*}
|J(\mu)|\le H(1+|y+\xi|)^2e^{2\pi \delta (y+\xi)-\frac{2\pi (y+\xi)}h-2\pi 
(y+\xi)-2\pi y} (1+|y+\xi|)^2
\le He^{-\frac{2\pi (y+\xi)}h-2\pi y},
\end{equation*}
where we used the inequalities  $|x|\le \delta h$ and  $0<\delta<1$.
This implies the first estimate in~\eqref{psi-0:up:J}.
Similarly one proves the second one.
\end{proof}
Now we are ready to prove~\eqref{psi-0:up}. We do it in three steps.
Below we assume that $p\in P$. All the $o(1)$  are uniform in $x$.
\\
{\bf 1.} \  Fix $\delta\in(0,1)$. Let  
$|x|\le\delta h$. 
Estimates~\eqref{psi-0:up:I} and~\eqref{psi-0:up:J} 
imply that as $y\to\infty$
\begin{equation}\label{psi-0:up:delta-vic}
\psi_0=\mu(z+i\xi)-(K\psi_0)(z)=\mu(z+i\xi)(1+o(1))+ e^{\frac{2\pi i 
(z+i\xi)}h}
\tilde \mu(z+i\xi)(\tilde a+o(1))
\end{equation}
where $\tilde a= \lambda a/(2hi \{\mu,\,\tilde \mu\})$. 
Representation~\eqref{psi-0:up:delta-vic},
and formulas~\eqref{tildeMu} and~\eqref{mu:up} imply~\eqref{psi-0:up} with 
$\varkappa_0=ie^{\frac{i\pi}{4h}}\tilde a$. This completes the proof 
of~\eqref{psi-0:up}
for $x\in[-\delta h,\delta h]$.
We note that~\eqref{psi-0:up} implies that, for  sufficiently 
large $y$,
\begin{equation}\label{est:psi:inter}
|\psi_0(z)|\le C(1+|\varkappa_0|)
e^{\frac{2\pi|x|(y+\xi)}h -\frac{\pi( y+\xi)}h-\pi (y+\xi)}.
\end{equation}
{\bf 2.} \  Now, we pick $\delta\in(1/2,1)$ and justify~\eqref{psi-0:up}
assuming that $h\delta \le x\le h(1+\delta)$. 
Let  $\epsilon\in(0,1-\delta)$. We denote by 
$\gamma(z)$  the curve that goes first along a straight line
from $i(y-h)$ to the point $z-h+\epsilon h$ and next along a straight line 
from 
this point to $i(y+h)$. One obtains
representation~\eqref{psi-0:up:Kpsi} with  $J(f)$ defined by the new  
formula:
\begin{equation}\label{new:J}
J(f)(z)=\int_{\gamma(z)}\Theta\,(z,\,z')\, f\,(z'+i\xi)\, e^{2\pi 
iz'}\,\psi_0(z') dz'.
\end{equation}
We note that on  $\gamma(z)$, one has $|\Theta\,(z,\,z')|\le C$. As 
$$-\delta h\le (\delta-1) h< x-h+\epsilon h\le (\delta+\epsilon)h,$$
and as $\delta+\epsilon<1$,
we can and do assume that  on $\gamma(z)$ for   sufficiently large $y$ 
solution $\psi_0$ satisfies~\eqref{est:psi:inter}.
Using~\eqref{est:psi:inter} and~\eqref{mu-est}, we check  that  as 
$y\to+\infty$
\begin{equation*}
 |J(\mu)(z)|\le H(1+|\varkappa_0|)
e^{4\pi (\delta+\epsilon-1) (y+\xi)-\frac{2\pi(y+\xi)}h+2\pi 
\xi}(1+|y+\xi|)^2=
o(e^{-\frac{2\pi(y+\xi)}h}).
\end{equation*}
Reasoning analogously, we also prove that $ J(\tilde\mu)(z)=o(1)$ as 
$y\to+\infty$.
These two estimates and~\eqref{psi-0:up:I} lead again to~\eqref{psi-0:up}.
This completes the proof of~\eqref{psi-0:up} for $0\le x\le (1+\delta)h$. 
The case of  
$ (1+\delta)h\le x\le 0$ is analyzed similarly.
\\
{\bf 3.} \ To justify~\eqref{psi-0:up} for larger $|x|$,
one uses equation~\eqref{pert-model}.
We discuss only the case of $x\ge 0$ and omit further details.
Pick $\delta\in(0,1)$ and $X>0$.
In the case of Theorem~\ref{psi-0:up}, 
we can assume that $|\varkappa_0|\le 1/2$. Then, $1+\varkappa_0\ne 0$ and  
for $x\in [\delta h, X]$~\eqref{psi-0:up} actually means that
\begin{equation*}
  \psi_0(z)=A\xi_-(z+i\xi)(1+o(1)),\qquad   
A=e^{\frac{i\pi}4}(1+\varkappa_0)v_-\,.
\end{equation*}
By~\eqref{pert-model}, one can write
\begin{equation*}
\psi_0(z)=-\lambda e^{-2\pi i(z-h)}\psi_0(z-h)\,\left(1+o(1)+
e^{2\pi i(z+i\xi-h)}\,\psi_0(z-2h)/\psi_0(z-h)\right).
\end{equation*}
Let  $(1+\delta)h\le x\le (2+\delta)h$.
Then $\psi_0(z-h)=A\xi_-(z+i\xi-h)(1+o(1))$. This and~\eqref{est:psi:inter}
(that is valid on any given compact subinterval of $(-h,h)$)
imply that $e^{2\pi i z}\psi_0(z-2h)/\psi_0(z-h)=o(1)$. 
Therefore,
\begin{equation*}
\psi_0(z)=-\lambda\,e^{-2\pi 
i(z-h)}\psi_0(z-h)\,(1+o(1))=A\xi_-(z+i\xi)(1+o(1)).
\end{equation*}
So, we proved~\eqref{psi-0:up} for $x\in[0,(2+\delta)h]$. Continuing 
in this way, one proves~\eqref{psi-0:up} for all $x$ such that 
$0\le x\le X$.
This completes the proof of~\eqref{psi-0:up}.\qed
\subsection{Asymptotics in the lower half-plane}
Here, we prove the third statement of Theorem~\ref{psi0}, i.e.,   
estimate~\eqref{psi-0:down}.
We pick $X>0$ and a sufficiently large $Y>0$ and assume that 
$|x|\le X$ and $|y|\le Y/2$. We also assume that $\lambda$ is so small 
(or $\lambda \le e^{C/h}$ with $C $ so large) that $Y<-\xi$.

The proof follows the same plan and uses the same estimates for 
$\mu$, $\tilde \mu$ and $\psi_0$ as for studying $\psi_0$ as $y\to\infty$.
So, we omit elementary details.
\\
{\bf 1.} \ First, we represent $K\psi_0$ in the 
form~\eqref{psi-0:up:Kpsi} with $I$  and $J$ given 
by~\eqref{eq:I}--~\eqref{eq:J}.
Then,  using~\eqref{mu-est:rough} and~\eqref{est:psi-a:2} and the rough 
estimate 
$|\Theta(z,z')|\le C$ for $|y-y'|\ge h$, we get 
\begin{equation}\label{psi-0:down:ItildeI}
|I(\tilde \mu)(z)|\le H (1+|\xi|)^2,\quad
|I(\mu)(z)|\le  H (1+|\xi|)^2\,e^{4\pi\re p |\xi|/h}.
\end{equation}
{\bf 2.} \ Let us turn to the terms $J(\mu)$ and $J(\tilde \mu)$.
We first  fix a positive $\delta<1$, and 
consider the case where $|x|\le \delta h$.  By means of 
Lemma~\ref{mu:estimates}
and~\eqref{est:psi-a:2} , we get
\begin{equation}\label{psi-0:down:J's}
|J(\tilde\mu)(z)|\le H\,(1+|\xi|)^2,\quad
|J(\mu)(z)|\le H\,(1+|\xi|)^2\, e^{4\pi\re p |\xi|/h}.
\end{equation}
{\bf 3.} We  recall that $\psi_0(z)-\mu(z+i\xi)=K\psi_0(z)$. Estimating 
the 
right hand side 
by means of~\eqref{psi-0:down:ItildeI}--\eqref{psi-0:down:J's}, the 
estimate from Lemma~\ref{mu:estimates} for $\mu$ 
and~\eqref{tildeMu},  the definition of $\tilde \mu$, we get
\begin{equation}\label{psi-0:down:delta-vic}
|\psi_0(z)-\mu(z+i\xi)|\le  \lambda H (1+|\xi|)^3\,e^{2\pi\re p |\xi|/h},
\end{equation}
i.e.,  representation~\eqref{psi-0:down} for $|x|\le \delta h$
\\
{\bf 4.} \ Now, we pick  $\delta\in(1/2,1)$ and  
justify~\eqref{psi-0:down:delta-vic}
for $|x|\le (1+\delta)h$.
\\
As $|y|<Y/2$, we can assume that the point $z-h$ is above the lower end of 
$\gamma$.  This allows to choose the curve $\gamma(z)$ as in
section~\ref{sss:asymp:up}, and redefine $J$ by~\eqref{new:J}. 

We can and do assume  that estimate~\eqref{psi-0:down:delta-vic} is proved 
on $\gamma(z)$.
This estimate and~\eqref{mu-est} imply that on $\gamma(z)$ one has
$|\psi_0(z)|\le H(1+|\xi|)e^{2\pi \re p |\xi|/h}$.
Using this and~\eqref{mu-est} we obtain 
for the new $J(\mu)$ and $J(\tilde \mu)$  the old 
estimates~\eqref{psi-0:down:J's}, 
and, therefore, we come to~\eqref{psi-0:down:delta-vic} for $\delta h\le 
x\le (1+\delta)h$.
The case of negative $x$ is treated similarly.
\\
{\bf 5.} \ Let us prove~\eqref{psi-0:down:delta-vic} for   all
$|x|\le X$. Therefore, we use a difference analog of the Gr\"onwall's 
inequality. We discuss only the case where $x>0$. 
The case of $x<0$ is treated similarly. 

Let $\delta(z)=\psi_0(z)-\mu(z+i\xi)$. Equations~\eqref{eq:harper} 
and~\eqref{mu:eq} for $\psi_0$ and $\mu$ imply that
\begin{equation*}
  \delta(z+h)+\delta(z-h)+2(\lambda\cos(2\pi z)-\cos(2\pi p))\delta(z)=
-\lambda e^{2\pi iz}\mu(z+i\xi).
\end{equation*}
Therefore,
\begin{equation*}
  \Delta(z+h)=M(z)\Delta (z) -\lambda e^{2\pi iz}\mu(z+i\xi)\,e_1,
\end{equation*}
where
\begin{equation*}
\Delta(z)=\begin{pmatrix} \delta(z)\\ \delta(z-h)\end{pmatrix},\quad
M(z)=\begin{pmatrix} 2(\cos(2\pi p)-\lambda\cos(2\pi z))& -1\\1& 
0\end{pmatrix},\quad
e_1=\begin{pmatrix} 1\\0\end{pmatrix}.
\end{equation*}
Let $n(z)=\|\Delta(z)\|_{\C^2}$ and $A=\max_{|y|\le Y}\|M(z)\|$, 
where $\|\cdot\|$ is the Hilbert-Schmidt norm. In view of~\eqref{mu-est} 
we obtain 
\begin{equation}\label{ineq:n}
n(z)\le B+A\, n(z-h),\quad B=\lambda(1+|\xi|) H e^{2\pi \re p|\xi|/h}.    
\end{equation}
Therefore, for $N\in\N$, we have
\begin{equation}\label{ineq:n-mult}
n(z)\le B\,(1+A+...A^{N-1})+ A^N\, n(z-Nh)=B\frac{A^N-1}{A-1}+A^N\,n(z-Nh).
\end{equation}
Assume that $\frac{3h}2\le x \le X$ and choose $N$ so that  
$\frac h2\le x-Nh\le \frac{3h}2$.  Then,
\\
$\bullet$ \  by~\eqref{psi-0:down:delta-vic}, \ 
$n(z-Nh)\le \lambda H (1+|\xi|)^3\,e^{2\pi \re p |\xi|/h}$;
\\
$\bullet$ \ $1\le N\le C/h$, and  $A^N\le H$.
\\
Using these observations and the estimate for $B$ from~\eqref{ineq:n},
we deduce from~\eqref{ineq:n-mult} estimate~\eqref{psi-0:down:delta-vic} 
for $0\le x\le X$. This completes the proof of Theorem~\ref{psi0}.
\section{A monodromy matrix for the Harper equation}
\label{s:mm:str}
Here  we give a  definition of  
the minimal entire solutions to the Harper equation and  describe 
the monodromy matrix corresponding to a basis of two minimal entire 
solutions. This is the matrix described in Theorem~\ref{th:1}. Then,
we prove Theorem~\ref{th:2}.
\subsection{Minimal entire solutions and monodromy matrices}
In~\cite{Bu-Fe:01} the authors studied entire 
solutions to equation~\eqref{eq:matrix} with an $SL(2,\C)$-valued 
$2\pi$-periodic entire function $M$. Using the equivalence
described in Section~\ref{sss:MM:diffSch},  we turn their results into 
results for the Harper equation.
\\
Let $Y\in\mathbb R$. Below,   $\mathbb 
C_+(Y)=\{z\in\mathbb C\,:\, y>Y\}$ and $\mathbb 
C_-(Y)=\{z\in\mathbb C\,:\, y<Y\}$.
\subsubsection{Solutions with the simplest behavior as $y\to\pm\infty$}
To characterize the behavior of an entire solution as 
$y\to\pm \infty$, we express it in terms of solutions 
having the simplest asymptotic behavior as 
$y\to\pm \infty$. Let us describe these solutions.
The next theorem follows from Theorem 1.1a from~\cite{Bu-Fe:01}.
\begin{theorem}\label{th:bloch} 
If $Y_1>0$ is sufficiently large,  there exist two 
solutions $u_\pm$ to \eqref{eq:harper} that are analytic in the 
half-plane $\C_+(Y_1)$ and admit there the representations
\begin{equation} \label{f-solutions}
u_\pm(z)=e^{\dsize
\pm\frac{ i\pi}{h}\,(z-\frac12+i\xi)^2+i\pi z+o\,(1)},\quad y\to+\infty.  
\end{equation} 
One has 
\begin{equation}
  \label{eq:Wronsk-f}
  \{u_+(z),u_-(z)\}=\lambda.
\end{equation}
Moreover, $u_\pm$ are Bloch solutions in the sense of~\cite{Bu-Fe:01}, 
i.e., 
the ratios $u_\pm(z+1)/u_\pm(z)$ are $h$-periodic in $z$.
\end{theorem}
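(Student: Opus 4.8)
The plan is to obtain the statement from Theorem~1.1a of~\cite{Bu-Fe:01} via the equivalence, recalled in Section~\ref{sss:MM:diffSch}, between the scalar equation~\eqref{eq:harper} and the matrix equation~\eqref{eq:matrix} with the matrix~\eqref{eq:M:Shcr}; that matrix is entire and $1$-periodic in $z$. The cited theorem produces, in a half-plane $\C_+(Y_1)$ with $Y_1$ large, two Bloch solutions of~\eqref{eq:matrix} singled out by having the slowest possible growth as $y\to+\infty$, together with the exact leading form of their asymptotics; transporting them back through Section~\ref{sss:MM:diffSch} gives two scalar solutions $u_\pm$ of~\eqref{eq:harper}, analytic in $\C_+(Y_1)$, with $u_\pm(z+1)/u_\pm(z)$ $h$-periodic. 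It then remains to verify that their leading asymptotics is exactly~\eqref{f-solutions} and that the normalisation~\eqref{eq:Wronsk-f} holds.

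To identify the asymptotics I would use that, as $y\to+\infty$, the potential obeys $2\lambda\cos(2\pi z)=\lambda e^{-2\pi iz}\bigl(1+O(e^{-4\pi y})\bigr)=e^{-2\pi i(z+i\xi)}\bigl(1+O(e^{-4\pi y})\bigr)$, so~\eqref{eq:harper} is an exponentially small perturbation of the model equation~\eqref{mu:eq} taken at the shifted argument $z+i\xi$. Hence the Bloch solutions of~\eqref{eq:harper} are asymptotic, up to a constant and an $h$-periodic factor, to the Bloch solutions $\xi_\pm(z+i\xi)$ of the model equation (see Proposition~\ref{pro:mu:up}); completing the square in the exponents $\pm\frac{\pi i(z+i\xi)^2}h+\frac{i\pi(z+i\xi)}h+\pi i(z+i\xi)$ rewrites them, modulo a constant and an $h$-periodic factor, as $\pm\frac{i\pi}h\bigl(z-\frac12+i\xi\bigr)^2+i\pi z$, which is the exponent in~\eqref{f-solutions}. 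The minimal-growth normalisation of~\cite{Bu-Fe:01} pins the constant down; as a concrete check one may also compare with the solution $\psi_0$ of Theorem~\ref{psi0}, whose expansion~\eqref{psi-0:up} displays it explicitly as a combination of the two model exponential types with known coefficients.

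Granting~\eqref{f-solutions}, relation~\eqref{eq:Wronsk-f} is checked by a direct computation with the leading exponentials. Writing $u_\pm=e^{\Phi_\pm}(1+o(1))$ with $\Phi_\pm(z)=\pm\frac{i\pi}h\bigl(z-\frac12+i\xi\bigr)^2+i\pi z$, one has $\Phi_+(z)+\Phi_-(z)=2\pi iz$ and $\Phi_\pm(z+h)-\Phi_\pm(z)=\pm 2\pi i\bigl(z-\frac12+i\xi\bigr)+(1\pm 1)\pi ih$, whence $\{u_+,u_-\}(z)=e^{2\pi iz}\bigl(e^{2\pi i(z-\frac12+i\xi)+2\pi ih}(1+o(1))-e^{-2\pi i(z-\frac12+i\xi)}(1+o(1))\bigr)$. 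The first term has modulus $\bigl|e^{2\pi iz}e^{2\pi i(z-\frac12+i\xi)+2\pi ih}\bigr|=\lambda^{-1}e^{-4\pi y}\to0$ as $y\to+\infty$, while $e^{2\pi iz}e^{-2\pi i(z-\frac12+i\xi)}=-e^{2\pi\xi}=-\lambda$, so $\{u_+,u_-\}\to\lambda$ as $y\to+\infty$; since this Wronskian is $h$-periodic in $z$, its value is then identified as $\lambda$ (this is, in any case, part of the normalisation of the Bloch pair supplied by Theorem~1.1a of~\cite{Bu-Fe:01}). Finally, the Bloch property is recorded directly by the cited theorem; alternatively, $u_\pm(z+1)$ is a solution of the same exponential type as $u_\pm(z)$, hence an $h$-periodic multiple of it. The only real difficulty I expect is the bookkeeping in the second paragraph — reconciling the normalisation and phase conventions of the model-equation asymptotics (the constants $v_\pm$, the quadratic phases of $\xi_\pm$) with the Harper normalisation so that the prefactor in~\eqref{f-solutions} is exactly $e^{o(1)}$ — which is where the minimal-growth characterisation of $u_\pm$ is essential.
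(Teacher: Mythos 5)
Your proposal follows essentially the same route as the paper, whose entire justification for Theorem~\ref{th:bloch} is the one-line citation of Theorem~1.1a from~\cite{Bu-Fe:01}; your supplementary calculations (completing the square to match the exponent of $\xi_\pm(z+i\xi)$ with that in~\eqref{f-solutions}, and the leading-order Wronskian computation) are consistency checks rather than new proof content. One small caveat: an $h$-periodic holomorphic function on a half-plane that is bounded and tends to $\lambda$ as $y\to+\infty$ need not be identically $\lambda$ (it may retain positive Fourier modes $c_n e^{2\pi i n z/h}$, $n\ge 1$), so the exact equality $\{u_+,u_-\}=\lambda$ is genuinely part of the imported normalisation of the Bloch pair and is not a consequence of periodicity plus the limit --- which you do acknowledge in your parenthetical, so this is a phrasing issue rather than a gap.
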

\begin{Rem} The expressions 
$u_\pm^0(z)=e^{\dsize\pm\frac{i\pi}{h}\,(z-\frac12+i\xi)^2+i\pi z}$, 
the leading terms in~\eqref{f-solutions}, satisfy the equations
$u^0_\pm(z\mp h)+\lambda e^{-2\pi iz}\,u^0_\pm(z)=0$ (compare it with 
the Harper equation!).
\end{Rem}
\noindent
We construct two solutions with the simplest asymptotic behavior as 
$y\to-\infty$ by the formulas
\begin{equation}
  \label{eq:g-sol}
  d_\pm(z)=-u_\pm(1-z),\qquad z\in \mathbb C_-(-Y_1).
\end{equation}
We use 
\begin{lemma} \label{le:d-star-u} One has
\begin{equation}\label{u-ast-d}
d_\pm^*(z)=\alpha_\pm(z)u_\mp(z),\qquad  z\in\mathbb C_+(Y_1),
\end{equation}
where $\alpha_\pm$ are analytic and $h$-periodic functions, and, as $ 
y\to\infty$, \  $\alpha(z)\to 1$ uniformly in $x$.
\end{lemma}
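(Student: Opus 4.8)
\smallskip
\noindent\textbf{Proof plan.}
The change of variable $z\mapsto 1-z$ maps~\eqref{eq:harper} into itself (since $\cos(2\pi(1-z))=\cos(2\pi z)$), so by~\eqref{eq:g-sol} each $d_\pm$ solves~\eqref{eq:harper} in $\C_-(-Y_1)$; and for the real values of $E$ in question the conjugation $f\mapsto f^*$, $f^*(z)=\overline{f(\bar z)}$, preserves~\eqref{eq:harper}, whence $d_\pm^*$ solves~\eqref{eq:harper} and is analytic in $\C_+(Y_1)$. I would first determine the behaviour of $d_\pm^*$ as $y\to+\infty$. From $d_\pm^*(z)=-\overline{u_\pm(1-\bar z)}$, the asymptotics~\eqref{f-solutions} (applicable because $\im(1-\bar z)=y\to+\infty$), the reality of $h,\xi,\tfrac12$ — which gives $(1-\bar z)-\tfrac12+i\xi=-\overline{(z-\tfrac12+i\xi)}$, hence $\bigl((1-\bar z)-\tfrac12+i\xi\bigr)^2=\overline{(z-\tfrac12+i\xi)^2}$ — together with $\overline{i}=-i$ and $-e^{-i\pi}=1$, one obtains
\begin{equation*}
 d_\pm^*(z)=e^{\dsize\mp\frac{i\pi}{h}\left(z-\frac12+i\xi\right)^2+i\pi z+o(1)},\qquad y\to+\infty,
\end{equation*}
that is, $d_\pm^*$ carries exactly the leading term of $u_\mp$ from~\eqref{f-solutions}, so $d_\pm^*=u_\mp(1+o(1))$ as $y\to+\infty$, uniformly in $x$ on compacts.

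\smallskip
Since $\{u_+,u_-\}=\lambda\neq0$ by~\eqref{eq:Wronsk-f}, the pair $u_\pm$ is a basis of the solution space of~\eqref{eq:harper} over the ring of $h$-periodic functions, so $d_\pm^*=\alpha_\pm u_\mp+\beta_\pm u_\pm$ with $\alpha_\pm,\beta_\pm$ $h$-periodic and analytic in $\C_+(Y_1)$ (formulas~\eqref{eq:periodic-coef}). To see $\beta_\pm\equiv0$ I would use the Bloch structure: $u_\pm(z+1)=\mu_\pm(z)u_\pm(z)$ with $\mu_\pm$ $h$-periodic by Theorem~\ref{th:bloch}, and $\mu_+\not\equiv\mu_-$, since otherwise $u_+/u_-$ would be $h$-periodic, which~\eqref{f-solutions} excludes (its modulus behaves differently on the lines $\re z=0$ and $\re z=h$). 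A direct computation from~\eqref{eq:g-sol} shows $d_\pm$, hence $d_\pm^*$, are Bloch as well, say $d_\pm^*(z+1)=\mu(z)d_\pm^*(z)$ with $\mu$ $h$-periodic. Expanding this identity in the basis $u_+,u_-$ and using the periodicity of $\alpha_\pm,\beta_\pm$ yields $(\mu-\mu_\mp)\alpha_\pm\equiv0$ and $(\mu-\mu_\pm)\beta_\pm\equiv0$. Here $\alpha_\pm\not\equiv0$: otherwise $d_\pm^*/u_\pm=\beta_\pm$ would be $h$-periodic, contradicting $d_\pm^*/u_\pm=(u_\mp^0/u_\pm^0)(1+o(1))$, which is non-periodic by the same two-line comparison. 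Hence $\mu\equiv\mu_\mp$, and as $\mu_\mp\not\equiv\mu_\pm$ the second relation forces $\beta_\pm\equiv0$. Finally $\alpha_\pm=d_\pm^*/u_\mp=1+o(1)$ uniformly in $x$ on compacts as $y\to+\infty$, and since $\alpha_\pm$ is $h$-periodic this gives $\alpha_\pm(z)\to1$ uniformly in $x\in\R$ as $y\to+\infty$.

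\smallskip
\textbf{Main obstacle.} The step I expect to be most delicate is $\beta_\pm\equiv0$: it cannot be read off from the leading asymptotics on a single vertical line, because an $h$-periodic analytic function need not be bounded in a half-plane. What makes it work is the Floquet dichotomy — the two multipliers $\mu_\pm$ are genuinely distinct functions, and the explicit Gaussian shape of the leading terms $u_\pm^0$ makes the ratios $u_\mp^0/u_\pm^0$ manifestly non-$h$-periodic — so that a Bloch solution is pinned to one of the two one-dimensional eigendirections. The routine but technical verification of these facts in the difference-equation setting is carried out in~\cite{Bu-Fe:96b,Bu-Fe:01}, which I would cite.
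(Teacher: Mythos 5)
Your opening computation of the leading asymptotics of $d_\pm^*$ and the decomposition $d_\pm^*=\alpha_\pm u_\mp+\beta_\pm u_\pm$ with $h$-periodic analytic coefficients (via~\eqref{eq:periodic-coef}) are fine and coincide with the paper's setup. The gap is in the step where you claim that expanding $d_\pm^*(z+1)=\mu(z)d_\pm^*(z)$ and ``using the periodicity of $\alpha_\pm,\beta_\pm$'' yields $(\mu-\mu_\mp)\alpha_\pm\equiv0$ and $(\mu-\mu_\pm)\beta_\pm\equiv0$. Expanding in the basis $u_\pm$ actually gives
\begin{equation*}
\alpha_\pm(z+1)\,\mu_\mp(z)=\mu(z)\,\alpha_\pm(z),\qquad
\beta_\pm(z+1)\,\mu_\pm(z)=\mu(z)\,\beta_\pm(z),
\end{equation*}
and since $\alpha_\pm,\beta_\pm$ are $h$-periodic, \emph{not} $1$-periodic, you cannot replace $\alpha_\pm(z+1)$ by $\alpha_\pm(z)$: for $h\in(0,1)$ irrational the shift by $1$ acts nontrivially on the ring of $h$-periodic functions. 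So there is no Floquet dichotomy pinning a Bloch solution to a one-dimensional eigenspace here — the ``eigenspaces'' are rank-one modules over this ring, and the integer shift mixes the ring itself. The conclusions $\mu\equiv\mu_\mp$ and $\beta_\pm\equiv0$ do not follow from these relations as stated (indeed these recursions are precisely the kind of structure the monodromization machinery is built to handle, not something one reads off for free).

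The paper closes this gap by a different, more elementary mechanism. From the Bloch ratio $r(z)=u_-(z+1)/u_-(z)=-\lambda^{1/h}e^{-2\pi iz/h+o(1)}$ (computed from~\eqref{f-solutions}) one reads off that for fixed sufficiently large $y$, $u_-(z)\to0$ as $x\to-\infty$, and the same decay holds for $d_+^*$. The Wronskian $\{d_+^*,u_-\}$ is $h$-periodic in $z$, hence $h$-periodic in $x$ on each horizontal line; being periodic and tending to $0$ as $x\to-\infty$ it vanishes identically, which by~\eqref{eq:periodic-coef} is exactly $\beta\equiv0$. You could repair your write-up by replacing the multiplier-comparison step with this decay-in-$x$ plus periodicity-of-Wronskian argument; the rest of your proof (the asymptotic identification of $d_\pm^*$ with $u_\mp$ and the final passage $\alpha_\pm=d_\pm^*/u_\mp\to1$) would then go through.
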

\begin{proof} 
We check~\eqref{u-ast-d} for $d_+$. Mutatis mutandis, for $d_-$, the 
analysis is the same. 
\\
As $\{u_+(z),u_-(z)\}$, the Wronskian of $u_+$ and $u_-$, does not
vanish in $\mathbb C(Y_1)$, one has (see section~\ref{sss:MM:diffSch})
\begin{equation*}
d_+^*(z)=\alpha(z) u_-(z)+\beta(z) u_+(z), \quad 
\alpha(z)=\frac{\{u_+(z), d_+^*(z)\}}{\{u_+(z), 
u_-(z)\}}, \ \ \beta(z)=\frac{\{d_+^*(z), u_-(z)\}}{\{u_+(z), 
u_-(z)\}}.
\end{equation*}
The coefficients $\alpha$ and $\beta$ are $h$-periodic and 
analytic in  $\C_+(Y_1)$.
\\
We recall that $u_-$ is a Bloch solution. It means that the ratio 
$r(z)=u_-(z+1)/u_-(z)$
is $h$-periodic. Using~\eqref{f-solutions}, we get the asymptotic 
representation
\begin{equation*}
  r(z)=-\lambda^{1/h}e^{-2\pi i z/h+o(1)},\quad y\to\infty,
\end{equation*}
where the error estimate is  uniform in $x$. This implies that, for 
sufficiently large $y$, the solution $u_-$ tends to zero as 
$x\to-\infty$. Similarly one proves 
that $d_+^*$
does the same.  Therefore,  being periodic, the Wronskian $ \{d_+^*(z), 
u_-(z)\}$ equals zero. This implies that $\beta=0$.  
\\
Using~\eqref{f-solutions}, we check that as $y\to\infty$ one has 
$\alpha(z)=1+o(1)$ uniformly in $x$.
\end{proof}
\subsubsection{The minimal solutions}\label{s:min-sol}
Let $\psi$ be an entire solution to~\eqref{eq:harper}, and let $Y_1$ be 
as in Theorem~\ref{th:bloch}.
Then, $\psi$ admits the representations:
\begin{gather}
\label{psi:up}
\psi(z)=A(z)\,u_+(z)+B(z)\,u_-(z),\quad z\in \mathbb C_+(Y_1),\\
\label{psi:down}
\psi(z)=C(z)d_+(z)+D(z)\,d_-(z),\quad z\in \mathbb C_-(-Y_1),
\end{gather}
where $A$, $B$, $C$ and $D$ are analytic and $h$-periodic in  $z$.
The solution $\psi$ is called {\it minimal} if  $A$, $B$, $C$ and $D$
are bounded and one of them tends to zero as  $y$ tends to either 
$-\infty $ or $+\infty$.

Let $\psi$ be a minimal solution such that
$\lim_{y\to-\infty}D(z)=0$ and  $C(-i\infty)=\lim_{y\to-\infty}C(z)\ne0$. 
We set
$\psi_D(z)=\psi(z)/C(-i\infty)$.
In section~\ref{as:mm}, for sufficiently small $\lambda$, we construct  
$\psi_D$ in terms of  the solution $\psi_0$ from section~\ref{sec:int-eq}.
\\
Let  $A$, $B$, $C$ and $D$ be  defined for  $\psi=\psi_D$
by~\eqref{psi:up} and~\eqref{psi:down}.  The limits 
$$A_D=\lim_{y\to\infty}A(z), \ B_D=\lim_{y\to\infty}B(z), \ 
C_D=\lim_{y\to-\infty}C(z), \ D_D=\lim_{y\to-\infty}
e^{2\pi iz/h}D(z)$$ 
are  called the {\it asymptotic coefficients} of  $\psi_D$. By 
definition of $\psi_D$ one has $C_D=1$.  
\subsubsection{The monodromy matrix}
In terms of $\psi_D$, we define one more solution to the Harper 
equation~\eqref{eq:harper} by the formula
\begin{equation}
  \label{eq:psiB}
  \psi_B(z)=\psi_D(1-z).
\end{equation}
Clearly, $\psi_B$ is one more minimal entire solution. 

Theorem 7.2 from~\cite{Bu-Fe:01} can be formulated  as follows:
\begin{theorem}
The minimal entire solutions $\psi_D$  and $\psi_B$  exist. They, their 
asymptotic coefficients and their Wronskian are nontrivial meromorphic 
functions 
of
$E$.  The Wronskian is independent of $z$.  The monodromy matrix
corresponding to  $\psi_D$  and $\psi_B$ is of the form~\eqref{firstM}, and
\begin{equation}\label{first:ts}
s=-\lambda_1\frac{D_D}{B_D},\quad\quad t= -\lambda_1\,{A_D},\quad\quad
\lambda_1=\lambda^{\frac1h}=e^{\frac{2\pi\xi}h}.
\end{equation} 
\end{theorem}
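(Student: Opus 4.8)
The statement is the translation, through the equivalence of Section~\ref{sss:MM:diffSch}, of Theorems~7.2 and~7.3 of~\cite{Bu-Fe:01}, so the plan is to carry out this translation and then to read off~\eqref{first:ts}. First I would record that we are exactly in the setting of~\cite{Bu-Fe:01}: for $v(x)=2\lambda\cos(2\pi x)$ the matrix $M$ of~\eqref{eq:M:Shcr} is entire and $1$-periodic, and, by Section~\ref{sss:MM:diffSch}, a column $(\psi(z),\psi(z-h))^{t}$ solves~\eqref{eq:matrix} if and only if $\psi$ solves the Harper equation~\eqref{eq:harper}. Hence the notions of minimal entire solution, asymptotic coefficient and monodromy matrix introduced in Section~\ref{s:min-sol} are precisely those of~\cite{Bu-Fe:01}, and the existence of $\psi_D$, the normalization $C_D=1$, and the meromorphy in $E$ of $\psi_D$, of the asymptotic coefficients $A_D,B_D,D_D$, and of the Wronskian are provided by Theorem~7.2 there. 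I would also note at this point that $\psi_B(z)=\psi_D(1-z)$ is again an \emph{entire} solution, since~\eqref{eq:harper} is invariant under $z\mapsto1-z$ ($v$ being $1$-periodic and even), and that it is minimal: by~\eqref{eq:g-sol} its coefficients in the bases $u_\pm$ and $d_\pm$ are the $z\mapsto1-z$ reflections of those of $\psi_D$, so they are bounded and the $u_-$-coefficient tends to $0$; in particular $A_B=-C_D=-1$.

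Next I would compute the monodromy matrix $\mathcal M$ attached to the ordered pair $(\psi_D,\psi_B)$. By~\eqref{eq:periodic-coef}, its four entries are the $h$-periodic functions $\{\psi_D(\cdot+1),\psi_B\}/\{\psi_D,\psi_B\}$, $\{\psi_D,\psi_D(\cdot+1)\}/\{\psi_D,\psi_B\}$, $\{\psi_B(\cdot+1),\psi_B\}/\{\psi_D,\psi_B\}$ and $\{\psi_D,\psi_B(\cdot+1)\}/\{\psi_D,\psi_B\}$. I would evaluate each numerator by expanding $\psi_D$ in the basis $u_\pm$ for large $y$ (Theorem~\ref{th:bloch}) and in the basis $d_\pm$ for large $-y$ (see~\eqref{eq:g-sol}), expressing $\psi_B$ through these expansions by means of $\psi_B(z)=\psi_D(1-z)$, $d_\pm(z)=-u_\pm(1-z)$ and Lemma~\ref{le:d-star-u}, and then letting $y\to+\infty$ and $y\to-\infty$; since an analytic $h$-periodic function of $z$ is determined by its behaviour at the two ends of a horizontal strip, matching the two asymptotics pins down the whole (finite) Fourier expansion of each entry. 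The Bloch property of $u_\pm$ and $d_\pm$, whose multipliers are governed by $-e^{\pm2\pi iz/h}\lambda_1^{\mp1}$ (compare~\eqref{f-solutions} and the Remark following Theorem~\ref{th:bloch}), is what produces exactly the harmonics $e^{\mp2\pi ix}$ and $\cos(2\pi x)$ in~\eqref{firstM}, while $\{u_+,u_-\}=\lambda$ from~\eqref{eq:Wronsk-f} and $\{\psi_D,\psi_B\}=\lambda B_D\,(1+o(1))$ fix the normalizations. Comparing the outcome with~\eqref{firstM} then gives $t=-\lambda_1A_D$ and $s=-\lambda_1D_D/B_D$, while the remaining coefficient $a=\lambda_1(1-s^2-t^2)/(st)$ is forced by the unimodularity $\det\mathcal M\equiv1$. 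Finally, $\{\psi_D,\psi_B\}$ is entire, $h$-periodic (Section~\ref{sss:MM:diffSch}) and bounded (by minimality of $\psi_D$ and $\psi_B$), hence constant by Liouville, and it is nonzero because $B_D\not\equiv0$.

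The main obstacle is the bookkeeping in the second step: following precisely the Bloch multipliers of $u_\pm$ and $d_\pm$, the relation~\eqref{u-ast-d}, and the asymptotic coefficients, and checking that the Fourier series of the four entries of $\mathcal M$ truncate \emph{exactly} as displayed in~\eqref{firstM}. All of this is carried out in the proofs of Theorems~7.2 and~7.3 of~\cite{Bu-Fe:01} for the matrix equation~\eqref{eq:matrix}; so the only genuinely new — and routine — point is to confirm that the equivalence of Section~\ref{sss:MM:diffSch} transports the minimal entire solutions, their asymptotic coefficients, the Wronskian and the monodromy matrix without change. Once that is in place, the formulas~\eqref{first:ts} follow directly from the comparison above.
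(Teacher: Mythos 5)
Your proposal is correct and follows essentially the same route as the paper: the paper states this theorem as a direct reformulation of Theorem~7.2 of~\cite{Bu-Fe:01} (with the matrix form from Theorem~7.3), transported through the equivalence of Section~\ref{sss:MM:diffSch}, which is exactly what you identify and then sketch. Your more detailed Wronskian computation in the bases $u_\pm$, $d_\pm$ is the argument behind the cited reference and mirrors the calculations the paper itself carries out a few lines later for $\{\psi_D,\psi_B\}$ in the proof of Theorem~\ref{th:2}.
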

In the case of $\lambda=1$, the analysis of the poles of $s$ and $t$ was 
done in~\cite{Bu-Fe:96b}. Mutatis mutandis, it can be done similarly in 
the general case. One can see that $s$ and $t$ are analytic on the 
interval $I=[-2-2\lambda, 2+2\lambda]$.
In section~\ref{as:mm}, as $\lambda\to 0$, we  compute the asymptotics  
of $s$ and $t$  for $E\in I$ and so check independently their analyticity.
\subsection{Symmetries and the monodromy matrix}
For a function $f$ of $z$ and $E$, we let
$f^*(z,E)=\overline{f(\bar z,\,\bar E)}$.
It is clear that $f$ and $f^*$ satisfy~\eqref{eq:harper}
simultaneously. Here, using this symmetry, we prove Theorem~\ref{th:2}.
\subsubsection{A relation for the monodromy matrix}
Let us consider  $E$ such that  $\psi_D$ and $\psi_B$ form a basis in the 
space of
solutions to Harper equation. The monodromy matrix corresponding to this 
basis is defined by~\eqref{eq:MM-def} with $\psi_1=\psi_D$ and 
$\psi_2=\psi_B$. Below we denote it by $M$ (instead of $M_1$).
\\
As $\psi_D^*$ and $\psi_B^*$ also are solutions to Harper equation, one 
can write
\begin{equation}\label{S1:def}
\Psi^*(z)= S(z/h)\Psi(z),\quad  \Psi^*(z)=\begin{pmatrix}
\psi_D^*(z)\\ \psi_B^*(z)\end{pmatrix},
\end{equation}
where $S$ is a $2\times2$ matrix with $1$-periodic entries.
It is entire in $z$ and meromorphic in $E$ like the basis 
solutions.
One has
\begin{lemma} \label{le:M-S}
The matrices $M$ and $S$ satisfy the relation
  \begin{equation}\label{M:sym}
S(z+h_1)\,M(z)= M^*(z)\,S(z),\qquad h_1=\{1/h\},
\end{equation}
where $M^*$ is obtained from $M$ by applying the operation ${}^*$ to each 
of its entries.
\end{lemma}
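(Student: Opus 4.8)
The plan is to obtain \eqref{M:sym} by applying the symmetry ${}^*$ to the relation defining $M$ and then comparing it with the relation defining $S$. Throughout, I work with an $E$ for which $\psi_D$ and $\psi_B$ form a basis, so that $\Psi=\begin{pmatrix}\psi_D\\ \psi_B\end{pmatrix}$ is a fundamental matrix solution; by \eqref{eq:MM-def} (with $\psi_1=\psi_D$ and $\psi_2=\psi_B$) one has $\Psi(z+1)=M(z/h)\,\Psi(z)$, where $M$ is the $1$-periodic unimodular monodromy matrix, meromorphic in $E$, and by \eqref{S1:def} one has $\Psi^*(z)=S(z/h)\,\Psi(z)$ with $S$ being $1$-periodic and meromorphic in $E$.

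First I would apply ${}^*$ to $\Psi(z+1)=M(z/h)\,\Psi(z)$. Since $\psi_D^*$ and $\psi_B^*$ again solve \eqref{eq:harper}, since $(fg)^*=f^*g^*$, and since $h$ is real while $M$ is meromorphic in $E$, this gives $\Psi^*(z+1)=M^*(z/h)\,\Psi^*(z)$, where $M^*$ denotes the entrywise ${}^*$-transform of $M$.

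Next I would evaluate $\Psi^*(z+1)$ in two ways. Inserting $\Psi^*(z)=S(z/h)\,\Psi(z)$ into the previous line gives $\Psi^*(z+1)=M^*(z/h)\,S(z/h)\,\Psi(z)$. Alternatively, using \eqref{S1:def} at the point $z+1$, then $\Psi(z+1)=M(z/h)\,\Psi(z)$, and finally the $1$-periodicity of $S$ together with $1/h=[1/h]+h_1$,
\begin{equation*}
\Psi^*(z+1)=S\big((z+1)/h\big)\,\Psi(z+1)=S\big(z/h+h_1\big)\,M(z/h)\,\Psi(z).
\end{equation*}
Equating the two expressions for $\Psi^*(z+1)$ and cancelling $\Psi(z)$ — legitimate because $\det\Psi(z)=\{\psi_D,\psi_B\}$ is a nonzero constant for the $E$'s considered — yields $M^*(z/h)\,S(z/h)=S\big(z/h+h_1\big)\,M(z/h)$, and renaming $z/h\mapsto z$ gives \eqref{M:sym}. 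Since $M$, $M^*$ and $S$ are meromorphic in $E$, the identity, once established for an open set of $E$, holds for all $E$.

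I do not expect a genuine obstacle here: the only delicate points are the index bookkeeping in the step $S\big((z+1)/h\big)=S\big(z/h+h_1\big)$, which is exactly where $h_1=\{1/h\}$ rather than $1/h$ enters, and checking that the argument is run at an $E$ for which $\Psi$ is invertible before passing to general $E$ by meromorphy.
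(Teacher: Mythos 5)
Your argument follows exactly the paper's route: apply the operation ${}^*$ to the monodromy relation $\Psi(z+1)=M(z/h)\Psi(z)$, compare with the defining relation for $S$, use the $1$-periodicity of $S$ to replace $1/h$ by $h_1$, and cancel to get~\eqref{M:sym}. The one place where your write-up is not quite right is the cancellation step. As defined, $\Psi(z)=\begin{pmatrix}\psi_D(z)\\ \psi_B(z)\end{pmatrix}$ is a $2\times1$ column vector, not a $2\times2$ matrix: the expression ``$\det\Psi(z)$'' does not literally exist, and one cannot conclude $A=B$ from $A\Psi(z)=B\Psi(z)$ for a single column. What makes the cancellation legitimate (and this is precisely what the paper does) is to use the $1$-periodicity of $S$ and $M$ to observe that the identity $S\big((z+1)/h\big)M(z/h)\Psi(z)=M^*(z/h)S(z/h)\Psi(z)$ also holds with $z$ replaced by $z+h$ and the same coefficient matrices, so that the two relations can be packaged as a single equation applied to the $2\times2$ matrix $\big(\Psi(z+h),\,\Psi(z)\big)$, whose determinant is the Wronskian $\{\psi_D(z),\psi_B(z)\}$; this being nontrivial (meromorphic and not identically zero), the $2\times2$ matrix is generically invertible and the cancellation goes through. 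With that repair your proof coincides with the paper's.
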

\begin{proof}
The definition of the monodromy matrix  and~\eqref{S1:def} imply that
$$S\left((z+1)/h \right)\,M(z/h)\,\Psi(z) = M^*(z/h)\,S(z/h)\,\Psi(z).$$
Let $(\Psi(z+h), \Psi(z))$ be the matrix made of the column vectors 
$\Psi(z+h)$ and $\Psi(z)$.
As the functions $S$ and $M$ are $1$-periodic, we get
$$S\left((z+1)/h \right)\,M(z/h)\,(\Psi(z+h),\Psi(z)) = 
M^*(z/h)\,S(z/h)\,(\Psi(z+h),\Psi(z)).$$
As 
$$\det (\Psi(z+h), \Psi(z))=\{\psi_D(z),\psi_B(z)\},$$ 
the determinant of  $(\Psi(z+h),\Psi(z))$  is nontrivial, and  we come to 
the relation
$S\left(z/h+1/h \right)\,M(z/h)= M^*(z/h)\,S(z/h).$ As $S$ is 
$1$-periodic, 
it
implies~\eqref{M:sym}.
\end{proof}
\subsection{Properties of the matrix {\it S}}
We start with the following elementary observation.
\begin{lemma} One has 
  \begin{equation}
    \label{eq:S-prop}
    \sigma_1\,S(h_1-z)\,\sigma_1=S(z),\quad 
    \sigma_1=\begin{pmatrix} 0 & 1 \\ 1 &0 \end{pmatrix}.
  \end{equation}
\end{lemma}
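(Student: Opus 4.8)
The plan is to read off \eqref{eq:S-prop} directly from the definition of $S$ in \eqref{S1:def} together with the reflection symmetry $\psi_B(z)=\psi_D(1-z)$ coming from \eqref{eq:psiB}. Applying the operation ${}^*$ to the latter, and using that ${}^*$ commutes with the substitution $z\mapsto 1-z$ (since $\overline{1-\bar z}=1-z$), I would first record the companion identity $\psi_B^*(z)=\psi_D^*(1-z)$. It suffices to prove \eqref{eq:S-prop} for those $E$ — all but a discrete set, since the Wronskian of $\psi_D$ and $\psi_B$ is a nontrivial meromorphic function of $E$ — for which $\psi_D$ and $\psi_B$ form a basis of the solution space with constant nonzero Wronskian; the identity then extends to all $E$ because $S$ is meromorphic in $E$.

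Next I would write \eqref{S1:def} componentwise, namely $\psi_D^*(z)=S_{11}(z/h)\psi_D(z)+S_{12}(z/h)\psi_B(z)$ and $\psi_B^*(z)=S_{21}(z/h)\psi_D(z)+S_{22}(z/h)\psi_B(z)$, and substitute $z\mapsto 1-z$ in each. Using $\psi_D(1-z)=\psi_B(z)$, $\psi_B(1-z)=\psi_D(z)$, together with $\psi_D^*(1-z)=\psi_B^*(z)$ and $\psi_B^*(1-z)=\psi_D^*(z)$, the two resulting relations become expansions of $\psi_B^*$, respectively of $\psi_D^*$, in the basis $\{\psi_D,\psi_B\}$ with the $h$-periodic coefficients $S_{ij}\big((1-z)/h\big)$. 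Since the expansion of a solution in such a basis with $h$-periodic coefficients is unique — this is the content of \eqref{eq:three-solutions}--\eqref{eq:periodic-coef}, valid here because the Wronskian $\{\psi_D,\psi_B\}$ is constant and, for our $E$, nonzero — comparing with the two original relations gives $S_{11}(z/h)=S_{22}\big((1-z)/h\big)$, $S_{12}(z/h)=S_{21}\big((1-z)/h\big)$, and the two transposed equalities obtained by exchanging the indices $1\leftrightarrow 2$ on one side.

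Finally, since $(1-z)/h=1/h-z/h$ differs from $h_1-z/h$ by the integer $[1/h]$ and $S$ is $1$-periodic, I may replace $(1-z)/h$ by $h_1-z/h$ throughout. Setting $w=z/h$ and assembling the four scalar identities into one matrix identity yields
\[
S(h_1-w)=\begin{pmatrix} S_{22}(w) & S_{21}(w)\\ S_{12}(w) & S_{11}(w)\end{pmatrix}=\sigma_1\,S(w)\,\sigma_1,
\]
and substituting $w=h_1-z$ and using $\sigma_1^2=I$ gives precisely \eqref{eq:S-prop}. I do not expect a genuine obstacle; the only points requiring care are the bookkeeping of the starred identity $\psi_B^*(z)=\psi_D^*(1-z)$, the appeal to uniqueness of the $h$-periodic expansion (which is why one works at generic $E$ where $\psi_D,\psi_B$ is a basis), and the harmless integer shift relating $1/h-w$ to $h_1-w$.
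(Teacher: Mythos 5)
Your proposal is correct and takes essentially the same approach as the paper: derive $\Psi^*(z)=\sigma_1\Psi^*(1-z)$ from $\psi_B(z)=\psi_D(1-z)$, substitute into \eqref{S1:def} on both sides, reduce $(1-z)/h$ to $h_1-z/h$ by $1$-periodicity, and conclude by uniqueness of the $h$-periodic expansion. The only cosmetic difference is that the paper keeps the argument in matrix form and invokes invertibility of the padded matrix $(\Psi(z+h),\Psi(z))$, whereas you write out the four scalar identities component-by-component before reassembling them into the matrix relation $\sigma_1 S(h_1-z)\sigma_1=S(z)$.
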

\begin{proof}
In view of~\eqref{eq:psiB}, one has $\Psi(1-z)=\sigma_1\Psi(z)$. 
This and~\eqref{S1:def} imply that $ \Psi^*(z)=\sigma_1 S(h_1-z/h)\sigma_1 
\Psi(z)$. 
Using~\eqref{S1:def} once more, we obtain the relation $ 
S(z/h)\Psi(z)=\sigma_1 S(h_1-z/h)\sigma_1 \Psi(z)$.
Using the argument from the end of the proof of 
Lemma~\ref{le:M-S}, we
deduce~\eqref{eq:S-prop} from this relation. The proof is complete.
\end{proof}
\noindent Now, we check
\begin{Pro}
The entries of $S$ are independent of $z$. 
One has
  \begin{gather}\label{eq:SIJ}
    S_{11}=S_{22}=\frac1{B_D},\qquad S_{12}=S_{21}
    =\frac{A_D}{B_D},\\
    \label{ADandBD}
    A_D^*=A_D,\qquad  B_DB_D^*-A_DA_D^*=1.
  \end{gather}
\end{Pro}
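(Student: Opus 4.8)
The plan is to extract all the stated conclusions from the two relations already available: the conjugation symmetry $\sigma_1 S(h_1-z)\sigma_1 = S(z)$ from the previous lemma, and the intertwining relation $S(z+h_1)M(z) = M^*(z)S(z)$ from Lemma~\ref{le:M-S}, together with the explicit form~\eqref{firstM} of the monodromy matrix $M$ (here playing the role of $\mathcal M$) and the formulas~\eqref{first:ts} tying $s,t$ to the asymptotic coefficients. First I would pin down the entries of $S$ by looking at the behavior as $y\to+\infty$. Using the definitions $\psi_D^* $, $\psi_B^*$ and the representation~\eqref{psi:up} of a minimal solution in terms of the Bloch solutions $u_\pm$, together with Lemma~\ref{le:d-star-u} and the fact that $(\,)^*$ swaps $u_+\leftrightarrow u_-$ up to $h$-periodic factors tending to $1$, one computes that applying $(\,)^*$ to $\psi_D$ produces a solution whose asymptotic coefficients are $A_D^*$ and $B_D^*$ in the roles of $A$ and $B$; comparing with $\Psi^*(z)=S(z/h)\Psi(z)$ and reading off the limits $y\to+\infty$ gives the entries of $S$ directly in terms of $A_D,B_D$ and their conjugates. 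Since $\psi_D$ has $C_D = 1$ normalization and $\psi_B(z)=\psi_D(1-z)$, the coefficients of $\psi_B$ are obtained from those of $\psi_D$ by the reflection $z\mapsto 1-z$, which is exactly what makes the off-diagonal/diagonal pattern symmetric. This yields $S_{11}=S_{22}=1/B_D$ and $S_{12}=S_{21}=A_D/B_D$, and in particular shows the entries of $S$ are constant in $z$ (they are limits of $h$-periodic analytic functions that stabilize), which is the first assertion.

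Next I would derive the two identities~\eqref{ADandBD}. The symmetry~\eqref{eq:S-prop}, once we know $S$ is $z$-independent, simply says $\sigma_1 S \sigma_1 = S$, i.e. $S_{11}=S_{22}$ and $S_{12}=S_{21}$ — consistent with the above, nothing new. The real content comes from $\det$ and from the $*$-structure. Applying $(\,)^*$ twice is the identity, so $S^*(z+h_1)\,S(z) = I$ after iterating~\eqref{S1:def} (write $\Psi = S^*\Psi^*$ from $\Psi^{**}=\Psi$ and combine with $\Psi^*=S\Psi$); since $S$ is constant this is $S^* S = I$. Writing this out with $S = \begin{pmatrix} 1/B_D & A_D/B_D \\ A_D/B_D & 1/B_D\end{pmatrix}$, the $(1,1)$ entry of $S^*S = I$ gives $\frac{1}{B_D^*B_D} + \frac{A_D^* A_D}{B_D^* B_D} = 1$, hence $1 + A_D A_D^* = B_D B_D^*$, which is the second relation in~\eqref{ADandBD}; and the $(1,2)$ entry gives $\frac{A_D}{B_D^* B_D} + \frac{A_D^*}{B_D^* B_D} = 0$... which would force $A_D + A_D^* = 0$, so one must instead use $\det S^* \det S = 1$ to get $|\det S|=1$ and feed in the already-known symmetry; the correct route to $A_D^* = A_D$ is to combine $S^*S=I$ with the intertwining relation $S M = M^* S$ evaluated using the explicit entries of $M$ from~\eqref{firstM}, whose $(1,2)$ entry $s + t e^{-2\pi i x}$ and whose reality constraints force $A_D$ (which equals $-t/\lambda_1$ by~\eqref{first:ts}) to be $*$-invariant. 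So I would: (i) establish $S^*S=I$; (ii) from it read off $B_DB_D^* - A_DA_D^* = 1$ after using $\det S = 1/B_D^2 - A_D^2/B_D^2 = (1-A_D^2 \cdot \text{sign})$ carefully, i.e. $\det S = (1 - A_D^2)/B_D^2$...

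Let me reorganize the last step cleanly: from $S^*S = I$ and $S$ symmetric with $S_{11}=S_{22}=:\sigma$, $S_{12}=S_{21}=:\tau$, the equations are $\sigma^*\sigma + \tau^*\tau = 1$ and $\sigma^*\tau + \tau^*\sigma = 0$. With $\sigma = 1/B_D$, $\tau = A_D/B_D$, the second equation is $\frac{A_D + A_D^*}{B_D^*B_D}\cdot\frac{?}{}$... actually $\sigma^*\tau + \tau^*\sigma = \frac{A_D}{B_D^*B_D} + \frac{A_D^*}{B_DB_D^*} = \frac{A_D+A_D^*}{|B_D|^2_*}$, forcing $A_D + A_D^* = 0$, which is $\im$-type, not $A_D^* = A_D$. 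This sign discrepancy tells me the intertwining relation $SM = M^*S$ is essential and $S^*S = I$ alone is not the right auxiliary identity — instead one should use $S(z+h_1)M(z) = M^*(z)S(z)$ together with the explicit $M$ to get both relations simultaneously.

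\textbf{The main obstacle} I anticipate is exactly this bookkeeping of the $*$-operation and the precise normalizations: getting the signs right in $A_D^* = A_D$ versus $A_D^* = -A_D$, and making sure the relation one feeds in is $S(z+h_1)M(z)=M^*(z)S(z)$ with the correct $M$ rather than a naive $S^*S=I$. Concretely, the clean argument is: substitute the known form~\eqref{firstM} of $M$ and the candidate $S = \begin{pmatrix}1/B_D & A_D/B_D\\ A_D/B_D & 1/B_D\end{pmatrix}$ into $S M = M^* S$ (both sides $z$-independent once we show $S$ is), equate entries, and use $t = -\lambda_1 A_D$, $s = -\lambda_1 D_D/B_D$, $a = \lambda_1(1-s^2-t^2)/(st)$, $st/\lambda_1 = $ the $(2,2)$ entry; the $(1,1)$ entry comparison yields $a^* = a$ hence (given $s,t$ real-analytic on $I$) the reality that forces $A_D^* = A_D$, and the determinant relation $\det S = 1$ combined with $\det M = 1$ yields $1/B_D^2 - A_D^2/B_D^2 = $ the normalization, i.e. $B_D B_D^* - A_D A_D^* = 1$ after also invoking $S^*$-information. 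So the write-up will proceed: (1) show $S$ is $z$-independent by the $y\to\infty$ limit argument; (2) read off~\eqref{eq:SIJ} from the asymptotic coefficients; (3) substitute into $SM = M^*S$ and into $\det S = 1$ to extract~\eqref{ADandBD}.
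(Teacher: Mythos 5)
Your $S^*S=I$ route, which you introduce and then abandon, is in fact a valid and self\mbox{-}contained argument. From $\Psi^{**}=\Psi$ and~\eqref{S1:def} one gets $S^*(z/h)\,S(z/h)=I$; once the entries~\eqref{eq:SIJ} and the $z$\mbox{-}independence of $S$ are in hand, the $(1,1)$ entry of $S^*S=I$ is exactly $B_DB_D^*-A_DA_D^*=1$ and the $(1,2)$ entry is exactly $A_D+A_D^*=0$ --- your computation was right. You discarded it because you thought $A_D+A_D^*=0$ conflicts with the displayed ``$A_D^*=A_D$''; it does not conflict with the paper's own argument. The paper proves the Proposition by computing, via~\eqref{eq:S-wronsk}, the Wronskians $\{\psi_D,\psi_B\}$, $\{\psi_D^*,\psi_B\}$, $\{\psi_D,\psi_D^*\}$: each is an $h$\mbox{-}periodic entire function with finite limits as $y\to\pm\infty$, hence constant, and equating the two limits gives $\lambda B_D$, $\lambda=\lambda(B_DB_D^*-A_DA_D^*)$ and $\lambda A_D=-\lambda A_D^*$. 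That last equality is $A_D^*=-A_D$, i.e.\ exactly what your $S^*S=I$ computation gave. Indeed the printed ``$A_D^*=A_D$'' must be read with a minus sign: immediately after the Proposition the paper deduces $t=-t^*$ from $t=-\lambda_1 A_D$ and ``the first formula in~\eqref{ADandBD}'', and only $A_D^*=-A_D$ makes that step work.

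The genuine gaps in your proposal are therefore two. First, having correctly reached $S^*S=I$, you talk yourself out of it and retreat to $SM=M^*S$ and $\det S$; but $SM=M^*S$ is what the paper uses \emph{after} the Proposition, in the proof of Theorem~\ref{th:2}, and as you sketch it the argument for~\eqref{ADandBD} is circular: extracting $A_D^*=\pm A_D$ from the $SM=M^*S$ entries or from ``reality constraints'' on $s,t$ presupposes precisely the reality properties of the asymptotic coefficients you are trying to establish, and you concede that the second relation ``would also require invoking $S^*$ information'' without specifying how. Second, the $z$\mbox{-}independence of $S$ is asserted but never proved: ``limits of $h$\mbox{-}periodic functions that stabilize'' is not an argument. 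What you need, and what the paper actually supplies, is that the Wronskians in the formulas~\eqref{eq:S-wronsk} for $S_{11},S_{12}$ are $h$\mbox{-}periodic \emph{entire} functions with finite limits at both $\pm i\infty$, hence constant by Liouville's theorem in the variable $\zeta=e^{2\pi i z/h}$. This one observation is the engine of the whole proof: it simultaneously yields the constancy of $S$, the values~\eqref{eq:SIJ}, and the identities~\eqref{ADandBD}, and the write\mbox{-}up should be organized around it.
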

\begin{proof}
  We prove formulas~\eqref{eq:SIJ} for $S_{11}$ and $S_{12}$.
  These formulas and relation~\eqref{eq:S-prop} imply the formulas 
  for the other entries of $M$.
  \\
  According to~\eqref{eq:three-solutions}--~\eqref{eq:periodic-coef},
  relation~\eqref{S1:def} implies that
\begin{equation}
  \label{eq:S-wronsk}  
S_{11}(z/h)=\frac{\{\psi_D^*(z),\psi_B(z)\}}{\{\psi_D(z),\psi_B(z)\}},
\qquad 
  S_{12}(z/h)=\frac{\{\psi_D(z),\psi_D^*(z)\}}{\{\psi_D(z),\psi_B(z)\}}.
\end{equation}
Below, we  compute the Wronskians in~\eqref{eq:S-wronsk} in terms
of the asymptotic coefficients of  the solution $\Psi_D$. 

Let us  begin with $\{\psi_D(z),\psi_B(z)\}$. We recall that, for 
sufficiently
large $Y_1$, solution $\psi=\psi_D$ admits  representations~\eqref{psi:up}
and~\eqref{psi:down} with $h$-periodic coefficients $A$, $B$, $C$ and $D$, 
and $D=e^{-2\pi i z/h}D_1(z)$, where $D_1$ is bounded  in $\mathbb 
C_-(-Y_1)$.
By means of~\eqref{eq:psiB} and~\eqref{eq:g-sol}    we get for $z\in 
\mathbb C_+(Y_1)$
\begin{equation*}
\begin{split}
   \{\psi_D(z)&,\psi_B(z)\}=\\
&=\left\{A(z)u_+(z)+B(z)u_-(z), -C(1-z)u_+(z)-e^{-\frac{2\pi i 
(1-z)}h}D_1(1-z)u_-(z)\right\}\\
&=\left(-e^{-\frac{2\pi i (1-z)}h}A(z)D_1(1-z)+B(z) 
C(1-z)\right)\left\{u_+(z),u_-(z)\right\}.
\end{split}
\end{equation*}
Using this representation, (\ref{eq:Wronsk-f}) and the definitions of the
asymptotic coefficients of  $\psi_D$, see~\eqref{s:min-sol}, we check that 
\begin{equation*}
   \{\psi_D(z),\psi_B(z)\}\to\lambda B_DC_D=\lambda B_D\quad 
\text{as}\quad 
y\to\infty.
\end{equation*}
Similarly, we prove that 
\begin{multline*}
   \{\psi_D(z),\psi_B(z)\}=\\
=\left\{C(z)d_+(z)+e^{-2\pi i z/h}D_1(z)d_-(z), 
-A(1-z)d_+(z)-B(1-z)d_-(z)\right\}\\
\longrightarrow \quad \lambda B_D \qquad \text{as} \qquad y\to-\infty.
\end{multline*}
As $ \{\psi_D(z),\psi_B(z)\}$ is an $h$-periodic entire function, these 
observations imply that 
\begin{equation}\label{W:DB}
  \{\psi_D(z),\psi_B(z)\}=\lambda B_D.
\end{equation}
By means of Lemma~\ref{le:d-star-u}, one 
similarly computes the Wronskians $\{\psi_D^*(z),\psi_B(z)\}$ and 
$\{\psi_D(z),\psi_D^*(z)\}$,
and obtains the formulas
\begin{gather*}
  \{\psi_D^*(z),\psi_B(z)\}=\lambda+o(1)\quad\text{as}\quad y\to+\infty,\\ 
  \{\psi_D^*(z),\psi_B(z)\}=\lambda( B_DB_D^*-A_DA_D^*)+o(1)\quad
  \text{as}\quad y\to-\infty,\\
  \intertext{and}
  \{\psi_D(z),\psi_D^*(z)\}=\lambda A_D+o(1)\quad\text{as}\quad 
y\to+\infty,\\ 
  \{\psi_D(z),\psi_D^*(z)\}=-\lambda A_D^*+o(1)\quad
  \text{as}\quad y\to-\infty.
\end{gather*}
The last four formulas imply~\eqref{ADandBD} and formulas 
$$\{\psi_D^*(z),\psi_B(z)\}=\lambda,\quad  
\{\psi_D(z),\psi_D^*(z)\}=\lambda A_D.$$
This,~\eqref{W:DB} and~\eqref{eq:S-wronsk} imply~\eqref{eq:SIJ}.
\end{proof}
\subsubsection{Proof of  Theorem~\ref{th:2}}
Formulas~\eqref{M:sym} and~\eqref{eq:SIJ} imply the relation
\begin{equation*}
  \tilde S\,M(z)= M^*(z)\,\tilde S,\quad \forall z\in\C,\qquad
  \tilde S=\begin{pmatrix} 1 & A_D\\ A_D & 1 \end{pmatrix}.
\end{equation*}
This relation implies that
$$M_{12}(z)+A_D M_{22}(z)=M_{11}^*(z) A_D+ M_{12}^*(z), \qquad \forall 
z\in\C.$$ 
Substituting into this formula the expressions for the monodromy matrix 
entries from~\eqref{firstM}, we get
\begin{equation*}
  s+te^{-2\pi i z} +A_D\frac{st}{\lambda_1}=
  A_D\big(a^*-2\lambda_1\cos(2\pi z)\big)+s^*+t^*e^{2\pi i z},\qquad 
\forall z\in\C.
\end{equation*}
This equality of two trigonometric polynomials leads to the relations
\begin{equation*}
  t=-\lambda_1A_D,\quad s+A_D\frac{st}\lambda_1=a^*A_D+s^*.
\end{equation*}
The first of these two relations and the first formula in~\eqref{ADandBD}
imply that $t=-t^*$,   and substituting  in the second one the formula
$A_D=-t/\lambda_1$ and the formula for $a$ from~\eqref{firstM}, one easily 
checks that
$$ss^*=\lambda_1^2\frac{1-t^2}{\lambda_1^2-t^2}.$$
These two observations imply~\eqref{eq:s/a}.
\\
Let $E\in \R$. One has  $t=i\tau$ and
$s=-i\lambda_1\sqrt{\frac{1-t^2}{\lambda_1^2-t^2}} e^{i\alpha}$
with real $\tau$ and $ \alpha$. Using these representations,  we get the
formula~\eqref{trace}:
\begin{equation*}
  L=\frac{\lambda_1}{st}\,\big(1-s^2-t^2\big)+\frac{st}{\lambda_1}=
  \frac{2}{\tau}\sqrt{(1+\tau^2)(\lambda_1+\tau^2)}\cos \alpha.
\end{equation*}
The proof of Theorem~\ref{th:2} is complete.\qed
\section{Asymptotics of the monodromy matrix}\label{as:mm}
\subsection{Formulation of the Riemann-Hilbert problem}
%
Fix $Y>0$. Below  $\mathbb C_+=\C_+(-Y)$ and
$\mathbb C_-=\C_-(Y)$.
To construct the solution $\psi_D$, we paste it of 
solutions analytic in $\mathbb C_+$ and solutions analytic in
$\mathbb C_-$ by means of a Riemann-Hilbert 
problem. Here, we formulate this problem.
\subsubsection{Relations between entire solutions and 
solutions analytic in $\C_\pm$}
%
Let $\sigma$ be either the sign ``$+$'' or the sign ``$-$''.
Let $\mathbb{S}_\sigma$ be the set of solutions to Harper equation
that are analytic in $\C_\sigma$, and let  $\mathbb{K}_\sigma$ be the set 
of
the complex valued functions that are analytic and $h$-periodic in in 
$\C_\sigma$.
\\
Assume that  $\psi_\sigma$ and  $\phi_\sigma$ belong $\mathbb{S}_\sigma$.
Let  $w_\sigma(z)=\{\psi_\sigma(z),\phi_\sigma(z)\}$. One has 
$w_\sigma\in\mathbb{K}_\sigma$.
We assume that $w_\sigma$ does not vanish in $\C_\sigma$. Then, the pair 
$\psi_\sigma,\,\phi_\sigma$ is a basis in $\mathbb{S}_\sigma$.
Any entire solution $\psi$ to~\eqref{eq:matrix}  admits the representations
\begin{equation}\label{RP:1}
  \psi\,(z)=a_+(z)\,\psi_+(z)+b_+(z)\,\phi_+(z), \quad  z\in\C_+,\qquad
  a_+,\,b_+\in{\mathbb{K}}_+,
\end{equation}
\begin{equation}\label{RP:2}
     \psi\,(z)=a_-(z)\,\psi_-(z)+b_-(z)\,\phi_-(z),\quad z\in\C_-,\qquad
    a_-,\,b_-\in{\mathbb{K}}_-,
\end{equation}
with
\begin{equation*}
a_\pm(z)=\frac1{w_\pm(z)}\{\psi\,(z),\,\phi_\pm(z)\},\quad 
b_\pm(z)=\frac1{w_\pm(z)}\{\psi_\pm(z),\,\psi\,(z)\}.
\end{equation*}
It follows from ~\eqref{RP:1} and~\eqref{RP:2} that
\begin{equation}\label{RP:3}
a_+(z)\,\psi_+(z)+b_+(z)\,\phi_+(z)=a_-(z)\,\psi_-(z)+b_-(z)\,\phi_-(z),
\quad z\in\R.
\end{equation}
This implies that
\begin{equation}\label{RP:4}
V_+=\,G\,V_-,\quad z\in\R,\quad\quad 
V_+=\left(\begin{array}{c} a_+\\ b_+\end{array}\right),\quad
V_-=\left(\begin{array}{c} a_-\\ b_-\end{array}\right),
\end{equation}
where $G$ is the matrix  with the entries
\begin{equation}\label{RP:5}
\begin{array}{ll}
G_{11}(z)=\frac1{w_+(z)}\,\{\psi_-(z),\,\phi_+(z)\}, &
G_{12}(z)=\frac1{w_+(z)}\,\{\phi_-(z),\,\phi_+(z)\},\\
G_{21}(z)=\frac1{w_+(z)}\,\{\psi_+(z),\,\psi_-(z)\},&
G_{22}(z)=\frac1{w_+(z)}\,\{\psi_+(z),\,\phi_-(z)\}.
\end{array}
\end{equation}
\begin{Rem} One has
\begin{equation}\label{detG}
\det G(z)= w_-(z)\,/\,w_+(z),\quad z\in\R.
\end{equation}
Indeed,~\eqref{RP:3} implies also the relation
$$V_-=\frac1{w_-(z)}\,\begin{pmatrix}
\{\psi_+(z),\,\phi_-(z)\} &\{\phi_+(z),\,\phi_-(z)\}\\
\{\psi_-(z),\,\psi_+(z)\}& \{\psi_-(z),\,\phi_+(z)\}
\end{pmatrix}\, V_+.$$ 
One also can express $V_-$ via $V_+$ by inverting the matrix $G$ 
in~\eqref{RP:4}. Comparing the results, one comes to~\eqref{detG}.
\end{Rem}
We have checked
\begin{lemma}
Any entire solution to~\eqref{eq:matrix} can be 
represented by~\eqref{RP:1} -- \eqref{RP:2} with $a_+,\, 
b_+\in{\mathbb{K}}_+$ and  
$a_-,\,b_-\in {\mathbb{K}}_-$, and these coefficients satisfy the
relation~\eqref{RP:4} with the matrix $G$ given by~\eqref{RP:5}.
\end{lemma}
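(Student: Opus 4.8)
The plan is to apply, in each of the two half-planes $\C_+$ and $\C_-$ separately, the standard representation of a solution of a difference Schr\"odinger equation as a linear combination of two basis solutions with $h$-periodic coefficients recalled in Section~\ref{sss:MM:diffSch}, and then to glue the two representations on the overlap $\C_+\cap\C_-=\{\,|y|<Y\,\}$.

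First I would work in $\C_+$. By hypothesis $w_+=\{\psi_+,\phi_+\}$ is $h$-periodic, analytic and nonvanishing in $\C_+$, so $\psi_+,\phi_+$ form a basis of $\mathbb{S}_+$. Hence an entire solution $\psi$ of~\eqref{eq:matrix}, regarded (via the identification of Section~\ref{sss:MM:diffSch}) as an element of $\mathbb{S}_+$, admits the representation~\eqref{RP:1}, and formulas~\eqref{eq:three-solutions}--\eqref{eq:periodic-coef} give $a_+=\{\psi,\phi_+\}/w_+$ and $b_+=\{\psi_+,\psi\}/w_+$. The numerators are Wronskians of two solutions, hence $h$-periodic, and analytic in $\C_+$ since $\psi,\psi_+,\phi_+$ are; dividing by the nonvanishing $w_+\in\mathbb{K}_+$ keeps $a_+,b_+$ in $\mathbb{K}_+$. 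Repeating the same argument in $\C_-$ with the basis $\psi_-,\phi_-$ of $\mathbb{S}_-$ gives~\eqref{RP:2} with $a_-,b_-\in\mathbb{K}_-$.

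Next I would glue the two representations. On $\C_+\cap\C_-$ both~\eqref{RP:1} and~\eqref{RP:2} hold, so~\eqref{RP:3} holds there, and by analyticity it persists on $\R$. To extract~\eqref{RP:4}--\eqref{RP:5}, I would take the Wronskian of both sides of~\eqref{RP:3} against $\phi_+$ and then against $\psi_+$. Since the Wronskian is bilinear over the ring of $h$-periodic functions, the coefficients $a_\pm,b_\pm$ factor out, and using $\{\psi_+,\phi_+\}=w_+$ together with $\{\phi_+,\phi_+\}=\{\psi_+,\psi_+\}=0$ one gets
\[
w_+a_+=\{\psi_-,\phi_+\}\,a_-+\{\phi_-,\phi_+\}\,b_-,\qquad
w_+b_+=\{\psi_+,\psi_-\}\,a_-+\{\psi_+,\phi_-\}\,b_-,
\]
which is precisely~\eqref{RP:4} with $G$ given by~\eqref{RP:5}.

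There is no genuine obstacle here: the statement is a bookkeeping consequence of the Wronskian calculus for difference equations. The one point deserving a line of care is that the $h$-periodic coefficients $a_\pm,b_\pm$ are well defined and analytic on all of $\C_+$, resp.\ $\C_-$, and not merely on the overlap strip; this is guaranteed by the explicit Wronskian formulas for $a_\pm,b_\pm$ above, which are valid throughout the respective half-planes since $w_\pm$ does not vanish there.
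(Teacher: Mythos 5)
Your proof is correct and follows essentially the same route as the paper: the paper obtains~\eqref{RP:1}--\eqref{RP:2} with the same explicit Wronskian formulas for $a_\pm,b_\pm$, equates the two representations on $\R$ to get~\eqref{RP:3}, and then extracts~\eqref{RP:4} with~\eqref{RP:5} by the same Wronskian bookkeeping you spell out. Your write-up just makes explicit the pairing against $\phi_+$ and $\psi_+$ that the paper leaves implicit when it says ``\eqref{RP:3} implies \eqref{RP:4}.''
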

One can easily prove also 
\begin{lemma}
If $a_+$ and $b_+$ belong to ${\mathbb{K}}_+$, \  
$a_-$ and $b_-$ belong to ${\mathbb{K}}_-$, and if these four functions 
satisfy 
relation~\eqref{RP:4} with the matrix $G$ given by~\eqref{RP:5}, 
then formulae~\eqref{RP:1} -- \eqref{RP:2} 
describe an entire solution to~\eqref{eq:matrix}.
\end{lemma}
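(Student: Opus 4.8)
The plan is to check that the piecewise prescription~\eqref{RP:1}--\eqref{RP:2} glues into a single entire function and that this function inherits equation~\eqref{eq:harper} from the building blocks $\psi_\pm,\,\phi_\pm$.

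First I would record two elementary facts. Since $Y>0$, the half-planes $\C_+=\C_+(-Y)$ and $\C_-=\C_-(Y)$ cover $\C$, and their intersection $\{z\,:\,|\im z|<Y\}$ is a connected open set containing $\R$. Next, applying the formulae~\eqref{eq:three-solutions}--\eqref{eq:periodic-coef} with the (nowhere vanishing) Wronskian $w_+=\{\psi_+,\phi_+\}$ and reading off the entries listed in~\eqref{RP:5}, one obtains the identity
\[
  \bigl(\psi_-(z)\ \ \phi_-(z)\bigr)=\bigl(\psi_+(z)\ \ \phi_+(z)\bigr)\,G(z),\qquad z\in\R,
\]
i.e. $G$ is precisely the matrix expressing the basis $\psi_-,\phi_-$ through the basis $\psi_+,\phi_+$ of the solution space along $\R$.

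Using this, I would show that the two representations agree on $\R$. Write $\Psi^{(+)}$, $\Psi^{(-)}$ for the right-hand sides of~\eqref{RP:1} and~\eqref{RP:2}, so that $\Psi^{(+)}=\bigl(\psi_+\ \phi_+\bigr)V_+$ and $\Psi^{(-)}=\bigl(\psi_-\ \phi_-\bigr)V_-$ with $V_\pm$ as in~\eqref{RP:4}. Then on $\R$, using the hypothesis $V_+=G\,V_-$ together with the identity above,
\[
  \Psi^{(-)}=\bigl(\psi_-\ \phi_-\bigr)V_-=\bigl(\psi_+\ \phi_+\bigr)G\,V_-=\bigl(\psi_+\ \phi_+\bigr)V_+=\Psi^{(+)}.
\]
Hence $\Psi^{(+)}-\Psi^{(-)}$ is holomorphic on the connected overlap $\{|\im z|<Y\}$ and vanishes on $\R$, which has limit points in that domain; by the identity theorem it vanishes identically there. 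Therefore the function $\psi$ defined by~\eqref{RP:1} on $\C_+$ and by~\eqref{RP:2} on $\C_-$ is well defined and holomorphic on $\C_+\cup\C_-=\C$, i.e. entire.

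It remains to verify that $\psi$ solves~\eqref{eq:harper}. The half-plane $\C_+$ is invariant under $z\mapsto z\pm h$; on $\C_+$ the coefficients $a_+,b_+$ are $h$-periodic and analytic while $\psi_+,\phi_+$ satisfy~\eqref{eq:harper}, so by~\eqref{eq:three-solutions} the function $\psi$ satisfies~\eqref{eq:harper} throughout $\C_+$. Consequently the entire function $z\mapsto\psi(z+h)+\psi(z-h)+2\lambda\cos(2\pi z)\psi(z)-E\psi(z)$ vanishes on $\C_+$, hence identically, so $\psi$ is an entire solution to~\eqref{eq:harper}, equivalently to~\eqref{eq:matrix}. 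The only point that needs a little care is the gluing across $\R$, and this is supplied by the converse reading of~\eqref{RP:4}--\eqref{RP:5} carried out above; everything else is routine.
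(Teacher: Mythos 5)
Your proof is correct. The paper states this lemma without proof (it is introduced with the phrase ``One can easily prove also''), and your argument is precisely the natural one: you verify that $(\psi_-\ \phi_-)=(\psi_+\ \phi_+)\,G$ on $\R$ directly from~\eqref{eq:periodic-coef} and~\eqref{RP:5}, conclude that the two expressions~\eqref{RP:1} and~\eqref{RP:2} agree on $\R$ and hence, by analytic continuation, on the overlap strip $\{|\im z|<Y\}$, so that $\psi$ is entire; then you check that $\psi$ solves the difference equation on $\C_+$ using the $h$-periodicity of $a_+,b_+$ and the fact that $\psi_+,\phi_+$ solve it, and extend this to all of $\C$ by the identity theorem. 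All the ingredients you use (the half-planes covering $\C$, the overlap being connected and containing $\R$, the $h$-invariance of the half-planes) are correct, so the argument is complete.
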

\subsubsection{Basis solutions for constructing $\psi_D$}
\label{sss:bases-for-psiD}
Let $\psi_0(z,p)$ be the solution to~\eqref{eq:harper}
described in Proposition~\ref{psi0}, and let
\begin{equation}\label{0:up}
\psi_+(z,p)=\psi_0(z,p),\quad
\phi_+(z,p)=e^{-\frac{\pi i(z+i\xi)}h} \psi_0\,(z-1/2,1/2-p),
\end{equation}
and
\begin{equation}\label{0:down}
\psi_-(z,p)=\psi_+^*(z,\,p),\quad
\phi_-(z,p)=\phi_+^*(z,\,p).
\end{equation}
Clearly, $\psi_+,\,\phi_+\in{\mathbb{S}}_+$, and 
$\psi_-,\,\phi_-\in{\mathbb{S}}_-$.
 
To work with $\psi_\pm$ and $\phi_\pm$, we need to describe their behavior
for large $y$ and for $y\sim 0$. 
Theorem~\ref{psi0} and formulas~\eqref{f-solutions}
and~\eqref{eq:g-sol} imply
\begin{Cor} In the case of Theorem~\ref{psi0}, for sufficiently large 
$Y_1$, and for all $z\in\C_+(Y_1)$, one has
\begin{equation}\label{psi-phi-plus}
\begin{split}
\psi_+(z,p)=&A_\psi(z) e^{\frac{2\pi i(z+i\xi)}h}
u_+(z)+B_\psi (z)u_-(z),\\
\phi_+(z,p)&=A_\phi(z)\, u_+(z)+\,B_\phi(z)\,
u_-(z).
\end{split}
\end{equation}
For $z\in\C_-(-Y_1)$, one has
\begin{gather}\label{psi-phi-minus}
\begin{split}\textstyle
  \psi_-(z,p)=&\textstyle\frac{B_\psi^*(z)}{\alpha_+^*(z)}\; d_+(z)+
  \frac{A_\psi^*(z)}{\alpha_-^*(z)} \;e^{-\frac{2\pi i(z-i\xi)}h} d_-(z),\\
  \phi_-(z,p)&=\textstyle\frac{B_\phi^*(z)}{\alpha_+^*(z)}\; d_+(z)+
   \frac{A_\phi^*(z)}{\alpha_-^*(z)} \;d_-(z),
\end{split}
\end{gather}
where $\alpha_\pm$, $A_\psi$, $B_\psi$, $A_\phi$ and $B_\phi$ are 
$h$-periodic 
and analytic in $z$; \ $\alpha_\pm$ are described in 
Lemma~\ref{le:d-star-u},
and one has
\begin{gather}\label{A0,B0} 
  A_\psi(z)=A_{\psi,0}(1+o(1)),\quad B_\psi(z)=B_{\psi,0}(1+o(1)),\quad 
y\to+\infty,\\
\nonumber
A_{\psi,0}=e^{\frac{i\pi}4-\frac{2\pi ip^2}h-\frac{i\pi}{3h}-\frac{i\pi 
h}{12}}\,
e^{-\pi \xi},\quad
B_{\psi,0}=e^{-\frac{i\pi}4-\frac{i\pi h}4-\pi\xi}(1+\varkappa_0),
\\
\label{A0,B0-hats}
A_\phi(z)=A_{\phi,0}(1+o(1)),\quad B_\phi(z)=B_{\phi,0}(1+o(1)),\quad 
y\to+\infty,\\
\nonumber
A_{\phi,0}(p)=-iA_{\psi,0}(1/2-p)\,e^{-\frac{i\pi}{4h}} ,\quad
B_{\phi,0}(p)=-iB_{\psi,0}(1/2-p)\,e^{-\frac{3i\pi}{4h}}.
\end{gather}
These  asymptotics are uniform in $\re z$.
\end{Cor}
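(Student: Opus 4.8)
The plan is to decompose $\psi_+$ and $\phi_+$ in the basis $\{u_+,u_-\}$ of solutions to the Harper equation analytic in $\C_+(Y_1)$ furnished by Theorem~\ref{th:bloch}, to read off the asymptotic coefficients by comparing with the asymptotics of $\psi_0$ from Theorem~\ref{psi0} and of $u_\pm$ from~\eqref{f-solutions}, and then to obtain the representations in $\C_-(-Y_1)$ by applying the symmetry $f\mapsto f^*$ together with Lemma~\ref{le:d-star-u}.

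First I would treat $\psi_+=\psi_0$. By Theorem~\ref{th:bloch}, $u_\pm$ are analytic in $\C_+(Y_1)$ and $\{u_+,u_-\}=\lambda\neq0$, so, as in section~\ref{sss:MM:diffSch}, one can write $\psi_0=\tilde A(z)\,u_+(z)+B_\psi(z)\,u_-(z)$ with $\tilde A,B_\psi$ analytic and $h$-periodic in $\C_+(Y_1)$, given by $\tilde A=\{\psi_0,u_-\}/\lambda$ and $B_\psi=\{u_+,\psi_0\}/\lambda$. Since $z\mapsto e^{-2\pi i(z+i\xi)/h}$ is $h$-periodic, the function $A_\psi:=\tilde A\,e^{-2\pi i(z+i\xi)/h}$ is again $h$-periodic and analytic, and this gives the first line of~\eqref{psi-phi-plus}. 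To compute $A_{\psi,0}=\lim_{y\to+\infty}A_\psi$ and $B_{\psi,0}=\lim_{y\to+\infty}B_\psi$ I would insert~\eqref{psi-0:up} for $\psi_0$ and~\eqref{f-solutions} for $u_\pm$ into the Wronskian formulas; expressing $\xi_\pm(z+i\xi)$ through $u^0_\pm(z)$ up to explicit exponential factors reduces the comparison to an algebraic identity between exponentials. This also shows that $A_\psi,B_\psi$ are bounded as $y\to+\infty$; being $h$-periodic and analytic, each then has a Fourier expansion $\sum_{n\ge0}c_n e^{2\pi inz/h}$, hence tends to its zeroth coefficient with error $O(e^{-2\pi y/h})=o(1)$ uniformly in $\re z$, and these zeroth coefficients are the constants recorded in~\eqref{A0,B0}. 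The coefficients $A_\phi,B_\phi$ of $\phi_+$ are obtained the same way: $\phi_+\in\mathbb{S}_+$ (noted after~\eqref{0:down}, an elementary check using $\cos(2\pi(z-1/2))=-\cos(2\pi z)$, cf. the analogous~\eqref{tildeMu} for the model equation), so the decomposition~\eqref{psi-phi-plus} holds for $\phi_+$ with $h$-periodic analytic coefficients, and, since $P$ is invariant under $p\mapsto1/2-p$, substituting~\eqref{psi-0:up} with parameter $1/2-p$ into $\phi_+(z,p)=e^{-\pi i(z+i\xi)/h}\psi_0(z-1/2,1/2-p)$ yields~\eqref{A0,B0-hats}.

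For the half-plane $\C_-(-Y_1)$ I would apply $f\mapsto f^*$ to the representations~\eqref{psi-phi-plus}. Since $\psi_-=\psi_+^*$, $\phi_-=\phi_+^*$ and $\xi$ is real, this gives $\psi_-(z)=A_\psi^*(z)\,e^{-2\pi i(z-i\xi)/h}\,u_+^*(z)+B_\psi^*(z)\,u_-^*(z)$ in $\C_-(-Y_1)$, with the analogous formula for $\phi_-$, the starred coefficients being analytic and $h$-periodic there. By Lemma~\ref{le:d-star-u}, $d_\pm^*=\alpha_\pm u_\mp$ in $\C_+(Y_1)$, so $u_+=d_-^*/\alpha_-$ and $u_-=d_+^*/\alpha_+$; applying $*$ gives $u_+^*=d_-/\alpha_-^*$ and $u_-^*=d_+/\alpha_+^*$ in $\C_-(-Y_1)$. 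Substituting these into the starred representations produces~\eqref{psi-phi-minus}, and the $h$-periodicity of $\alpha_\pm$ and of the starred coefficients gives that of the coefficients there.

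The argument is essentially a reorganization, so there is no serious obstacle; the only point needing care is the one in the second paragraph — isolating the $h$-periodic factor $e^{2\pi i(z+i\xi)/h}$ in the coefficient of $u_+$ so that the remaining factor $A_\psi$ is non-decaying, and then matching the explicit prefactors ($e^{-\pi\xi}$, $e^{-i\pi/(3h)}$, $e^{-i\pi h/12}$, and so on) in~\eqref{A0,B0}--\eqref{A0,B0-hats}. That matching is a routine but delicate exponential computation, carried out from the definitions of $\xi_\pm$, $u^0_\pm$ and the constants $v_\pm$ from~\eqref{v:down}--\eqref{v:up}.
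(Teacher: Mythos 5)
Your proof is correct and carries out exactly the argument the paper leaves implicit (the paper merely asserts that the corollary follows from Theorem~\ref{psi0} and formulas~\eqref{f-solutions}, \eqref{eq:g-sol}). The decomposition in the basis $u_\pm$ via the nonvanishing Wronskian $\{u_+,u_-\}=\lambda$, the extraction of the $h$-periodic factor $e^{2\pi i(z+i\xi)/h}$ from the coefficient of $u_+$, the Fourier-series reasoning for passage from boundedness to a limit with rate $o(1)$ uniform in $\re z$, and the use of $f\mapsto f^*$ with Lemma~\ref{le:d-star-u} for the lower half-plane are all exactly what is intended; the exponential matching you call routine but delicate does indeed produce the stated constants $A_{\psi,0}$, $B_{\psi,0}$, $A_{\phi,0}$, $B_{\phi,0}$ when one computes $\xi_\pm(z+i\xi)/u^0_\pm(z)$ and $e^{-\pi i(z+i\xi)/h}\xi_\pm(z-1/2+i\xi)/u^0_\pm(z)$.
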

Furthermore, the third point of Theorem~\ref{psi0} and
Proposition~\ref{pro:mu:down} imply
\begin{Cor} Let $Y$, $\lambda$ and $h$ be as in Theorem~\ref{psi0}. Fix 
$\alpha\in(0,1)$ and $X>0$. For $p\in P$, \  $|y|\le 
\frac{Y}2$
and $|x|\le X$ the following holds.
\\ 
$\bullet$ \ Pick $\beta\in(0,\alpha)$. If \ 
$\re p\ge \alpha h/2$, then
\begin{equation}\label{psi-plus:1}
\psi_+(z)= e^{\frac{2\pi ip (z+i\xi)}h}
\left(a(p)+O_H(\lambda^\beta)\right),
\end{equation}
and if \ $\re p\le \frac12-\alpha h/2$, then
\begin{equation}\label{phi-plus:1}
\phi_+(z)= e^{-\frac{2\pi ip (z+i\xi)}h}
\left(e^{-\frac{i\pi (1/2-p)}h}a(1/2-p)+O_H(\lambda^\beta)\right).
\end{equation}
\\
$\bullet$ \ Pick $\beta\in (0,1)$. If \ $\re p\le \alpha h/2$, then
\begin{equation}\label{psi-plus:2}
\begin{split}
\psi_+(z)= a(p)\,e^{\frac{2\pi ip (z+i\xi)}h}+a(-p)\,e^{-\frac{2\pi 
ip(z+i\xi)}h}&+
O_H\left(\lambda^\beta e^{-\frac{2\pi p\xi}h}\,\right),
\end{split}
\end{equation}
and if \ $p\ge \pi-\alpha h/2$, then
\begin{equation}\label{phi-plus:2}
\begin{split}
\phi_+(z)= e^{-\frac{2\pi ip (z+i\xi)}h}\,&e^{-\frac{i\pi 
(1/2-p)}h}a(1/2-p)\\
+&e^{-\frac{2\pi i(1/2-p)(z+i\xi)}h}\,e^{\frac{i\pi (1/2-p)}h}a(p-1/2)+
O_H\left(\lambda^\beta e^{\frac{2\pi p\xi}h}\,\right).
\end{split}
\end{equation}
\end{Cor}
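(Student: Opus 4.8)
The plan is to substitute into the definitions~\eqref{0:up} of $\psi_+$ and $\phi_+$ the approximation of $\psi_0$ by the model solution $\mu$ given by the third point of Theorem~\ref{psi0}, and then to insert the lower half--plane asymptotics of $\mu$ supplied by Proposition~\ref{pro:mu:down}. We may assume $\lambda$ so small that $\xi=\frac1{2\pi}\ln\lambda$ is negative enough for all shifted arguments $z+i\xi$ and $z-\frac12+i\xi$ (with $|x|\le X+\frac12$, $|y|\le Y/2$) to lie below the thresholds in Theorem~\ref{psi0} and Proposition~\ref{pro:mu:down}; then $|y+\xi|=|\xi|-y\ge|\xi|-Y/2$, so the error exponentials $e^{-2\pi\beta|y+\xi|}$, $e^{-2\pi\beta'|y+\xi|}$ appearing in~\eqref{mu:down:1}--\eqref{mu:down:2} are $O_H(\lambda^\beta)$, $O_H(\lambda^{\beta'})$, and the factor $\lambda(1+|\xi|)^3$ in~\eqref{psi-0:down} is $O_H(\lambda^\beta)$ for every fixed $\beta<1$. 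Finally, for $p\in P$ and bounded $|x|,|y|$ the quantities $e^{2\pi\re p\,|\xi|/h}$ and $|e^{2\pi ip(z+i\xi)/h}|$ coincide up to factors of type $H^{\pm1}$ (since $|\im p|\le h$), so interchanging them in error estimates is harmless.

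First take $\psi_+=\psi_0$. By~\eqref{psi-0:down}, $\psi_+(z)=\mu(z+i\xi,p)+O_H\big(\lambda(1+|\xi|)^3\,e^{2\pi\re p\,|\xi|/h}\big)$ for $|x|\le X$, $|y|\le Y/2$. If $\re p\ge\alpha h/2$, applying~\eqref{mu:down:1} to $\mu(z+i\xi,p)$, with $y+\xi$ in the role of $y$ there, gives $\mu(z+i\xi,p)=e^{2\pi ip(z+i\xi)/h}\big(a(p)+O_H(\lambda^\beta)\big)$ for any $\beta\in(0,\alpha)$; since the error of~\eqref{psi-0:down} is likewise $O_H(\lambda^\beta)\,|e^{2\pi ip(z+i\xi)/h}|$, this yields~\eqref{psi-plus:1}. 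If $\re p\le\alpha h/2$, one uses the two--term representation~\eqref{mu:down:2} instead of~\eqref{mu:down:1}; writing $e^{2\pi ip(z+i\xi)/h}=e^{2\pi ipz/h}e^{-2\pi p\xi/h}$ with $|e^{2\pi ipz/h}|\le H$, both error terms become $O_H\big(\lambda^\beta e^{-2\pi p\xi/h}\big)$ with $\beta\in(0,1)$, which gives~\eqref{psi-plus:2}.

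The formulas for $\phi_+$ follow the same scheme from $\phi_+(z,p)=e^{-\pi i(z+i\xi)/h}\,\psi_0(z-\frac12,\frac12-p)$: apply~\eqref{psi-0:down} with $(z-\frac12,\frac12-p)$ in place of $(z,p)$ (note $\frac12-p\in P$ when $p\in P$), then apply Proposition~\ref{pro:mu:down} to $\mu(z-\frac12+i\xi,\frac12-p)$, and finally simplify the exponential prefactor using $e^{-\pi i(z+i\xi)/h}\,e^{2\pi i(\frac12-p)(z-\frac12+i\xi)/h}=e^{-2\pi ip(z+i\xi)/h-i\pi(\frac12-p)/h}$. When $\frac12-p$ is bounded away from zero (that is, $\re p\le\frac12-\alpha h/2$) one inserts~\eqref{mu:down:1} and gets~\eqref{phi-plus:1}; when $\frac12-p$ is near zero one inserts the two--term formula~\eqref{mu:down:2}, keeping also the companion exponential, and obtains~\eqref{phi-plus:2} after the analogous prefactor reduction. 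The error of~\eqref{psi-0:down}, multiplied by $|e^{-\pi i(z+i\xi)/h}|=O_H(e^{\pi\xi/h})$, again matches the size of the leading exponentials and is $O_H(\lambda^\beta)$ relative to them.

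I do not expect a real obstacle: the statement is a direct composition of the third point of Theorem~\ref{psi0} and Proposition~\ref{pro:mu:down}. The one point that needs care is the bookkeeping of the exponential prefactors — in particular, checking after the substitutions $z\mapsto z-\frac12$, $p\mapsto\frac12-p$ and the extra factor $e^{-\pi i(z+i\xi)/h}$ that the exponents recombine exactly as stated — together with the routine verification that each error coming from~\eqref{psi-0:down}, \eqref{mu:down:1} and~\eqref{mu:down:2} is $O_H(\lambda^\beta)$ relative to the relevant leading exponential, for the admissible ranges of $\beta$ ($\beta<\alpha$ in the one--term regimes, $\beta<1$ in the two--term ones). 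Analyticity in $p$ and uniformity in $\re z$ are inherited from the corresponding properties of $\psi_0$ in Theorem~\ref{psi0} and of $\mu$ in Proposition~\ref{pro:mu:down}.
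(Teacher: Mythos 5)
Your proposal is correct and matches the paper's route exactly: the corollary is precisely the composition of the bound~\eqref{psi-0:down} from Theorem~\ref{psi0} with Proposition~\ref{pro:mu:down}, applied after the shift $z\mapsto z+i\xi$ (and, for $\phi_+$, the substitution $z\mapsto z-\tfrac12$, $p\mapsto\tfrac12-p$ plus the prefactor $e^{-\pi i(z+i\xi)/h}$), with exactly the bookkeeping of exponential prefactors and error sizes you describe; the paper itself offers no further detail beyond asserting the implication. One remark worth recording: carrying out the prefactor reduction for the \emph{second} term in~\eqref{phi-plus:2} gives $e^{-\pi i(z+i\xi)/h}\,e^{-2\pi i(1/2-p)(z-1/2+i\xi)/h}=e^{i\pi(1/2-p)/h}\,e^{-2\pi i(1-p)(z+i\xi)/h}$, so the exponent $e^{-2\pi i(1/2-p)(z+i\xi)/h}$ printed in the corollary appears to be a slip for $e^{-2\pi i(1-p)(z+i\xi)/h}$ (only with $1-p$ does this term factor as an $h$-periodic function times a Bloch mode $e^{\pm 2\pi ip(z+i\xi)/h}$, and only then does its modulus match that of the first term), and likewise the condition ``$p\ge\pi-\alpha h/2$'' should read $\re p\ge \tfrac12-\alpha h/2$.
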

\noindent We complete this section by computing the Wronskians 
$w_\pm=w_\pm(z)$. 
As
\begin{equation}\label{w=w}
w_-(z)=w_+^*(z),
\end{equation}
we need to compute only $w_+$. 
One has
\begin{lemma}\label{le:w+} Let $Y$, $\lambda$ and $h$ be as in 
Theorem~\ref{psi0}. Fix $0<\beta<1$.
For  $p\in P$ and $z\in\C_+(-Y/2)$, one has
\begin{equation}\label{w+}
w_+= w_0+O(H\lambda^\beta),\quad
w_0=ie^{-\frac{2\pi ip^2}h+\frac{2ip\pi}h-\frac{13i\pi}{12h}-\frac{i\pi 
h}3}.
\end{equation}
\end{lemma}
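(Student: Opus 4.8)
The plan is to compute $w_+$ near the real axis, where the asymptotics of $\psi_+$ and $\phi_+$ recorded in the previous corollary are available, and then to propagate the answer to all of $\C_+(-Y/2)$ by periodicity. As a preliminary remark, $w_+$, being the Wronskian of two solutions of the Harper equation that are analytic in $\C_+(-Y)$, is $h$-periodic in $z$ and analytic in $\C_+(-Y/2)$ (see section~\ref{sss:MM:diffSch}); hence, when bounding it, it suffices to keep $|\re z|\le h$, and we may take the $X$ in the corollary as large as we please.

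Fix $0<\beta<1$. For $|y|\le Y/2$ I use the asymptotics~\eqref{psi-plus:1}--\eqref{phi-plus:2}. When $\alpha h/2\le\re p\le 1/2-\alpha h/2$ one has, by~\eqref{psi-plus:1} and~\eqref{phi-plus:1},
\[
\psi_+(z)=e^{\frac{2\pi ip(z+i\xi)}h}\bigl(a(p)+O_H(\lambda^\beta)\bigr),\qquad
\phi_+(z)=e^{-\frac{2\pi ip(z+i\xi)}h}\bigl(e^{-\frac{i\pi(1/2-p)}h}a(1/2-p)+O_H(\lambda^\beta)\bigr).
\]
Replacing $z$ by $z+h$ multiplies the leading exponentials by $e^{\pm 2\pi ip}$, so in $w_+(z)=\psi_+(z+h)\phi_+(z)-\psi_+(z)\phi_+(z+h)$ the factors $e^{\pm 2\pi ip(z+i\xi)/h}$ cancel and
\[
w_+(z)=2i\sin(2\pi p)\bigl(a(p)+O_H(\lambda^\beta)\bigr)\bigl(e^{-\frac{i\pi(1/2-p)}h}a(1/2-p)+O_H(\lambda^\beta)\bigr).
\]
Since $|a(p)|,|a(1/2-p)|\le H$ by~\eqref{est:a0} and $|\sin(2\pi p)|,|e^{-i\pi(1/2-p)/h}|\le C$ for $p\in P$, the cross and quadratic terms are $O_H(\lambda^\beta)$, while the surviving term $2i\sin(2\pi p)\,e^{-i\pi(1/2-p)/h}a(p)a(1/2-p)$ equals $w_0$ by~\eqref{wr-relation} after a short manipulation of the exponents. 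The ranges $\re p\le\alpha h/2$ and $\re p\ge 1/2-\alpha h/2$ are handled the same way, using~\eqref{psi-plus:2}, respectively~\eqref{phi-plus:2}, for one of the two functions: in $w_+$ the part of $\psi_+$ (resp.\ $\phi_+$) proportional to the same exponential as $\phi_+$ (resp.\ $\psi_+$) drops out of the Wronskian, and in the remainders the possibly large factor $e^{\mp 2\pi p\xi/h}$ is compensated by the small factor carried by the leading exponential of the other function, so again $w_+(z)=w_0+O_H(\lambda^\beta)$. (Alternatively, one may substitute $\psi_0=\mu(\cdot+i\xi)+(\text{error})$ from~\eqref{psi-0:down}, note via~\eqref{tildeMu} that the main part of $\phi_+$ equals $e^{-i\pi/h}\tilde\mu(\cdot-1+i\xi)$, and use that $\{\mu,\tilde\mu(\cdot-1)\}$ is a $z$-independent constant — proved exactly as for~\eqref{wronskian:formula} — which produces the same $w_0$ uniformly in $p\in P$.)

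It remains to pass from $|y|\le Y/2$ to all of $\C_+(-Y/2)$. For $y>Y_1$, since the coefficients in~\eqref{psi-phi-plus} and the function $e^{2\pi iz/h}$ are $h$-periodic and $\{u_+,u_-\}=\lambda$ by~\eqref{eq:Wronsk-f},
\[
w_+(z)=\lambda\bigl(A_\psi(z)B_\phi(z)\,e^{\frac{2\pi i(z+i\xi)}h}-B_\psi(z)A_\phi(z)\bigr);
\]
as $y\to+\infty$ the first term vanishes and the coefficients converge by~\eqref{A0,B0}--\eqref{A0,B0-hats}, so $w_+$ is bounded as $y\to+\infty$. Hence $g:=w_+-w_0$ is $h$-periodic and analytic in $\C_+(-Y/2)$ and bounded as $y\to+\infty$, so it has a Fourier expansion $g(z)=\sum_{n\ge 0}g_n e^{2\pi i n z/h}$ with only non-negative modes. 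Computing $g_n$ as an integral over $\{\im z=y_0\}$, letting $y_0\downarrow-Y/2$, and using $|g|\le H\lambda^\beta$ on $\{\im z=-Y/2\}$ from the previous step gives $|g_n|\le H\lambda^\beta e^{-\pi nY/h}$; summing yields $|g(z)|\le H\lambda^\beta$ for $\im z\ge 0$, which together with the previous step for $|\im z|\le Y/2$ gives~\eqref{w+}.

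The heart of the matter is the second step: tracking the exponential prefactors so that the (possibly large) factors $e^{\pm 2\pi\re p\,|\xi|/h}$ cancel between the two factors of the Wronskian, treating the endpoint ranges of $\re p$ separately, and matching the surviving main term with the explicit constant $w_0$ by means of the functional equation~\eqref{eq:sigma} for $\sigma_a$ (equivalently, of~\eqref{wr-relation}).
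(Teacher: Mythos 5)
Your overall strategy — compute $w_+$ for $|y|\le Y/2$ from the near-axis asymptotics of $\psi_+$ and $\phi_+$, recognize the surviving term as $w_0$ via~\eqref{wr-relation}, then propagate to all of $\C_+(-Y/2)$ using $h$-periodicity, analyticity and boundedness at $+i\infty$ — is exactly the paper's. The propagation step (you use a Fourier-mode estimate, the paper invokes the maximum principle for $h$-periodic analytic functions) is a cosmetic variation and is fine. The first case ($\alpha h/2\le\re p\le 1/2-\alpha h/2$) is handled correctly.

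The gap is in the case analysis on $\re p$. You split $P$ into three regimes: the interior one and two endpoint ones, and for the endpoint ones you use the modified asymptotic~\eqref{psi-plus:2} (resp.~\eqref{phi-plus:2}) \emph{for one of the two functions only}. But these three regimes cover $P$ only when $\alpha h\le 1/2$. Since the Corollary gives~\eqref{psi-plus:1}--\eqref{phi-plus:1} with error $O_H(\lambda^\beta)$ only for $\beta<\alpha$, matching a prescribed $\beta$ close to $1$ forces $\alpha$ close to $1$; then for $h>1/(2\alpha)$ (in particular $h>1/2$), there is a fourth regime $\frac12-\frac{\alpha h}2\le\re p\le\frac{\alpha h}2$ in which neither~\eqref{psi-plus:1} nor~\eqref{phi-plus:1} applies: one must use both~\eqref{psi-plus:2} and~\eqref{phi-plus:2} simultaneously. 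There the "matching-exponential" pieces do \emph{not} all cancel in the Wronskian: an extra term $T$ proportional to $e^{-\frac{2\pi i(z+i\xi)}h}\sin(2\pi p)\,a(-p)\,a(p-1/2)$ survives (this is the term $T$ in~\eqref{5.29}). Showing $|T|\le C\lambda^{1/h}$ — so that it is absorbable in $O(H\lambda^\beta)$ precisely because this regime forces $h>1/2$ and hence $1/h<2$ — requires the functional equation~\eqref{sigma:qp}, Corollary~\ref{theta:uni-rep:1}, and the observation that $|1-e^{-4\pi ip/h}|$ is bounded below there. Your proof does not address this regime at all; as written it is incomplete.

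Your parenthetical alternative is in fact the cleaner route and would close the gap if you developed it: writing $\psi_+=\mu(\cdot+i\xi)+\mathrm{err}_\psi$ with $\mathrm{err}_\psi$ controlled by~\eqref{psi-0:down}, identifying the leading part of $\phi_+$ via~\eqref{tildeMu} as a constant multiple of $\tilde\mu(\cdot-1+i\xi)$, and expanding the Wronskian, the factors $e^{\pm 2\pi\re p|\xi|/h}$ coming from~\eqref{psi-0:down} and~\eqref{mu-est} cancel between the two factors in every cross and quadratic term \emph{uniformly in $p\in P$}, so no case split on $\re p$ is needed. You would still have to check that $\{\mu,\,\tilde\mu(\cdot-1)\}$ is $z$-independent (same argument as for~\eqref{wronskian:formula}) and that its value, after the prefactor, reproduces $w_0$ for all $p$. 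As written this is only a sketch.
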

\begin{Rem}
Lemma~\ref{le:w+} implies that $\psi_+$ and $\phi_+$ are linearly 
independent
if $\lambda<e^{-C/h}$, and $C$  is sufficiently large.
\end{Rem}
\begin{proof} First, we compute $w_+$ as $y\to+\infty$. 
Using~\eqref{psi-phi-plus} we get 
\begin{align*}
  \{\psi_+,\phi_+\}&=
\left\{A_\psi e^{\frac{2\pi i(z+i\xi)}h}u_++B_\psi u_-,
A_\phi u_++B_\phi u_-\right\}\\
&=\left(A_\psi B_\phi e^{\frac{2\pi i(z+i\xi)}h}-B_\psi A_\phi\right)\,
\{u_+,u_-\}\\
&=-\lambda A_{\phi,0} B_{\psi,0}+o(1).
\end{align*}
So, the Wronskian is bounded as $y\to+\infty$. 

Let us pick $\beta\in(0,1)$ and check that
\begin{equation}\label{w-plus-and-a}
w_+=2i\sin (2\pi 
p)\,a(p)\,a(1/2-p)\,e^{-\frac{i\pi(1/2-p)}h}+O_H(\lambda^\beta),\quad
|y|\le Y/2.
\end{equation}
Note that in view of~\eqref{wr-relation} this already implies 
representation~\eqref{w+} for $|y|\le Y/2$.
 
To prove~\eqref{w-plus-and-a}, we pick $\alpha\in(\beta,1)$ 
and consequently consider four cases.
In the case where $\frac{\alpha h}2\le \re p\le 1/2-\frac{\alpha h}2$
formula~\eqref{w-plus-and-a} follows from 
representations~\eqref{psi-plus:1},
\eqref{phi-plus:1} and estimate~\eqref{est:a0}. In the case where 
$\re p\le \min\{\frac{\alpha h}2,\,\frac12-\frac{\alpha h}2\}$, we use  
representations~\eqref{phi-plus:1}--\eqref{psi-plus:2} and 
estimates~\eqref{est:a0}--\eqref{est:a_0}, and get 
\begin{equation*}
w_+=2i\sin(2\pi p)\,a(p)\,a(1/2-p)\,e^{-\frac{i\pi(1/2-p)}h}
+O_H(\xi\lambda^\beta).
\end{equation*}
As we can assume that $\beta$ in the last formula is larger than
in~\eqref{w-plus-and-a}, we again obtain~\eqref{w-plus-and-a}.
The case where $\re p\ge \max\{\frac{\alpha h}2,\,\frac12-\frac{\alpha 
h}2\}$ 
is treated similarly (by means of~\eqref{psi-plus:1} 
and~\eqref{phi-plus:2}). 
Finally, let $\frac12-\frac{\alpha h}2\le\re p\le \frac{\alpha h}2$.
Note that in this case one has $h>\frac1{2\alpha}>1/2$. Now we get
\begin{equation}\label{5.29}
\begin{split}
w_+=2i&\sin(2\pi p)a(p)a(1/2-p)e^{-\frac{i\pi(1/2-p)}h}\\
&-2i\sin(2\pi  p)a(-p)a(p-1/2) e^{\frac{i\pi(1/2-p)}h}\,e^{-\frac{2\pi 
i(z+i\xi)}h}
+O(\xi\lambda^\beta).
\end{split}
\end{equation}
Let us estimate the second term in the right hand side of~\eqref{5.29}.
We denote it by $T$.   According to~\eqref{a(p,h)}, for $p\in P$, 
 we have
\begin{equation*}
  |T|\le C \lambda^{1/h}\big|\sin(2\pi p)\sigma_{\pi h}(-4\pi p-\pi h-\pi)
  \sigma_{\pi h}(4\pi (p-1/2)-\pi h-\pi)\big|.
\end{equation*}
By means of~\eqref{eq:sigma} and~\eqref{sigma:qp}, we get
\begin{equation*}
  |T|\le C \lambda^{1/h}\left|\frac{\sigma_{\pi h}(4\pi (h/2-p)-\pi h-\pi)
  \sigma_{\pi h}(4\pi p-\pi h-\pi)}{1-e^{-\frac{4\pi i p}h}}\right|.
\end{equation*}
As  $\frac12-\frac{\alpha h}2\le\re p\le \frac{\alpha h}2$ and
$\frac12<h<1$, we have 
\begin{gather*}
4\pi (h/2-\re p)\ge   2\pi h(1-\alpha)> \pi(1-\alpha), \quad
 4\pi \re p\ge 2\pi(1-\alpha h)\ge 2\pi(1-\alpha),\\
 \pi (1-\alpha)\le \re 2\pi p/h\le \alpha\pi.
\end{gather*}
The first two inequalities  and Corollary~\ref{theta:uni-rep:1} imply that
$|\sigma_{\pi h}(4\pi (h/2-p)-\pi h-\pi)\sigma_{\pi h}(4\pi p-\pi 
h-\pi)|\le C$.
The third inequality implies that $|1-e^{-4\pi i p/h}|\ge C$. These 
observations prove that $|T|\le C \lambda^{1/h}$.
As $1/2<h<1$, the term $T$ can be included in the error term. This 
completes 
the proof of~\eqref{w+} for $|y|\le Y/2$.
 
The expression $w_+-w_0$ is $h$-periodic and analytic in $z$.
Since it is bounded as $y\to+\infty$, representation~\eqref{w+} justified 
for $y=-Y/2$ and the maximum principle imply that 
$w_+-w_0=O(H\lambda^\beta)$ 
for all $z\in \C_+(-Y/2)$ uniformly in $p\in P$.
This completes the proof.
\end{proof}
\subsubsection{Riemann-Hilbert problem for constructing $\psi_D$}
Let $\psi_\pm$ and $\phi_\pm$ be the bases chosen in 
section~\ref{sss:bases-for-psiD}. The minimal solution 
$\psi=\psi_D$ admits the representations~\eqref{RP:1}-~\eqref{RP:2}. 
The coefficients $a_\pm,b_\pm\in\mathbb K_\pm$ satisfy the 
equation~\eqref{RP:4} with the matrix $G$ defined 
by~\eqref{RP:5}. To formulate the  Riemann-Hilbert problem for these 
coefficients,
we need to study their behavior at $\pm i\infty$.
\\
The coefficients $a_\pm$ and $b_\pm$ being $h$-periodic,
we shall regard them as functions of the variable
$\zeta=e^{2\pi i z/h}$. Let
\begin{equation*}
{\mathbb T}=\{\zeta\in\C\,:\,|\zeta|=1\},\quad 
{\mathbb B}_o=\{\zeta\in\C\,:\,|\zeta|\le 1\},\quad 
{\mathbb B}_\infty=\{\zeta\in\C\,: |\zeta|\ge 1\}\cup\{\infty\}.
\end{equation*} 
The function $V_+$ is  analytic in 
${\mathbb B}_o\setminus \{0\}$, and $V_-$ is analytic in 
${\mathbb B}_\infty\setminus\{\infty\}$. 
\\
Substituting~\eqref{psi-phi-plus} into~\eqref{RP:1}, we see 
that if  $a_+$ and $b_+$  are bounded  as $\zeta\to 0$
($y\to+\infty$), then $\psi_D$ admits representation~\eqref{psi:up}
with $A$ and $B$ staying bounded as $y\to+\infty$, and one has
\begin{equation}\label{1:RP:4}
A_D=A_{\phi,0}\,b_+(0),\quad B_D=B_{\psi,0}\,a_+(0)+B_{\phi,0}\,b_+(0).
\end{equation}
Substituting~\eqref{psi-phi-minus} into~\eqref{RP:2} and taking into 
account
Lemma~\ref{le:d-star-u}, we see 
that if, as $\zeta\to\infty$, the coefficient  $a_-$   is bounded
and $b_-(\zeta)\to 0$, then $\psi_D$ admits 
representation~\eqref{psi:down} 
with $C$  staying bounded and $D$ vanishing as $\zeta\to \infty$. One 
has
\begin{equation}\label{1:RP:6}
\begin{array}{c}
1=C_D=B_{\psi,0}^*\,a_-(\infty),\quad
D_D=e^{-\frac{2\pi\xi}h}\,A_{\psi,0}^*\, 
a_-(\infty)+A_{\phi,0}^*\,b_{-,1},\\{}\\
b_{-,1}=\lim_{\zeta\to\infty} (\zeta b_-(\zeta)).
\end{array}
\end{equation}
Let us collect the obtained information on the coefficients
$a_\pm$ and $b_\pm$. One has
\begin{gather}
\label{1:RP:b}
V_+(\zeta)=G(\zeta)\,V_-(\zeta),\quad\quad \zeta\in{\mathbb T},\\
\label{1:RP:a}
V_+\text{ is analytic in } {\mathbb B}_o,\quad 
V_-\text{ is analytic in } {\mathbb B}_\infty,
\quad \dsize 
V_-(\infty)=\frac1{B_{\psi,0}^*}\begin{pmatrix} 1\\ 0\end{pmatrix},
\end{gather}
Equation~\eqref{1:RP:b}  and conditions~\eqref{1:RP:a}
form a Riemann-Hilbert problem.
We shall see that, for sufficiently small $\lambda$,
this problem has a unique solution. Having solved this 
problem, we shall reconstruct the coefficients of the minimal 
solution $\psi_D$ by the formulae~\eqref{1:RP:4} and~\eqref{1:RP:6}.
\subsection{Matrix G}
In this section, we study  the matrix $G$.
\subsubsection{Functional relations}
The properties of the matrix $G$ we discuss here  immediately 
follow from~\eqref{RP:5},~\eqref{0:up} and~\eqref{0:down}. 
When describing these properties, we use the variable $z$,
assume that  $\lambda<e^{C/h}$ where $C$ is sufficiently large and 
that $p\in P$.
\\
As  $w_\pm$ are bounded away from zero in the domain
$(z,p)\in \{|y|\le Y/2\}\times P$, the matrix $G$ is analytic there.
Let
\begin{equation*}
g_{ij}(z,p)= w_+(z,p)\,G_{ij}(z,p),\quad i,j\in\{1,2\}.
\end{equation*}
As $\psi_-=\psi_+^*$ and $\phi_-=\phi_+^*$,~\eqref{RP:5} implies that
\begin{gather}\label{G:r-an}
g_{22}(z,p)= g_{11}^*(z,p), \ \ 
g_{12}(z,p)=-g_{12}^*(z,p), \ \ 
g_{21}(z,p)=-g_{21}^*(z,p).
\end{gather}
Furthermore, relation~\eqref{w=w} and the formula~\eqref{detG} imply that
\begin{equation*}
\det G^*=\det G^{-1},\qquad g_{11}\,g_{22}-g_{12}\,g_{21}=w_+^*w_+.
\end{equation*}
Finally, as 
$\phi_+(z,p)=e^{-\frac{i\pi(z+i\xi)}h}\,\psi_+(z-1/2,1/2-p)$, we get
\begin{equation}\label{G:invers}
g_{12}(z,p)=e^{\frac{2\pi\xi}h}\,g_{21}(z-1/2,1/2-p).
\end{equation}
\subsubsection{The asymptotics of  $G$ for $0<p<1/2$}
Here we prove
\begin{Pro}\label{first:G:ne0} Let $0<\beta<1/2$.
There is a constant $C$ such that if 
$\lambda<e^{-C/h}$, then
for $p\in P$ such that $h/4\le\re p\le 1/2-h/4$,
and for $|y|\le \frac{Y}2$, one has
\begin{equation*}\textstyle
G=\frac1{w_0}\begin{pmatrix}
\delta & e^{\frac{4\pi\xi p}h}(2iF(1/2-p)\sin(2\pi p)+\delta)\\
e^{-\frac{4\pi\xi p}h}(2iF(p)\sin(2\pi  p)+\delta)& \delta
\end{pmatrix},
\end{equation*}
where $w_0$ is defined in~\eqref{w+}, $\delta$ denotes $O(\lambda^\beta 
H)$, and $F$ is  the meromorphic function  such that
\begin{equation}\label{F_1,F_2}
F(p)=\left|\sigma_{\pi h}( 4\pi p-\pi-\pi h)\right|^2, \quad p \in \R.
\end{equation}
Moreover, one has $|F(p)|\le H$ for $p\in P$ such that $h/4\le \re p$.
\end{Pro}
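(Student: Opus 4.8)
The plan is to compute the four Wronskians entering~\eqref{RP:5} directly from the asymptotics of $\psi_\pm$ and $\phi_\pm$ in the Corollary containing~\eqref{psi-plus:1}--\eqref{phi-plus:2}, taking $\alpha=1/2$ so that for $h/4\le\re p\le 1/2-h/4$ both~\eqref{psi-plus:1} and~\eqref{phi-plus:1} are available, and then to divide by $w_+=w_0+O(H\lambda^\beta)$ from Lemma~\ref{le:w+}. Working with $g_{ij}=w_+G_{ij}$, the starting data are the leading exponentials: by \eqref{psi-plus:1}, \eqref{0:up}, \eqref{0:down}, uniformly in $z$ on $|y|\le Y/2$, $|x|\le X$,
\[
\psi_+(z)=e^{\frac{2\pi ip(z+i\xi)}h}\big(a(p)+O_H(\lambda^\beta)\big),\qquad \psi_-(z)=e^{-\frac{2\pi ip(z-i\xi)}h}\big(a^*(p)+O_H(\lambda^\beta)\big),
\]
\[
\phi_+(z)=e^{-\frac{2\pi ip(z+i\xi)}h}\big(b(p)+O_H(\lambda^\beta)\big),\qquad \phi_-(z)=e^{\frac{2\pi ip(z-i\xi)}h}\big(b^*(p)+O_H(\lambda^\beta)\big),
\]
where $b(p)=e^{-i\pi(1/2-p)/h}a(1/2-p)$, the operation $^*$ is the symmetry of Section~\ref{s:mm:str} (so $\psi_-=\psi_+^*$, $\phi_-=\phi_+^*$), and $a^*(p)=\overline{a(\bar p)}$.

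For the off-diagonal entries, the leading exponentials of $\psi_+$ and $\psi_-$ are $e^{\pm2\pi ipz/h}$, and $\{e^{2\pi ipz/h},e^{-2\pi ipz/h}\}=e^{2\pi ip}-e^{-2\pi ip}=2i\sin(2\pi p)$. Substituting the displays into $g_{21}=\{\psi_+,\psi_-\}$, the $\xi$-prefactors combine to exactly $e^{-4\pi\xi p/h}$, and --- using $|a(p)|\le H$, which holds here by~\eqref{est:a0} --- the cross terms of leading times $O_H(\lambda^\beta)$ parts contribute only $O(\lambda^\beta H)$, so $g_{21}=e^{-4\pi\xi p/h}\big(2i\,a(p)a^*(p)\sin(2\pi p)+O(\lambda^\beta H)\big)$. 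All factors multiplying $\sigma_{\pi h}(4\pi p-\pi h-\pi)$ in~\eqref{a(p,h)} are unimodular for real $p$, hence $|a(p)|^2=|\sigma_{\pi h}(4\pi p-\pi-\pi h)|^2=F(p)$ on $\R$ by~\eqref{F_1,F_2}; as $a(p)a^*(p)$ and $F$ are both meromorphic, $a(p)a^*(p)=F(p)$ identically, and $g_{21}=e^{-4\pi\xi p/h}\big(2iF(p)\sin(2\pi p)+O(\lambda^\beta H)\big)$. The entry $g_{12}$ will be obtained either the same way from $\{\phi_-,\phi_+\}$, using $b(p)b^*(p)=a(1/2-p)a^*(1/2-p)=F(1/2-p)$, or more cheaply from the functional relation~\eqref{G:invers}, $g_{12}(z,p)=e^{2\pi\xi/h}g_{21}(z-1/2,1/2-p)$, with $\sin(2\pi(1/2-p))=\sin(2\pi p)$; either way $g_{12}=e^{4\pi\xi p/h}\big(2iF(1/2-p)\sin(2\pi p)+O(\lambda^\beta H)\big)$.

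The diagonal entries are where I expect the actual work, because there a cancellation is needed: $\psi_-$ and $\phi_+$ share the leading exponential $e^{-2\pi ipz/h}$ (and $\psi_+$, $\phi_-$ share $e^{2\pi ipz/h}$), so the Wronskian of the two leading terms vanishes. What survives in $g_{11}=\{\psi_-,\phi_+\}$ is a sum of products of a leading coefficient ($a^*(p)$ or $b(p)$, both $O(H)$ by~\eqref{est:a0}) with an $O_H(\lambda^\beta)$ error, multiplied by the residual exponential $e^{-2\pi ip(2z+h)/h}$, whose modulus on $|y|\le Y/2$, $|x|\le X$ is $e^{O(1/h)}=H$ since $|\re p|\le 1/2$, $|\im p|\le h$. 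Hence $g_{11}=O(\lambda^\beta H)=\delta$, and $g_{22}=g_{11}^*=\delta$ by the symmetry~\eqref{G:r-an}.

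Finally, Lemma~\ref{le:w+} gives $w_+=w_0+O(H\lambda^\beta)$, and on the relevant $p$-range $|w_0|$ is bounded above and below by positive constants --- the only non-unimodular factor in~\eqref{w+} has exponent $\frac{2\pi i}h\,p(1-p)$, of real part $O(1/h)\cdot O(h)=O(1)$ --- so $w_+^{-1}=w_0^{-1}(1+O(H\lambda^\beta))$. Dividing the four formulas for $g_{ij}$ by $w_+$, and invoking $|F(p)|\le H$ to absorb the perturbation of the main terms into $\delta$, should give exactly the claimed matrix. The bound $|F(p)|\le H$ for $\re p\ge h/4$ drops out on the way, from $|F(p)|=|a(p)|\,|a^*(p)|\le H$ via~\eqref{est:a0}, and analyticity of $G$ in $(z,p)$ on the stated domain is automatic since $w_\pm$ stay away from zero there. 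The genuine obstacle is thus not a single step but the bookkeeping around the diagonal cancellation --- checking that once the leading Wronskians cancel every remaining term lies in the class $\delta=O(\lambda^\beta H)$, and that the $\xi$- and $z$-dependent exponentials arising en route are either exactly $e^{\pm4\pi\xi p/h}$ or bounded by $H$ on the strip.
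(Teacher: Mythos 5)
Your proof is correct and is essentially the paper's: the same Wronskian computations from~\eqref{psi-plus:1}--\eqref{phi-plus:1} (with $\alpha=1/2$), the same cancellation of coinciding leading exponentials to get $g_{11}=O(\lambda^\beta H)$, the use of~\eqref{G:r-an} to obtain $g_{22}$, the identification $a(p)a^*(p)=F(p)$ via~\eqref{eq:F-a}, and division by $w_+=w_0+O(\lambda^\beta H)$ from Lemma~\ref{le:w+}. The only difference is cosmetic: the paper computes $g_{12}=\{\phi_-,\phi_+\}$ directly and then derives $g_{21}$ from the functional relation~\eqref{G:invers}, whereas you compute $g_{21}=\{\psi_+,\psi_-\}$ directly and treat $g_{12}$ by either route --- the two are equivalent.
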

\noindent 
So in the case of this proposition, for sufficiently small 
$\lambda$  the matrix $G$ appears to be close to a constant one.
\begin{proof}
Below, we assume that all the hypotheses of the proposition are satisfied.
First, we estimate the Wronskian $g_{11}=\{\psi_-,\phi_+\}$. 
Using~\eqref{psi-plus:1}--~\eqref{phi-plus:1} and the definition
$\psi_-$ from~\eqref{0:down}, we get
\begin{equation*}
g_{11}=
\left\{e^{-\frac{2\pi ip(z-i\xi)}h}(a^*(p) +\delta),\ 
e^{-\frac{2\pi ip(z+i\xi)}h}(e^{-\frac{i\pi(1/2-p)}h}a(1/2-p) 
+\delta)\right\}.
\end{equation*}
Obviously, the leading term  equals zero, and using estimate~\eqref{est:a0}
we prove that $g_{11}=O(\lambda^\beta\,H)$. 
By means of the first relation from~\eqref{G:r-an} 
we also see that $g_{22}=O(\lambda^\beta\,H)$.
As $G_{jj}=(w_+)^{-1}g_{jj}$, $j=1,2$, and in view of Lemma~\ref{le:w+}, 
we 
get 
the announced estimate for the diagonal elements of the matrix $G$.
\\
Now consider $g_{12}=\{\phi_-(z),\,\phi_+(z)\}$.
Using~\eqref{phi-plus:1},~\eqref{0:down} and~\eqref{est:a0}, we get
\begin{align*}
g_{12}&=
\{e^{\frac{2\pi ip(z-i\xi)}h}(a^*(1/2-p) +\delta),\;
e^{-\frac{2\pi ip(z+i\xi)}h}(a(1/2-p) +\delta) \}=\\
&=e^{\frac{4\pi p\xi}h}\Big(2i\sin (2\pi p)\, |a(1/2-p)|^2+
O(\lambda^\beta H)\Big).
\end{align*}
Let us note that, to get this formula, instead of $h/4\le p\le 1/2-h/4$,
we have only to assume that  $\re p\le 1/2-h/4$. 
The definition of $a$, formula~\eqref{a(p,h)}, implies that
\begin{equation}
  \label{eq:F-a}
  F(p)=|a(p)|^2.
\end{equation}
Therefore,
\begin{equation}\label{g12}
g_{12}=e^{\frac{4\pi p\xi}h}
\Big(2i\,F(1/2-p)\,\sin(2\pi  p)+\delta\Big),
\end{equation}
and also, in view of~\eqref{est:a0} \ $|F(p)|\le H$ for $p\in P$ such 
that $\re p\ge h/4$. This estimate and representations~\eqref{g12} and
\eqref{w+} imply
the formula for $G_{12}$ announced in the proposition.
We note that it is valid for all $p\in P$
such that  $\re p\le \pi-h/4$.
\\
Formula~\eqref{g12} and relation~\eqref{G:invers} imply the formula
for $G_{21}$ announced in the proposition.
It is valid for all $p\in P$ such that 
$h/4\le \re p$.
 \\
We have checked all the statements of the proposition.
\end{proof}
To use Proposition~\ref{first:G:ne0}, we need 
\begin{lemma}\label{F1F2:prop} One has 
\begin{equation}
  \label{eq:F}
 F(p+1/2)=4\sin^2\frac{2\pi p}h\;F(p),
\end{equation}
\begin{equation}\label{F1F2:rel}
4\sin^2(2\pi p)\;F(1/2-p)\,F(p)=1.
\end{equation}
\end{lemma}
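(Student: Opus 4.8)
The plan is to reduce both identities to relations that are already at our disposal: the Wronskian formula \eqref{wr-relation} for \eqref{F1F2:rel}, and the functional equations of $\sigma_{\pi h}$ from Section~\ref{sec:sigma} for \eqref{eq:F}. Since $F$, $p\mapsto F(p+1/2)$ and $p\mapsto F(1/2-p)$ are meromorphic and $\sin^2(2\pi p/h)$ is entire, it suffices to verify \eqref{eq:F} and \eqref{F1F2:rel} on the real axis and then invoke analytic continuation. On $\R$, formula \eqref{eq:F-a} gives $F(p)=|a(p)|^2$, and hence also $F(1/2-p)=|a(1/2-p)|^2$, so $F$ may be replaced throughout by $|a(\cdot)|^2$.

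For \eqref{F1F2:rel} I would simply take the squared modulus of both sides of \eqref{wr-relation} for real $p$. On the left this produces $|2i\sin(2\pi p)|^2\,|a(1/2-p)|^2\,|a(p)|^2=4\sin^2(2\pi p)\,F(1/2-p)\,F(p)$, while on the right it produces $|i|^2=1$, because the exponent on the right-hand side of \eqref{wr-relation} is purely imaginary when $p$ is real. This is exactly \eqref{F1F2:rel} on $\R$, hence on all of $\C$. (Alternatively, writing out the arguments, $F(1/2-p)F(p)=|\sigma_{\pi h}(w)\,\sigma_{\pi h}(-w-2\pi h)|^2$ with $w=4\pi p-\pi-\pi h$, so \eqref{F1F2:rel} is also a direct consequence of the reflection formula for $\sigma_{\pi h}$; the route through \eqref{wr-relation} is the shortest.)

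For \eqref{eq:F} I would return to the definition \eqref{F_1,F_2}: $F(p+1/2)=|\sigma_{\pi h}(4\pi p+\pi-\pi h)|^2$ and $F(p)=|\sigma_{\pi h}(4\pi p-\pi-\pi h)|^2$, so the two arguments differ by $2\pi$. The relation \eqref{eq:sigma} only shifts the argument of $\sigma_{\pi h}$ by $2\pi h$, so the $2\pi$-shift must be supplied by the quasi-periodicity relation \eqref{sigma:qp}: applying it at $w=4\pi p-\pi-\pi h$ gives $\sigma_{\pi h}(w+2\pi)=R(w)\,\sigma_{\pi h}(w)$ for an explicit factor $R$, whence $F(p+1/2)=|R(w)|^2\,F(p)$. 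It then remains to compute $|R(w)|^2$ at this point and check that it equals $4\sin^2(2\pi p/h)$; the arguments $4\pi p\pm\pi-\pi h$ are chosen precisely so that the non-unimodular part of $R(w)$ is $1-e^{-4\pi i p/h}$ up to factors of modulus one, and $|1-e^{-4\pi i p/h}|^2=4\sin^2(2\pi p/h)$. Analytic continuation again extends the identity off the real axis.

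The only genuine work is this last computation: one must extract the precise form of the $2\pi$-quasi-period factor of $\sigma_{\pi h}$ from Section~\ref{sec:sigma} and carry the scalar exponential prefactors through accurately so that $|R(w)|^2$ collapses to $4\sin^2(2\pi p/h)$. By contrast, the derivation of \eqref{F1F2:rel} is essentially free once \eqref{eq:F-a} and \eqref{wr-relation} are in hand.
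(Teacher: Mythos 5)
Your proof is correct. For \eqref{eq:F} your route is exactly the paper's: apply the quasi-periodicity relation \eqref{sigma:qp} with $a=\pi h$ at the appropriate point and observe that the jump factor $1-e^{-4\pi ip/h}$ has squared modulus $4\sin^2(2\pi p/h)$ (your computation with $w=4\pi p-\pi-\pi h$ and $R(w)=1+e^{-i(w+\pi)/h}$ checks out).

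For \eqref{F1F2:rel} the paper argues directly from \eqref{sigma:sym} and \eqref{eq:sigma}: one writes $F(1/2-p)=|\sigma_{\pi h}(-(4\pi p-\pi+\pi h))|^2$, uses the reflection formula to turn this into $1/|\sigma_{\pi h}(4\pi p-\pi+\pi h)|^2$, and then uses \eqref{eq:sigma} to shift the argument by $2\pi h$ down to the one appearing in $F(p)$, picking up $|1-e^{-4\pi ip}|^2=4\sin^2(2\pi p)$. Your primary route instead takes $|\cdot|^2$ of the Wronskian identity \eqref{wr-relation} and invokes \eqref{eq:F-a}; since the exponent on the right of \eqref{wr-relation} is purely imaginary for real $p$, this collapses immediately to $4\sin^2(2\pi p)F(1/2-p)F(p)=1$. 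This is a legitimate shortcut: \eqref{wr-relation} was already established in the Wronskian lemma (itself proved from \eqref{eq:sigma} and \eqref{sigma:sym}), so no circularity arises, and you are simply reusing packaged work rather than redoing the same $\sigma$-computation. The self-contained route is marginally preferable only in that it keeps Section~7 material independent of the earlier Wronskian calculation; the content is the same. You also correctly flag the analytic-continuation step from real $p$ to all of $\C$, which the paper leaves implicit.
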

\begin{proof}
Formula~\eqref{eq:F} follows from~\eqref{sigma:qp}, 
and~\eqref{F1F2:rel} follows from~\eqref{sigma:sym} and~\eqref{eq:sigma}.
\end{proof}
\subsection{Proof of  Theorem~\ref{th:0:ne0}}
Here, we compute the coefficients $s$ and $t$ of the monodromy matrix
in the case where $p\in P$ is bounded away from $0$  and $1/2$.  
Therefore, first, we solve the Riemann-Hilbert  
problem~\eqref{1:RP:b}--\eqref{1:RP:a} 
to find the asymptotics of $a_+(0)$, $b_+(0)$, $a_-(\infty)$ and
$\lim_{\zeta\to\infty}\zeta  b_-(\zeta)$. Then, by means 
of formulae~\eqref{1:RP:4} and~\eqref{1:RP:6}, we compute
the coefficients $A_D$, $B_D$ and $D_D$
of the minimal entire solution $\psi_D$. Finally, using
formulae~\eqref{first:ts}, we compute $s$ and $t$.  
\subsubsection{Solving the Riemann-Hilbert problem}
The leading term of the asymptotics of  $G$ being independent 
of $z$, the analysis of the Riemann-Hilbert problem is elementary.  
Assume that $\lambda$, $z$ and $p$ satisfy assumptions of 
Proposition~\ref{first:G:ne0}.
Let
\begin{equation*}
G_0=\frac{2i\sin(2\pi p)}{w_0}\,\begin{pmatrix}
0& F(1/2-p)\\
F(p) & 0\end{pmatrix},\quad\quad
T=\begin{pmatrix} \tau &0\\ 0 & \tau^{-1}\end{pmatrix},\quad
\tau=e^{\frac{2\pi \xi p}h}.
\end{equation*}
Relation~\eqref{F1F2:rel} implies that $\det G_0=1/w_0^2$.
In view of Proposition~\ref{first:G:ne0}, we have
\begin{equation}\label{G:fact}
G=T\,G_0\,(I+\Delta)\,T^{-1},\quad
\Delta=O(\lambda^\beta H).
\end{equation}
The  term $\Delta$ is analytic in 
$(z,p)\in \{|y|\le \frac{Y}2\}\times 
\{p\in P\,:\,h/4\le \re p\le 1/2-h/4\}$.

Now, we pass to the variable $\zeta=e^{2\pi i z/h}$. 
Let $\|.\|$ be a  matrix norm. Pick $\alpha\in(0,1)$. 
For matrix functions on ${\mathbb T}=\{|\zeta|=1\}$ 
denote by $\|.\|_\alpha$ the  standard H\"older
norm defined in terms of $\|.\|$.  One has 
\begin{lemma}\label{le:RP:standard} Let $\Delta$ be a matrix-valued 
function on ${\mathbb T}$.  
If $\|\Delta\|_\alpha$ is sufficiently small,
then there exist unique matrix functions
$W_\pm$ such that
\begin{gather}
\nonumber
W_+ \text{ is analytic in }  {\mathbb B}_o,\\
\nonumber
W_-\text{ is analytic in }  {\mathbb B}_\infty ,\quad  W_-(\infty)=I,\\
\label{I+Delta:fact}
W_+(\zeta)=(I+\Delta(\zeta))\,W_-(\zeta),\quad \zeta\in{\mathbb T}.
\end{gather}
These functions satisfy the estimates:
\begin{equation}\label{Wpm:est}
\|W_+(\zeta)-I\|\le C\|\Delta\|_\alpha,\quad |\zeta|\le 1;\quad\quad
\|W_-(\zeta)-I\|\le \frac{C\|\Delta\|_\alpha}{|\zeta|},\quad |\zeta|\ge 1.
\end{equation}
\end{lemma}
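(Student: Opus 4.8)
The plan is to reduce the multiplicative factorization~\eqref{I+Delta:fact} to an additive (jump) Riemann--Hilbert problem on $\mathbb T$, to solve that problem by a Cauchy integral, and then to absorb the nonlinearity into a contraction mapping, which is legitimate precisely because $\|\Delta\|_\alpha$ is small. First I would seek $W_\pm$ in the form $W_\pm=I+w_\pm$, where $w_+$ is analytic in $\mathbb B_o$, $w_-$ is analytic in $\mathbb B_\infty$ and $w_-(\infty)=0$. Substituting into~\eqref{I+Delta:fact} and cancelling the common summand $I$ shows that~\eqref{I+Delta:fact} is equivalent to the additive relation $w_+(\zeta)-w_-(\zeta)=\Delta(\zeta)+\Delta(\zeta)w_-(\zeta)$ for $\zeta\in\mathbb T$; denote its right-hand side by $f$.

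Next I would use the Cauchy projections: for $g\in C^\alpha(\mathbb T)$ put $(\mathcal C g)(\zeta)=\frac1{2\pi i}\int_{\mathbb T}\frac{g(t)}{t-\zeta}\,dt$ for $\zeta\notin\mathbb T$, and let $P_\pm g$ be the boundary values of $\mathcal C g$ from inside, resp.\ outside $\mathbb T$. By the classical Plemelj--Privalov theorem the operators $P_\pm$ are bounded on $C^\alpha(\mathbb T)$, one has $P_+-P_-=\mathrm{id}$, the function $P_+g$ extends analytically to $\mathbb B_o$, and $P_-g$ extends analytically to $\mathbb B_\infty$ with $(P_-g)(\infty)=0$. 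Hence the unique pair $(w_+,w_-)$ in the prescribed analyticity classes solving the jump relation with a given $f$ is $w_+=P_+f$, $w_-=P_-f$, so $w_-$ must satisfy the fixed-point equation
\begin{equation*}
w_-=P_-\Delta+P_-(\Delta\,w_-)=:\mathcal A(w_-)
\end{equation*}
in $C^\alpha(\mathbb T)$. Since multiplication by $\Delta$ has operator norm $\le C\|\Delta\|_\alpha$ on $C^\alpha(\mathbb T)$ and $P_-$ is bounded there, $\mathcal A$ is a contraction once $\|\Delta\|_\alpha$ is small enough; its unique fixed point obeys $\|w_-\|_\alpha\le C\|\Delta\|_\alpha$, and then $w_+=P_+(\Delta+\Delta w_-)$ obeys $\|w_+\|_\alpha\le C\|\Delta\|_\alpha$ as well. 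This yields $W_\pm=I+w_\pm$ with the required analyticity and with $W_-(\infty)=I$.

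The pointwise bounds~\eqref{Wpm:est} then follow from the maximum principle: for $|\zeta|\le1$ one has $\|W_+(\zeta)-I\|=\|w_+(\zeta)\|\le\sup_{\mathbb T}\|w_+\|\le\|w_+\|_\alpha\le C\|\Delta\|_\alpha$, and for $|\zeta|\ge1$ the function $\zeta\mapsto\zeta\,w_-(\zeta)$ is analytic and bounded in $\mathbb B_\infty$ (because $w_-$ vanishes at $\infty$), so $|\zeta|\,\|w_-(\zeta)\|\le\sup_{\mathbb T}\|w_-\|\le C\|\Delta\|_\alpha$. For uniqueness, if $W_\pm^{(1)}$ and $W_\pm^{(2)}$ are two admissible factorizations, then all the $W_\pm^{(j)}$ are invertible near $\mathbb T$ (being $I+O(\|\Delta\|_\alpha)$), and $(W_+^{(1)})^{-1}W_+^{(2)}=(W_-^{(1)})^{-1}(I+\Delta)^{-1}(I+\Delta)W_-^{(2)}=(W_-^{(1)})^{-1}W_-^{(2)}$, so this function is analytic on all of $\mathbb C\cup\{\infty\}$ and equals $I$ at infinity; by Liouville it is identically $I$, whence $W_\pm^{(1)}=W_\pm^{(2)}$. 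The only point requiring care is invoking the boundedness of the Cauchy singular operator on Hölder spaces and keeping track of the contraction constant; there is no genuine obstacle, as this is the standard small-data solvability statement for Riemann--Hilbert factorization.
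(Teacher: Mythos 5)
Your proposal is correct and follows essentially the same route as the paper: the operator $S$ in the paper is precisely your minus Cauchy projection $P_-$ (by the Sokhotski--Plemelj formula), and the paper's fixed-point equation $W_-=I+S(\Delta W_-)$ is identical to your $w_-=P_-\Delta+P_-(\Delta w_-)$ after the substitution $W_-=I+w_-$. You have simply spelled out some of the details (the additive reduction, the Liouville-based uniqueness, and the maximum-principle derivation of the pointwise bounds) that the paper leaves to the reader as standard.
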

\begin{proof} The Lemma follows from standard results of the 
theory of singular integral operators, see, e.g.,~\cite{Mu:53}. So, we 
describe 
the proof omitting standard details.  
\\
First, in  $H_\alpha({\mathbb T})$, the space of 
matrix-valued H\"older functions on $\mathbb T$, one
constructs a solution to the equation 
\begin{equation}\label{eq:W}
W_-=I+S(\,\Delta W_-),
\end{equation}
where, for $f\in H_\alpha(\mathbb T)$, 
\begin{equation*} 
S(f)(\zeta)=-\frac12 \,f(\zeta)+\frac1{2\pi i}\;v.p.\int_{\mathbb T}
\frac{f(\zeta')\,d\zeta'}{\zeta'-\zeta},\qquad \zeta\in{\mathbb T},
\end{equation*}
and the orientation of  $\mathbb T$ is positive.
As  $S$ is a bounded operator in $H_\alpha(\mathbb T)$, and as for 
$f,g\in H_\alpha(\mathbb T)$ one has $\|fg\|_\alpha\le 
\|f\|_\alpha\|g\|_\alpha$,
equation~\eqref{eq:W} has a unique solution provided $\|\Delta\|_\alpha$ 
is sufficiently small.
\\
One defines $W_+(\zeta)$ for  $\zeta\in {\mathbb B}_o$ and 
$W_-(\zeta)$ for  $\zeta\in {\mathbb B}_\infty$ by the formulas
$$W_\pm(\zeta)=I+ \frac1{2\pi i} \int_{\mathbb T}
\frac{\Delta(\zeta')W_-(\zeta')}{\zeta'-\zeta}\,
d\zeta',$$ 
and checks that these two function have all the properties 
described in Lemma~\ref{le:RP:standard}.
We omit further details.
\end{proof}
In our case, in the ring $ e^{-\pi Y/h}\le|\zeta|\le e^{\pi Y/h}$, \
$\Delta$ is analytic  and satisfies the estimate
$\|\Delta(\zeta)\|\le H\lambda^\beta$. Therefore, for any fixed 
$\alpha\in(0,1)$, one has
$\|\Delta\|_\alpha\le C(\alpha)\,H\lambda^\beta$. So, there is a  $C>0$ 
such that if $\lambda\le e^{-C/h}$, then $\Delta$ satisfies
the assumptions of Lemma~\ref{le:RP:standard}. For this $\Delta$, 
we construct $W_\pm$ by Lemma~\ref{le:RP:standard}. The vector-valued 
functions defined by the formulas
\begin{equation}\label{Vpm:answer}
V_+(\zeta)= TG_0W_+(\zeta)T^{-1}e,\quad
V_-(\zeta)= T W_-(\zeta)T^{-1} e,\quad
e=\frac1{B_{\psi,0}^*}\,\begin{pmatrix} 1\\0\end{pmatrix},
\end{equation}
are a solution of the Riemann-Hilbert 
problem~\eqref{1:RP:a}--\eqref{1:RP:b}.
Indeed, $V_+$ is analytic in $\mathbb B_0$, \ $V_-$ is analytic in 
$\mathbb B_\infty$,  and $V_-(\infty)= T W_-(\infty) T^{-1} e=e$.
Moreover, by~\eqref{G:fact} and~\eqref{I+Delta:fact},
for $|\zeta|=1$,
\begin{align*}
 V_+(\zeta)&=T G_0 W_+(\zeta)T^{-1} e=
T G_0 (I+\Delta(\zeta)) W_-(\zeta)T^{-1} e=\\
 &=T G_0 (I+\Delta(\zeta))T^{-1} V_-(\zeta)=G(\zeta) V_-(\zeta).
\end{align*}
We compute the coefficients $s$ and $t$ of the monodromy matrix
using~\eqref{1:RP:4}--\eqref{1:RP:6}, where $a_\pm$ and $b_\pm$ are the 
first and the second components of the vectors $V_\pm(0)$. 
Formulas~\eqref{Vpm:answer},  formula for $B_{\psi,0}$ 
from~\eqref{A0,B0} and the  estimate for $W_--I$ from~\eqref{Wpm:est} 
imply that
\begin{equation}
  \label{eq:a-b-}
  a_-(\infty)=\frac1{B_{\psi,0}^*},\qquad  
  b_{-,1}=\zeta b_-(\zeta)\Big|_{\zeta=\infty}=O(\lambda^\beta e^{-4\pi 
\xi 
p/h+\pi\xi}H).
\end{equation}
Using also formula~\eqref{w+} for $w_0$ and the estimate 
$\varkappa_0=O(H\lambda)$ (following from the third point of
Proposition~\ref{psi0}), we get 
\begin{equation}\label{a+,b+}
a_+(0)=O\left(e^{\pi\xi}\lambda^\beta H\right),\quad
b_+(0)=\frac{e^{-\frac{4\pi\xi p}h}}{w_0B_{\psi,0}^*}\left(2i\sin(2\pi 
p)F(p)+
O(\lambda^\beta H)\right).
\end{equation}
\subsubsection{Asymptotics of the coefficients $s$ and $t$}
%
Using~\eqref{first:ts}, \eqref{1:RP:4}, \eqref{1:RP:6},
we get
\begin{equation}\label{ts:ne0}
t=-\lambda_1 A_{\phi,0}\,b_+(0),\qquad
s=-
\frac{A_{\psi,0}^*\,a_-(\infty)+\lambda_1A_{\phi,0}^*\,b_{-,1}}
{B_{\psi,0}\,a_+(0)+B_{\phi,0}\, b_+(0)}.
\end{equation}
Using \eqref{a+,b+}--\eqref{eq:a-b-},
estimates for $a_\pm$, $b_+$ and $b_{-,1}$, the estimate  
$\varkappa_0=O(H\lambda)$, and~\eqref{A0,B0} and~\eqref{A0,B0-hats},  
formulas for $A_{\psi,0}$,
$A_{\phi,0}$, $B_{\psi,0}$ and $B_{\phi,0}$,
we obtain
\begin{equation*}
t=2ie^{\frac{4\pi(1/2-p)\xi}h}\,\frac{F(p)\sin(2\pi p)+O(\lambda^\beta H)}
{1+O(\lambda H)},
\qquad
s=\frac1{2i}\,\,\,
\frac{e^{\frac{4\pi p\xi}h}\, e^{\frac{2\pi ip}h}\,
 +O(\lambda_1\lambda^\beta H))}
{F(p)\sin (2\pi p) +O(\lambda^\beta H)}.
\end{equation*}
Let us  simplify these formulae. For $h/4\le \re p\le 1/2-h/4$, one has
$|F(p)|\le H$ (see Proposition~\ref{first:G:ne0}). By this estimate 
and~\eqref{F1F2:rel},
one also has
$$\left|2F(p)\sin (2\pi p)\right|^{-1}=|2\sin(2\pi p)\,F(1/2-p)|\le H.$$ 
Using these observations, we get
\begin{equation}\label{ts:temp}
t=2ie^{\frac{4\pi (1/2-p)\xi}h}\,F(p)\sin(2\pi p)(1+O(\lambda^\beta H)),
\ \
s=\frac{e^{\frac{4\pi p\xi}h}e^{\frac{2\pi ip}h}(1+ O(\lambda^\beta H))}
{2i\,F(p)\,\sin(2\pi  p)}.
\end{equation}
Finally, by means of~\eqref{eq:sigma},
we check that, for $p\in\R$,
$$
2i\sin(2\pi p) F(p)=2i\sin (2\pi p)
\left|\sigma_{\pi h}( 4\pi p-\pi-\pi h)\right|^2=
-\frac{\left|\sigma( 4\pi p-\pi+\pi h)\right|^2}
{2i\sin (2\pi p)}.
$$
This relation and~\eqref{ts:temp} imply the statement of  
Theorem~\ref{th:0:ne0}.
\qed
\subsection{Asymptotics of  {\it s} and {\it t} for {\it p} close to 
zero}\label{ss:s-and-t-for-p-close-to-0}
Here we prove
\begin {theorem} \label{th:0:0} Pick $\beta\in(0,1)$.
There is a positive constant $C$ such that if 
$\lambda<e^{-C/h}$, then for  $ p \in P $ satisfying the condition $ 0 \le 
{\rm Re} \, p \le h / 4 $, one has
\begin {equation}\label{eq:st:0}
t = ie ^ {\frac {4\pi (1/2-p) \xi} {h}} \, F_0(p) \,
\left [\frac {1-e ^ {\frac {8\pi p \xi} { h}} \, \rho (p)} {2 \sin (2\pi 
p)} + \delta \right], \qquad
s = \frac {-i\,e ^ {\frac {4\pi p \xi} {h} + \frac {2\pi ip} h}}
{F_0 (p) \left [
\frac {1-e ^ {\frac {8\pi p (\xi + i/2)} {h}} \, \rho (p)} {2\sin(2\pi  
p)} 
+ \delta \right]}\;,
\end {equation}
where for $p\in \R$, one has $ \rho (p) =\frac{F_0(-p)}{F_0(p)}=
\left|\frac{\sigma(-4\pi p-\pi+\pi h)}{\sigma(4\pi p-\pi+\pi h)}\right|^2$,
and $ \delta $ denotes  $ O (\lambda ^ \beta H) $.
\end {theorem}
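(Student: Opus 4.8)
The plan is to mirror, almost verbatim, the proof of Theorem~\ref{th:0:ne0}, paying attention only to the places where the hypothesis $0\le\re p\le h/4$ forces a change. The whole argument rests on solving the Riemann--Hilbert problem~\eqref{1:RP:b}--\eqref{1:RP:a}; the only input that needs upgrading is the asymptotics of the matrix $G$, because Proposition~\ref{first:G:ne0} required $h/4\le\re p\le 1/2-h/4$, and now we sit outside that range on the left. First I would re-derive the Wronskians $g_{11},g_{12},g_{21},g_{22}$ for $0\le\re p\le h/4$ using the two-term asymptotics~\eqref{psi-plus:2} for $\psi_+$ (valid when $\re p\le\alpha h/2$) together with~\eqref{phi-plus:1} for $\phi_+$ (still valid, since $\re p\le 1/2-\alpha h/2$ here). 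The new feature is that $\psi_+$ now carries \emph{both} exponentials $e^{\pm 2\pi ip(z+i\xi)/h}$, with coefficients $a(p)$ and $a(-p)$; consequently $g_{11}=\{\psi_-,\phi_+\}$ and its companion $g_{21}=\{\psi_+,\psi_-\}$ no longer have a vanishing leading term. Carrying the extra term through, using $F_0(p)=|a(p)|^2$ on the real axis and $F_0(-p)=|a(-p)|^2$, one should get, in place of the clean $2iF(p)\sin(2\pi p)$, an expression proportional to $F_0(p)\bigl(1-e^{8\pi p\xi/h}\rho(p)\bigr)$ with $\rho(p)=F_0(-p)/F_0(p)$; the phase $e^{2\pi ip/h}$ in the second factor of $s$ will come from the $i\xi$-shift interacting with the $e^{-2\pi ip(z+i\xi)/h}$ versus $e^{2\pi ip(z+i\xi)/h}$ mismatch, exactly as the $e^{2\pi ip/h}$ already appears in Theorem~\ref{th:0:ne0}.

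Next I would repackage this into the factorized form $G=T\,G_0'\,(I+\Delta)\,T^{-1}$ with $\tau=e^{2\pi\xi p/h}$ as before, where now $G_0'$ has the modified off-diagonal entries. The point is that $\Delta=O(\lambda^\beta H)$ still holds and is analytic in $(z,p)$ on the relevant bi-domain, so Lemma~\ref{le:RP:standard} applies unchanged, producing $W_\pm$ with $\|W_\pm-I\|\le C\|\Delta\|_\alpha$. Then~\eqref{Vpm:answer} gives the solution of the Riemann--Hilbert problem, and~\eqref{a+,b+}, \eqref{eq:a-b-} get replaced by their obvious analogues with $2iF(p)\sin(2\pi p)$ swapped for the new combination. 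Feeding these into~\eqref{ts:ne0} via~\eqref{first:ts}, \eqref{1:RP:4}, \eqref{1:RP:6}, and using the formulas for $A_{\psi,0},A_{\phi,0},B_{\psi,0},B_{\phi,0}$ from~\eqref{A0,B0}--\eqref{A0,B0-hats} and the estimate $\varkappa_0=O(H\lambda)$, one lands on~\eqref{eq:st:0}. The reciprocal structure of $t$ and $s$ (product $st/\lambda_1\approx e^{2\pi ip/h}$) is automatic from the fact that $s$'s denominator is, up to the explicit exponentials, the same quantity appearing in $t$'s bracket, shifted by $i/2$ in $\xi$; this shift is precisely what turns $e^{8\pi p\xi/h}$ into $e^{8\pi p(\xi+i/2)/h}$.

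The main obstacle I anticipate is bookkeeping near the genuinely delicate sub-range: when $\re p$ is comparable to $h/4$ one must make sure the two asymptotic regimes — the one-term expansion used in Theorem~\ref{th:0:ne0} and the two-term expansion used here — match, i.e.\ that the new term $e^{8\pi p\xi/h}\rho(p)$ is $O(\lambda^\beta)$ there and can be absorbed into $\delta$, so that~\eqref{eq:st:0} degenerates smoothly to~\eqref{as:st}. This requires the estimate $|e^{8\pi p\xi/h}\rho(p)|\le H\lambda^{\,\re(4p)/h}$ together with $|F_0(-p)|\le H$ for $p\in P$ with $\re p\ge 0$; the latter is exactly the content of the remark that $F_0$ is real-analytic and nonvanishing in a neighborhood of $I_p$, extended to the left edge using the $\sigma_{\pi h}$ pole/zero description of section~\ref{sigma:poles,zeros} and the uniform representation from Section~\ref{sec:sigma}. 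A second, lesser, point is that for $\re p$ very close to $0$ the function $1/\sin(2\pi p)$ blows up, so the statement is genuinely about the \emph{product} of $F_0$ with the bracket rather than about each factor; but since $\sin(2\pi p)$ also divides $t$ in Theorem~\ref{th:0:ne0}, nothing new breaks, and the pole structure in $E$ is consistent with $s,t$ being meromorphic. Everything else — constructing $W_\pm$, the contour deformations, the Cauchy estimates — is literally the argument already given, so I would present those steps by reference and spell out only the modified Wronskian computation and the final substitution.
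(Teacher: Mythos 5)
Your plan is structurally correct and matches the paper's own argument: the paper proves Theorem~\ref{th:0:0} by establishing a variant of Proposition~\ref{first:G:ne0} (namely Proposition~\ref{first:G:0}) valid for $0\le\re p\le h/4$ via the two-term representation~\eqref{psi-plus:2} for $\psi_+$, then rerunning the Riemann--Hilbert argument with a modified $G_0$ and feeding the result into~\eqref{ts:ne0}. The phase $e^{2\pi i p/h}$ and the $\xi+i/2$ shift do indeed arise from the exponential mismatch exactly as you describe, and the matching regime $\re p\sim h/4$ is handled by absorbing $e^{8\pi p\xi/h}\rho(p)$ into the error.

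Two points to tighten. First, you write $F_0(p)=|a(p)|^2$; the paper's identity~\eqref{eq:F-a} is $F(p)=|a(p)|^2$ with $F$ from~\eqref{F_1,F_2}, and one has $F_0(p)=4\sin^2(2\pi p)F(p)$. The ratio $\rho=F_0(-p)/F_0(p)=F(-p)/F(p)$ is unaffected (the $\sin^2$ cancels), but the overall $F_0(p)$ prefactor in~\eqref{eq:st:0} comes only after converting the $2i\sin(2\pi p)F(p)$ that naturally appears in the Wronskians into $\frac{iF_0(p)}{2\sin(2\pi p)}$; if you carry $F_0$ in place of $F$ through the intermediate Wronskian computations you will be off by $4\sin^2(2\pi p)$. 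Second, you describe $g_{21}$ as ``no longer having a vanishing leading term,'' but $g_{21}$ was already non-vanishing in Proposition~\ref{first:G:ne0}; what changes in $g_{21}$ is the replacement $F(p)\mapsto F(p)-e^{8\pi p\xi/h}F(-p)$, while the genuinely new non-vanishing leading terms appear on the \emph{diagonal} ($g_{11}$ and $g_{22}$, given by the function $F_d$ in the paper's~\eqref{F_d}). You should also record, as the paper does, that the new constant matrix $G_0$ still has $\det G_0=1/w_0^2$ (which follows from~\eqref{F1F2:rel}), since this is what makes $G_0$ invertible with controlled entries and lets Lemma~\ref{le:RP:standard} apply.
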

The proof of Theorem~\ref{th:0:0} is similar to one of 
Theorem~\ref{th:0:ne0}. So, when proving Theorem~\ref{th:0:0}, we omit
elementary details.
\subsubsection{Asymptotics of the matrix $G$}
We have
\begin{Pro}\label{first:G:0}  Pick $0<\beta<1$.
There is a positive constant $C$ such that if 
$\lambda<e^{-C/h}$, then, for $p\in P$,  
$0\le\re p\le h/4$, and for $|y|\le Y/2$, 
one has
\begin{equation*}
G=\frac1{w_0}\,\begin{pmatrix}
e^{\frac{4\pi p\xi}h}\,F_d(p)+\delta &
e^{\frac{4\pi p\xi}h}\left(2iF(1/2-p)\,\sin(2\pi p)+
\delta \right)  \\
e^{-\frac{4\pi p\xi}h}\,\left(F_a(p)+\delta \right)&
e^{\frac{4\pi p\xi}h}\,F_d^*(p)+\delta
\end{pmatrix},
\end{equation*}
where $\delta$ denotes $O(\lambda^\beta H)$,
\begin{gather}\label{F_d}
F_d(p)=-4i e^{\frac{2\pi i p}h-\frac{3i\pi}{4h}}
\sin\frac{2\pi p}h\sin(2\pi p)\;F(-p),\\
\label{F_a} 
F_a(p)=2i\sin(2\pi p) (F(p)-e^{\frac{8\pi p\xi}h}F(-p)),
\end{gather}
and  $F$, $F_d$ and $F_a$ satisfy the estimates
\begin{equation}\label{F:1,3,4:est}
p^2F(p)=O(H),\quad F_d(p)=O(H),\quad F_a(p)=O(\xi H).
\end{equation}
\end{Pro}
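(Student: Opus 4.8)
The plan is to repeat the proof of Proposition~\ref{first:G:ne0} almost word for word; the only genuinely new point is that on the range $0\le\re p\le h/4$ the real part $\re p$ may drop below $\alpha h/2$, so the solution $\psi_+=\psi_0(\cdot,p)$ must now be described by the two-term asymptotics~\eqref{psi-plus:2} (when $\re p\le\alpha h/2$) rather than by the one-term formula~\eqref{psi-plus:1}, whereas $\phi_+$, being built from $\psi_0(\cdot,1/2-p)$ with $1/2-p$ bounded away from $0$, keeps the one-term representation~\eqref{phi-plus:1} (formula~\eqref{phi-plus:2} is not needed). As in Proposition~\ref{first:G:ne0} I would compute the four Wronskians $g_{11}=\{\psi_-,\phi_+\}$, $g_{12}=\{\phi_-,\phi_+\}$, $g_{21}=\{\psi_+,\psi_-\}$, $g_{22}=\{\psi_+,\phi_-\}$ entering~\eqref{RP:5}, recalling $\psi_-=\psi_+^*$, $\phi_-=\phi_+^*$ from~\eqref{0:down}, and then divide by $w_+=w_0+O(H\lambda^\beta)$ using Lemma~\ref{le:w+}, the factor $w_0/w_+=1+O(H\lambda^\beta)$ being absorbed into $\delta$.

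For the Wronskians I would use $\{e^{\mu z},e^{\nu z}\}=e^{(\mu+\nu)z}(e^{\mu h}-e^{\nu h})$, which vanishes when $\mu=\nu$. The entry $g_{12}$ is handled exactly as in Proposition~\ref{first:G:ne0}: $\phi_\pm$ each carry a single exponential $e^{\mp2\pi ipz/h}$, so $g_{12}=e^{4\pi p\xi/h}\bigl(2i\sin(2\pi p)\,|a(1/2-p)|^2+\delta\bigr)$, which by~\eqref{eq:F-a} is the announced $(1,2)$-entry. In $g_{21}=\{\psi_+,\psi_+^*\}$ both factors carry the exponentials $e^{\pm2\pi ipz/h}$; the coincident pairings vanish and each surviving cross term produces a factor $\{e^{2\pi ipz/h},e^{-2\pi ipz/h}\}=2i\sin(2\pi p)$, so that after collecting the $z$-independent prefactors $e^{\pm4\pi p\xi/h}$ and using~\eqref{eq:F-a} one gets $g_{21}=e^{-4\pi p\xi/h}\bigl(F_a(p)+\delta\bigr)$ with $F_a$ as in~\eqref{F_a}. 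In $g_{11}=\{\psi_+^*,\phi_+\}$ the $e^{-2\pi ipz/h}$-term of $\psi_+^*$ pairs with the single exponential of $\phi_+$ to give zero, while the $e^{+2\pi ipz/h}$-term survives, yielding $g_{11}=e^{4\pi p\xi/h}\bigl(2i\sin(2\pi p)\,a^*(-p)\,a(1/2-p)\,e^{-i\pi(1/2-p)/h}+\delta\bigr)$; writing $a^*(-p)=F(-p)/a(-p)$ for real $p$ by~\eqref{eq:F-a} and evaluating $a(1/2-p)/a(-p)$ by means of~\eqref{wr-relation}, the defining formula~\eqref{a(p,h)}, the functional equation~\eqref{eq:sigma} and the relations~\eqref{sigma:sym},~\eqref{sigma:qp} turns the mixed product into the closed form~\eqref{F_d}. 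Finally $g_{22}=g_{11}^*$ by~\eqref{G:r-an}. On the complementary range $\alpha h/2\le\re p\le h/4$, where~\eqref{psi-plus:1} applies and $\psi_+$ is effectively one-term, the computations of $g_{11}$ and $g_{21}$ reduce to those of Proposition~\ref{first:G:ne0} (the leading term of $g_{11}$ being zero), and one checks, using $e^{4\pi p\xi/h}=\lambda^{2\re p/h}\le\lambda^{\alpha}$ and $F_d(p),F(-p)=O(H)$ there, that the dropped pieces $e^{4\pi p\xi/h}F_d(p)$ and $e^{4\pi p\xi/h}F(-p)$ are themselves $O(\lambda^\beta H)$, so the announced formula holds on the whole range.

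The estimates~\eqref{F:1,3,4:est} follow from the pole structure of $\sigma_{\pi h}$ near the argument $-\pi h-\pi$, where by section~\ref{sigma:poles,zeros} it has at worst a simple pole: this gives $p^2F(p)=O(H)$ just as~\eqref{est:a_0} gives $|pa(p)|\le H$, and then $F_d(p)=O(H)$ and $F_a(p)=O(\xi H)$ follow after observing that the leading singular parts of $F(\pm p)$ cancel in the combinations~\eqref{F_d},~\eqref{F_a}, the uniform bounds away from that neighborhood being supplied by Theorem~\ref{theta:uni-rep:2} and Corollary~\ref{theta:uni-rep:1} exactly as in the proofs of~\eqref{est:a0}--\eqref{est:a_0}. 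The main obstacle I expect is purely bookkeeping: tracking the $z$-independent exponential prefactors $e^{\pm2\pi p\xi/h}$ produced by the two-term asymptotics when forming the Wronskians, and carrying out the $\sigma_{\pi h}$-identity that collapses $a^*(-p)\,a(1/2-p)\,e^{-i\pi(1/2-p)/h}$ to the form~\eqref{F_d}; everything else is a routine repetition of Proposition~\ref{first:G:ne0}.
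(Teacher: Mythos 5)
Your proposal follows the paper's proof essentially step for step, and the Wronskian computations you outline are correct; in particular, your derivation of $g_{11}$, $g_{21}$ and the collapse of $a^*(-p)a(1/2-p)e^{-i\pi(1/2-p)/h}$ into $F_d(p)$ via \eqref{a(p,h)} and \eqref{sigma:qp} reproduces \eqref{g11:temp0}--\eqref{g11:temp1}. Two small remarks. First, the case split $\re p\lessgtr\alpha h/2$ is unnecessary: since $\alpha\in(0,1)$ is at your disposal, choose $\alpha>1/2$ (and still $\alpha\le 1/h-1/2$, which is automatic), so that $\re p\le h/4\le\alpha h/2$ on the whole range and \eqref{psi-plus:2} applies uniformly — this is what the paper does and avoids the complementary-range bookkeeping. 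Second, your explanation of $F_d(p)=O(H)$ is slightly off: there is no cancellation between $F(p)$ and $F(-p)$ in \eqref{F_d} since $F_d$ contains only $F(-p)$; the double pole of $F(-p)$ at $p=0$ is absorbed by the product $\sin\frac{2\pi p}{h}\,\sin(2\pi p)=O(p^2/h)$, giving $F_d=O(H/h)=O(H)$. The cancellation argument you describe is correct for $F_a$, where combining $|a(p)+a(-p)|\le H$ and $|pa(\pm p)|\le H$ from \eqref{est:a_0} together with $1-e^{8\pi p\xi/h}=O(p\xi/h)$ does yield $F_a=O(\xi H)$.
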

\begin{proof} 
First, we note that  $G_{12}$ was already computed when proving 
Proposition~\ref{first:G:ne0}: when computing  it we assumed 
that $0\le \re p\le 1/2-h/4$, and now, as $0<h<1$, one has
$0\le \re p\le h/4\le 1/2-h/4$.
The formula for $G_{22}=g_{22}/w_+$ follows from one for 
$G_{11}=g_{11}/w_+$,~\eqref{G:r-an} and~\eqref{w+}.
\\ 
So we only have to compute 
$G_{11}=g_{11}/w_+$ and $G_{21}=g_{21}/w_+$.
Using formulas~\eqref{psi-plus:2},~\eqref{phi-plus:1} 
and~\eqref{0:down}, and estimates~\eqref{est:a0} and~\eqref{est:a_0}, we 
get
\begin{gather}
\label{g11:temp0}
g_{11}=e^{\frac{4\pi p\xi}h}\;
2i\sin(2\pi p)\,e^{-\frac{i\pi(1/2-p)}h}a^*(-p)a(1/2-p)
+O((1+|\xi|) \lambda^\beta H),
\\
\nonumber
g_{21}=e^{-\frac{4\pi p\xi}h}\left(2i\sin(2\pi 
p)\,(|a(p)|^2-e^{\frac{8\pi p\xi}h}|a(-p)|^2)
+O(\lambda^\beta(1+|\xi|) H)\right).
\end{gather}
By means of~\eqref{a(p,h)} and~\eqref{sigma:qp}, we 
transform~\eqref{g11:temp0} to the form
\begin{equation}\label{g11:temp1}
g_{11}=e^{\frac{4\pi p\xi}h}\,F_d(p)+O(H\lambda^{\beta}(1+|\xi|)).
\end{equation}
Furthermore, the definition of $F_a$ and relation~\eqref{eq:F-a} allows 
to get the formula
\begin{equation}\label{g21:temp1}
g_{21}=e^{-\frac{4\pi 
p\xi}h}\left(F_a(p)+O(\lambda^\beta(1+|\xi|) H)\right).
\end{equation}
Estimates~\eqref{F:1,3,4:est} follow from~\eqref{est:a0} 
and~\eqref{est:a_0}.
\\
As we can assume that $\beta$ in formulas~\eqref{g11:temp1} 
and~\eqref{g21:temp1} is larger than in Proposition~\ref{first:G:0}, 
these formulas,~\eqref{w+} and estimates~\eqref{F:1,3,4:est}
imply the representations for $G_{11}$ and $G_{12}$ from 
Proposition~\ref{first:G:0}. This completes its proof.
\end{proof}
\subsubsection{Solving the Riemann-Hilbert problem}
Let
\begin{equation}\label{G0:0}
G_0=\frac1{w_0}\,\begin{pmatrix}
e^{\frac{4\pi p\xi}h}\,F_d(p)& 2iF(1/2-p)\,\sin (2\pi p)  \\
F_a(p) & e^{\frac{4\pi p\xi}h}\,F_d^*(p).
\end{pmatrix}
\end{equation}
First, we prove that again $\det(G_0)=1/w_0^2$. This follows 
from~\eqref{G0:0}, the definitions of $F_a$ and $F_d$, see~\eqref{F_a} 
and~\eqref{F_d}, and Lemma~\ref{F1F2:prop}.
\\
Then, we prove that $G$  admits again
representation~\eqref{G:fact} with the new $G_0$. For this, using 
the estimates for $F_d$ and $F_a$  from~\eqref{F:1,3,4:est} and the 
estimate for $F$ from Proposition~\ref{first:G:0},  we check that 
$G_0^{-1}T^{-1}GT=I+O(\lambda^{\beta} H)$. 
\\
Having obtained~\eqref{G:fact}, we proceed as in the case where $h/4\le 
\re p$ and 
obtain
\begin{gather}\label{a+,b+:0}
a_+(0)=\frac{e^{\frac{4\pi p\xi}h}F_d(p)+
O(\lambda^{\beta}H)}{w_0B_{\psi,0}^*},\quad
b_+(0)=\frac{e^{-\frac{4\pi p\xi}h}(F_a(p)+
O(\lambda^{\beta}H))}{w_0B_{\psi,0}^*},\\
\label{a-,b-:0}
 a_-(\infty)=\frac1{B_{\psi,0}^*},\qquad 
b_{-,1}=O(e^{-\frac{4\pi p\xi}h+\pi\xi}\lambda^{\beta}H).
\end{gather}
\subsubsection{Asymptotics of $s$ and $t$}
Computing the coefficients $s$ and $t$ by means of  
formulas~\eqref{ts:ne0},~\eqref{a+,b+:0}, and~\eqref{a-,b-:0}, and  
estimates~\eqref{F:1,3,4:est}, we get
\begin{equation*}
t=e^{\frac{4\pi(1/2-p)\xi}h}\,\left(F_a(p)+O(\lambda^{\beta}H)\right),\quad
s=\frac{e^{\frac{2\pi i p}h+\frac{4\pi p\xi}h}}
{F_a(p)+ ie^{\frac{3\pi i}{4h}+\frac{8\pi p\xi}h}F_d(p)+O(\lambda^\beta 
H)}.
\end{equation*}
Let us prove~\eqref{eq:st:0} for $t$. In view 
of~\eqref{F_1,F_2},~\eqref{eq:sigma},~\eqref{def:F0} and~\eqref{F_a}, 
one has
\begin{gather*}
4\sin^2(2\pi p) F(p)=F_0(p),\ 
\frac{F(-p)}{F(p)}=\frac{F_0(-p)}{F_0(p)}=\rho(p),\ 
F_a(p)=iF_0(p)\frac{1-\rho(p)e^{\frac{8\pi p\xi}h}}{2\sin(2\pi p)}.
\end{gather*}
As $1/2-\re p\ge 1/2-h/4\ge h/4$, formula~\eqref{F1F2:rel} and the 
estimate for $F$ from Proposition~\ref{first:G:ne0}
imply that  $1/F_0(p)=O(H)$. This
and the last formulas for $t$ and $F_a$ imply the formula for $t$ 
from~\eqref{eq:st:0}.

The last formula for $s$ and the relation
\begin{equation*}
F_a(p)+ie^{\frac{3\pi i}{4h}+\frac{8\pi 
p\xi}h}F_d(p)=iF_0(p)\frac{1-e^{\frac{8\pi 
p}h(\xi+\frac{i}2)}\rho(p)}{2\sin(2\pi p)}.
\end{equation*}
imply the formula  for $s$ from~\eqref{eq:st:0}.
The proof of Theorem~\ref{th:0:0} is complete. 
\qed
\section{A trigonometric analog of the Euler 
Gamma-function}\label{sec:sigma}
\noindent 
Here, following mostly~\cite{Bu-Fe:01, Fe:2016},  we discuss  
equation~\eqref{eq:sigma} with $a\in(0,\pi)$.
\subsection{Definition and elementary properties}
\subsubsection{}
In $S_0=\{|x|<\pi+a\}$, equation~\eqref{eq:sigma} has a unique 
solution  $\sigma_a$ that is analytic, nonvanishing, and 
having the representations 
\begin{gather} \label{sigma:down}
\sigma_a(z)=1+o(e^{-\alpha|y|}), \quad y\to-\infty,
\\
\nonumber
\sigma_a(z)= e^{\dsize
-\frac{iz^2}{4a}+\frac{i\pi^2}{12a}+\frac{ia}{12}+o\,(e^{-\alpha|y|})}, 
\quad 
y\to+\infty,
\end{gather}
for any fixed $\alpha\in(0,1)$.
These representations are uniform in $x$. If $a$ is bounded away from 
zero, they are also uniform in $a$. The function $\sigma_a$ is 
continuous in $a$.
\subsubsection{}\label{sigma:poles,zeros}
Using equation~\eqref{eq:sigma}, one can analytically continue  $\sigma_a$
to a meromorphic function. Its poles are located at the points
\begin{equation*}
-\pi-a -2\pi l-2ak,\quad 
l,k=0,\,1,\,2,\,\dots,
\end{equation*} 
and its zeros are at the points
\begin{equation*}
  \pi+a +2\pi l+2ak,\quad l,k=0,\,1,\,2,\,\dots.
\end{equation*}
Its zero at $\pi+a$ and its pole at  $-\pi-a$ are simple.
\subsubsection{} The function $\sigma_a$ satisfies the following relations:
\begin{gather}\label{sigma:qp}
\sigma_a(z+\pi)=(1+e^{\dsize -\frac{i\pi}a\,z})\,\sigma_a(z-\pi),\\
\label{sigma:sym}
 \sigma_a(-z)\,
=e^{\dsize -\frac{i}{4a}\,z^2+\frac{i\pi^2}{12a}+
\frac{ia}{12}}\,\frac1{\sigma_a\,(z)},\\
\label{sigma:real-an}
\overline{\sigma_a(\overline{z})}=e^{\dsize 
\frac{i}{4a}\,z^2-\frac{i\pi^2}{12a}-
\frac{ia}{12}}\,\sigma_a\,(z).
\end{gather}
\subsubsection{} One also has
\begin{equation}\label{sigma:res}
 {\rm Res}_{z=-\pi-a} \sigma_a=-i\sigma_a(-\pi+a)=
\sqrt{\frac a\pi}\,e^{\dsize -\frac{i\pi^2}{12a}-\frac{i\pi}4
-\frac{ia}{12}}.
\end{equation}
\subsection{Quasiclassical asymptotics}
Here, we discuss $\sigma_a$ for small $a$.
\\
Thanks to~\eqref{sigma:sym}, it suffices to 
study this function for $y\le 0$.
\\
Below we use  the branch of the function 
$z\mapsto\ln\,(1+e^{\dsize-iz})$ analytic in $\C_-$, the lower halfplane, 
and satisfying the condition
\begin{equation*}
\ln\,(1+e^{\dsize-iz})\to 0,\quad y\to-\infty.
\end{equation*}
We set
\begin{equation*}
L(z)=\int_{-i\infty}^{z}\ln\,(1+e^{\dsize-iz'})\,dz',
\end{equation*}
where we integrate in $\C_-$, say, along the line $\re z'=\re z$.
\begin{theorem}\label{theta:uni-rep:1}\cite{Fe:2016}  Pick $0<\delta<\pi$.
In $\C_-\cup\mathbb R$ outside the  $\delta$-neighborhood 
of the half-lines $z\ge \pi$ and $z\le -\pi$, for sufficiently small $a$, \
$\sigma_a$ admits the representation
\begin{equation}\label{Pi}
\sigma_a(z)=\exp\left(\frac1{2a} L(z)+ 
O\left(\,a\,(1+|x|)e^{-|y|}\,\right)\right).
\end{equation}
\end{theorem}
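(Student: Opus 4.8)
The plan is to establish the asymptotic representation~\eqref{Pi} by combining the functional equation~\eqref{eq:sigma}, the defining boundary behavior~\eqref{sigma:down}, and the Laplace/saddle-point asymptotics of an integral representation of $\ln\sigma_a$. First I would note that, since $\sigma_a$ does not vanish in $S_0=\{|x|<\pi+a\}$ and tends to $1$ as $y\to-\infty$, the logarithm $g_a(z)=\ln\sigma_a(z)$ is well defined and analytic in $S_0\cap\C_-$ (choosing the branch with $g_a\to 0$ as $y\to-\infty$), and~\eqref{eq:sigma} becomes the additive difference equation $g_a(z+a)-g_a(z-a)=\ln(1+e^{-iz})$, where the right-hand side uses exactly the branch fixed in the excerpt. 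The function $\frac1{2a}L(z)$ is an obvious ``quasiclassical'' candidate: formally $\frac1{2a}\bigl(L(z+a)-L(z-a)\bigr)=\frac1{2a}\int_{z-a}^{z+a}\ln(1+e^{-iz'})\,dz'\to\ln(1+e^{-iz})$ as $a\to0$, so $\frac1{2a}L$ solves the difference equation up to a small error. The strategy is thus to write $\sigma_a(z)=\exp\bigl(\frac1{2a}L(z)+r_a(z)\bigr)$ and show $r_a(z)=O\bigl(a(1+|x|)e^{-|y|}\bigr)$ uniformly on the stated domain.

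The cleanest way to carry this out, following~\cite{Fe:2016}, is to produce an explicit integral formula for $g_a$. Applying a Fourier/Laplace transform in $z$ to the difference equation $g_a(z+a)-g_a(z-a)=\ln(1+e^{-iz})$, or equivalently iterating the recursion downward from $y=-\infty$, one obtains a contour-integral representation of the form
\begin{equation*}
  g_a(z)=\frac1{2\pi i}\int_{\Gamma}\frac{e^{itz}}{2\sinh(at)}\,\widehat{m}(t)\,dt,
\end{equation*}
where $\widehat m$ is the Fourier transform of $t\mapsto\ln(1+e^{-it})$ (so $\widehat m(t)$ has simple poles at the integers, with the principal contribution from $t=0$) and $\Gamma$ is a suitable horizontal contour in the lower half $t$-plane. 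One then splits $\frac1{2a\sinh(at)}$ off its leading term $\frac1{2a^2 t}$ — precisely the kernel producing $\frac1{2a}L(z)$ after writing $L$ through the same transform — and estimates the remainder. Since $\frac1{2\sinh(at)}-\frac1{2at}=O(a|t|)$ near $t=0$ and is exponentially small in $1/a$ away from it, the remainder integral contributes $O(a)$ times an integral that decays like $e^{-|y|}$ (coming from shifting $\Gamma$ past the first pole at $t=\pm1$, which is responsible for the factor $e^{-|y|}$, and $1+|x|$ from the linear growth of $L$ in the real direction). The $\delta$-neighborhoods of $z\ge\pi$ and $z\le-\pi$ must be excised because there $\ln(1+e^{-iz})$ (hence $L$) has its logarithmic branch points, which is exactly where the approximation degrades; away from them all estimates are uniform.

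The main obstacle, and the step deserving the most care, is the uniform control of the remainder $r_a(z)=g_a(z)-\frac1{2a}L(z)$ as one moves the contour $\Gamma$ and $|x|$ grows: one must verify that shifting past the pole at $t=\pm1$ gives precisely a contribution bounded by $C a(1+|x|)e^{-|y|}$ and that the shifted integral (now with an extra factor $e^{\pm iz}$) does not reintroduce $e^{-|y|/a}$-type terms or blow up near $x=\pm\pi$ except within the allowed $\delta$-collar. An alternative, perhaps more elementary, route that avoids delicate transform estimates is a ``telescoping plus maximum principle'' argument: define $r_a$ by the displayed formula, check from the formal computation above that $r_a(z+a)-r_a(z-a)=O\bigl(a^3(1+|x|)e^{-|y|}\bigr)$ and that $r_a\to0$ as $y\to-\infty$, then solve this difference equation for $r_a$ by summing the small right-hand side over the downward orbit $z,z-2a,z-4a,\dots$; the geometric-type sum of $O(a^3(1+|x|)e^{-|y|})$ terms over roughly $|y|/a$ steps, controlled by the $e^{-|y|}$ decay, yields the claimed $O(a(1+|x|)e^{-|y|})$ bound, with uniformity following from the explicit estimates and the Phragmén–Lindelöf principle in the horizontal strips avoiding the branch collars. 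Either way, once $r_a$ is shown to be $O\bigl(a(1+|x|)e^{-|y|}\bigr)$ in $\C_-\cup\R$ outside the $\delta$-neighborhoods, exponentiating gives~\eqref{Pi} and the theorem is proved.
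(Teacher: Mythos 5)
This theorem is stated as a citation to~\cite{Fe:2016}; the present paper does not prove it, so there is no internal proof to compare your attempt against. Judged on its own merits, your two sketches are in the right spirit, but the ``telescoping plus maximum principle'' route — the one you actually push toward a conclusion — contains a computational error that makes the arithmetic internally inconsistent. Writing $r_a=\ln\sigma_a-\frac{1}{2a}L$ and Taylor-expanding $L(z\pm a)$ about $z$, one finds
\begin{equation*}
r_a(z+a)-r_a(z-a)=\ln(1+e^{-iz})-\frac{1}{2a}\bigl(L(z+a)-L(z-a)\bigr)
=-\frac{a^2}{6}\,L'''(z)+O\!\left(a^4 L^{(5)}(z)\right),
\end{equation*}
and since $L'''(z)=-e^{-iz}/(1+e^{-iz})^2=O(e^{-|y|})$ in the admissible region, the per-step error is $O\!\left(a^2 e^{-|y|}\right)$, not the $O\!\left(a^3(1+|x|)e^{-|y|}\right)$ you claim. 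Summing $O(a^2 e^{-|y-2ka|})$ over $k\ge 0$ produces a geometric sum $\asymp 1/(2a)$, giving exactly the claimed $O(a\,e^{-|y|})$. In contrast, your $O(a^3)$ per-step bound would force the conclusion $r_a=O(a^2 e^{-|y|})$ — a strengthening that is in fact false (the leading remainder is genuinely $\sim -\tfrac{a}{12}L''(z)$). Moreover, you state that $O(a^3)$ per step summed over ``roughly $|y|/a$ steps'' gives $O(a)$; that arithmetic does not work either way and signals you did not actually carry the computation through.

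Two further points that would need tightening. First, the $(1+|x|)$ factor is not produced by the strip estimate at all: the telescoping argument lives in the domain $|x|<\pi+a$ where $\sigma_a$ is defined directly, and the factor $(1+|x|)$ enters only when one continues meromorphically to large $|x|$ by repeated use of~\eqref{eq:sigma} (compare the proof of Corollary~\ref{cor:sigma:2}, where one writes $\sigma_a(z)=\prod_{j=1}^{|n|}(1+e^{-i(z\mp(2j-1)a)})^{\pm1}\sigma_a(z-2na)$ with $|n|\lesssim|x|/a$); you allude to this only in passing inside your integral-transform sketch and do not close the gap. Second, the telescoping identity $r_a(z)=\sum_{k\ge0}\bigl(r_a(z-2ka)-r_a(z-2(k+1)a)\bigr)$ is justified only once you know $r_a(z-2ka)\to0$; this follows from the defining normalization of $\sigma_a$ and from $L\to0$ as $y\to-\infty$, but it needs to be said explicitly, otherwise the solution of the difference equation for $r_a$ is not unique (a $2a$-periodic term such as $e^{-i\pi z/a}$ also vanishes as $y\to-\infty$). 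Your first sketch via a kernel like $1/(2\sinh(at))$ is the more standard route for functions of this type and is plausibly what~\cite{Fe:2016} does, but you leave all of its quantitative estimates at the level of a plan; as written, neither sketch constitutes a proof.
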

Let us discuss the behavior of  $\sigma_a$ in a neighborhood of the point 
$-\pi$.  
\begin{theorem}\label{theta:uni-rep:2}\cite{Fe:2016}
Let $0<\delta<2\pi$. For $t$ in the $\delta$-neighborhood of zero, one has
\begin{equation}\label{theta:delta}
\sigma_a(-\pi+t)=\frac{e^{\frac{\ln(2a)}{2a} t}}{\sqrt{2\pi}}
\Gamma\left(\frac{t+a}{2a}\right)\,
e^{\frac1{2a}\,\int_0^t\tilde l(\zeta) d\zeta-\frac{i\pi^2}{6a}+
O\left(a\right)},
\quad
\tilde l(\zeta)=\ln\frac{1-e^{-i\zeta}}{\zeta},
\end{equation}
where $\tilde l$ is analytic, and $\tilde 
l(0)=i\pi/2$.
The error term in~\eqref{theta:delta} is analytic in $t$.
\end{theorem}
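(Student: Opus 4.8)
The plan is to compare $\sigma_a$ near $z=-\pi$ with the explicit function on the right of~\eqref{theta:delta}. Put $t=z+\pi$, set $g(t)=\sigma_a(-\pi+t)$, and let $R(t)$ denote the right-hand side of~\eqref{theta:delta} with the error term deleted, so that the theorem is the assertion that $\log r(t)=O(a)$ with $\log r$ analytic, where $r=g/R$. Fix $\delta_1\in(\delta,2\pi)$ and write $D_\rho=\{|t|<\rho\}$. First I would check that $r$ is analytic and nonvanishing in $D_{\delta_1}$: in $D_{\delta_1}$ the poles of $g$ are exactly the simple poles of $\sigma_a$ descending from $-\pi$, located at $t=-a(2k+1)$, $k\ge 0$, and these coincide with the poles of $\Gamma\big(\tfrac{t+a}{2a}\big)$; since $\tilde l(\zeta)=\ln\frac{1-e^{-i\zeta}}{\zeta}$ continues analytically across $\zeta=0$ (with value $i\pi/2$) and $1-e^{-i\zeta}$ has no other zero in $D_{\delta_1}$, the function $R$ is analytic and nonvanishing there; $D_{\delta_1}$ being simply connected, a holomorphic branch of $\log r$ exists.

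Second, I would turn the functional equation into a near-periodicity for $\log r$. From~\eqref{eq:sigma} one gets $g(t+2a)=(1-e^{-i(t+a)})\,g(t)$; using $\Gamma(x+1)=x\Gamma(x)$, the prefactor identity $\exp\big(\tfrac{\ln(2a)}{2a}\cdot 2a\big)=2a$, and $1-e^{-i(t+a)}=(t+a)\,e^{\tilde l(t+a)}$, a direct computation gives $R(t+2a)/R(t)=(t+a)\,\exp\big(\tfrac1{2a}\int_t^{t+2a}\tilde l\big)$. The midpoint estimate $\tfrac1{2a}\int_t^{t+2a}\tilde l(\zeta)\,d\zeta=\tilde l(t+a)+O(a^2)$, valid uniformly on compact subsets of $D_{\delta_1}$ where $\tilde l$ is analytic, then yields $R(t+2a)/R(t)=(1-e^{-i(t+a)})\,e^{O(a^2)}$, and therefore
\[
\log r(t+2a)-\log r(t)=O(a^2)\qquad\text{uniformly on compact subsets of }D_{\delta_1}.
\]

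Third — and this is the heart of the matter — I would pin down $\log r$ on a fixed region and at the same time confirm the normalization of~\eqref{theta:delta}. For $z=-\pi+t$ with $\im t<0$ and $|t|\ge\delta'$ (a small fixed constant), the point lies at distance $\ge\delta'$ from the half-lines $\{z\ge\pi\}\cup\{z\le-\pi\}$, so~\eqref{Pi} gives $\log\sigma_a(-\pi+t)=\tfrac1{2a}L(-\pi+t)+O(a)$. There $|t|/a\to\infty$, so Stirling's formula applies to $\Gamma\big(\tfrac{t+a}{2a}\big)$ with error $O(a)$; combining it with the identity $L(-\pi+t)=L(-\pi)+t\ln t-t+\int_0^t\tilde l$ (which comes from $L'(-\pi+\zeta)=\ln(1-e^{-i\zeta})=\ln\zeta+\tilde l(\zeta)$, with branches chosen consistently) and the elementary value $L(-\pi)=i\int_0^\infty\ln(1-e^{-s})\,ds=-i\pi^2/6$, one checks that $\log R(t)=\tfrac1{2a}L(-\pi+t)+O(a)$; hence $\log r(t)=O(a)$ on $\{\im t<0,\ \delta'\le|t|<\delta_1\}$.

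Finally, I would propagate. By~\eqref{sigma:real-an} both sides of~\eqref{theta:delta} are covariant under $t\mapsto\bar t$ together with complex conjugation, so it suffices to treat $\im t\le 0$. Any such $t\in D_\delta$ can be joined to a point $t'$ of the region controlled in Step~3 by a chain $t,t+2a,\dots,t'$ of at most $C/a$ horizontal steps remaining inside $D_{\delta_1}$; summing the increments from Step~2 along the chain gives $\log r(t)=\log r(t')+O(a)=O(a)$, the intermediate points being harmless since only the two endpoints enter. This proves $\log r=O(a)$ throughout $D_\delta$, and the error term in~\eqref{theta:delta} is $\log r$, which is analytic in $t$ by construction. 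The main obstacle is Step~3: near $t=0$ the only external input, Theorem~\ref{theta:uni-rep:1}, degenerates, so the exact normalization of~\eqref{theta:delta} must be extracted from the delicate interplay of Stirling's expansion, the value of $L(-\pi)$, and the logarithmic splitting $\ln(1-e^{-i\zeta})=\ln\zeta+\tilde l(\zeta)$, with all branch choices tracked consistently.
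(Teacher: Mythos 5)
The paper does not prove this statement; it cites it from \cite{Fe:2016}. So your proposal must be judged on its own merits, and its overall architecture is the natural one: compare $\sigma_a(-\pi+t)$ with the explicit candidate $R(t)$, show that $r=\sigma_a(-\pi+\cdot)/R$ is analytic and zero-free (the poles of $\sigma_a$ at $t=-a(2k+1)$ match those of $\Gamma\bigl(\tfrac{t+a}{2a}\bigr)$), turn \eqref{eq:sigma} into a near-$2a$-periodicity $\log r(t+2a)-\log r(t)=O(a^2)$ via $\Gamma(x{+}1)=x\Gamma(x)$, $e^{\ln(2a)}=2a$, $(t{+}a)e^{\tilde l(t+a)}=1-e^{-i(t+a)}$ and the midpoint quadrature error $\tfrac1{2a}\int_t^{t+2a}\tilde l-\tilde l(t+a)=O(a^2)$, anchor via Theorem~\ref{theta:uni-rep:1} away from $t=0$, and propagate through $O(1/a)$ steps to accumulate an $O(a)$ error. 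This is sound in outline.

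The substantive gap is exactly where you locate ``the heart of the matter,'' Step~3. If you carry out the Stirling computation you describe, you get
\[
\log R(t)=\frac{t\ln t-t}{2a}+\frac1{2a}\int_0^t\tilde l(\zeta)\,d\zeta-\frac{i\pi^2}{6a}+O(a),
\]
while, with $L(-\pi)=-i\pi^2/6$,
\[
\frac1{2a}L(-\pi+t)=\frac{t\ln t-t}{2a}+\frac1{2a}\int_0^t\tilde l(\zeta)\,d\zeta-\frac{i\pi^2}{12a},
\]
so $\log R(t)-\tfrac1{2a}L(-\pi+t)=-\tfrac{i\pi^2}{12a}+O(a)$, \emph{not} $O(a)$. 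Your assertion that ``one checks that $\log R(t)=\tfrac1{2a}L(-\pi+t)+O(a)$'' is therefore false for the formula as written. In fact the written constant $-\tfrac{i\pi^2}{6a}$ in \eqref{theta:delta} appears to be a misprint for $-\tfrac{i\pi^2}{12a}$: combining \eqref{sigma:qp} at $z=0$ (giving $\sigma_a(\pi)=2\sigma_a(-\pi)$) with \eqref{sigma:sym} at $z=\pi$ gives $\sigma_a(-\pi)^2=\tfrac12\,e^{-i\pi^2/(6a)+ia/12}$, hence $\sigma_a(-\pi)=2^{-1/2}e^{-i\pi^2/(12a)+O(a)}$, which matches \eqref{theta:delta} at $t=0$ only after replacing $1/6$ by $1/12$. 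So either you silently made that correction without saying so, or the claimed computation was never actually done; either way the proposal as written does not close. Two smaller points: the condition ``$\im t<0$ and $|t|\ge\delta'$'' does not keep $z=-\pi+t$ at distance $\ge\delta'$ from the ray $z\le-\pi$ when $\re t<0$ (one needs $|\im t|\ge\delta'$, or else $\re t\ge\delta'$, and the horizontal chain in Step~4 reaches only the latter anchor when $|\im t|<\delta'$); and the stated covariance of the right-hand side $R$ under $t\mapsto\bar t$ together with conjugation, which you invoke to transfer from $\im t\le0$ to $\im t\ge0$, is not obvious from the explicit form of $R$ (in particular for the $\int_0^t\tilde l$ term) and should be verified rather than asserted.
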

\subsection{Uniform estimates}
Fix $\delta\in(0,\pi)$ and $\kappa\in(0,1)$. 
\begin{Cor}\label{cor:sigma:2}
Outside the $\delta$-neighborhood of the ray $z<-\pi$, one has
\begin{gather}\label{rough-est:down}
\sigma_a(z)=e^{O\left(a^{-1}e^{-\kappa |y|}(1+|x|)\right)},\quad y\le 0,\\
\label{rough-est:up}
\sigma_a\,(z)=e^{ -\frac{iz^2}{4a}+\frac{i\pi^2}{12a}+
\frac{ia}{12}+O\left(a^{-1}e^{-\kappa |y|}(1+|x|)\right)   },
\quad y\ge 0.
\end{gather}
\end{Cor}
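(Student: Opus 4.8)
Throughout write $z=x+iy$, and recall that every pole of $\sigma_a$ lies on the ray $z\le-\pi$ and every zero on the ray $z\ge\pi$. The plan is to read both estimates off the representations of Theorems~\ref{theta:uni-rep:1} and~\ref{theta:uni-rep:2} together with the reflection formula~\eqref{sigma:sym}; the only genuine computation is a one-line bound on $L(z)$, which I carry out first. Along the vertical ray $\{\,\re z'=x,\ \im z'\le y\le0\,\}$ one has $|e^{-iz'}|=e^{\im z'}\le e^{y}\le1$, so the series $\ln(1+e^{-iz'})=-\sum_{n\ge1}\frac{(-1)^{n}}{n}e^{-inz'}$ converges and, inserted into the definition of $L$ preceding~\eqref{Pi} and integrated termwise, gives $L(z)=-i\sum_{n\ge1}\frac{(-1)^{n}}{n^{2}}e^{-inz}$. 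Since $|e^{-inz}|=e^{ny}=(e^{y})^{n}\le e^{y}=e^{-|y|}$ for $n\ge1$ when $y\le0$, and $e^{-|y|}\le e^{-\kappa|y|}$ because $\kappa<1$, we get $|L(z)|\le\frac{\pi^{2}}{6}\,e^{-\kappa|y|}$ for all $y\le0$.

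\emph{The case $y\le0$.} If $z$ lies outside the $\delta$-neighbourhoods of both half-lines $z\le-\pi$ and $z\ge\pi$, Theorem~\ref{theta:uni-rep:1} gives $\sigma_a(z)=\exp\bigl(\tfrac1{2a}L(z)+O(a(1+|x|)e^{-|y|})\bigr)$; combining this with the bound on $L$ and the trivial inequalities $a\le a^{-1}$ and $e^{-|y|}\le e^{-\kappa|y|}$ (for small $a$) yields~\eqref{rough-est:down} on this region. It remains to treat $z$ in the $\delta$-neighbourhood of the ray $z\ge\pi$ with $y\le0$: if moreover $|y|\ge\delta$ we are still inside the domain of Theorem~\ref{theta:uni-rep:1}; otherwise $-z$ lies near $-\pi$, so by~\eqref{sigma:sym} one has $\sigma_a(z)=e^{-iz^{2}/(4a)+i\pi^{2}/(12a)+ia/12}/\sigma_a(-z)$ with $\sigma_a(-z)=\sigma_a(-\pi+(\pi-z))$ described by Theorem~\ref{theta:uni-rep:2} at $t=\pi-z$. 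Estimating the resulting factor $\Gamma\bigl(\tfrac{\pi-z+a}{2a}\bigr)$ by Stirling's formula and the explicit power factor $e^{(\ln2a)t/(2a)}$ and the factor $e^{\frac1{2a}\int_{0}^{t}\tilde l}$ directly, one checks that after cancelling the explicit prefactor $e^{-iz^{2}/(4a)+\dots}$ all the $O(1/a)$-sized contributions recombine into a quantity of size $O(a^{-1}e^{-\kappa|y|}(1+|x|))$ (indeed into $\tfrac1{2a}L(z)$ up to such an error, away from the zeros $\pi+a+2\pi\ell+2ak$, where the factor $1/\Gamma$ carries the zero). This establishes~\eqref{rough-est:down} on the whole region claimed.

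\emph{The case $y\ge0$.} Here I use~\eqref{sigma:sym} in the form $\sigma_a(z)=e^{-iz^{2}/(4a)+i\pi^{2}/(12a)+ia/12}\,\sigma_a(-z)^{-1}$. The point $w:=-z$ has $\im w\le0$, $|\re w|=|x|$, $|\im w|=|y|$, and lies outside the neighbourhood for which~\eqref{rough-est:down} has just been proved (applying Theorem~\ref{theta:uni-rep:2} once more near $-\pi$ if $w$ sits in the $\delta$-neighbourhood of the pole-ray, exactly as in the previous paragraph); hence $\sigma_a(w)=e^{O(a^{-1}e^{-\kappa|y|}(1+|x|))}$, so $\sigma_a(w)^{-1}=e^{O(a^{-1}e^{-\kappa|y|}(1+|x|))}$, and multiplying by the explicit prefactor gives precisely~\eqref{rough-est:up}.

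The steps involving $L$, Theorem~\ref{theta:uni-rep:1} and the reflection are routine. The delicate point is the end of the $y\le0$ case (and its reflected counterpart), namely controlling $\sigma_a$ in the $\delta$-neighbourhood of the zero-ray $z\ge\pi$ (resp.\ the pole-ray $z\le-\pi$), where the exponential representation of Theorem~\ref{theta:uni-rep:1} is no longer available: one must pass through Theorem~\ref{theta:uni-rep:2} and verify that the large $\Gamma$- and power-factors cancel against one another and against the explicit prefactor, leaving only the stated $O(a^{-1}e^{-\kappa|y|}(1+|x|))$ error — with the understanding, arbitrarily near an actual zero of $\sigma_a$, that the assertion is to be read as the (trivially valid) upper bound on $|\sigma_a|$, which is all that the applications in Sections~\ref{model:eq}--\ref{sec:int-eq} invoke.
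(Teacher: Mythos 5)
Your route (global use of Theorem~\ref{theta:uni-rep:1}, reflection via~\eqref{sigma:sym}, and a passage through Theorem~\ref{theta:uni-rep:2} near the zero-ray) is genuinely different from the paper's. The paper proves~\eqref{rough-est:down} only on the base strip $|x|\le a$ — via~\eqref{Pi} when $a$ is small and via~\eqref{sigma:down} when $a$ is bounded away from zero — and then propagates to arbitrary $x$ by iterating the functional equation~\eqref{eq:sigma} in steps of $2a$, so that the factor picked up is $\prod_{j\le |n|}(1+e^{-i(z\mp(2j-1)a)})^{\pm1}=e^{O(|n|e^{-|y|})}$ with $|n|\le |x|/(2a)+1/2$; the $y\ge0$ case is then obtained from~\eqref{sigma:real-an}. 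That scheme never approaches the zero-ray analysis you describe, which is why the paper's proof is short. Your explicit series bound $|L(z)|\le\frac{\pi^2}{6}e^{-|y|}$ is correct and fills in a step the paper leaves implicit.

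However, your argument has two genuine gaps. First, Theorems~\ref{theta:uni-rep:1} and~\ref{theta:uni-rep:2} hold only for sufficiently small $a$; the corollary is asserted for all $a\in(0,\pi)$, and you give no argument when $a$ is bounded away from $0$. The paper handles that regime on the base strip with~\eqref{sigma:down} and then propagates; your proof has no substitute. Second, in the problematic region $\{y\le0,\ |y|<\delta,\ z$ within $\delta$ of the ray $z\ge\pi\}$ you write ``$-z$ lies near $-\pi$,'' but that is true only when $z$ is near $\pi$ itself. For $z$ near $\pi+2\pi\ell$ with $\ell\ge1$, $-z$ is near $-\pi-2\pi\ell$, outside the $t$-range of Theorem~\ref{theta:uni-rep:2}; one would first have to translate by $2\pi\ell$ via~\eqref{sigma:qp} (or by $2a$-steps via~\eqref{eq:sigma}, as the paper does) before the local expansion applies, and you do not supply that reduction. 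A softer issue is that the claimed cancellation of the $\Gamma$-, $e^{(\ln 2a)t/2a}$- and $e^{\frac1{2a}\int\tilde l}$-factors in~\eqref{theta:delta} against the reflection prefactor is asserted, not carried out; you flag it as the delicate point but do not close it. (Your observation that near a zero of $\sigma_a$ the claim must be read as the one-sided bound $|\sigma_a|\le e^{C(\cdot)}$ applies equally to the propagation product in the paper's own proof and is not a shortcoming specific to your approach.)
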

\begin{proof} Estimate~\eqref{rough-est:up} follows 
from~\eqref{rough-est:down}
and~\eqref{sigma:real-an}. Let us prove~\eqref{rough-est:down}. Assume 
that $y\le 0$.
First, we note that \eqref{rough-est:down} is valid for $|x|\le a$. 
Indeed, let $a_0>0$ be so small that~\eqref{Pi} holds for all $0<a<a_0$. 
For 
these $a$, formula \eqref{rough-est:down} follows directly 
from~\eqref{Pi}.
For $a_0\le a\le 2\pi$, it follows from~\eqref{sigma:down} that is valid 
and uniform in $a$ and in $x$ if $|x|\le a$.
\\
Now, we assume that  $z$ is outside the $\delta$-neighborhood of 
the ray $z<-\pi$.
We choose  $n\in\Z$ so that $|x-2an|\le a$. 
By~\eqref{eq:sigma}
\begin{equation*}
\sigma_a(z)=\prod_{j=1}^{|n|}\left(1+e^{\dsize-i(z\mp(2j-1)a)}\right)^{\pm1
}
\sigma_a(z-2na) \quad \text{if} \quad\pm n\ge1.
\end{equation*}
As $|n|\le \frac{|x|}{2a}+\frac12$, we get
\begin{equation}\label{est:aux:1}
\prod_{j=1}^{|n|}\left(1+e^{-i(z\mp(2j-1)a)}\right)^{\pm 1}
=e^{O(|n|\,e^{-|y|})}
=e^{O\left(xe^{-|y|}/a\right)}.
\end{equation}
Formula~\eqref{rough-est:down}  valid for $|x|\le a$ and 
\eqref{est:aux:1} imply~\eqref{rough-est:down} for all $z$ we consider.
\end{proof}
Fix positive $c_1$, $c_2$ and  $\delta<2\pi$. 
\begin{Cor}\label{cor:sigma:3}
Let $|z+\pi|\le \delta$ and   $x\ge -\pi-a+c_1a-c_2|y|$.
Then, $|\sigma_a(z)|\le e^{C/a}$.
\end{Cor}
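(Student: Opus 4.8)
The plan is to derive the bound from Theorem~\ref{theta:uni-rep:2}, which expresses $\sigma_a$ near $-\pi$ through the Euler $\Gamma$-function. We may assume $a$ is small: for $a$ bounded away from $0$ the region $\{|z+\pi|\le\delta,\ x\ge-\pi-a+c_1a-c_2|y|\}$ contains no pole of $\sigma_a$ (by section~\ref{sigma:poles,zeros} the poles with $|z+\pi|\le\delta<2\pi$ sit at $z=-\pi-a-2ak$, $k=0,1,2,\dots$, where $x+\pi+a=-2ak\le0<c_1a$), so $|\sigma_a|$ is bounded there by the continuity of $\sigma_a$ in $(a,z)$, and it suffices to enlarge $C$.

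For small $a$, put $t=z+\pi$ (so $|t|\le\delta$) and $w=\frac{t+a}{2a}$, and note that $e^{\frac{\ln(2a)}{2a}t}=(2a)^{t/(2a)}=(2a)^{w-1/2}$. Theorem~\ref{theta:uni-rep:2} then gives $|\sigma_a(z)|=\frac1{\sqrt{2\pi}}\,(2a)^{\re w-1/2}\,|\Gamma(w)|\cdot e^{\frac1{2a}\re\int_0^t\tilde l(\zeta)\,d\zeta+O(a)}$; the last factor is $e^{O(1/a)}$ because $\tilde l$ is analytic, hence bounded on $\{|\zeta|\le\delta\}$, and $-i\pi^2/(6a)$ is purely imaginary. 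So everything reduces to proving $(2a)^{\re w-1/2}|\Gamma(w)|\le e^{C/a}$.

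First I would observe that the hypothesis $x\ge-\pi-a+c_1a-c_2|y|$, i.e. $\re t\ge(c_1-1)a-c_2|\im t|$, keeps $w$ at a fixed distance from the poles $0,-1,-2,\dots$ of $\Gamma$: an elementary computation gives $|w+k|\ge c_3:=c_1/\bigl(4\max\{1,c_2\}\bigr)>0$ for all $k\ge0$, and trivially $|w-n|\ge1/2$ for $n\ge1$ when $\re w\le1/2$; in particular $|\sin\pi w|$ is bounded below and $w\notin\mathbb Z$ whenever $\re w\le1/2$. I would then split into two cases. If $\re w\ge1/2$, Stirling's formula with the standard remainder bound $|\mu(w)|\le(12\re w)^{-1}\le\frac16$ (valid since $\re w>0$) gives $\Gamma(w)=\sqrt{2\pi}\,w^{w-1/2}e^{-w}e^{\mu(w)}$, so $(2a)^{w-1/2}\Gamma(w)=\sqrt{2\pi}\,(t+a)^{w-1/2}e^{-w}e^{\mu(w)}$; since $\re(t+a)=2a\re w>0$, $|t+a|\le a+\delta$, $|\im w|\le\delta/(2a)$ and $\re w\ge1/2$, each factor is bounded by $e^{O(1/a)}$. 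If $\re w\le1/2$, I would use the reflection formula $\Gamma(w)=\pi/\bigl(\sin\pi w\,\Gamma(1-w)\bigr)$ together with Stirling for $\omega:=1-w$ (now $\re\omega\ge1/2$), which yields $(2a)^{w-1/2}\Gamma(w)=\frac{\sqrt{\pi/2}}{\sin\pi w}\,(2a)^{1/2}\omega^{1/2}(e/(a-t))^{\omega}e^{-\mu(\omega)}$, where $2a\omega=a-t$; here $1/|\sin\pi w|$, $e^{-\mu(\omega)}$ and $(2a)^{1/2}|\omega|^{1/2}\le(a+\delta)^{1/2}$ are $O(1)$, one has $|(e/(a-t))^{\omega}|\le(e/|a-t|)^{\re\omega}e^{\pi\delta/(4a)}$ because $\re(a-t)=2a\re\omega>0$ and $|\im\omega|\le\delta/(2a)$, and, crucially, since $\re\omega\le|\omega|=|a-t|/(2a)$, the elementary inequality $s(1-\ln s)\le1$ ($s>0$) gives $(e/|a-t|)^{\re\omega}\le e^{|a-t|(1-\ln|a-t|)/(2a)}\le e^{1/(2a)}$ (the case $|a-t|\ge e$ being trivial). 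In both cases $(2a)^{\re w-1/2}|\Gamma(w)|\le e^{C/a}$, and combining with the previous paragraph yields $|\sigma_a(z)|\le e^{C/a}$.

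I expect the last estimate to be the main obstacle. Bounding $(2a)^{w-1/2}$ and $\Gamma(w)$ separately only gives $e^{O(a^{-1}\ln(1/a))}$, which is too weak; one has to keep these two factors together so that after Stirling they combine into the quantity $(e/(2a|\omega|))^{\re\omega}$ with $2a|\omega|=|a-t|$ bounded and $\re\omega\le|a-t|/(2a)$, and then apply $s(1-\ln s)\le1$. The other point requiring care is the bookkeeping with the poles of $\Gamma$ (checking that the hypothesis on $z$ really bounds the distance to them, and that the relevant poles are only the non-positive integers), but this is routine.
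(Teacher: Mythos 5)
Your proof is correct, and it rests on the same underlying mechanism as the paper's — apply Theorem~\ref{theta:uni-rep:2} and then combine the factor $(2a)^{(t+a)/(2a)-1/2}$ with Stirling's asymptotics of $\Gamma$ so that the dangerous $\ln(2a)$ terms cancel and one is left with $(2a\,w)^{w-1/2}=(t+a)^{w-1/2}$, a quantity whose modulus is $e^{O(1/a)}$ because $|t+a|$ is bounded; the inequality $s(1-\ln s)\le1$ is exactly the scalar fact that makes this work. The execution, however, is genuinely different. The paper sets $u=w$, introduces $Y(u)=\sqrt{u/(2\pi)}\,e^{-u(\ln u-1)}\Gamma(u)$, i.e.\ the Stirling remainder $e^{\mu(u)}$, and observes that $Y$ is bounded on the \emph{entire} domain $D=\{\re u\ge c_1/2-c_2|\im u|\}$: this is a single clean step, but it quietly invokes Stirling's approximation in a sector $|\arg u|\le\pi-\epsilon$ (not just the right half-plane), together with the fact that $D$ stays a fixed distance from the non-positive integers. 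You replace this with two cases. On $\re w\ge1/2$ you use the elementary right-half-plane Stirling with the explicit remainder bound $|\mu(w)|\le(12\re w)^{-1}$; on $\re w\le1/2$ you pass to $\omega=1-w$ via the reflection formula and re-run the same estimate, paying for it with the observation that $|\sin\pi w|$ is bounded below on $D$. Your version uses only the most elementary statement of Stirling and is verifiable line by line, at the cost of the case split and the reflection-formula bookkeeping; the paper's one-line appeal to $Y$ being bounded in $D$ is shorter but absorbs the sector-wise Stirling estimate and the pole-avoidance into a single unproved assertion. Your treatment of $a$ bounded away from $0$ by continuity and pole-counting is also consistent with the paper's convention that the constants absorb the dependence on $a$ away from $0$.

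One small quibble in the write-up: when you cite the lower bound $|w+k|\ge c_3$ for $k\ge0$ and then invoke the reflection formula, you should note explicitly that $1/|\sin\pi w|$ is uniformly bounded on $\{w:\operatorname{dist}(w,\mathbb Z)\ge c\}$ (which follows from $1$-periodicity plus decay as $|\im w|\to\infty$); you assert the bound, but the step from ``distance to $\mathbb Z$ bounded below'' to ``$1/|\sin\pi w|$ bounded'' deserves a sentence. This does not affect the correctness of the argument.
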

\begin{proof} \   {\it 1)} \ Let $u=\frac{z+\pi+a}{2a}$. 
Then, under the hypothesis of the corollary, 
\begin{equation*}
\re u\ge \frac{c_1}2-c_2|\im u|.
\end{equation*}
Let $D$ be the domain defined by this inequality in
the complex plane of $u$.
\\
{\it 2)} \ For $u\in D$,  we set 
\begin{equation*}
Y(u)=\sqrt{\frac{u}{2\pi}}\,e^{-u\,\left(\ln u- 1\right)} \Gamma(u)
\end{equation*}
where the branches of $\ln{.} $ and $\sqrt{.}$ are analytic in $\mathbb 
C\setminus\{z\le 0\}$ and 
such that $\ln 1=0$ and $\sqrt{1}=1$. 
The function $Y$  is bounded in $D$.

\noindent{\it 3.} \ By~\eqref{theta:delta} and the previous steps, 
under the hypothesis of the Corollary,
\begin{equation}\label{est:sigma}
\begin{split}\dsize
|\sigma_a(z)|&\le C\,\left|\exp\left(u(\ln u-1)-\frac12\ln 
u+\frac{\ln(2a)}{2a} (z+\pi)+
\frac1{2a}\,\int_0^{z+\pi}\tilde l(\zeta) d\zeta\right)\right|\\
\dsize
&\le e^{C/a} 
\left|\exp\left(u\ln u+\frac{\ln(2a)}{2a}(z+\pi)\right)\right|\le 
e^{C/a}\left|e^{u\ln(2au)}\right|,
\end{split}
\end{equation}
as   $\tilde l$ is analytic in the 
$\delta$-neighborhood of zero and as $|u|\ge C$ in $D$ .

\noindent{\it 4.} As $|2au|=|z+\pi+a|\le C$, then one also has  
$|2au\ln(2au)|\le C$, and~\eqref{est:sigma} implies that  
$|\sigma_a(z)|\le e^{C/a}$. 
\end{proof}
\def\cprime{$'$}

\end{document}